\newcounter{cprop}[section]
\newtheorem{theorem}[cprop]{Theorem}
\theoremstyle{plain}
\newtheorem{lemma}[cprop]{Lemma}
\newtheorem{proposition}[cprop]{Proposition}
\newtheorem{assumption}[cprop]{Assumption}
\numberwithin{equation}{section}
\theoremstyle{definition}
\newtheorem{definition}[cprop]{Definition}
\theoremstyle{remark}
\newtheorem{remark}[cprop]{Remark}
\renewcommand{\P}{\mathbb{P}}
\newcommand{\R}{\mathbb{R}}
\newcommand{\N}{\mathbb{N}}
\newcommand{\Z}{\mathbb{Z}}
\newcommand{\vertiii}[1]{{\left\vert\kern-0.25ex\left\vert\kern-0.25ex\left\vert #1 
    \right\vert\kern-0.25ex\right\vert\kern-0.25ex\right\vert}}
\begin{document}
\title[Oseledets splitting and invariant manifolds on fields of Banach spaces]{Oseledets splitting and invariant manifolds on fields of Banach spaces}

\author{M. Ghani Varzaneh}
\address{Mazyar Ghani Varzaneh\\
Institut f\"ur Mathematik, Technische Universit\"at Berlin, Germany and Department of Mathematical Sciences, Sharif University of Technology, Tehran, Iran}
\email{mazyarghani69@gmail.com}

\author{S. Riedel}
\address{Sebastian Riedel \\
Institut f\"ur Mathematik, Technische Universit\"at Berlin, Germany}
\email{riedel@math.tu-berlin.de}


\keywords{semi-invertible Multiplicative Ergodic Theorem, Oseledets splitting, fields of Banach spaces, invariant manifolds}

\subjclass[2010]{37H15, 37L55, 37B55}

\begin{abstract}
 We prove a semi-invertible Oseledets theorem for cocycles acting on measurable fields of Banach spaces, i.e. we only assume invertibility of the base, not of the operator. As an application, we prove an invariant manifold theorem for nonlinear cocycles acting on measurable fields of Banach spaces.
\end{abstract}

\maketitle

\section*{Introduction}

The Multiplicative Ergodic Theorem (MET) is a powerful tool with various applications in different fields of mathematics, including analysis, probability theory and geometry, and a cornerstone in smooth ergodic theory. It was first proved by Oseledets \cite{Ose68} for matrix cocycles. Since then, the theorem attracted many researchers to provide new proofs and formulations with increasing generality \cite{Rag79, Rue79, Rue82, Man83, Thi87, Wal93, LL10, DoaPhd09, Blu16, GTQ15}. 

In \cite{GRS19}, the authors gave a proof for an MET for compact cocycles acting on \emph{measurable fields of Banach spaces}. Let us quickly recall the setting here: If $(\Omega,\mathcal{F},\P)$ denotes a probability space, we call a family of Banach spaces $\{E_{\omega} \}_{\omega \in \Omega}$ a measurable field if there exists a linear subspace $\Delta$ of all sections $\Pi_{\omega \in \Omega} E_{\omega}$ and a countable space $\Delta_0 \subset \Delta$ such that $\{g(\omega)\, :\, g \in \Delta_0\}$ is dense in $E_{\omega}$ for every $\omega \in \Omega$ and $\omega \mapsto \| g(\omega) \|_{E_{\omega}}$ is measurable for every $g \in \Delta$. Note that this definition implies that every Banach space $E_{\omega}$ is separable. On the other hand, every separable Banach space defines a field of Banach spaces by simply setting $E_{\omega} = E$. This structure is similar to a measurable version of a Banach bundle with base $\Omega$ and total space $\Pi_{\omega \in \Omega} E_{\omega}$ in which every space $E_{\omega}$ is a fiber. However, the fundamental difference is that we do \emph{not} put any measurable (or topological) structure on the bundle $\Pi_{\omega \in \Omega} E_{\omega}$ itself! In fact, the existence of the set $\Delta$ is a substitute for the measurable structure and will help to prove measurability for functionals defined on $\Pi_{\omega \in \Omega} E_{\omega}$ as we will see many times in this work. If $(\Omega,\mathcal{F},\P,\theta)$ is a measure preserving dynamical systems, a \emph{cocycle} acting on the field $\{E_{\omega}\}_{\omega \in \Omega}$ consists of a family of maps $\varphi_{\omega} \colon E_{\omega} \to E_{\theta \omega}$. Setting $\varphi^n_{\omega} := \varphi_{\theta^{n-1} \omega} \circ \cdots \circ \varphi_{\omega}$, we furthermore claim that $\omega \mapsto \| \varphi^n_{\omega}(g(\omega)) \|_{E_{\theta^n \omega}}$ is measurable for every $g \in \Delta$ and every $n \in \N$. 

There are numerous examples in which it is natural to study cocycles on random spaces. In \cite{GRS19}, our motivation was to study dynamical properties of singular stochastic delay differential equations in which the spaces $E_{\omega}$ are (essentially) spaces of controlled Brownian paths known in rough paths theory \cite{FH14}. In the finite dimensional case, linearizing a $C^1$-cocycle on a manifold yields a linear cocycle acting on the tangent bundle \cite[Chapter 4.2]{Arn98}. In the context of stochastic partial differential equations (SPDE), cocycles on random metric spaces were studied, for instance, when uniqueness of the equation is unknown and one has to work with a measurable selection instead, cf. \cite{FS96} in the case of the 3D stochastic Navier-Stokes equation. Other examples in the situation of SPDE can be found in \cite{CKS04, CRS07}. In the deterministic case, a similar structure appears when studying the flow on time-dependent domains \cite{Lio62}. More recently, scales of time-dependent Banach spaces where introduced to study dynamical properties of non-autonomous PDEs in \cite{DPDT11,CPT13}.

 We will now restate the MET \cite[Theorem 4.17]{GRS19} in a slightly simplified version.
 
 \begin{theorem}\label{thm:MET_Banach_fields}
Let $(\Omega,\mathcal{F},\mathbb{P},\theta)$ be an ergodic measurable metric dynamical system and $\varphi$ be a compact linear cocycle acting on a measurable field of Banach spaces $\{E_{\omega}\}_{\omega \in \Omega}$. For $\mu \in \R \cup \{-\infty\}$ and $\omega \in {\Omega}$, define
\begin{align*}
 F_{\mu}(\omega) := \big{\lbrace} x\in E_{\omega}\, :\, \limsup_{n\rightarrow\infty} \frac{1}{n} \log \Vert\varphi^n_{\omega}(x) \Vert \leqslant \mu \big{\rbrace}. 
\end{align*}
Assume that
\begin{align*}
 \log^+ \Vert\varphi_{\omega} \Vert \in L^{1}(\Omega).
\end{align*}
Then there is a measurable forward invariant set $\tilde{\Omega} \subset \Omega$ of full measure and a decreasing sequence $\{\mu_i\}_{i \geq 1}$, $\mu_i \in [-\infty, \infty)$ with the properties that $\lim_{n \to \infty} \mu_n = - \infty$ and either $\mu_i > \mu_{i+1}$ or $\mu_i = \mu_{i+1} = -\infty$ such that for every $\omega \in \tilde{\Omega}$,
\begin{align}
x\in F_{\mu_{i}}(\omega)\setminus F_{\mu_{i+1}}(\omega) \quad \text{if and only if}\quad \lim_{n\rightarrow\infty}\frac{1}{n}\log \Vert\varphi^n_{\omega}(x) \| = \mu_{i}.
\end{align}
Moreover, there are numbers $m_1, m_2, \ldots$ such that $\operatorname{codim} F_{\mu_j}(\omega) = m_1 + \ldots + m_{j-1}$ for every $\omega \in \tilde{\Omega}$.
\end{theorem}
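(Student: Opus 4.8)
Since the statement is a mild reformulation of \cite[Theorem 4.17]{GRS19}, the most economical route is to quote that result directly; for completeness let me sketch the structure of the argument, which follows the classical scheme for compact cocycles (Ruelle, Thieullen, Lian--Lu) adapted to the Banach-field framework. The engine is Kingman's subadditive ergodic theorem applied to notions of \emph{$k$-dimensional volume growth}. For a bounded operator $T$ between Banach spaces and $k \geqslant 1$ let $D_k(T)$ denote such a volume --- for definiteness the product $c_1(T)\cdots c_k(T)$ of the first $k$ Gelfand numbers, or the supremum of the $k$-dimensional parallelepiped volumes of images of the unit ball by $T$; the two properties needed are submultiplicativity $D_k(ST) \leqslant D_k(S) D_k(T)$ and the crude bound $\log^+ D_k(T) \leqslant k \log^+\|T\|$. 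Combined with the hypothesis $\log^+\|\varphi_\omega\| \in L^1(\Omega)$, the latter makes $\omega \mapsto \log D_k(\varphi^n_\omega)$ an admissible subadditive cocycle, so Kingman yields a constant $\Lambda_k \in [-\infty,\infty)$ with $\frac1n \log D_k(\varphi^n_\omega) \to \Lambda_k$ on a full-measure set. Setting $\Lambda_0 := 0$ and $\lambda_k := \Lambda_k - \Lambda_{k-1}$, concavity-type inequalities for volumes give $\lambda_1 \geqslant \lambda_2 \geqslant \cdots$, and compactness of the operators $\varphi^n_\omega$ forces $\lambda_k \to -\infty$. The decreasing sequence $\{\mu_i\}$ is then the sequence of distinct values of $\{\lambda_k\}$ together with $-\infty$, and $m_i := \#\{k : \lambda_k = \mu_i\}$.

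Next I would construct the filtration. Fix $j$ and put $d := m_1 + \cdots + m_{j-1}$. By the definition of the Gelfand numbers, for each $n$ there is a subspace $M^n_\omega \subset E_\omega$ of codimension $d$ with $\|\varphi^n_\omega|_{M^n_\omega}\|$ comparable (up to a $d$-dependent constant) to $c_{d+1}(\varphi^n_\omega)$, whence of exponential growth rate at most $\mu_j$. A codimension-$d$ subspace is the common kernel of $d$ normalized functionals; after normalization these range over a set compact in the weak-$\ast$ topology, so --- using the cocycle identity to make the estimate uniform in the initial time --- one passes to a subsequential limit and obtains a single closed subspace $G_j(\omega)$ of codimension $d$ on which $\limsup_n \frac1n \log \|\varphi^n_\omega x\| \leqslant \mu_j$; thus $G_j(\omega) \subseteq F_{\mu_j}(\omega)$. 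The measurable-field data $\Delta, \Delta_0$ enter precisely here, to arrange that $\omega \mapsto G_j(\omega)$ (equivalently $\omega \mapsto \operatorname{dist}(g(\omega), G_j(\omega))$ for each $g \in \Delta$) is measurable. For the reverse inequality on the codimension one uses that the $d$-volume growth rate $\Lambda_d = m_1 \mu_1 + \cdots + m_{j-1}\mu_{j-1}$ is \emph{strictly} larger than $d\mu_j$: a subspace of codimension $<d$ carries near-extremal $d$-frames for $\varphi^n_\omega$, so if it were contained in $F_{\mu_j}(\omega)$ its images would display $d$-volume growth $>d\mu_j$, contradicting that every vector there grows at rate $\leqslant \mu_j$. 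Hence $\operatorname{codim} F_{\mu_j}(\omega) = d$ and $F_{\mu_j}(\omega) = G_j(\omega)$.

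It remains to replace the $\limsup$ in the definition of $F_\mu$ by a genuine limit on the successive quotients $F_{\mu_i}(\omega) \setminus F_{\mu_{i+1}}(\omega)$. Given such an $x$ we already have $\limsup_n \frac1n \log \|\varphi^n_\omega x\| \leqslant \mu_i$; I would show $\liminf_n \frac1n \log \|\varphi^n_\omega x\| < \mu_i$ is impossible. If it held, then along a subsequence $\|\varphi^n_\omega x\|$ would decay at least at the next available rate $\mu_{i+1}$; comparing $\varphi^n_\omega x$ with the already-constructed codimension-$(m_1 + \cdots + m_i)$ subspace $F_{\mu_{i+1}}(\theta^n \omega)$ and estimating the complementary finite-dimensional piece by the volume bounds, one concludes $x \in F_{\mu_{i+1}}(\omega)$, a contradiction; so the limit exists and equals $\mu_i$, giving the stated equivalence. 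Equivariance $\varphi_\omega F_\mu(\omega) \subseteq F_\mu(\theta\omega)$ is immediate from the cocycle identity, and one takes $\tilde\Omega := \bigcap_{n \geqslant 0} \theta^{-n} \Omega_0$, where $\Omega_0$ is the full-measure set carrying all of the above a.e.\ statements; since $\theta$ preserves $\P$, this $\tilde\Omega$ is of full measure and forward invariant.

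The main obstacle is the passage from the asymptotic volume-growth data to an $\omega$-wise, time-uniform filtration of exactly the prescribed codimension, together with the upgrade of $\limsup$ to $\lim$: in the absence of any inner-product structure on the $E_\omega$ there is no orthogonal splitting, so one must work with finite-codimensional subspaces throughout and carefully track which scale of $s$-numbers (approximation, Gelfand, Kolmogorov numbers, which coincide only up to constants in a general Banach space) is submultiplicative and which controls restrictions to finite-codimensional subspaces. Moreover the compactness extraction of limiting subspaces has to be performed measurably and compatibly with the field structure $\Delta$ --- the technical heart of \cite{GRS19}, and the reason the measurable-field framework is set up the way it is.
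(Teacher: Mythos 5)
Your proposal is correct and matches the paper's treatment: the paper does not reprove this statement but presents it as a restatement of \cite[Theorem 4.17]{GRS19}, which is exactly the citation you lead with. Your accompanying sketch (Kingman applied to the $\operatorname{Vol}$-based quantities $D_k$, finite-codimensional slow-growing subspaces, and the upgrade of $\limsup$ to $\lim$ on successive quotients) is the same volume-growth argument underlying that reference and the machinery reused in Section \ref{sec:semi_MET}.
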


Let us mention here that, motivated by our example of a stochastic delay equation, we proved this theorem for compact cocycles only, but it should be straightforward to generalize it to the quasi-compact case as in  \cite{GTQ15}. Consequently, we believe that all our results in this work will hold for quasi-compact cocycles, too.

The numbers $\{\mu_i\}$ are the \emph{Lyapunov exponents}, the subspaces $F_{\mu}(\omega)$ are sometimes called \emph{slow-growing subspaces} and the resulting filtration
\begin{align*}
 E_{\omega} = F_{\mu_1}(\omega) \supset F_{\mu_2}(\omega) \supset \ldots  
\end{align*}
is called \emph{Oseledets filtration}. Is is easily seen that the slow-growing spaces are \emph{equivariant}, meaning that $\varphi_{\omega}(F_{\mu_i}(\omega)) \subset F_{\mu_i}(\theta \omega)$. In the proof of this theorem, no invertibility of $\theta$ or $\varphi$ is assumed, in which case a filtration of slow-growing subspaces is the best one can hope for. However, things change when we assume that the base $\theta$ is invertible. In this case, it is possible to deduce a \emph{splitting} of the spaces $E_{\omega}$ consisting of \emph{fast-growing subspaces} which are invariant under $\varphi$. Such a splitting is called \emph{Oseledets splitting}, and the corresponding theorem is called \emph{semi-invertible MET}. Let us emphasize that we only need to assume invertibility of the base $\theta$ and no invertibility of the cocyle $\varphi$. In the context of SPDE or stochastic delay equations, these assumptions are quite natural: $\theta$ usually denotes the shift of a random trajectory (which can be shifted forward and backward in time) and the cocycle denotes the solution map, which is not injective if the equation can be solved forward in time only.

Our first main result is a semi-invertible MET on a measurable field of Banach spaces. We state a simplified version here, the full statement can be found in Theorem \ref{BT} below.

\begin{theorem}\label{theorem:main_1_intro}
  In addition to the assumptions made in Theorem \ref{thm:MET_Banach_fields}, assume that $\theta$ is invertible with measurable inverse $\sigma := \theta^{-1}$ and that Assumption \ref{MES} holds. Then there is a $\theta$-invariant set $\tilde{\Omega}$ of full measure such that for every $i \geq 1$ with $\mu_i > \mu_{i+1}$ and $\omega \in \tilde{\Omega}$, there is an $m_i$-dimensional subspace $H^i_\omega$ with the following properties:
\begin{itemize}
 \item[(i)] (Invariance)\ \ $\phi_{\omega}^k(H^i_{\omega}) = H^i_{\theta^k \omega}$ for every $k \geq 0$.
 \item[(ii)] (Splitting)\ \ $H_{\omega}^i \oplus F_{\mu_{i+1}}(\omega) = F_{\mu_i}(\omega)$. In particular,
 \begin{align*}
  E_{\omega} = H^1_{\omega} \oplus \cdots \oplus H^i_{\omega} \oplus   F_{\mu_{i+1}}(\omega).
 \end{align*}
 \item[(iii)] ('Fast' growing subspace)\ \ For each $ h_{\omega}\in H^{i}_{\omega} $,
 \begin{align*}
  \lim_{n\rightarrow\infty}\frac{1}{n}\log\Vert \phi^{n}_{\omega}(h_{\omega})\Vert = \mu_{j}
 \end{align*}
    and
\begin{align*}
\lim_{n\rightarrow\infty}\frac{1}{n}\log\Vert (\phi^{n}_{\sigma ^{n}\omega})^{-1}(h_{\omega})\Vert =-\mu_{j}.
\end{align*}

\end{itemize}

Moreover, the spaces are uniquely determined by properties (i), (ii) and (iii).

\end{theorem}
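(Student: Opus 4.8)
The plan is to fix $i$ with $\mu_i > \mu_{i+1}$, push the whole problem down to the finite-dimensional quotient bundle $G_\omega := E_\omega / F_{\mu_{i+1}}(\omega)$ (whose dimension $d := m_1 + \cdots + m_i$ is constant on a full-measure $\theta$-invariant set by Theorem~\ref{thm:MET_Banach_fields}), invoke the classical invertible Oseledets theorem there, and then lift the resulting splitting back to $E_\omega$ by a pull-back limit along the orbit $\dots,\sigma^2\omega,\sigma\omega,\omega$. First I would check that $\varphi$ descends: since $\varphi_\omega(F_{\mu_{i+1}}(\omega)) \subseteq F_{\mu_{i+1}}(\theta\omega)$ and $\ker\varphi_\omega \subseteq F_{-\infty}(\omega) \subseteq F_{\mu_{i+1}}(\omega)$, the induced maps $\bar\varphi_\omega \colon G_\omega \to G_{\theta\omega}$ are injective, hence (matching dimensions) isomorphisms; I equip each $G_\omega$ with the quotient norm. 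Then $\bar\varphi$ is a $d$-dimensional cocycle over the invertible ergodic base, log-integrability inherited from $\varphi$, and a routine comparison of the quotient norm with the filtration shows its Lyapunov exponents are exactly $\mu_1,\dots,\mu_i$ with multiplicities $m_1,\dots,m_i$. The finite-dimensional invertible MET then supplies, on a $\theta$-invariant full-measure set, an equivariant splitting $G_\omega = \bar H^1_\omega \oplus \cdots \oplus \bar H^i_\omega$ with $\dim \bar H^j_\omega = m_j$, the two-sided characterization $\lim_n \tfrac1n\log\|\bar\varphi^n_\omega \bar h\| = \mu_j = -\lim_n\tfrac1n\log\|(\bar\varphi^n_{\sigma^n\omega})^{-1}\bar h\|$ for $0 \neq \bar h \in \bar H^j_\omega$, and the corresponding uniqueness.

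Next I would lift. Using the countable family $\Delta_0$ and Assumption~\ref{MES}, I select a measurable field $V_\omega \subseteq E_\omega$ of $d$-dimensional complements of $F_{\mu_{i+1}}(\omega)$ and set $W_n(\omega) := \varphi^n_{\sigma^n\omega}(V_{\sigma^n\omega})$. Because any complement of $F_{\mu_{i+1}}(\eta)$ meets both $\ker\varphi^n_\eta$ and $\{x : \text{exponent of } x \le \mu_{i+1}\}$ only in $0$ (the latter since the exponent of $x$ from $\eta$ equals that of $\varphi^n_\eta x$ from $\theta^n\eta$), each $W_n(\omega)$ is again a $d$-dimensional complement of $F_{\mu_{i+1}}(\omega)$, and $W_{n+1}(\omega)$ is the $\varphi^n_{\sigma^n\omega}$-image of the complement $\varphi_{\sigma^{n+1}\omega}(V_{\sigma^{n+1}\omega})$ of $F_{\mu_{i+1}}(\sigma^n\omega)$. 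The heart of the matter is a contraction estimate: writing a complement of $F_{\mu_{i+1}}(\eta)$ as the graph of a linear map $G_\eta \to F_{\mu_{i+1}}(\eta)$ over a reference lift and pushing it forward by $\varphi^n_\eta$, the graph is transported by the operator obtained from $\varphi^n_\eta$ restricted to $F_{\mu_{i+1}}$ (operator norm $\lesssim e^{n(\mu_{i+1}+\varepsilon)}$) composed with $(\bar\varphi^n_\eta)^{-1}$ (operator norm $\lesssim e^{-n(\mu_i-\varepsilon)}$), so that $\operatorname{gap}\big(\varphi^n_\eta(U),\varphi^n_\eta(U')\big) \lesssim C(\eta)\,e^{-n(\mu_i-\mu_{i+1}-2\varepsilon)}$ uniformly over complements $U,U'$ of $F_{\mu_{i+1}}(\eta)$, with $C$ tempered. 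Taking $\eta = \sigma^n\omega$ and telescoping, $\{W_n(\omega)\}_n$ is Cauchy in the gap metric; its limit $H^{\le i}_\omega$ is a $d$-dimensional complement of $F_{\mu_{i+1}}(\omega)$, measurable in $\omega$, independent of $V$, and satisfies $\varphi_\omega(H^{\le i}_\omega) = H^{\le i}_{\theta\omega}$ (shift the index in the limit, using that $\varphi_\omega$ is bi-Lipschitz on subspaces close to $H^{\le i}_\omega$). Running the same construction for each $j \le i$ with the auxiliary complements chosen nested yields equivariant complements $H^{\le 1}_\omega \subseteq \cdots \subseteq H^{\le i}_\omega$ with $H^{\le j}_\omega \oplus F_{\mu_{j+1}}(\omega) = E_\omega$.

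Then I would set $H^j_\omega := H^{\le j}_\omega \cap F_{\mu_j}(\omega)$. Since $H^{\le j}_\omega$ maps isomorphically onto $E_\omega/F_{\mu_{j+1}}(\omega)$ and $F_{\mu_j}(\omega)/F_{\mu_{j+1}}(\omega)$ has dimension $m_j$, this forces $\dim H^j_\omega = m_j$, $H^j_\omega \subseteq F_{\mu_j}(\omega)$, $H^j_\omega \cap F_{\mu_{j+1}}(\omega) = 0$, hence $H^j_\omega \oplus F_{\mu_{j+1}}(\omega) = F_{\mu_j}(\omega)$; since $H^{\le j-1}_\omega$ is itself a complement of $F_{\mu_j}(\omega)$, the $H^j_\omega$ sum directly to $H^{\le j}_\omega$, and telescoping gives (ii). Property (i) follows because $\varphi^k_\omega$ restricts to an injection of $H^{\le j}_\omega$ into $H^{\le j}_{\theta^k\omega}$ (transversality to $F_{\mu_{i+1}} \supseteq \ker$) carrying $F_{\mu_j}(\omega)$ into $F_{\mu_j}(\theta^k\omega)$, hence $H^j_\omega$ into $H^j_{\theta^k\omega}$, with equality by dimension. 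For (iii), if $0 \neq h \in H^j_\omega$ then $h \in F_{\mu_j}(\omega)\setminus F_{\mu_{j+1}}(\omega)$, so Theorem~\ref{thm:MET_Banach_fields} already yields $\lim_n\tfrac1n\log\|\varphi^n_\omega h\| = \mu_j$; for the backward exponent, $h'_n := (\varphi^n_{\sigma^n\omega})^{-1}h \in H^j_{\sigma^n\omega}$ has image $(\bar\varphi^n_{\sigma^n\omega})^{-1}\bar h$ in $G_{\sigma^n\omega}$ with $\bar h \in \bar H^j$, so $\tfrac1n\log\|h'_n\|_G \to -\mu_j$, while $\|h'_n\|_G \le \|h'_n\|_E \le \|P_{\sigma^n\omega}^{-1}\|\,\|h'_n\|_G$, where $P_\eta \colon H^{\le i}_\eta \to G_\eta$ is the equivariant quotient isomorphism; since $\eta \mapsto \|P_\eta^{-1}\|$ obeys a sub-multiplicativity bound with $L^1$ error coming from $\log^+\|\varphi_\omega\|$ and $\log^+\|\bar\varphi_\omega^{-1}\|$, it is tempered, and the two inequalities pinch $\tfrac1n\log\|h'_n\|_E \to -\mu_j$.

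Finally, for uniqueness: if $\tilde H^j_\omega$ also satisfies (i)--(iii), then $\tilde H^{\le i}_\omega := \tilde H^1_\omega \oplus \cdots \oplus \tilde H^i_\omega$ is an \emph{equivariant} complement of $F_{\mu_{i+1}}(\omega)$, so choosing $V := \tilde H^{\le i}$ makes $W_n(\omega) \equiv \tilde H^{\le i}_\omega$, whence the pull-back limit gives $\tilde H^{\le i}_\omega = H^{\le i}_\omega$; then $P_\omega(\tilde H^j_\omega)$ is a $\bar\varphi$-equivariant $m_j$-dimensional subspace of $G_\omega$ with forward exponent $\le \mu_j$ and backward exponent $\le -\mu_j$, and since these subspaces sum to $G_\omega$ they must equal $\bar H^j_\omega$ by finite-dimensional uniqueness, so $\tilde H^j_\omega = H^j_\omega$. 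The set $\tilde\Omega$ is the intersection of the full-measure $\theta$-invariant sets from Theorem~\ref{thm:MET_Banach_fields}, from the finite-dimensional MET on the quotient, and from the temperedness lemmas. I expect the main obstacle to be the contraction estimate together with its measurable bookkeeping: carrying the Grassmannian arguments, the measurable selection of $V$, and the gap-metric limits through the fields-of-Banach-spaces framework, where $\Pi_\omega E_\omega$ carries no a priori structure, is precisely what Assumption~\ref{MES} and the subspace $\Delta$ are there to make possible; securing the uniform (operator-norm) growth bound for $\varphi$ on $F_{\mu_{i+1}}$ and the temperedness of $\eta \mapsto \|P_\eta^{-1}\|$ are further, more standard, technical points.
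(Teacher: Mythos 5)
Your overall architecture is sound and, at its core, coincides with the paper's: the decisive mechanism in both is to pull complements of $F_{\mu_{i+1}}$ back along $\ldots,\sigma^2\omega,\sigma\omega,\omega$, push them forward, and show the images are Cauchy in the Grassmannian with rate $e^{-n(\mu_i-\mu_{i+1}-2\varepsilon)}$ (this is exactly Proposition \ref{comp_}). Where you genuinely diverge is in the surrounding scaffolding. The paper does not invoke any finite-dimensional MET on the quotient bundle; instead it re-runs the volume-growth analysis directly on the quotient maps $[\psi^n_\omega]_{\mu_{i+1}}$ (Lemmas \ref{mea_Vol}, \ref{fra_L}, \ref{fra_fra}) to obtain the minimal expansion rate $\mu_i$, and it establishes transversality of the limit space to $F_{\mu_{i+1}}(\omega)$ and uniqueness via a dual cocycle $(\psi^n_{\sigma^n\omega})^*_{\mu_i}$ acting on $(F_{\mu_i}(\omega))^*$ together with the annihilator identity \eqref{eqn:uniqueness_H}. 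Your graph-transform formulation, by contrast, would deliver transversality of the limit "for free" (a limit of graphs with convergent slopes is a graph) and outsources the spectral bookkeeping to a finite-dimensional black box; that is an attractive reorganization and arguably more modular.

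Three points, however, need repair. First, the quotient cocycle $\bar\varphi_\omega$ is invertible as a linear map but there is no reason for $\log^+\Vert\bar\varphi_\omega^{-1}\Vert$ to be integrable, so the \emph{classical invertible} Oseledets theorem does not apply; you must use the finite-dimensional \emph{semi-invertible} MET (invertible base, log-integrable forward norms only), which fortunately still yields the splitting and the backward rates you need. The same issue undermines your proposed temperedness argument for $\eta\mapsto\Vert P_\eta^{-1}\Vert$ via "sub-multiplicativity with $L^1$ error", since the error term would involve $\log^+\Vert\bar\varphi^{-1}\Vert$; the paper instead gets $\frac1n\log\Vert\Pi_{H_{\sigma^n\omega}\Vert F_{\mu_{i+1}}(\sigma^n\omega)}\Vert\to 0$ from the forward MET estimates combined with Feldman's trick (Lemma \ref{inve_lim}), and you will need that route. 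Second, your contraction estimate cannot be uniform over \emph{all} complements $U,U'$ of $F_{\mu_{i+1}}(\eta)$: a complement can be arbitrarily close to $F_{\mu_{i+1}}(\eta)$, so its slope over the reference is unbounded. One must fix complements with uniformly bounded projection norm (as in \eqref{CXCC}) and track the tempered growth of the slope of $\varphi_{\sigma^{n+1}\omega}(V_{\sigma^{n+1}\omega})$, which is precisely the role of the factor $\Vert\psi^1_{\sigma^{n+1}\omega}\Vert/\Vert\psi^{n+1}_{\sigma^{n+1}\omega}(\xi)\Vert$ in \eqref{first_}. Third, the first step of your uniqueness argument is circular as ordered: to conclude that $W_n(\omega)\equiv\tilde H^{\le i}_\omega$ converges to $H^{\le i}_\omega$ you need the slopes of $\tilde H^{\le i}_{\sigma^n\omega}$ over the backward orbit to be tempered, which does not follow from (i)--(iii) alone without first identifying $P_\omega(\tilde H^j_\omega)=\bar H^j_\omega$ on the quotient (your second step). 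Reversing the two steps fixes this; alternatively, the paper's direct pairing $\langle(\psi^n_{\sigma^n\omega})^*\xi^*_\omega,(\psi^n_{\sigma^n\omega})^{-1}\xi_\omega\rangle\le e^{n(\mu_{i+1}-\mu_i+2\varepsilon)}\to 0$ settles uniqueness in one stroke.
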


Clearly, the Oseledets splitting provides much more information about the cocycle than the filtration. 

Let us discuss some important preceeding results. In the finite dimensional case, an MET for cocycles acting on measurable bundles can be found in the monograph \cite[4.2.6 Theorem]{Arn98} by L.~Arnold. In \cite{Man83}, Ma\~{n}\'{e} proved an MET with Oseledets splitting on a Banach bundle, assuming a topological structure on $\Omega$ and continuity of the map $\omega \mapsto \varphi_{\omega}$. He also assumed injectivity of $\varphi$. Besides these results, we are not aware of any METs for cocycles acting on a bundle-type structure. Lian and Lu \cite{LL10} prove an MET for cocycles acting on a fixed Banach space, assuming only a measurable structure on $\Omega$, but injectivity of the cocycle. This assumption was later removed by Doan in \cite{DoaPhd09} without giving an Oseledets splitting, however. In \cite{GTQ14}, Gonz\'{a}lez-Tokman and Quas used this result as a ``black-box'' and proved that an Oseledets splitting holds in this case, too.

Let us mention that our result is not only the first which provides a splitting on a bundle structure of Banach spaces without using a topological structure on $\Omega$, it also weakens the measurability assumption on $\varphi$ significantly in case we are dealing with a single Banach space $E$ only. In fact, the standard measurability assumption, for instance in \cite{GTQ15}, is \emph{strong measurability} of $\varphi$, meaning that for fixed $x \in E$, the map
\begin{align}\label{eqn:strong_measurability}
 \Omega \ni \omega \mapsto \varphi_{\omega}(x) \in E
\end{align}
should be measurable. In contrast, our assumption means that the maps
\begin{align*}
 \Omega \ni \omega \mapsto \| \varphi^{k + n}_{\omega}(x) - \varphi^{k}_{\theta^n \omega}(\tilde{x}) \|_E \in \R
\end{align*}
should be measurable for every $n,k \in \N_0$ and $x, \tilde{x} \in S$ where $S$ is a countable and dense subset of $E$. This assumption is clearly implied by \eqref{eqn:strong_measurability}. 

The proof of Theorem \ref{theorem:main_1_intro} pushes forward the volume growth-approach advocated by Blumenthal \cite{Blu16} and Gonz\'{a}lez-Tokman, Quas \cite{GTQ15} which provides a clear growth interpretation of the Lyapunov exponents. In a way, our result complements these two works in case of a single Banach space $E$. In particular, we are not imposing any further assumptions on $E$ like reflexivity or separability of the dual as in \cite{GTQ15}.

A typical application for an MET is the construction of stable and unstable manifolds, cf. \cite{Rue79, Rue82, Man83}. Here, the existence of the Oseledets splitting is crucial. Our second main contribution is an invariant manifold theorem for nonlinear cocycles acting on fields of Banach spaces. We state an informal version here, the precise statements are formulated in Theorem \ref{stable manifold} and Theorem \ref{unstable manifold}.

\begin{theorem}
  Let $\varphi$ be a nonlinear, differentiable cocycle acting on a measurable field of Banach spaces $\{E_{\omega} \}_{\omega \in \Omega}$. Assume that $Y_{\omega}$ is a random fixed point of $\varphi$, in particular $\varphi_{\omega}(Y_{\omega}) = Y_{\theta \omega}$. Then, under the same measurability and integrability assumptions as in Theorem \ref{theorem:main_1_intro}, the linearized cocycle $D_{Y_{\omega}} \varphi_{\omega}$ has a Lyapunov spectrum $\{\mu_n\}_{n \geq 1}$. Under further assumptions on $\varphi$ and $Y$, there is a $\theta$-invariant set $\tilde{\Omega}$ of full measure, closed subspaces $S_{\omega}$ and $U_{\omega}$ of $E_{\omega}$ 
  and immersed submanifolds $S_{loc}(\omega)$ and $U_{loc}(\omega)$ of $E_{\omega}$ such that for every $\omega \in \tilde{\Omega}$,
  \begin{align*}
   T_{Y(\omega)} S_{loc}(\omega) = S_{\omega} \qquad \text{and} \qquad T_{Y(\omega)} U_{loc}(\omega) = U_{\omega}
  \end{align*}
  and the properties that for every $ Z_{\omega}\in S_{loc}(\omega) $,
\begin{align*}
    \limsup_{n\rightarrow\infty}\frac{1}{n}\log\Vert\varphi^{n}_{\omega}(Z_{\omega})-Y_{\theta^{n}\omega}\Vert\leqslant  \mu_{j_{0}} < 0
\end{align*}
   and for every $ Z_{\omega}\in U_{loc}(\omega) $ one has $\varphi^n_{\sigma^n \omega}(Z_{\sigma^n \omega}) = Z_{\omega}$ and
\begin{align*}
\limsup_{n\rightarrow\infty}\frac{1}{n}\log\Vert Z_{\sigma^{n}\omega}-Y_{\sigma^{n}\omega}\Vert\leqslant -\mu_{k_{0}} < 0.
\end{align*}
Here we have set $\mu_{j_0} = \max \{ \mu_j \, :\ \mu_j < 0 \}$ and $\mu_{k_0} = \min \{ \mu_k \, :\, \mu_k > 0 \}$. In the hyperbolic case, i.e. if all Lyapunov exponents are non-zero, the submanifolds $S^{\upsilon}_{loc}(\omega) $ and $U^{\upsilon}_{loc}(\omega) $ are {transversal}, i.e.
  \begin{align*}
   E_{\omega} = T_{Y_{\omega}} U^{\upsilon}_{loc}(\omega) \oplus T_{Y_{\omega}} S^{\upsilon}_{loc}(\omega).
  \end{align*}

\end{theorem}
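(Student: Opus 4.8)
The plan is to reduce the nonlinear invariant manifold theorem to the semi-invertible MET (Theorem \ref{theorem:main_1_intro}) applied to the linearized cocycle $A_\omega := D_{Y_\omega}\varphi_\omega$, and then run a Lyapunov--Perron fixed point scheme fiberwise. First I would reduce to the case $Y_\omega \equiv 0$ by subtracting the random fixed point: set $\psi_\omega(x) := \varphi_\omega(x + Y_\omega) - Y_{\theta\omega}$, so that $\psi_\omega(0) = 0$, $D_0\psi_\omega = A_\omega$, and the remainder $R_\omega(x) := \psi_\omega(x) - A_\omega x$ satisfies $R_\omega(0) = 0$, $D_0 R_\omega = 0$; the extra assumptions on $\varphi$ and $Y$ should be exactly what is needed to control $R_\omega$ uniformly (e.g. a random Lipschitz bound $\operatorname{Lip}(R_\omega|_{B_r}) \le \varepsilon(\omega)$ with $\log \varepsilon$ tempered, together with temperedness of the radius). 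Since $\psi^n_\omega$ is measurable in the required sense whenever $\varphi$ and $Y$ are, the MET applies and produces the Lyapunov spectrum $\{\mu_n\}$, the splitting $H^i_\omega$, and the slow-growing filtration $F_{\mu_i}(\omega)$.

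Next, for the stable manifold I would set $S_\omega := F_{\mu_{j_0+1}}(\omega)$ (all exponents $\le \mu_{j_0} < 0$) and seek $S_{loc}(\omega)$ as a graph over a ball in $S_\omega$: that is, a map $h_\omega \colon B_{S_\omega}(0,r_\omega) \to \bigoplus_{i \le j_0} H^i_\omega$ (the complementary finite-dimensional unstable-center directions) with $h_\omega(0) = 0$, $D_0 h_\omega = 0$, whose graph is forward-equivariant: $\psi_\omega(\operatorname{graph} h_\omega) \subseteq \operatorname{graph} h_{\theta\omega}$. The standard device is to characterize the stable manifold as the set of initial points whose forward orbit decays at rate $\le \mu_{j_0}$, i.e. to define, on a suitable tempered weighted sequence space, the Lyapunov--Perron operator
\begin{align*}
 (\mathcal{T}_\omega z)_n := A^n_\omega \xi + \sum_{k=0}^{n-1} A^{n-1-k}_{\theta^{k+1}\omega}\, P^u_{\theta^{k+1}\omega} R_{\theta^k\omega}(z_k) - \sum_{k=n}^{\infty} (A^{k-n}_{\theta^n\omega})^{-1}\big|_{\text{stable}}\, P^s_{\theta^{k+1}\omega} R_{\theta^k\omega}(z_k),
\end{align*}
where $P^s_\omega, P^u_\omega$ are the MET projections onto $F_{\mu_{j_0+1}}(\omega)$ and $\bigoplus_{i\le j_0}H^i_\omega$ respectively. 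Here I use that on the stable subspace the cocycle is a contraction at rate $e^{\mu_{j_0}}$ forward, and the (generally non-invertible) cocycle is nonetheless \emph{invertible restricted to the finite-dimensional fast subspaces}, so the backward pushes in the second term make sense; the spectral gap between $\mu_{j_0}$ and $0$ (or $\mu_{j_0}$ and $\mu_{j_0-1}$, whichever gap one uses) gives the contraction of $\mathcal{T}_\omega$ on a small ball once $\varepsilon(\omega)$ is small, after passing to a tempered-adapted norm (a Pesin/Lyapunov norm) so that the MET growth rates become genuine one-step exponential bounds. The fixed point $z(\xi)$ depends on the parameter $\xi \in S_\omega$; differentiability of $\varphi$ upgrades to differentiability of $\xi \mapsto z_0(\xi)$, and $h_\omega(\xi) := P^u_\omega z_0(\xi)$ defines the graph, with $T_{Y_\omega}S_{loc}(\omega) = S_\omega$ falling out of $D_0 h_\omega = 0$. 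For the unstable manifold I would run the mirror-image argument: the backward orbit in the sense $\varphi^n_{\sigma^n\omega}(Z_{\sigma^n\omega}) = Z_\omega$ is governed by $\sigma = \theta^{-1}$, which is available since $\theta$ is invertible, and the fast subspaces $\bigoplus_{i}H^i_\omega$ restricted to positive exponents are finite-dimensional and the cocycle is invertible there, so the usual unstable Lyapunov--Perron operator on backward-tempered sequences applies with no need to invert on the infinite-dimensional part. Transversality in the hyperbolic case is then immediate from $T_{Y_\omega}S^\upsilon_{loc}(\omega) = F_{\mu_{j_0+1}}(\omega) = \bigoplus_{\mu_i < 0} H^i_\omega$, $T_{Y_\omega}U^\upsilon_{loc}(\omega) = \bigoplus_{\mu_i > 0} H^i_\omega$, and the MET splitting $E_\omega = \bigoplus_i H^i_\omega \oplus F_{\mu_{i+1}}(\omega)$ with no zero exponents present.

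The main obstacle I anticipate is not the fixed-point argument per se — that is classical Lyapunov--Perron — but rather making all the estimates \emph{measurable and tempered} in $\omega$ in the bundle setting, where there is no norm on the total space. Concretely: one must show $\omega \mapsto \|R_\omega(g(\omega))\|$, the projection norms $\|P^s_\omega\|, \|P^u_\omega\|$, the backward operator norms $\|(A^k_{\theta^k\omega})^{-1}|_{\text{fast}}\|$, and the adapted Lyapunov norms are all measurable and subexponential along orbits, using only the $\Delta$-structure and the measurability hypotheses carried over from Theorem \ref{theorem:main_1_intro}; this is where the weakened measurability assumption on $\varphi$ has to be shown to suffice (in particular that $\omega \mapsto \|\psi^n_\omega(g(\omega)) - (\text{reference})\|$ stays measurable through the graph transform). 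A secondary technical point is choosing the domain radii $r_\omega$ as a tempered function so that the ball $B(0,r_\omega)$ is mapped into $B(0,r_{\theta\omega})$ under the (contracted) dynamics — this forces $r_\omega$ to shrink at most subexponentially, which is compatible with temperedness exactly because of the spectral gap. Finally, since the stable subspace is infinite-dimensional, $S_{loc}(\omega)$ is only an \emph{immersed} (not embedded) submanifold and one should be careful to state the graph over a ball in the Banach space $S_\omega$ rather than claim a global chart; the unstable manifold, being finite-dimensional, is genuinely embedded locally.
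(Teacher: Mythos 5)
Your overall strategy is exactly the paper's: apply the semi-invertible MET to the linearized cocycle around $Y$, split $E_\omega$ into the slow-growing filtration space and the finite-dimensional sum of fast Oseledets spaces, and run a Lyapunov--Perron fixed-point scheme on a weighted space of forward (resp.\ backward) orbits, using that the non-invertible cocycle \emph{is} invertible on the finite-dimensional fast subspaces. The differences are stylistic: the paper does not pass to adapted Pesin/Lyapunov norms but keeps the fixed weight $e^{\upsilon j}$ with $0<\upsilon<-\mu_{j_0}$ and absorbs all subexponential prefactors (projection norms, $\|\psi^n|_{S}\|$, $\|(\psi^n|_{U})^{-1}\|$) into tempered random constants $h_1^\upsilon(\omega), h_2^\upsilon(\omega)$ and a tempered radius $R^\upsilon(\omega)$; the sharp rate $\mu_{j_0}$ in the conclusion is then recovered by nesting the manifolds $S^{\upsilon_2}_{loc}\subseteq S^{\upsilon_1}_{loc}$ and letting $\upsilon\uparrow-\mu_{j_0}$, a step your sketch omits but which is needed since a single $\upsilon$ only gives decay rate $-\upsilon$.

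Two concrete slips would derail the argument if taken literally. First, in your displayed operator the projections are swapped: the forward sum must carry the projection onto the \emph{slow} space $S$ (so that $A^{n-1-k}$ acts where it contracts), and the backward sum must carry the projection onto the finite-dimensional \emph{fast} space $U$, where alone the inverse $(A^{k-n+1}_{\theta^n\omega})^{-1}$ exists --- your formula inverts the cocycle "restricted to stable", which is impossible for a compact cocycle on an infinite-dimensional $S_\omega$, and pushes the unstable components forward, which leaves the weighted sequence space. Your surrounding prose describes the correct mechanism, so this is a transcription error, but it is the heart of the construction. Second, the index is off by one: the stable subspace must be $S_\omega=F_{\mu_{j_0}}(\omega)$ (all vectors with exponent $\leqslant\mu_{j_0}$), with complement $U_\omega=\oplus_{1\leqslant i<j_0}H^i_\omega$; taking $F_{\mu_{j_0+1}}(\omega)$ as you write would produce a strong stable manifold that misses the directions with exponent exactly $\mu_{j_0}$ and would not have the tangent space claimed in the theorem. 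Your identification of the real work --- measurability of the remainder, of the projection norms and of the restricted inverses using only the $\Delta$-structure, plus temperedness of the radii --- matches where the paper spends its effort (Lemmas on measurability of $\|\Pi_{H^i_\omega\|F_{\mu_{i+1}}(\omega)}\|$ and the forward/backward temperedness lemma).
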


The structure of the paper is as follows. In Section \ref{sec:semi_MET}, we prove a semi-invertible MET for cocycles acting on measurable fields of Banach spaces. This result is applied in Section \ref{sec_inf_mfd} to deduce the existence of local stable and unstable manifolds for nonlinear cocycles.

\subsection*{Notation}

\begin{itemize}
 \item For Banach spaces $(X, \|\cdot\|_X)$ and $(Y, \|\cdot\|_Y)$, $ L(X,Y) $ denotes the space of bounded linear functions from $ X $ to $Y $ equipped with usual operator norm. We will often not explicitly write a subindex for Banach space norms and use the symbol $\| \cdot \|$ instead. \emph{Differentiability} of a function $f  \colon X \to Y$ will always mean Fr\'echet-differentiability. A \emph{$C^m$ function} denotes an $m$-times Fr\'echet-differentiable function. If $A, B \subseteq X$, we denote by $d(A,B) := \inf_{a \in A, b \in B} \|a - b\|$ the distance between two sets $A$ and $B$. We also set $d(x,B) := d(B,x) := d(\{x\},B)$ for $x \in X$, $B \subseteq X$.
 
 \item Let $ X ,Y $ be  Banach spaces. For $ x_{1},...,x_{k}\in X $, set 
\begin{align}\label{VOL}
  \operatorname{Vol}(x_{1},x_{2},...,x_{k}) := \Vert x_{1}\Vert\prod_{i=2}^{k}d(x_{i},\langle x_{j} \rangle_{1\leqslant j<i}).
 \end{align}
 For a given bounded linear function $ T : X\rightarrow Y $ and $k \geq 1$, we define
\begin{align*}
D_{k}(T):=\sup_{\Vert x_{i}\Vert =1 ; i=1,...,k} \operatorname{Vol} \big{(}T(x_{1}),T(x_{2}),...,T(x_{k})\big{)}.
\end{align*}
 
  \item Let $E$ be a vector space. If we can write $E$ as a direct sum $E = F \oplus H$ of vector spaces, we have an \emph{algebraic splitting}. We also say that \emph{$F$ is a complement of $H$} and vice versa. The projection operator $\Pi_{F \Vert H}(e) = f$ with $e = f + h$, $f \in F$, $h \in H$, is called the \emph{projection operator onto $F$ parallel to $H$}. If $E$ is a normed space and $\Pi_{F \Vert H}$ is bounded linear, i.e.
\begin{align*}
 \| \Pi_{F \Vert H} \| = \sup_{f \in F, h \in H, f + h \neq 0} \frac{\| f \|}{\|f + h\|} < \infty,
\end{align*}
we call $E = F \oplus H$ a \emph{topological splitting}. For normed spaces, a \emph{splitting} will always mean a topological splitting.

 \item Let $(\Omega,\mathcal{F})$ be a measurable space. We call a family of Banach spaces $\{E_{\omega}\}_{\omega \in \Omega}$ a \emph{measurable field of Banach spaces} if there is a set of sections
 \begin{align*}
  \Delta \subset \prod_{\omega \in \Omega} E_{\omega}
 \end{align*}
 with the following properties:
 \begin{itemize}
  \item[(i)] $\Delta$ is a linear subspace of $\prod_{\omega \in \Omega} E_{\omega}$.
  \item[(ii)] There is a countable subset $\Delta_0 \subset \Delta$ such that for every $\omega \in \Omega$, the set $\{g(\omega)\, :\, g \in \Delta_0\}$ is dense in $E_{\omega}$.
  \item[(iii)] For every $g \in \Delta$, the map $\omega \mapsto \| g(\omega) \|_{E_{\omega}}$ is measurable.
 \end{itemize}
 
 \item Let $(\Omega,\mathcal{F})$ be a measurable space. If there exists a measurable map $\theta \colon \Omega \to \Omega$, $\omega \mapsto \theta \omega$, with a measurable inverse $\theta^{-1}$, we call $(\Omega,\mathcal{F}, \theta)$ a \emph{measurable dynamical system}. We will use the notation $\theta^n \omega$ for $n$-times applying $\theta$ to an element $\omega \in \Omega$. We also set $\theta^0 := \operatorname{Id}_{\Omega}$ and $\theta^{-n} := (\theta^n)^{-1}$. If $\P$ is a probability measure on $(\Omega,\mathcal{F})$ that is invariant under $\theta$, i.e. $\P(\theta^{-1} A) = \P(A) = \P( \theta A)$ for every $A \in \mathcal{F}$, we call the tuple $\big(\Omega, \mathcal{F},\P,\theta\big)$ a \emph{measure-preserving dynamical system}. The system is called \emph{ergodic} if every $\theta$-invariant set has probability $0$ or $1$.

 \item Let $(\Omega,\mathcal{F},\P,\theta)$ be a measure-preserving dynamical system and $(\{E_{\omega}\}_{\omega \in \Omega},\Delta)$ a measurable field of Banach spaces. A \emph{continuous cocycle on $\{E_{\omega}\}_{\omega \in \Omega}$} consists of a family of continuous maps
 \begin{align}\label{eqn:def_cocycle}
  \varphi_{\omega} \colon E_{\omega} \to E_{\theta \omega}.
 \end{align}
 If $\varphi$ is a continuous cocycle, we define $\varphi^n_{\omega} \colon E_{\omega} \to E_{\theta^n \omega}$ as 
 \begin{align*}
  \varphi^n_{\omega} := \varphi_{\theta^{n-1}\omega} \circ \cdots \circ \varphi_{\omega}.
 \end{align*}
 We also set $\varphi^0_{\omega} := \operatorname{Id}_{E_{\omega}}$. We say that \emph{$\varphi$ acts on $\{E_{\omega}\}_{\omega \in \Omega}$} if the maps
 \begin{align*}
  \omega \mapsto \| \varphi(n,\omega,g(\omega)) \|_{E_{\theta^n \omega}}, \quad n \in \N
 \end{align*}
 are measurable for every $g \in \Delta$. In this case, we will speak of a \emph{continuous random dynamical system on a field of Banach spaces}. If the map \eqref{eqn:def_cocycle} is bounded linear/compact, we call $\varphi$ a bounded linear/compact cocycle.
 
\end{itemize}

\section{Semi-invertible MET on fields of Banach spaces}\label{sec:semi_MET}

In this section, $(\Omega,\mathcal{F},\P,\theta)$ will denote an ergodic measure-preserving dynamical system and we set $\sigma := \theta^{-1}$. Let $(\{E_{\omega})_{\omega \in \Omega},\Delta,\Delta_0)$ be a measurable field of Banach space and let $\psi_{\omega} \colon E_{\omega} \to E_{\theta \omega}$ be a compact linear cocycle acting on it. In the sequel, we will furthermore assume that the following assumption is satisfied:

\begin{assumption}\label{MES}
For each $ g ,\tilde{g}\in \Delta$ and $n, k \geq 0$,
\begin{align*}
\omega\rightarrow\Vert\psi_{\theta^n \omega}^k ( \psi^{n}_{\omega}(g_{\omega})-\tilde{g}_{\theta^{n}\omega} )\Vert_{E_{\theta^{n + k} \omega}}
\end{align*}
is measurable.
\end{assumption}

We will always assume that 
\begin{align*}
 \log^+ \| \psi_{\omega} \| \in L^1(\Omega).
\end{align*}
Under this condition, the Multiplicative Ergodic Theorem \cite[Theorem 4.17]{GRS19} applies and yields the existence of Lyapunov exponents $\{\mu_1 > \mu_2 > \ldots \} \subset [-\infty,\infty)$ on a $\theta$-invariant set of full measure $\tilde{\Omega} \subset \Omega$. More precisely, there are numbers $\Lambda_k \in [-\infty,\infty)$ such that
\begin{align*}
 \Lambda_k = \lim_{n\rightarrow\infty}\frac{1}{n}\log D_{k}\big{(}\psi^{n}_{\omega}\big{)}, \quad k \geq 1
\end{align*}
for every $\omega \in \tilde{\Omega}$. Setting $\lambda_k = \Lambda_k - \Lambda_{k-1}$, the sequence $(\mu_k)$ is the subsequence of $(\lambda_k)$ defined by removing all multiple elements. For any $\mu \in [-\infty,\infty)$, we define the closed subspace
\begin{align*}
 F_{\mu}(\omega) = \left\{ \xi \in E_{\omega} \, |\, \limsup_{n \to \infty} \frac{1}{n} \log \|\psi^n_{\omega}(\xi) \| \leq \mu \right\}.
\end{align*}
Note that $\psi$ is invariant on these spaces in the sense that
\begin{align*}
 \psi^{n}_{\omega} \vert_{F_{\mu}(\omega)} \colon {F_{\mu}(\omega)} \to {F_{\mu}(\theta^n \omega)}.
\end{align*}
We also saw in \cite[Theorem 4.17]{GRS19} that there are numbers $m_i \in \N$ such that $m_i = \operatorname{dim} \left( F_{\mu_i}(\omega) / F_{\mu_{i+1}}(\omega)\right)$ for every $\omega \in \tilde{\Omega}$.

If not otherwise stated, $\tilde{\Omega} \subset \Omega$ will always denote a $\theta$-invariant set of full measure. Note that we can always assume w.l.o.g. that a given set of full measure $\Omega_0 \subset \Omega$ is $\theta$-invariant, otherwise we can consider
\begin{align*}
 \bigcap_{k \in \Z} \theta^k(\Omega_0)
\end{align*}
instead.

Next, we collect some basic Lemmas. Recall the definition of $\operatorname{Vol}$ and $D_k$.

\begin{lemma}\label{dual}
Let $ X, Y $ be Banach spaces and $ T:X\rightarrow Y $ a linear operator. For $k\in\mathbb{N}$, there exist positive constants $ c_{k}, C_{k} $  depending only on $k$ such that
\begin{align}
c_{k}D_{k}(T)\leqslant D_{k}(T^{*})\leqslant C_{k} D_{k}(T)
\end{align} 
where by $ T^{*}:Y^{*}\rightarrow X^{*} $ we mean the dual map of $ T $.
 \end{lemma}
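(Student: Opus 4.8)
\textbf{Proof plan for Lemma \ref{dual}.}

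The plan is to reduce everything to a finite-dimensional comparison and then exploit the standard duality between a finite set of vectors and functionals. First I would observe that $\operatorname{Vol}(x_1,\dots,x_k)$, being defined via distances to linear spans, only depends on the finite-dimensional subspace spanned by the $x_i$, and is comparable (up to constants depending only on $k$) to the more symmetric quantity obtained from the Gram-type determinant of the $x_i$ in any auxiliary inner product we put on that span; more to the point, for any fixed $k$-dimensional space, $\operatorname{Vol}(x_1,\dots,x_k)$ is within a factor $c_k$, $C_k$ of the volume computed after replacing the chain of orthogonal-projection distances by the ``parallelepiped'' volume. This lets me work with the parallelepiped volume $\operatorname{Vol}$ interchangeably up to $k$-dependent constants.

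The key step is then the following linear-algebraic duality statement: if $V$ is a $k$-dimensional normed space and $V^*$ its dual, then
\begin{align*}
 \sup_{\|x_i\| = 1} \operatorname{Vol}(x_1,\dots,x_k) \quad \text{and} \quad \sup_{\|f_i\| = 1} \operatorname{Vol}(f_1,\dots,f_k)
\end{align*}
agree up to multiplicative constants $c_k, C_k$ depending only on $k$. I would prove this by picking a near-optimal system $x_1,\dots,x_k$ in $V$ realizing the left-hand supremum (or $x_i$ of norm $\le 1$ maximizing the parallelepiped volume), letting $f_1,\dots,f_k$ be the dual basis (so $f_i(x_j) = \delta_{ij}$), and estimating $\|f_i\|$ from below and above using the near-optimality of the $x_j$: the standard fact is that $\|f_i\| \le (\text{const}_k)/\operatorname{Vol}(x_1,\dots,x_k) \cdot \operatorname{Vol}(x_1,\dots,\widehat{x_i},\dots,x_k)$-type bounds, i.e. the cofactor/Cramer expression, combined with John's-lemma-type control of the norm by the Euclidean structure adapted to the $x_i$. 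Running this in both directions (from $V$ to $V^*$ and back, using $V^{**} = V$ in finite dimensions) gives the two-sided comparison.

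Finally, to pass from operators to this, given $T \colon X \to Y$ I would note $D_k(T) = \sup \operatorname{Vol}(Tx_1,\dots,Tx_k)$ is really a supremum over $k$-dimensional subspaces $W \subset \operatorname{Im}(T)$ (or its closure) of the intrinsic volume functional on $W$, after reparametrizing; dually, $D_k(T^*) = \sup \operatorname{Vol}(T^* g_1, \dots, T^* g_k)$ is a supremum over $k$-dimensional subspaces of $\overline{\operatorname{Im}(T^*)} \subset X^*$. The link is that $T^* g_i$ restricted to a finite-dimensional $W \subset X$ on which the $Tx_j$ are independent recovers, via the annihilator/quotient duality $(X/W^\perp)^* \cong$ the relevant subspace, precisely the dual system of the $Tx_j$ up to the operator action — so the intrinsic volume comparison of the previous paragraph applies with $V = $ the span of the $Tx_j$ inside $Y$. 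I expect the main obstacle to be bookkeeping: making the identification between ``span of $T^*g_i$ in $X^*$'' and ``dual of the span of $Tx_j$ in $Y$'' clean enough that the $k$-dependent constants genuinely do not depend on $X$, $Y$ or $T$, which requires being careful that the suprema are attained (or approximated) on the same finite-dimensional picture on both sides. The norm estimates for dual bases and the John-ellipsoid input are otherwise routine.
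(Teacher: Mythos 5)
The paper does not actually prove this lemma; it is quoted from \cite[Lemma 3]{GTQ15}, so there is no in-paper argument to compare against and I am judging your sketch on its own terms. As written it has a genuine gap at its core. You reduce the lemma to the claim that, for a $k$-dimensional normed space $V$, the quantities $\sup_{\|x_i\|=1}\operatorname{Vol}(x_1,\dots,x_k)$ on $V$ and on $V^*$ are comparable, and you present $D_k(T)$ as ``a supremum over $k$-dimensional subspaces $W\subset\operatorname{Im}(T)$ of the intrinsic volume functional on $W$.'' But that intrinsic functional is identically equal to $1$ on every $k$-dimensional normed space: the trivial bound $d(x_i,\langle x_j\rangle_{j<i})\le\|x_i\|$ gives $\operatorname{Vol}\le 1$, and an Auerbach basis gives $\operatorname{Vol}=1$. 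So your ``key duality step'' is vacuously true (both sides equal $1$) and cannot imply the lemma, while the proposed reduction of $D_k(T)$ to it is false --- note that replacing $T$ by $\varepsilon T$ multiplies $D_k$ by $\varepsilon^k$ without changing the image subspaces. The point you are missing is that in $D_k(T)$ the vectors $x_i$ are normalized in $X$ while the volume is measured in $Y$, so $D_k(T)$ records the $k$-volume \emph{expansion} of $T$, not an intrinsic invariant of $\operatorname{Im}(T)$; the whole content of the lemma is transferring that expansion factor to $T^*$.

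The standard repair (and, in essence, what \cite{GTQ15} does) is operator-theoretic rather than subspace-intrinsic: given near-optimal unit vectors $x_1,\dots,x_k$, use Hahn--Banach to pick unit functionals $y_i^*\in Y^*$ vanishing on $\langle Tx_j\rangle_{j<i}$ with $y_i^*(Tx_i)=d(Tx_i,\langle Tx_j\rangle_{j<i})$; the matrix $\big(y_i^*(Tx_j)\big)=\big((T^*y_i^*)(x_j)\big)$ is triangular with these distances on the diagonal, and evaluating the functionals $T^*y_i^*$ against suitable combinations of the $x_j$ (this is where your Cramer/dual-basis and $k$-dependent constants genuinely enter) yields $\operatorname{Vol}(T^*y_1^*,\dots,T^*y_k^*)\ge c_k\operatorname{Vol}(Tx_1,\dots,Tx_k)$. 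The reverse inequality is not symmetric: running the same argument for $T^*$ produces elements of $X^{**}$ and $Y^{**}$, and since $X,Y$ are infinite-dimensional you cannot invoke $V^{**}=V$; you must either argue directly with vectors of $X$ that only \emph{nearly} norm the relevant functionals, or appeal to Goldstine/local reflexivity. Your sketch does not address this asymmetry at all.
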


\begin{proof}
\cite[Lemma 3]{GTQ15}.
\end{proof}

\begin{lemma}\label{Con_VOl}
For a Banach space $ X $ and $ k\geqslant 1 $, the map
\begin{align}
\begin{split}
\operatorname{Vol} : X^{k} &\longrightarrow\mathbb{R}\\ 
(x_{1},x_{2},...,x_{k}) &\mapsto \Vert x_{1}\Vert\prod_{i=2}^{k}d(x_{i},\langle x_{j} \rangle_{1\leqslant j<i})
 \end{split}
 \end{align}
 is continuous.
\end{lemma}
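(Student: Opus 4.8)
The plan is to use that $\operatorname{Vol}(x_1,\dots,x_k)$ is a product of $k$ nonnegative factors, namely $\Vert x_1\Vert$ and $d(x_i,\langle x_j\rangle_{1\leq j<i})$ for $2\leq i\leq k$, together with the elementary a priori bound $0\leq \operatorname{Vol}(x_1,\dots,x_k)\leq \prod_{j=1}^{k}\Vert x_j\Vert$, which holds because $0$ lies in every span $\langle x_j\rangle_{1\leq j<i}$, so each distance factor is at most $\Vert x_i\Vert$. Fixing a point $(a_1,\dots,a_k)\in X^k$, I would split into two cases according to whether $a_1,\dots,a_{k-1}$ are linearly independent.

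In the independent case, every initial segment $a_1,\dots,a_{i-1}$ with $2\leq i\leq k$ is linearly independent, and I would show directly that $(x_1,\dots,x_i)\mapsto d(x_i,\langle x_j\rangle_{1\leq j<i})$ is (locally Lipschitz) continuous at $(a_1,\dots,a_i)$. The quantitative input is that linear independence of $a_1,\dots,a_{i-1}$ gives a constant $\kappa>0$ with $\Vert\sum_j c_j a_j\Vert\geq\kappa\sum_j|c_j|$ for all scalar vectors $c$, and this lower bound persists, with $\kappa$ replaced by $\kappa/2$, for all $(x_1,\dots,x_{i-1})$ in a small ball around $(a_1,\dots,a_{i-1})$. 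This lets me restrict the infimum defining $d(x_i,\langle x_j\rangle_{1\leq j<i})$ to coefficient vectors lying in a fixed bounded set $\{\sum_j|c_j|\leq R\}$, uniformly over that ball; a one-line triangle-inequality estimate then yields $\bigl|d(x_i,\langle x_j\rangle_{1\leq j<i})-d(x_i',\langle x_j'\rangle_{1\leq j<i})\bigr|\leq (1+R)\max_{j\leq i}\Vert x_j-x_j'\Vert$. Since $\operatorname{Vol}$ is then a finite product of maps continuous at $(a_1,\dots,a_k)$ (including $x_1\mapsto\Vert x_1\Vert$), it is continuous there.

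In the dependent case I would argue that $\operatorname{Vol}$ vanishes at $(a_1,\dots,a_k)$ and remains small nearby. Picking a nontrivial relation among $a_1,\dots,a_{k-1}$ and letting $p$ be the largest index appearing with nonzero coefficient, either $p=1$, so $a_1=0$ and $\operatorname{Vol}(x)\leq\Vert x_1\Vert\prod_{j\geq2}\Vert x_j\Vert\to 0$ as $x\to a$; or $2\leq p\leq k-1$ and $a_p=\sum_{j<p}c_j a_j$ for suitable scalars $c_j$, so that the $i=p$ factor satisfies $d(x_p,\langle x_j\rangle_{1\leq j<p})\leq\Vert x_p-\sum_{j<p}c_j x_j\Vert\to 0$ while all other factors stay bounded by the nearby norms; in both cases the product bound forces $\operatorname{Vol}(x)\to 0=\operatorname{Vol}(a)$.

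The main obstacle is that the single factor $(x_1,\dots,x_{i-1},y)\mapsto d(y,\langle x_1,\dots,x_{i-1}\rangle)$ is genuinely discontinuous where the spanning vectors degenerate — perturbing $0$ to $\varepsilon e_1$ collapses $d(e_1,\langle 0\rangle)=1$ to $d(e_1,\langle\varepsilon e_1\rangle)=0$ — so one cannot simply invoke continuity of each factor everywhere. The point that makes the argument go through, and the reason the case split is natural, is that exactly at such degenerate configurations $\operatorname{Vol}$ itself vanishes, and the bound $\operatorname{Vol}(x)\leq\prod_j\Vert x_j\Vert$, combined with the smallness of the offending factor, controls it throughout a neighborhood.
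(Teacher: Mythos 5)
Your proof is correct. Note that the paper does not prove this lemma itself but simply cites \cite[Lemma 4.2]{LL10}, so what you have written is a self-contained elementary replacement rather than a variant of an argument appearing in the text. The structure is sound: the a priori bound $\operatorname{Vol}(x)\leq\prod_{j}\Vert x_{j}\Vert$ holds because $0$ lies in every span $\langle x_{j}\rangle_{1\leqslant j<i}$; in the nondegenerate case the lower bound $\Vert\sum_{j}c_{j}x_{j}\Vert\geqslant(\kappa/2)\sum_{j}\vert c_{j}\vert$, valid uniformly on a neighbourhood of $(a_{1},\dots,a_{i-1})$, legitimately truncates the infimum defining $d(x_{i},\langle x_{j}\rangle_{1\leqslant j<i})$ to the set $\{\sum_{j}\vert c_{j}\vert\leqslant R\}$ with $R$ uniform over the neighbourhood (any $c$ outside this set gives $\Vert x_{i}-\sum_{j}c_{j}x_{j}\Vert>\Vert x_{i}\Vert\geqslant d(x_{i},\langle x_{j}\rangle_{1\leqslant j<i})$, so it cannot compete in the infimum), after which the $(1+R)$-Lipschitz estimate for each factor is immediate and $\operatorname{Vol}$ is a finite product of maps continuous at the point; in the degenerate case $\operatorname{Vol}(a)=0$ and the one factor you exhibit as tending to $0$, multiplied by the remaining factors which are bounded by the nearby norms, forces $\operatorname{Vol}(x)\to0$. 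You also correctly isolate the genuine subtlety, namely that the individual distance factors are discontinuous precisely at degenerate configurations, which is exactly where $\operatorname{Vol}$ vanishes, so the case split is not an artifact but the essential point. The only cosmetic remarks are that the case $k=1$ is trivial (the map reduces to $x_{1}\mapsto\Vert x_{1}\Vert$) and your dichotomy implicitly assumes $k\geqslant2$, and that in the dependent case one should note explicitly that $\operatorname{Vol}(a)=0$ because either $\Vert a_{1}\Vert=0$ or $a_{p}\in\langle a_{j}\rangle_{1\leqslant j<p}$; both points are implicit in what you wrote.
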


\begin{proof}
\cite[Lemma 4.2]{LL10}.
\end{proof}

\begin{lemma}\label{mea_distance}
For every $ g\in\Delta $ and $ j\geqslant 1 $, the map
\begin{align*}
\omega \mapsto d\big{(}g(\omega) , F_{\mu_{j}}(\omega))\big{)}
\end{align*}
is measurable.
\end{lemma}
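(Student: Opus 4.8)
The plan is to reduce $d(g(\omega),F_{\mu_j}(\omega))$ to distances to finite‑dimensional objects, exploiting that $F_{\mu_j}(\omega)$ has finite codimension $m:=m_1+\dots+m_{j-1}$ (constant on $\tilde\Omega$) and that $\psi^n_\omega$ eventually crushes $F_{\mu_j}$ relative to any complement. First I would dispose of the trivial cases: if $j=1$, or if all Lyapunov exponents are $-\infty$, then $F_{\mu_j}(\omega)=E_\omega$ on $\tilde\Omega$ and the distance is identically $0$; otherwise $\mu_1>-\infty$, and discarding the (redundant) indices past the last finite exponent I may assume $\mu_{j-1}>-\infty$, so that $\Lambda_{m-1},\Lambda_m\in\R$, $\Lambda_m-\Lambda_{m-1}=\mu_{j-1}>\mu_j$, and $m\geqslant1$.

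The heart of the argument is a measurable partition of $\tilde\Omega$. For an ordered $m$‑tuple $I=(h_1,\dots,h_m)\in\Delta_0^{\,m}$ set
\[
\Omega_I:=\Bigl\{\omega\in\tilde\Omega:\ \lim_{n\to\infty}\tfrac1n\log\operatorname{Vol}\bigl(\psi^n_\omega h_1(\omega),\dots,\psi^n_\omega h_l(\omega)\bigr)=\Lambda_l\ \text{for }l=1,\dots,m\Bigr\}.
\]
Each $\Omega_I$ is measurable: by Lemma \ref{Con_VOl} the nested distances occurring in $\operatorname{Vol}$ may be taken as countable infima over rational coefficients, so $\omega\mapsto\operatorname{Vol}(\psi^n_\omega h_1(\omega),\dots,\psi^n_\omega h_l(\omega))$ is assembled from maps $\omega\mapsto\|\psi^n_\omega((h_i-\sum_ja_jh_j)(\omega))\|$ that are measurable by Assumption \ref{MES} ($k=0$, $\tilde g=0$), and $\limsup$'s preserve measurability. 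A standard genericity argument produces, for every $\omega\in\tilde\Omega$, such an ``adapted'' tuple inside $\Delta_0$ — at each step the vectors $v$ with $\limsup_n\tfrac1n\log d(\psi^n_\omega v,\langle\psi^n_\omega h_i(\omega)\rangle_{i<l})<\lambda_l$ form a proper closed subspace (of the form $\langle h_1(\omega),\dots,h_{l-1}(\omega)\rangle+F_{\mu_{p+1}}(\omega)$), whose complement is dense and hence meets $\Delta_0$ — so $\tilde\Omega=\bigcup_I\Omega_I$. Moreover, on $\Omega_I$ the vectors $h_1(\omega),\dots,h_m(\omega)$ span a complement $G_\omega$ of $F_{\mu_j}(\omega)$: otherwise, choosing the largest index $k_0$ admitting a nontrivial relation $\sum_ka_kh_k(\omega)\in F_{\mu_j}(\omega)$ gives $h_{k_0}(\omega)\equiv-a_{k_0}^{-1}\sum_{k<k_0}a_kh_k(\omega)\pmod{F_{\mu_j}(\omega)}$, whence $d(\psi^n_\omega h_{k_0}(\omega),\langle\psi^n_\omega h_k(\omega)\rangle_{k<k_0})\lesssim e^{n(\mu_j+\varepsilon)}$ and $\tfrac1n\log\operatorname{Vol}(\psi^n_\omega h_1(\omega),\dots,\psi^n_\omega h_{k_0}(\omega))\leqslant\Lambda_{k_0-1}+\mu_j+\varepsilon<\Lambda_{k_0}$, contradicting adaptedness. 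Since $G_\omega$ is finite‑dimensional and $F_{\mu_j}(\omega)$ closed, $E_\omega=G_\omega\oplus F_{\mu_j}(\omega)$ is a topological splitting.

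It remains to prove measurability on a fixed $\Omega_I$; write $h_1,\dots,h_m$ for the tuple. For $h\in\Delta_0$ let $c_1(h),\dots,c_m(h)$ and $\pi(h):=h-\sum_kc_k(h)h_k$ be the components of $h$ in this splitting, so $\pi(h)(\omega)\in F_{\mu_j}(\omega)$ has exponential growth rate $\leqslant\mu_j$. On $\Omega_I$ one has $\tfrac1n\log d(\psi^n_\omega h_k(\omega),\langle\psi^n_\omega h_l(\omega)\rangle_{l<k})\to\lambda_k$, and $\lambda_k\in\{\mu_1,\dots,\mu_{j-1}\}$, so $\lambda_k>\mu_j$. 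Hence, recovering the coordinates by downward induction on $k$,
\[
|c_k(h)(\omega)|=\lim_{n\to\infty}\frac{d\bigl(\psi^n_\omega\bigl(h-\sum_{l>k}c_l(h)h_l\bigr)(\omega),\,\langle\psi^n_\omega h_l(\omega)\rangle_{l<k}\bigr)}{d\bigl(\psi^n_\omega h_k(\omega),\,\langle\psi^n_\omega h_l(\omega)\rangle_{l<k}\bigr)},
\]
the sign being fixed by also computing the corresponding limit for $h-h_k$ (recall $c_k(h_k)=1$). The inner distances are countable infima of maps $\omega\mapsto\|\psi^n_\omega(f(\omega))\|$ where $f$ is a $\Delta$‑section perturbed by the already‑constructed measurable functions $c_l(h)$; a partition‑and‑uniform‑approximation argument (split $\Omega_I$ by the dyadic cube containing the relevant tuple of $c_l(h)(\omega)$, approximate by exact $\Delta$‑sections, and let the mesh tend to $0$) then shows each $\omega\mapsto c_k(h)(\omega)$ is measurable, and likewise $\omega\mapsto\|g(\omega)-\pi(h)(\omega)\|$. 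Finally, $\pi\colon E_\omega\to F_{\mu_j}(\omega)$ is a bounded linear surjection, so $\{\pi(h)(\omega):h\in\Delta_0\}$ is dense in $F_{\mu_j}(\omega)$, and therefore
\[
d\bigl(g(\omega),F_{\mu_j}(\omega)\bigr)=\inf_{h\in\Delta_0}\bigl\|g(\omega)-\pi(h)(\omega)\bigr\|
\]
is a countable infimum of measurable functions. Thus the map is measurable on each $\Omega_I$, hence on $\tilde\Omega=\bigcup_I\Omega_I$; off the null set $\tilde\Omega$ one invokes completeness of the measure space (or simply works on $\tilde\Omega$).

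The step I expect to be the main obstacle is exactly this passage through finite‑dimensional linear algebra: there is no a priori measurable selection of a complement of $F_{\mu_j}(\omega)$ or of the associated projection, and the only structural handle on $F_{\mu_j}(\omega)$ is the asymptotics of $\psi^n_\omega$. Making rigorous that adapted tuples exist, that ``adapted'' forces ``complementary'', and that the displayed volume ratios converge to $|c_k(h)(\omega)|$ all rest on the volume‑growth estimates underlying the MET of \cite{GRS19} — notably that filtration‑generic ordered tuples realize the rates $\Lambda_1,\dots,\Lambda_m$ and that $F_{\mu_j}(\omega)$ is precisely the set of vectors of exponent $\leqslant\mu_j$.
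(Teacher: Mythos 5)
Your argument is correct in outline, but it takes a genuinely different --- and considerably heavier --- route than the paper, whose proof of Lemma \ref{mea_distance} is simply a reference to \cite[Lemma 4.3]{GRS19}. Judging from the sets $B^{l,k}_{\omega}(\mu_{j})$ that reappear in the proof of Lemma \ref{mea_Vol} with the same citation, that argument approximates $F_{\mu_{j}}(\omega)$ \emph{from the inside} by the finite-time growth conditions defining it: the distance is obtained as an iterated limit in $l,k$ of countable infima of $\Vert g(\omega)-h(\omega)\Vert$ over those $h\in\Delta_{0}$ satisfying the finite-time bounds, each condition being measurable by induction on $j$. You instead work \emph{from the outside}: you measurably select an adapted $m$-tuple in $\Delta_{0}^{m}$ spanning a complement of $F_{\mu_{j}}(\omega)$, recover the coordinates of the associated projection as limits of ratios of nested distances, and exhaust $F_{\mu_{j}}(\omega)$ by the projected images of $\Delta_{0}$. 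This costs more --- the genericity argument producing adapted tuples, the ``adapted implies complementary'' step, and the substitution of the $\omega$-dependent coefficients $c_{l}(h)(\omega)$ into $\Delta$-sections all need to be carried out --- but each ingredient is of the same kind as what the paper itself does later (notably in Lemma \ref{Proj_}), and your route buys more: a measurable complement and a measurable bounded projection onto $F_{\mu_{j}}(\omega)$, which the inner approximation does not produce. The one step you should spell out is the ``partition-and-uniform-approximation'' argument: since $\Delta$ only gives measurability of $\omega\mapsto\Vert\psi^{n}_{\omega}(g(\omega))\Vert$ for fixed sections $g$, you need the joint measurability of $(\omega,t)\mapsto\Vert\psi^{n}_{\omega}(g(\omega)+t\,g'(\omega))\Vert$ (measurable in $\omega$ for rational $t$, continuous in $t$) to justify plugging in the measurable functions $c_{l}(h)(\omega)$.
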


\begin{proof}
 As in the proof to \cite[Lemma 4.3]{GRS19}.
\end{proof}

For a Banach space $X$ and a closed subspace $U \subset X$, the quotient space $X / U$ is again a Banach space with norm
\begin{align*}
 \| [x] \|_{X/U} = \inf_{u \in U} \| x-u \|.
\end{align*}
For an element $x \in E_{\omega}$, we denote by $[x]_{\mu}$ its equivalence class in the quotient space $E_{\omega} / F_{\mu}(\omega)$. From the invariance property of $\psi$, the map 
\begin{align*}
 [\psi^{n}_{\omega}]_{\mu_{j+1}}: \frac{F_{\mu_{j}}(\omega)}{F_{\mu_{j+1}}(\omega)} &\longrightarrow \frac{F_{\mu_{j}}(\theta^{n}\omega)}{F_{\mu_{j+1}}(\theta^{n}\omega)}, \quad [\psi^{n}_{\omega}]_{\mu_{j+1}}([x]) := [ \psi^n_{\omega}(x)]_{\mu_{j+1}} \\
\end{align*}
is well-defined for every $j \geq 1$ and $n \in \N$. Note also that $[\psi^{n}_{\omega}]_{\mu_{j+1}}$ is bijective for $\omega \in \tilde{\Omega}$. Indeed, injectivity is straightforward and surjectivity follows from the fact that $F_{\mu_{j}}(\omega)/F_{\mu_{j+1}}(\omega)$ and $F_{\mu_{j}}(\theta^n\omega)/F_{\mu_{j+1}}(\theta^n\omega)$ are finite-dimensional with the same dimension $m_i$.

\begin{lemma}\label{mea_Vol}
 For $ j,m, n\in\mathbb{N}$, the maps
 \begin{align*}
 f_{1}(\omega):=  D_{m}(\psi^{n}_{\omega} \mid_{F_{\mu_{j}}(\omega)}) \  \ \ \text{and} \ \ \ \  f_{2}(\omega):=D_{m}{(}{[}\psi^{n}_{\omega}{]}_{\mu_{j+1}}{)}
 \end{align*}
  are measurable.
\end{lemma}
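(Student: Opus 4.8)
The plan is to reduce both measurability statements to the measurability of finitely many maps of the form $\omega \mapsto \|\psi^n_\omega(g(\omega))\|$ or $\omega \mapsto d(g(\omega), F_{\mu_j}(\omega))$, which are available from Assumption \ref{MES} and from Lemma \ref{mea_distance} together with the countable dense family $\Delta_0$. The key structural input is that $\operatorname{Vol}$ is continuous (Lemma \ref{Con_VOl}) and that the supremum defining $D_m$ can be replaced by a supremum over a countable set, so that measurability is preserved under the supremum.

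\medskip

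First I would treat $f_1$. Since $\psi^n_\omega$ maps $F_{\mu_j}(\omega)$ into $F_{\mu_j}(\theta^n\omega)$, we need the supremum of $\operatorname{Vol}(\psi^n_\omega(x_1),\dots,\psi^n_\omega(x_m))$ over unit vectors $x_i \in F_{\mu_j}(\omega)$. The first step is to pass to a countable dense subset: by Lemma \ref{mea_distance} and an exhaustion/selection argument (as in the proof of \cite[Lemma 4.3]{GRS19}), one can extract from $\Delta_0$ a countable family $\{g_\ell\}_{\ell}$ of sections such that, for every $\omega \in \tilde\Omega$, the vectors obtained by projecting the $g_\ell(\omega)$ into $F_{\mu_j}(\omega)$ (or, more simply, those $g_\ell(\omega)$ that already lie close to $F_{\mu_j}(\omega)$, after renormalizing) are dense in the unit sphere of $F_{\mu_j}(\omega)$; here one uses that $F_{\mu_j}(\omega)$ is closed and that the field structure passes to the subspaces. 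Because $\operatorname{Vol}$ and $x \mapsto \psi^n_\omega(x)$ are continuous, the supremum of $\operatorname{Vol}(\psi^n_\omega(x_1),\dots,\psi^n_\omega(x_m))$ over the unit sphere equals the supremum over this countable dense set, so
\begin{align*}
 f_1(\omega) = \sup_{\ell_1,\dots,\ell_m} \operatorname{Vol}\big(\psi^n_\omega(\hat g_{\ell_1}(\omega)),\dots,\psi^n_\omega(\hat g_{\ell_m}(\omega))\big),
\end{align*}
where $\hat g_\ell(\omega)$ denotes the appropriately normalized/projected section. A countable supremum of measurable functions is measurable, so it remains to check that each summand map $\omega \mapsto \operatorname{Vol}(\psi^n_\omega(\hat g_{\ell_1}(\omega)),\dots)$ is measurable. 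Expanding $\operatorname{Vol}$ via \eqref{VOL}, this is a finite algebraic combination of terms $\|\psi^n_\omega(\hat g_{\ell_i}(\omega))\|$ and distances $d(\psi^n_\omega(\hat g_{\ell_i}(\omega)), \langle \psi^n_\omega(\hat g_{\ell_p}(\omega))\rangle_{p<i})$; the former are measurable by Assumption \ref{MES} (taking $\tilde g = 0$, $k=0$, and using that $\hat g_\ell \in \Delta$), and the latter distances are again realized as countable infima over rational linear combinations of the finitely many vectors $\psi^n_\omega(\hat g_{\ell_p}(\omega))$, each such combination being measurable in $\omega$ by Assumption \ref{MES} and linearity of $\Delta$.

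\medskip

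For $f_2$, the argument is the same after one additional reduction: the quotient map $[\psi^n_\omega]_{\mu_{j+1}}$ acts on $F_{\mu_j}(\omega)/F_{\mu_{j+1}}(\omega)$, whose norm is $\|[x]\| = d(x, F_{\mu_{j+1}}(\omega))$. One rewrites $\operatorname{Vol}$ in the quotient purely in terms of distances to subspaces of the form $F_{\mu_{j+1}}(\theta^n\omega) + \langle \psi^n_\omega(\hat g_{\ell_p}(\omega))\rangle_{p<i}$ inside $E_{\theta^n\omega}$, and the normalization condition $\|[x_i]\|=1$ becomes $d(\hat g_{\ell_i}(\omega), F_{\mu_{j+1}}(\omega)) = 1$. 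All the resulting quantities are, again, countable infima of maps of the form $\omega \mapsto d\big(\text{(rational linear combo of sections)}, F_{\mu}(\theta^n\omega)\big)$; since $\tilde\Omega$ is $\theta$-invariant and $d(\cdot, F_\mu(\cdot))$ is measurable by Lemma \ref{mea_distance}, composing with $\theta^n$ keeps measurability, and Assumption \ref{MES} handles the image vectors $\psi^n_\omega(\hat g_\ell(\omega))$. Taking the countable supremum over $\ell_1,\dots,\ell_m$ then gives measurability of $f_2$.

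\medskip

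The main obstacle I anticipate is the first reduction step: producing, in a measurable way, a countable family of sections that is dense in the unit sphere of the \emph{random} subspace $F_{\mu_j}(\omega)$ (and likewise dense in the unit sphere of the quotient $F_{\mu_j}(\omega)/F_{\mu_{j+1}}(\omega)$), uniformly over $\omega \in \tilde\Omega$. This is precisely the kind of measurable-selection argument carried out in \cite[Lemma 4.3]{GRS19} and \cite[Lemma 4.2]{LL10}, so I would invoke that machinery rather than redo it; the only thing to verify is that the characterization $F_{\mu_j}(\omega) = \{\xi : \limsup \frac1n \log\|\psi^n_\omega(\xi)\| \le \mu_j\}$ together with the density of $\{g(\omega) : g \in \Delta_0\}$ in $E_\omega$ lets one approximate unit vectors of $F_{\mu_j}(\omega)$ by (normalized) elements of $\Delta_0$ whose membership in $F_{\mu_j}(\omega)$ is detected measurably — which is exactly what Lemma \ref{mea_distance} provides. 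Once this countable dense family is in hand, everything else is the routine "continuity of $\operatorname{Vol}$ plus countable sup/inf of measurable functions" bookkeeping sketched above.
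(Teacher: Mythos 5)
Your high-level strategy (reduce to countable suprema/infima of measurable functions built from $\Delta_0$, using continuity of $\operatorname{Vol}$, Assumption \ref{MES} and Lemma \ref{mea_distance}) is the same as the paper's, but the step you yourself flag as the main obstacle is where the proof actually breaks as written. You claim one can extract from $\Delta_0$ a countable family $\{\hat g_\ell\}$ whose values are dense in the unit sphere of the \emph{random} subspace $F_{\mu_j}(\omega)$, either by taking "those $g_\ell(\omega)$ that already lie close to $F_{\mu_j}(\omega)$, after renormalizing" or by "projecting". Neither works: a proper closed subspace has empty interior, so $\Delta_0$ need not contain \emph{any} section with $g(\omega)\in F_{\mu_j}(\omega)$, and an element merely close to $F_{\mu_j}(\omega)$ does not normalize into it; while "projecting onto $F_{\mu_j}(\omega)$" presupposes a measurably chosen complement and projection, whose measurability is precisely what is established only later (Lemma \ref{Proj_}) using the full semi-invertible MET machinery — so invoking it here would be circular. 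Consequently your formula $f_1(\omega)=\sup_{\ell_1,\dots,\ell_m}\operatorname{Vol}(\psi^n_\omega(\hat g_{\ell_1}(\omega)),\dots)$ is not justified, and the citation of \cite[Lemma 4.3]{GRS19} does not deliver what you attribute to it.

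What that machinery actually provides, and what the paper uses, is an \emph{approximate} membership test: one writes $f_1(\omega)=\lim_{l}\liminf_{k}\sup_{\{\xi^t\}\subset B^{l,k}_\omega(\mu_j)}\operatorname{Vol}(\psi^n_\omega(\xi^1),\dots,\psi^n_\omega(\xi^m))$, where $B^{l,k}_\omega(\mu_j)$ is cut out by finitely many \emph{open} conditions (norm close to $1$, controlled growth of $\|\psi^k_\omega(\xi)\|$, smallness of $d(\xi,F_{\mu_i}(\omega))$) that can be tested on elements of $\Delta_0$ via measurable sets $C(g_t)=\{\omega: g_t(\omega)\in B^{l,k}_\omega(\mu_j)\}$; the inner supremum is then a genuine countable supremum of measurable functions multiplied by the indicators $\chi_{C(g_t)}(\omega)$. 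The double limit is what absorbs the fact that the test vectors are only approximately in $F_{\mu_j}(\omega)$. A related slip occurs in your treatment of $f_2$: imposing the exact normalization $d(\hat g_{\ell_i}(\omega),F_{\mu_{j+1}}(\omega))=1$ on elements of $\Delta_0$ is again a measure-zero condition that will typically never be met; the paper instead divides by $\prod_t d(g_t(\omega),F_{\mu_{j+1}}(\omega))$ (with the convention $0/0:=0$), which requires no exact normalization. To repair your argument you would need to replace the single countable supremum by such a limiting scheme over approximating sets; as it stands the reduction to a countable dense family in $S_{F_{\mu_j}(\omega)}$ is a genuine gap.
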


\begin{proof}
It is not hard to see that
\begin{align}
&f_{1}(\omega)=\lim _{l \rightarrow\infty}\liminf_{k \rightarrow\infty}\bigg{[}\sup_{\lbrace\xi^{t}_{\omega}\rbrace_{1\leqslant t\leqslant m} \subset B_{\omega}^{l,k}(\mu_{j})} \operatorname{Vol}\big{(}\psi^{n}_{\omega}(\xi^{1}_{\omega}),...,\psi^{n}_{\omega}(\xi^{m}_{\omega})\big{)}\bigg{]}
\end{align}
where
\begin{align*}
B_{\omega}^{l,k}(\mu_{j})=\bigg{\lbrace} \xi \in E_{\omega}:\ \| \xi\| = 1, \ &\Vert\psi^k_{\omega}(\xi) \Vert< \exp \big{(}  k(\mu_{j} + \frac{1}{l}) \big{)}\ \text{and}\ \\ &d\big{(}\xi, F_{\mu_{i}}(\omega)\big{)}<\exp\big{(}k(\mu_{j}-\mu_{i-1})\big{)}, 1\leqslant i<j  \bigg{\rbrace},
\end{align*}
cf. the proof of \cite[Lemma 4.3]{GRS19}. Let $\lbrace g_{t}\rbrace_{1\leqslant t\leqslant m}\subset \Delta_{0} $ and $ C({g_{t}}) := \lbrace \omega\ :\ {g}_{t}(\omega)\in B^{l,k}_{\omega}(\mu_{j})\rbrace $. As a consequence of Lemma \ref{mea_distance}, these sets are measurable and we have 
\begin{align*}
\sup_{\lbrace\xi_{\omega}^{t}\rbrace_{1\leqslant t\leqslant m} \subset B_{\omega}^{l,k}(\mu_{j})}& \operatorname{Vol}\big{(}\psi^{n}_{\omega}(\xi^{1}_{\omega}),...,\psi^{n}_{\omega}(\xi^{m}_{\omega})\big{)} = \\&\sup_{\lbrace g_{t}\rbrace_{1\leqslant t\leqslant m}\subset \Delta_0 } \operatorname{Vol}\bigg{(}\psi^{n}_{\omega}\big{(}\frac{g_{1}(\omega)}{\Vert g_{1}(\omega)\Vert} \big{)},...,\psi^{n}_{\omega}\big{(}\frac{g_{m}(\omega)}{\Vert g_{m}(\omega)\Vert} \big{)}\bigg{)}\, \prod_{1\leqslant t\leqslant m}\chi_{C(g_{t})}(\omega)
\end{align*}
which implies measurability of $f_1$. For $f_2$, note first that
\begin{align*}
f_{2}(\omega)=\lim _{l \rightarrow\infty}\liminf_{k \rightarrow\infty}
\bigg{[}\sup_{\lbrace\xi^{t}_{\omega}\rbrace_{1\leqslant t\leqslant m} \subset B_{\omega}^{l,k}(\mu_{j})}& \frac{\operatorname{Vol}\big{(}[\psi^{n}_{\omega}(\xi^{1}_{\omega}){]}_{\mu_{i+1}},...,{[}\psi^{n}_{\omega}(\xi^{m}_{\omega}){]}_{\mu_{i+1}}\big{)}}{\prod_{1\leqslant t\leqslant m}\Vert[\xi^{t}_{\omega}]_{\mu_{j+1}}\Vert}\bigg{]}
\end{align*}
where we set $ \frac{0}{0}:=0 $. Again as before
\begin{align*}
\sup_{\lbrace\xi^{t}_{\omega}\rbrace_{1\leqslant t\leqslant m} \subset B_{\omega}^{l,k}(\mu_{j})}& \frac{\operatorname{Vol}\big{(}[\psi^{n}_{\omega}(\xi^{1}_{\omega}){]}_{\mu_{i+1}},...,{[}\psi^{n}_{\omega}(\xi^{m}_{\omega}){]}_{\mu_{j+1}}\big{)}}{\prod_{1\leqslant t\leqslant m}\Vert[\xi^{t}_{\omega}]_{\mu_{j+1}}\Vert} = \\ &\sup_{\lbrace g_{t}\rbrace_{1\leqslant t\leqslant m}\subset \Delta_{0} }\frac{\operatorname{Vol}\big{(}{[}\psi^{n}_{\omega}{(}g_{1}(\omega){)}\big{]}_{\mu_{i+1}},...,{[}\psi^{n}_{\omega}(g_{k}(\omega))]_{\mu_{i+1}}\big{)}}{\prod_{1\leqslant t\leqslant m}d\big{(}g_{t}(\omega),F_{\mu_{i+1}}(\omega)\big{)}} \, \prod_{1\leqslant t\leqslant m}\chi_{C(g_{t})}(\omega).
\end{align*}
It remains to show that for $ g\in \Delta $,\ {$  d\big{(}\psi^{n}_{\omega}\big{(}{g(\omega)}\big{)}, F_{\mu_{j+1}}(\theta^{n}\omega)\big{)}$} is measurable, which can be achieved using Assumption \ref{MES} with a proof similar to Lemma \ref{mea_distance}.
\end{proof}

\begin{lemma}\label{fra_L}
 For every $i \geq 0$, there is a constant $M_i > 0$ such that
\begin{align*}
\Vert [\psi^{1}_{\omega}]_{\mu_{i+1}}\Vert< M_{i}\Vert\psi^{1}_{\omega}\Vert
\end{align*}
for every $\omega \in \tilde{\Omega}$.

\end{lemma}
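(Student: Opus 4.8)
The plan is to recognize this inequality as the elementary fact that passing to a quotient does not increase the operator norm; in particular one may even take $M_i$ independent of $i$, and $M_i$ only has to exceed $1$ in order to get the strict inequality. Before running the estimate I would dispose of two trivial cases. First, $i=0$: with the convention $F_{\mu_0}(\omega):=E_\omega$ one has $F_{\mu_0}(\omega)=F_{\mu_1}(\omega)=E_\omega$, so the domain $F_{\mu_0}(\omega)/F_{\mu_1}(\omega)$ is the zero space and there is nothing to prove. Second, the degenerate case $\P\big(\|\psi^1_\omega\|=0\big)>0$: by ergodicity almost every orbit then meets the set $\{\|\psi^1_\cdot\|=0\}$, so $\frac1n\log\|\psi^n_\omega\|\to-\infty$, $\Lambda_1=-\infty$, the Lyapunov spectrum is $\{-\infty\}$, all the spaces $F_{\mu_i}(\omega)$ coincide, and the quotients in the statement are trivial. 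Having excluded this, we may shrink $\tilde{\Omega}$ so that $\|\psi^1_\omega\|>0$ for every $\omega\in\tilde{\Omega}$.

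For the main estimate I would use equivariance of the slow-growing subspaces, $\psi^1_\omega\big(F_{\mu_{i+1}}(\omega)\big)\subseteq F_{\mu_{i+1}}(\theta\omega)$. Fix $\omega\in\tilde{\Omega}$, $x\in F_{\mu_i}(\omega)$ and $z\in F_{\mu_{i+1}}(\omega)$; then $\psi^1_\omega(z)\in F_{\mu_{i+1}}(\theta\omega)$, so
\begin{align*}
 \big\| [\psi^1_\omega(x)]_{\mu_{i+1}} \big\| = \inf_{y \in F_{\mu_{i+1}}(\theta\omega)} \big\| \psi^1_\omega(x) - y \big\| \leqslant \big\| \psi^1_\omega(x) - \psi^1_\omega(z) \big\| \leqslant \| \psi^1_\omega \| \, \| x - z \|.
\end{align*}
Taking the infimum over $z\in F_{\mu_{i+1}}(\omega)$ and recalling that $\| [x]_{\mu_{i+1}} \| = \inf_{z \in F_{\mu_{i+1}}(\omega)} \| x - z \|$, this gives $\big\| [\psi^1_\omega(x)]_{\mu_{i+1}} \big\| \leqslant \| \psi^1_\omega \| \, \| [x]_{\mu_{i+1}} \|$ for every $x\in F_{\mu_i}(\omega)$, hence $\big\| [\psi^1_\omega]_{\mu_{i+1}} \big\| \leqslant \| \psi^1_\omega \|$, and choosing $M_i:=2$ yields the claimed strict inequality on $\tilde{\Omega}$.

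I do not anticipate any genuine obstacle here, since every step is routine; the only two points deserving a word of care are the degenerate reduction carried out above and the observation that the quotient norm on $F_{\mu_i}(\omega)/F_{\mu_{i+1}}(\omega)$ is literally the infimum over $F_{\mu_{i+1}}(\omega)$, which is exactly what the estimate produces after infimizing over $z$.
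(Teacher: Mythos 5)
Your proof is correct and takes a genuinely different, more elementary route than the paper. The paper does not apply the ``quotient operators do not increase norms'' fact directly; instead it chooses a finite-dimensional complement $H_{\omega}$ of $F_{\mu_{i+1}}(\omega)$ in $F_{\mu_i}(\omega)$ whose projection has norm at most $\sqrt{m_i}+2$ (a Kadec--Snobar-type result, \cite[III.B.11]{Woj91}), decomposes $\xi_{\omega}=h_{\omega}+f_{\omega}$, and chains $\Vert[\psi^1_{\omega}(\xi_{\omega})]_{\mu_{i+1}}\Vert/\Vert[\xi_{\omega}]_{\mu_{i+1}}\Vert\leqslant M_i\Vert\psi^1_{\omega}(h_{\omega})\Vert/\Vert h_{\omega}\Vert\leqslant M_i\Vert\psi^1_{\omega}\Vert$ with $M_i=\sqrt{m_i}+2$. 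Your argument --- equivariance $\psi^1_{\omega}(F_{\mu_{i+1}}(\omega))\subseteq F_{\mu_{i+1}}(\theta\omega)$ followed by infimizing over $z$ --- gives the sharper bound $\Vert[\psi^1_{\omega}]_{\mu_{i+1}}\Vert\leqslant\Vert\psi^1_{\omega}\Vert$ with a constant independent of $m_i$, which is all that is ever needed (the lemma feeds the integrability hypothesis of Kingman's theorem in Lemma \ref{fra_fra}, where only $\leqslant$ matters). The paper's detour is best read as simultaneously installing the decomposition \eqref{CXCC} and the constant $M_i$, which are reused verbatim in Proposition \ref{comp_}. One pedantic caveat: if $\Vert\psi^1_{\omega}\Vert=0$ the strict inequality fails for every $M_i$ since both sides vanish, and your reduction to ``trivial quotients'' does not restore strictness (the operator norm on the zero space is $0$, and $0<0$ is false); but the paper's own proof likewise only establishes $\leqslant$, so this is a cosmetic defect of the statement rather than a gap in your argument.
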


\begin{proof}
 Since $\operatorname{dim}[\frac{F_{\mu_{i}}(\omega)}{F_{\mu_{i+1}}(\omega)}] = m_{i} $, we can choose $ H_{\omega}\subset F_{\mu_{i}}(\omega)$ such that 
 \begin{align}\label{CXCC}
H_{\omega}\oplus F_{\mu_{i+1}}(\omega)=F_{\mu_{i}}(\omega)\ \ \  \text{and} \ \ \ \ \Vert\Pi_{H_{\omega}||F_{\mu_{i+1}}(\omega)}\Vert \leq \sqrt{m_{i}}+2 =:  M_i,
\end{align}
cf. \cite[III.B.11]{Woj91}. Let $ \xi_{\omega}\in F_{\mu_{i}}(\omega)\setminus F_{\mu_{i+1}}(\omega) $ with corresponding decomposition $ \xi_{\omega} = h_{\omega} + f_{\omega} \in H_{\omega}\oplus F_{\mu_{i+1}}(\omega)$. From \eqref{CXCC}, we know that $ \frac{\Vert h_{\omega}\Vert}{\Vert [\xi_{\omega}]_{\mu_{i+1}}\Vert}\leqslant M_{i} $ and consequently
\begin{align*}
 \frac{\| [ \psi^1_{\omega}(\xi_{\omega}) ]_{\mu_{i+1}} \|}{ \| [ \xi_{\omega}]_{\mu_{i+1}} \|} \leq M_i \frac{\| [ \psi^1_{\omega}(h_{\omega}) ]_{\mu_{i+1}} \|}{ \| h_{\omega} \|} \leq M_i \frac{\| \psi^1_{\omega}(h_{\omega})  \|}{ \| h_{\omega} \|} \leq M_i \Vert\psi^{1}_{\omega}\Vert.
\end{align*}
The claim follows.

\end{proof}

\begin{lemma}\label{lemma:reverse_sub}
 Assume that $ \lbrace f_{n}(\omega)\rbrace_{n\geqslant 1} $ is a subadditive sequence with respect to $ \theta $ and set $ g_{n}(\omega):=f_{n}(\sigma^{n}\omega)$. Assume $ f^+_{1}(\omega) \in L^{1}(\Omega)$. Then there is a $\theta$-invariant set $\tilde{\Omega} \in \mathcal{F}$ with full measure such that for every $\omega \in \tilde{\Omega}$,
    \begin{align*}
    \lim_{n\rightarrow\infty}\frac{1}{n}f_{n}(\omega)=\lim_{n\rightarrow\infty}\frac{1}{n}g_{n}(\omega)\in [-\infty ,\infty)
    \end{align*}
    where the limit does not depend on $\omega$.
\end{lemma}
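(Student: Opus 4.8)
The plan is to invoke Kingman's subadditive ergodic theorem twice: once for $(f_n)$ together with $\theta$, and once for $(g_n)$ together with $\sigma=\theta^{-1}$, and then to check that the two limits coincide.

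First I would record that $(\Omega,\mathcal F,\P,\sigma)$ is again an ergodic measure-preserving dynamical system: since $\theta$ is invertible with measurable inverse $\sigma$ and $\P$ is $\theta$-invariant, we get $\P(\sigma^{-1}A)=\P(\theta A)=\P(A)$, and a set is $\theta$-invariant if and only if it is $\sigma$-invariant, so ergodicity is preserved. Iterating the subadditivity relation gives $f_n^+\leqslant\sum_{i=0}^{n-1}f_1^+\circ\theta^i$, hence $\E[f_n^+]\leqslant n\,\E[f_1^+]<\infty$, so Kingman's theorem applies to the (measurable) subadditive sequence $(f_n)$ with respect to $\theta$ and, by ergodicity, yields a $\theta$-invariant set $\Omega_1$ of full measure on which $\tfrac1n f_n(\omega)$ converges to the constant $\Lambda:=\inf_{n\geqslant1}\tfrac1n\E[f_n]\in[-\infty,\infty)$.

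The one genuine observation is that $(g_n)$ is subadditive with respect to $\sigma$. Indeed, applying $f_{n+m}(\eta)\leqslant f_m(\eta)+f_n(\theta^m\eta)$ with $\eta=\sigma^{n+m}\omega$, and using $\theta^m\sigma^{n+m}\omega=\sigma^n\omega$, $f_m(\sigma^{n+m}\omega)=f_m(\sigma^m\sigma^n\omega)=g_m(\sigma^n\omega)$ and $f_n(\sigma^n\omega)=g_n(\omega)$, one obtains
\begin{align*}
 g_{n+m}(\omega)=f_{n+m}(\sigma^{n+m}\omega)\leqslant g_n(\omega)+g_m(\sigma^n\omega).
\end{align*}
Moreover $g_n=f_n\circ\sigma^n$ is measurable and $g_1^+=f_1^+\circ\sigma\in L^1(\Omega)$ since $\sigma$ preserves $\P$. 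Hence Kingman's theorem applies to $(g_n)$ with respect to $\sigma$ and produces a $\sigma$-invariant (equivalently $\theta$-invariant) set $\Omega_2$ of full measure on which $\tfrac1n g_n(\omega)$ converges to the constant $\Lambda':=\inf_{n\geqslant1}\tfrac1n\E[g_n]$.

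Finally I would identify the limits: since each $\sigma^n$ preserves $\P$, $\E[g_n]=\E[f_n\circ\sigma^n]=\E[f_n]$ for every $n$, so $\Lambda'=\Lambda$. Taking $\tilde\Omega:=\bigcap_{k\in\Z}\theta^k(\Omega_1\cap\Omega_2)$ gives a $\theta$-invariant set of full measure on which both $\tfrac1n f_n(\omega)$ and $\tfrac1n g_n(\omega)$ converge to the same $\omega$-independent constant $\Lambda\in[-\infty,\infty)$, which is the assertion. I do not expect any real obstacle beyond spotting the subadditivity of $(g_n)$ for the inverse dynamics; the rest is bookkeeping with the measure-preservation of $\theta$ and $\sigma$.
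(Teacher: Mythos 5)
Your proposal is correct and follows essentially the same route as the paper: verify that $(g_n)$ is subadditive for the inverse dynamics $\sigma$, apply Kingman's theorem to both sequences, and identify the limits via the fact that $f_n$ and $g_n=f_n\circ\sigma^n$ have the same law (equivalently $\E[g_n]=\E[f_n]$). Your write-up just spells out the details the paper leaves implicit.
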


\begin{proof}
 We can easily check that $ \lbrace g_{n}(\omega)\rbrace_{n\geqslant 1} $ is a subadditive sequence with respect to $ \sigma $. Since $ f_{n}(\omega) $ and $ g_{n}(\omega) $ have same law, the result follows from Kingman's Subadditive Ergodic Theorem.
\end{proof}

As a consequence, we obtain the following:

\begin{lemma}\label{lemma:forw_back}
 There is a $\theta$-invariant set of full measure $\tilde{\Omega} \in \mathcal{F}$ such that
 \begin{align}\label{Du_Re}
&\lim_{n\rightarrow\infty}\frac{1}{n}\log D_{k}\big{(}\psi^{n}_{\omega}\big{)}=\lim_{n\rightarrow\infty}\frac{1}{n}\log D_{k}\big{(}\psi^{n}_{\sigma^{n}\omega}\big{)}=\lim_{n\rightarrow\infty}\frac{1}{n}\log D_{k}\big{(}(\psi^{n}_{\sigma^{n}\omega})^{*}\big{)}=\Lambda_{k}
\end{align}
and 
\begin{align}\label{Du_Re_2}
    \begin{split}
 &\lim_{n\rightarrow\infty}\frac{1}{n}\log D_{k}\big{(}\psi^{n}_{\omega}\mid_{F_{\mu_{i}}(\omega)}\big{)}=\lim_{n\rightarrow\infty}\frac{1}{n}\log D_{k}\big{(}\psi^{n}_{\sigma^{n}\omega}\mid_{F_{\mu_{i}}(\sigma^{n}\omega)}\big{)}\\&=\lim_{n\rightarrow\infty}\frac{1}{n}\log D_{k}\big{(}(\psi^{n}_{\sigma^{n}\omega})^{*}\mid_{\big{(}F_{\mu_{i}}(\sigma^{n}\omega)\big{)}^{*}}\big{]}=\Lambda_{k+\tilde{m}_{i-1}}-\Lambda_{\tilde{m}_{i-1}}
 \end{split}
\end{align}
where $ \tilde{m}_{0}=0$ and $ \tilde{m}_{i}=\sum_{1\leqslant t\leqslant i}m_{j} $ for $i\geqslant 1 $.

\end{lemma}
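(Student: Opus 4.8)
The plan is to recognise every sequence appearing in \eqref{Du_Re} and \eqref{Du_Re_2} as a subadditive cocycle (up to a bounded, hence subexponential, correction in the dual cases), so that Lemma~\ref{lemma:reverse_sub} transfers the forward time-average onto the backward orbit $\{\sigma^n\omega\}$, the forward average itself being supplied by \cite[Theorem~4.17]{GRS19}, while Lemma~\ref{dual} absorbs the passage to the dual operator.

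For \eqref{Du_Re}, put $f_n(\omega):=\log D_k(\psi^n_\omega)$. The cocycle identity $\psi^{n+m}_\omega=\psi^m_{\theta^n\omega}\circ\psi^n_\omega$ together with the submultiplicativity $D_k(ST)\leqslant D_k(S)D_k(T)$ of the volume-growth functional (see \cite{GTQ15}) shows that $\{f_n\}$ is subadditive with respect to $\theta$, and $\operatorname{Vol}(Tx_1,\dots,Tx_k)\leqslant\|T\|^k$ for unit vectors $x_i$ gives $f_1^+\leqslant k\log^+\|\psi_\omega\|\in L^1(\Omega)$. Lemma~\ref{lemma:reverse_sub} then yields, on a $\theta$-invariant set of full measure,
\[
\lim_{n\to\infty}\frac1n\log D_k(\psi^n_{\sigma^n\omega})=\lim_{n\to\infty}\frac1n\log D_k(\psi^n_\omega)=\Lambda_k,
\]
the last equality being the MET. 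For the dual term, Lemma~\ref{dual} gives $\bigl|\log D_k\bigl((\psi^n_{\sigma^n\omega})^*\bigr)-\log D_k(\psi^n_{\sigma^n\omega})\bigr|\leqslant\max(|\log c_k|,|\log C_k|)$, which divided by $n$ tends to $0$; this proves the third equality in \eqref{Du_Re}.

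The proof of \eqref{Du_Re_2} is identical with $f_n(\omega):=\log D_k\bigl(\psi^n_\omega\mid_{F_{\mu_i}(\omega)}\bigr)$. Here subadditivity uses the invariance $\psi^n_\omega\bigl(F_{\mu_i}(\omega)\bigr)\subseteq F_{\mu_i}(\theta^n\omega)$, which factorises $\psi^{n+m}_\omega\mid_{F_{\mu_i}(\omega)}=\bigl(\psi^m_{\theta^n\omega}\mid_{F_{\mu_i}(\theta^n\omega)}\bigr)\circ\bigl(\psi^n_\omega\mid_{F_{\mu_i}(\omega)}\bigr)$, and integrability follows from $D_k\bigl(\psi_\omega\mid_{F_{\mu_i}(\omega)}\bigr)\leqslant\|\psi_\omega\|^k$. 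Lemma~\ref{lemma:reverse_sub} equates the forward and backward limits; the forward limit equals $\Lambda_{k+\tilde{m}_{i-1}}-\Lambda_{\tilde{m}_{i-1}}$ by the volume-growth description of the exponents in \cite[Theorem~4.17]{GRS19}; and Lemma~\ref{dual}, applied to the operator $\psi^n_{\sigma^n\omega}\mid_{F_{\mu_i}(\sigma^n\omega)}\colon F_{\mu_i}(\sigma^n\omega)\to F_{\mu_i}(\omega)$ and its dual, handles the last term. Since all the exceptional sets arising are null sets and there are only countably many of them (one per $k$ in \eqref{Du_Re} and one per pair $(k,i)$ in \eqref{Du_Re_2}), intersecting them with $\tilde{\Omega}$ and, if necessary, passing to $\bigcap_{\ell\in\Z}\theta^\ell(\cdot)$ leaves a single $\theta$-invariant set of full measure on which all the identities hold.

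The only step requiring genuine input beyond bookkeeping is the identification of the forward limit in \eqref{Du_Re_2}, namely that the $k$-dimensional volume growth of $\psi^n_\omega$ restricted to the slow space $F_{\mu_i}(\omega)$ equals $\Lambda_{k+\tilde{m}_{i-1}}-\Lambda_{\tilde{m}_{i-1}}$; this expresses that the $\tilde{m}_{i-1}$ fastest-growing ``singular directions'' of $\psi^n_\omega$ are precisely those transverse to $F_{\mu_i}(\omega)$, and I would extract it from the volume-growth analysis underlying \cite[Theorem~4.17]{GRS19} rather than reprove it here.
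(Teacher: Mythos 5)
Your proof is correct and follows essentially the same route as the paper, which simply invokes the MET for the forward limits and ``a combination of Lemmas \ref{dual}--\ref{lemma:reverse_sub}'' for the rest; you have usefully spelled out the submultiplicativity and integrability checks needed to apply Lemma \ref{lemma:reverse_sub}. The only ingredient you leave implicit is the measurability of $\omega\mapsto D_k\big(\psi^n_\omega\mid_{F_{\mu_i}(\omega)}\big)$, which is required before Kingman's theorem can be applied to the restricted cocycle and is exactly what Lemma \ref{mea_Vol} supplies.
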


\begin{proof}
    We already noted that $\lim_{n\rightarrow\infty}\frac{1}{n}\log D_{k}\big{(}\psi^{n}_{\omega}\big{)} = \Lambda_k$. The equality
    \begin{align}\label{eqn:growth_slow_subspace}
        \lim_{n\rightarrow\infty}\frac{1}{n}\log D_{k}\big{(}\psi^{n}_{\omega}\mid_{F_{\mu_{i}}(\omega)}\big{)} = \Lambda_{k+\tilde{m}_{i-1}}-\Lambda_{\tilde{m}_{i-1}}
    \end{align}
    was a partial result in the proof of Theorem \cite[Theorem 4.17]{GRS19}. The remaining inequalities follow by a combination of all Lemmas \ref{dual} - \ref{lemma:reverse_sub}.
\end{proof}

From now on, we will assume that $\tilde{\Omega}$ is the set provided in Lemma \ref{lemma:forw_back}.

\begin{lemma}\label{fra_fra}
    Fix $i \geq 1$ and $\omega \in \tilde{\Omega}$. Let $(\xi_{\sigma^{n}\omega})_n$ be a sequence such that $ \xi_{\sigma^{n}\omega}\in F_{\mu_{i}}(\sigma^{n}\omega)\setminus F_{\mu_{i+1}}(\sigma^{n}\omega) $ and $\Vert[\xi_{\sigma^{n}\omega}]_{\mu_{i+1}}\Vert =1$ for every $n \in \N$. Then
\begin{align}\label{Fra_lim}
\lim_{n\rightarrow\infty}\frac{1}{n}\log\Vert [\psi^{n}_{\sigma^{n}\omega}(\xi_{\sigma^{n}\omega})]_{\mu_{i+1}}\Vert=\mu_{i}
\end{align}
on a $\theta$-invariant set of full measure.
\end{lemma}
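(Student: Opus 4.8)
The plan is to control the growth of $\|[\psi^n_{\sigma^n\omega}(\xi_{\sigma^n\omega})]_{\mu_{i+1}}\|$ from both sides. The quantity $D_1([\psi^n_{\sigma^n\omega}]_{\mu_{i+1}})$ equals the operator norm of the induced map on the quotient $F_{\mu_i}(\sigma^n\omega)/F_{\mu_{i+1}}(\sigma^n\omega)$, so the upper bound $\limsup_n \frac1n \log \|[\psi^n_{\sigma^n\omega}(\xi_{\sigma^n\omega})]_{\mu_{i+1}}\| \leq \mu_i$ follows once we know that $\frac1n \log D_1([\psi^n_{\sigma^n\omega}]_{\mu_{i+1}}) \to \mu_i$. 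To get this, I would combine the multiplicativity/subadditivity of $D_k$ of the quotient cocycle (as in Lemma \ref{mea_Vol}, one passes between $D_k(\psi^n|_{F_{\mu_i}})$ and $D_k([\psi^n]_{\mu_{i+1}})$ up to the norm of a bounded projection, via Lemma \ref{fra_L} and its iterates, whose logarithmic growth is zero by subadditivity) together with Lemma \ref{lemma:forw_back}: equation \eqref{Du_Re_2} gives $\frac1n \log D_k(\psi^n_{\sigma^n\omega}|_{F_{\mu_i}(\sigma^n\omega)}) \to \Lambda_{k+\tilde m_{i-1}} - \Lambda_{\tilde m_{i-1}}$, and for $k=1$ this is $\lambda_{\tilde m_{i-1}+1} = \mu_i$. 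So the upper bound is essentially bookkeeping on top of already-established facts.

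For the lower bound I would argue by contradiction along the lines of the standard volume-growth argument: if $\frac1n \log \|[\psi^n_{\sigma^n\omega}(\xi_{\sigma^n\omega})]_{\mu_{i+1}}\|$ were, along a subsequence, bounded above by $\mu_i - \varepsilon$, then I would use this "slow" direction together with $m_i - 1$ further directions chosen to nearly realize $D_{m_i}([\psi^n_{\sigma^n\omega}]_{\mu_{i+1}})$ to build an $m_i$-tuple whose volume under $[\psi^n_{\sigma^n\omega}]_{\mu_{i+1}}$ grows at rate at most $(\mu_i - \varepsilon) + (\text{rate of } D_{m_i-1})$. Since the quotient space is exactly $m_i$-dimensional, $\frac1n\log D_{m_i}([\psi^n_{\sigma^n\omega}]_{\mu_{i+1}}) \to m_i \mu_i$ and $\frac1n\log D_{m_i-1}([\psi^n_{\sigma^n\omega}]_{\mu_{i+1}}) \to (m_i-1)\mu_i$ (again from \eqref{Du_Re_2} with $k = m_i$ and $k = m_i-1$, noting $\Lambda_{\tilde m_i} - \Lambda_{\tilde m_{i-1}} = m_i \mu_i$), which forces a contradiction. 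Here one must be careful that the extra directions are taken with unit quotient norm and that the "angle" between the slow direction and the span of the others does not degenerate too fast — but since all spaces involved are finite-dimensional of fixed dimension $m_i$, the $\operatorname{Vol}$ functional is jointly continuous (Lemma \ref{Con_VOl}) and the relevant quantities are comparable to those on a fixed $m_i$-dimensional model space, so the subexponential corrections are harmless.

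The measurability of the $\theta$-invariant full-measure set on which the limit holds is not an issue: the sets appearing are built from the measurable maps in Lemma \ref{mea_Vol} and Lemma \ref{mea_distance}, and as remarked after Lemma \ref{mea_Vol} one may always intersect over $\theta^k$ to make the set $\theta$-invariant. The main obstacle I anticipate is the lower bound: making the contradiction argument quantitative requires a uniform (in $n$) lower bound on the volume distortion when one appends the slow vector $\xi_{\sigma^n\omega}$ to an optimal $(m_i-1)$-frame, i.e. ruling out that $[\xi_{\sigma^n\omega}]_{\mu_{i+1}}$ lies exponentially close to the span of those frames. This is exactly the kind of "angle estimate" that is delicate in infinite dimensions, but here it is tamed by finite-dimensionality of $F_{\mu_i}/F_{\mu_{i+1}}$; I would isolate it as the technical heart of the proof and handle it by passing to coordinates on the $m_i$-dimensional quotient and invoking compactness of the unit sphere there.
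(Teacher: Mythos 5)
Your architecture (upper bound from $D_1$ of the quotient cocycle, lower bound from the ratio $D_{m_i}/D_{m_i-1}$ of the quotient cocycle via a frame argument in the $m_i$-dimensional quotient) is essentially the paper's, and the parts you flag as delicate — completing $[\xi_{\sigma^n\omega}]$ to a non-degenerate frame and the frame-independence of the volume distortion — are indeed harmless by finite-dimensionality, exactly as you say. The genuine gap is earlier, in the step you treat as ``bookkeeping'': the claim that $\tfrac1n\log D_{m_i}\big([\psi^n_{\sigma^n\omega}]_{\mu_{i+1}}\big)\to m_i\mu_i$. Passing between $D_k(\psi^n_\omega|_{F_{\mu_i}(\omega)})$ and $D_k([\psi^n_\omega]_{\mu_{i+1}})$ ``up to the norm of a bounded projection via Lemma \ref{fra_L}'' only works in one direction: choosing representatives in a complement $H_\omega$ with the uniformly bounded projection of \eqref{CXCC} gives $D_k([\psi^n_\omega]_{\mu_{i+1}})\leq M_i^k\,D_k(\psi^n_\omega|_{F_{\mu_i}(\omega)})$, i.e.\ the \emph{upper} bound $\leq k\mu_i$. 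For the \emph{lower} bound on $D_{m_i}$ of the quotient you must bound quotient volumes from below by volumes upstairs, and this requires controlling the projection at the \emph{image} time, namely $\Vert\Pi_{\psi^n_\omega(H_\omega)\,\Vert\, F_{\mu_{i+1}}(\theta^n\omega)}\Vert$ — the angle between the pushed-forward complement and the slow space at $\theta^n\omega$. Lemma \ref{fra_L} says nothing about this (the complement with uniformly bounded projection at $\theta^n\omega$ need not be $\psi^n_\omega(H_\omega)$), and this quantity is not subadditive in any obvious way, so ``logarithmic growth zero by subadditivity'' does not apply.

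This missing angle estimate is precisely the generalization of \cite[Lemma 4.4]{GRS19} that the paper invokes: $\lim_n\tfrac1n\log\Vert\Pi_{\psi^n_\omega(H_\omega)\,\Vert\, F_{\mu_{i+1}}(\theta^n\omega)}\Vert=0$. From it one first gets the pointwise forward statement \eqref{MMNB}, that \emph{every} nonzero class in $F_{\mu_i}(\omega)/F_{\mu_{i+1}}(\omega)$ grows forward at rate exactly $\mu_i$; the paper then obtains the rate $m_i\mu_i$ for $D_{m_i}$ of the quotient by a maximality argument using \cite[Proposition 4.15]{GRS19} (if the rate of $D_m$ dropped below $m\mu_i$ for some $m\leq m_i$, there would be a nonzero class of strictly slower forward growth, contradicting \eqref{MMNB}), rather than by a direct comparison with $D_{m_i}(\psi^n_\omega|_{F_{\mu_i}(\omega)})$. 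So you have correctly identified that an angle estimate is the technical heart, but located it in the wrong place: the one you worry about is tamed by finite-dimensionality, while the one your proof actually needs — non-degeneracy of the equivariant image $\psi^n_\omega(H_\omega)$ against $F_{\mu_{i+1}}(\theta^n\omega)$ — is unaddressed and cannot be extracted from Lemma \ref{fra_L} or Lemma \ref{lemma:forw_back} alone. Once that ingredient is supplied, the rest of your argument goes through and coincides in substance with the paper's proof.
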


\begin{proof}
By applying Lemma \ref{mea_Vol}, Lemma \ref{fra_L} and Lemma \ref{lemma:reverse_sub}, Kingman's Subadditive Ergodic Theorem shows that
\begin{align*}
\lim_{n\rightarrow\infty}\frac{1}{n}\log D_{m}\big{(} \big{[}\psi^{n}_{\omega}\big{]}_{\mu_{i+1}}\big{)}=\lim_{n\rightarrow\infty}\frac{1}{n}\log D_{m}\big{(}\big{[}\psi^{n}_{\sigma^{n}\omega}\big{]}_{\mu_{i+1}}\big{)}
\end{align*}
exist for every $k \geq 1$. Let $ H_{\omega} $ be a complement subspace for $ F_{\mu_{i+1}}(\omega) $ in $ F_{\mu_{i}}(\omega) $. Using a slight generalization of \cite[Lemma 4.4]{GRS19}, we have that
\begin{align*}
\lim_{n\rightarrow\infty}\frac{1}{n}\log\Vert\Pi_{\psi^{n}_{\omega}(H_{\omega})||F_{\mu_{i+1}}(\theta^{n}\omega)}\Vert =0.
\end{align*}
For $ \xi_{\omega}\in F_{\mu_{i}}(\omega)\setminus F_{\mu_{i+1}}(\omega) $, since
\begin{align*}
 \frac{\Vert\psi^{n}_{\omega}(\Pi_{H_{\omega}||F_{\mu_{i+1}}(\omega)}(\xi_{\omega}))\Vert}{\Vert [\psi^{n}_{\omega}(\xi_{\omega})]_{\mu_{i+1}}\Vert}\leqslant  \Vert \Pi_{\psi^{n}_{\omega}(H_{\omega})||F_{\mu_{i+1}}(\theta^{n}\omega)}\Vert
\end{align*}
it follows that
\begin{align}\label{MMNB}
\lim_{n\rightarrow\infty}\frac{1}{n}\log\Vert [\psi^{n}_{\omega}(\xi_{\omega})]_{\mu_{i+1}}\Vert =\mu_{i}.
\end{align}
Let
\begin{align*}
k := \max \big{\lbrace} m : \lim_{n\rightarrow\infty}\frac{1}{n}\log D_{m}\big{(} \big{[}\psi^{n}_{\omega}\big{]}_{\mu_{j+1}}\big{)} =m \mu_{i} \big{\rbrace}.
\end{align*}
We claim $ k=m_{i} $. Indeed, otherwise from \cite [Proposition 4.15]{GRS19}, there exists a subspace $ F_{\omega}\subset \frac{F_{\mu_{i}}(\omega)}{F_{\mu_{i+1}}(\omega)} $ with codimension $ k $ such that for every $ \xi_{\omega}\in F_{\omega} $ 
\begin{align*}
\limsup_{n\rightarrow\infty}\frac{1}{n}\log\Vert [\psi^{n}_{\omega}(\xi_{\omega})]_{\mu_{i+1}} \Vert <\mu_{i}.
\end{align*}
Since $\operatorname{dim} [\frac{F_{\mu_{i}}(\omega)}{F_{\mu_{i+1}}(\omega)}]=m_{i}$, we can find a non-zero element in $ F_{\omega}$ which contradicts \eqref{MMNB}. Hence we have shown that
\begin{align*}
\lim_{n\rightarrow\infty}\frac{1}{n}\log D_{m}\big{(} \big{[}\psi^{n}_{\omega}\big{]}_{\mu_{j+1}}\big{)} =m_i \mu_{i}.
\end{align*}
Therefore, for every $n\in \mathbb{N}$, we can find $\lbrace\xi^{j}_{\sigma^{n}\omega}\rbrace_{1\leqslant j\leqslant m_{i}}\subset F_{\mu_{i}}(\sigma^{n}\omega)$ such that $\Vert[\xi^{j}_{\omega}]_{\mu_{i+1}}\Vert=1 $ and 
\begin{align}
\lim_{n\rightarrow\infty}\frac{1}{n}\operatorname{Vol}\big{(}[\psi^{n}_{\sigma^{n}\omega}(\xi^{1}_{\sigma^{n}\omega})]_{\mu_{i+1}}, \ldots, [\psi^{n}_{\sigma^{n}\omega}(\xi^{m_{i}}_{\sigma^{n}\omega})]_{\mu_{i+1}} \big{)} \big{]}=m_{i} \mu_{i}.
\end{align}
Using the definition of $\operatorname{Vol}$, it follows that for every $ 2\leqslant t\leqslant m_{i} $,
\begin{align}\label{d_d}
\lim_{n\rightarrow\infty}\frac{1}{n}\log d\big{(}[\psi^{n}_{\sigma^{n}\omega}(\xi^{t}_{\omega})]_{\mu_{i+1}},\langle [\psi^{n}_{\sigma^{n}\omega}(\xi^{j}_{\sigma^{n}\omega})]_{\mu_{i+1}}\rangle_{1\leqslant j\leqslant t-1}\big{)}=\mu_{i}
\end{align}
  We have $ \xi_{\sigma^{n}\omega}=\sum_{1\leqslant j\leqslant m_{i}}\alpha_{j}\xi^{j}_{\sigma^{n}\omega}$ mod $ F_{\mu_{i+1}}(\sigma^{n}\omega)$. In the proof of \cite[Lemma 4.7]{GRS19}, we already saw that the the $ \operatorname{Vol} $-function is symmetric up to a constant. By our assumption on $ \xi_{\sigma^{n}\omega} $, we can therefore assume that $ \alpha_{m_{i}}\geqslant \frac{1}{ m_{i}}$. Finally from \eqref{d_d} 
\begin{align*}
\lim_{n\rightarrow\infty}\frac{1}{n}\log \Vert [\psi^{n}_{\sigma^{n}\omega}(\xi_{\sigma^{n}\omega})]_{\mu_{i+1}}\Vert=\lim_{n\rightarrow\infty}\frac{1}{n}\big{[}d\big{(}[\psi(\xi^{m_{i}}_{\sigma^{n}\omega})]_{\mu_{i+1}},\langle [\psi^{n}_{\sigma^{n}\omega}(\xi^{j}_{\sigma^{n}\omega})]_{\mu_{i+1}}\rangle_{1\leqslant j\leqslant m_{i}-1}\big{)} =\mu_{i}.
\end{align*}
\end{proof}

\begin{definition}
 Let $X$ be a Banach space. We define $G(X)$ to be the Grassmanian of closed subspaces of $ X $ equipped with the Hausdorff distance
 \begin{align*}
    d_{H}(A,B):=\max\lbrace\sup_{a\in S_A}d(a,S_{B}),\sup_{b\in S_B}d(b,S_{A})\rbrace.
\end{align*}
    where $ S_{A}=\lbrace a\in A\ : \ \Vert a\Vert =1\rbrace $. Set 
    \begin{align*}
     G_{k}(X)=\lbrace A\in G(X)\ :\ \operatorname{dim}[A]=k\rbrace \quad \text{and} \quad G^{k}(X)=\lbrace A\in G(X)\ : \ \operatorname{dim}[X/A] =k \rbrace.
    \end{align*}
\end{definition}
It can be shown that $(G(X),d_H)$ is a complete metric space and that $G_k(X)$ and $G^k(X)$ are closed subsets \cite[Chapter IV]{Kat95}. The following lemma will be useful.

\begin{lemma}\label{Bli}
For $ A,B\in G(X) $ set
\begin{align*}
\delta(A,B):=\sup_{a\in S_{A}}d(a,B).
\end{align*} 
Then the following holds:
\begin{itemize}
\item[(i)] $ d_{H}(A,B)\leqslant 2\max\lbrace \delta(A,B),\delta(B,A)\rbrace $.
\item[(ii)]  If $ A , B\in G_{k}(X) $ with $ d(A,B)<\frac{1}{k} $ for some $k \in \N$, we have
\begin{align*}
\delta(B,A)\leqslant \frac{k\delta(A,B)}{1- k\delta(A,B)}.
\end{align*}
\end{itemize}
\end{lemma}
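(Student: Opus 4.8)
The plan is to prove the two parts separately, as they are of rather different character. Part (i) is a purely metric comparison: one has to bound $d_H(A,B) = \max\{\sup_{a\in S_A} d(a,S_B),\ \sup_{b\in S_B} d(b,S_A)\}$ in terms of $\delta(A,B) = \sup_{a\in S_A} d(a,B)$ and $\delta(B,A)$. The subtlety is that $\delta$ measures distance to the whole subspace $B$, while $d_H$ measures distance to the \emph{unit sphere} $S_B$. So the core estimate I would establish is: for any $a \in S_A$, $d(a,S_B) \leq 2\, d(a,B)$. To see this, pick $b \in B$ with $\|a - b\|$ close to $d(a,B) =: r$; then $\|b\| \in [1-r, 1+r]$, and (assuming $r < 1$, which is the only interesting case since $d_H \leq 2$ always) the normalization $\hat b := b/\|b\| \in S_B$ satisfies $\|a - \hat b\| \leq \|a - b\| + \|b - \hat b\| = \|a-b\| + |\,\|b\| - 1\,| \leq r + r = 2r$. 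Taking the sup over $a \in S_A$ gives $\sup_{a\in S_A} d(a,S_B) \leq 2\delta(A,B)$, and symmetrically for the other term; part (i) follows. (If $r \geq 1$ for some $a$, then $\delta(A,B) \geq 1$ and the bound $d_H \leq 2 \leq 2\delta(A,B)$ is trivial.)

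For part (ii), the setting is $A, B \in G_k(X)$ with $d(A,B) < 1/k$ (here $d = d_H$), and we want $\delta(B,A) \leq k\delta(A,B)/(1 - k\delta(A,B))$. The idea is to take $b \in S_B$ and approximate it well by an element of $A$, exploiting finite-dimensionality: choose a basis-like system. Concretely, I would use that since $\dim A = \dim B = k$, one can pick $a_1,\dots,a_k \in S_A$ and a "biorthogonal-type" control so that the natural linear map $A \to B$ sending a good frame of $A$ to a good frame of $B$ is close to an isometry when $\delta$ is small. The cleaner route: given $b \in S_B$, write $b = \sum c_i b_i$ for a suitable unit-vector frame $\{b_i\}$ of $B$; use $\delta(A,B)$ to replace each $b_i$ by $a_i \in A$ with $\|b_i - a_i\| \leq \delta(B,A)$... — but this is circular. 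The right move is the standard fixed-point/Neumann-series argument: let $a \in A$ realize (approximately) $d(b,A)$, so $b = a + e$ with $\|e\| = d(b,A)$; now $a \neq 0$ (if $\delta(B,A)$ were $\geq 1$ the claimed bound is vacuous as the RHS would be negative or undefined, so we may assume $k\delta(A,B) < 1$ hence $\delta(B,A) < 1$), and $\hat a = a/\|a\| \in S_A$. Apply $\delta(A,B)$ to $\hat a$: there is $b' \in B$ with $\|\hat a - b'\| \leq \delta(A,B)$. Iterating this comparison and summing a geometric series with ratio $k\delta(A,B)$ — the factor $k$ entering because passing between $k$-dimensional subspaces and renormalizing costs a dimensional constant — yields the stated bound. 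I expect the bookkeeping of constants in this geometric-series estimate to be the main obstacle; one must be careful that the factor $k$ appears exactly once per step, which is where the hypothesis $d(A,B) < 1/k$ is used to guarantee convergence.

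In writing this up I would present part (i) in full (it is short) and for part (ii) either cite the analogous statement in \cite[Chapter IV]{Kat95} or give the geometric-series argument in a few lines, taking care to note at the outset that one may assume $k\delta(A,B) < 1$ (otherwise the right-hand side is nonpositive or the inequality is vacuous). The only genuine inputs are the triangle inequality, the normalization trick of part (i), and finite-dimensionality of $A$ and $B$ entering through the dimensional constant $k$.
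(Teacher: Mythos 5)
The paper offers no proof of this lemma at all: it simply cites \cite[Lemma 2.6]{Blu16}. So the comparison is between your sketch and Blumenthal's argument. Your part (i) is correct and essentially complete: the normalization estimate $d(a,S_B)\leq 2\,d(a,B)$ for $a\in S_A$ (with the trivial case $d(a,B)\geq 1$ handled by $d_H\leq 2$) is exactly the right reduction, and taking suprema on both sides gives (i).

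Part (ii), however, has a genuine gap. Your ``fixed-point/Neumann-series'' sketch starts from $b\in S_B$ and writes $b=a+e$ with $\|e\|=d(b,A)$ --- the very quantity to be bounded --- and then proposes to ``iterate and sum a geometric series with ratio $k\delta(A,B)$'' without identifying where the factor $k$ comes from or why the iteration closes. As written, the step ``apply $\delta(A,B)$ to $\hat a$ to get $b'\in B$ with $\|\hat a-b'\|\leq\delta(A,B)$'' yields no new information about $\|e\|$. The missing idea is the one that actually drives the cited proof: choose an Auerbach basis $a_1,\dots,a_k$ of $A$ (unit vectors whose coordinate functionals have norm one), pick $b_i\in B$ with $\|a_i-b_i\|\leq\delta(A,B)+\varepsilon$, and consider the linear map $T\colon A\to B$ defined by $T(\sum_i c_ia_i)=\sum_i c_ib_i$. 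The Auerbach property gives $\|x-Tx\|\leq k(\delta(A,B)+\varepsilon)\|x\|$ --- this is where the factor $k$ enters, once and for all, not ``once per step'' --- so $T$ is injective when $k\delta(A,B)<1$ and hence surjective because $\dim A=\dim B=k<\infty$. Given $b\in S_B$, write $b=Tx$; then $\|x\|\leq (1-k\delta(A,B))^{-1}$ and $d(b,A)\leq\|Tx-x\|\leq k\delta(A,B)/(1-k\delta(A,B))$, which is the claim. Without the surjectivity of some such map --- the only place where equal finite dimension is genuinely used --- there is no mechanism for converting control of $\delta(A,B)$ into control of $\delta(B,A)$, and your geometric series has nothing to converge to. Falling back on the citation, as the paper does, is of course also acceptable.
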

\begin{proof}
    \cite[Lemma 2.6]{Blu16}.
\end{proof}

\begin{proposition}\label{comp_}
Fix $ i\geqslant 1 $ and $\omega \in \tilde{\Omega}$. For every $ n\in \mathbb{Z} $, let $ H_{\sigma^{n}\omega}^{n}\subset F_{\mu_{i}}(\sigma^{n}\omega)$ be a complementary subspace for $ F_{\mu_{i+1}}(\omega) $ satisfying \eqref{CXCC}. Set $ \tilde{H}^{n}_{\omega}:=\psi^{n}_{\sigma^{n}\omega}(H^{n}_{\sigma^{n}\omega}) $. Then the sequence $ \lbrace \tilde{H}^{n}_{\omega}\rbrace_{n\geqslant 1} $ is Cauchy in $\big{(}G_{m_{i}}(F_{\mu_{i}}(\omega)),d_{H}\big{)} $ on a $\theta$-invariant set of full measure.
\end{proposition}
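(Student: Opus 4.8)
The plan is to show that $\{\tilde H^n_\omega\}$ is Cauchy by estimating the Hausdorff distance $d_H(\tilde H^n_\omega, \tilde H^{n+p}_\omega)$ for large $n$ and $p \geq 1$, exploiting the fact that all these spaces live inside the finite-dimensional quotient picture and have the same dimension $m_i$. First I would fix a reference: by Lemma \ref{Bli}(i) it suffices to control $\delta(\tilde H^n_\omega, \tilde H^{n+p}_\omega)$ and the symmetric quantity, and by part (ii) the two are comparable once the distance is already small, so the real task is to bound $\delta(\tilde H^{n+p}_\omega, \tilde H^n_\omega)$, i.e.\ to show every unit vector in $\tilde H^{n+p}_\omega = \psi^{n+p}_{\sigma^{n+p}\omega}(H^{n+p}_{\sigma^{n+p}\omega})$ is exponentially close to $\tilde H^n_\omega = \psi^n_{\sigma^n\omega}(H^n_{\sigma^n\omega})$. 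The key is to write $\psi^{n+p}_{\sigma^{n+p}\omega} = \psi^{n}_{\sigma^{n}\omega} \circ \psi^{p}_{\sigma^{n+p}\omega}$, so $\tilde H^{n+p}_\omega = \psi^{n}_{\sigma^{n}\omega}\big(\psi^{p}_{\sigma^{n+p}\omega}(H^{n+p}_{\sigma^{n+p}\omega})\big)$; both $\psi^{p}_{\sigma^{n+p}\omega}(H^{n+p}_{\sigma^{n+p}\omega})$ and $H^n_{\sigma^n\omega}$ are $m_i$-dimensional complements of $F_{\mu_{i+1}}(\sigma^n\omega)$ inside $F_{\mu_i}(\sigma^n\omega)$, so the two input spaces already agree modulo $F_{\mu_{i+1}}(\sigma^n\omega)$. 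Hence a unit vector $h$ of $\psi^{p}_{\sigma^{n+p}\omega}(H^{n+p}_{\sigma^{n+p}\omega})$ differs from a vector $h'$ of $H^n_{\sigma^n\omega}$ by an element $f \in F_{\mu_{i+1}}(\sigma^n\omega)$, with $\|h'\|, \|f\|$ controlled by $M_i$ (the projection bound in \eqref{CXCC}); then $\psi^n_{\sigma^n\omega}(h) - \psi^n_{\sigma^n\omega}(h') = \psi^n_{\sigma^n\omega}(f)$, and dividing by $\|\psi^n_{\sigma^n\omega}(h)\|$ gives
\begin{align*}
 d\big(\psi^n_{\sigma^n\omega}(h), \tilde H^n_\omega\big) \leq \frac{\|\psi^n_{\sigma^n\omega}(f)\|}{\|\psi^n_{\sigma^n\omega}(h)\|}.
\end{align*}

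The numerator is governed by growth on $F_{\mu_{i+1}}$, which by Lemma \ref{lemma:forw_back} (equation \eqref{eqn:growth_slow_subspace} applied with $i+1$) grows at rate $\leq \mu_{i+1}$; the denominator is $\|[\psi^n_{\sigma^n\omega}(h)]_{\mu_{i+1}}\| \cdot (\text{something})$ up to the projection bound, and since $h$ corresponds under $\psi^p_{\sigma^{n+p}\omega}$ to a vector in $F_{\mu_i}\setminus F_{\mu_{i+1}}$, by Lemma \ref{fra_fra} (more precisely by the argument in \eqref{MMNB}, applied along the $\sigma$-orbit) this grows at rate $\mu_i$. Thus the ratio decays like $e^{n(\mu_{i+1}-\mu_i) + o(n)}$, and since $\mu_i > \mu_{i+1}$ strictly, we get $\delta(\tilde H^{n+p}_\omega, \tilde H^n_\omega) \leq C e^{-\gamma n}$ uniformly in $p$, for some $\gamma > 0$. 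Summing over $n$ shows the sequence is Cauchy; measurability of the exceptional set and $\theta$-invariance come from the fact that all the rate statements invoked (Lemmas \ref{mea_Vol}, \ref{fra_fra}, \ref{lemma:forw_back}) already hold on $\theta$-invariant full-measure sets, and we intersect them.

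The main obstacle I anticipate is making the denominator estimate uniform in $p$: I need that the vectors $h \in \psi^p_{\sigma^{n+p}\omega}(H^{n+p}_{\sigma^{n+p}\omega})$, which depend on $p$, still satisfy $\frac{1}{n}\log\|\psi^n_{\sigma^n\omega}(h)\| \to \mu_i$ at a rate not degenerating as $p$ varies — this is a statement about arbitrary unit vectors in $m_i$-dimensional complements of $F_{\mu_{i+1}}$ rather than a fixed vector, so one cannot just quote \eqref{MMNB} pointwise. The fix is to use the projection-operator bound: the result from the slight generalization of \cite[Lemma 4.4]{GRS19} quoted in the proof of Lemma \ref{fra_fra} gives $\frac1n \log\|\Pi_{\psi^n_{\sigma^n\omega}(H_{\sigma^n\omega})\,\|\,F_{\mu_{i+1}}(\omega)}\| \to 0$ uniformly over the relevant complements, which converts the bound on $\|[\psi^n_{\sigma^n\omega}(h)]_{\mu_{i+1}}\|$ (controlled below by $M_i^{-1}\|h\| e^{n\mu_i + o(n)}$ via Lemma \ref{fra_fra}) into the needed lower bound on $\|\psi^n_{\sigma^n\omega}(h)\|$ itself, with constants depending only on $i$ and $\omega$, not on $p$. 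Combining this uniform lower bound with the uniform upper bound $\|\psi^n_{\sigma^n\omega}\mid_{F_{\mu_{i+1}}(\sigma^n\omega)}\| \leq e^{n(\mu_{i+1}+\varepsilon)}$ for $n$ large closes the argument.
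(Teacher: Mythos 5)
Your core geometric idea coincides with the paper's: both $\psi^{p}_{\sigma^{n+p}\omega}(H^{n+p}_{\sigma^{n+p}\omega})$ and $H^{n}_{\sigma^{n}\omega}$ complement $F_{\mu_{i+1}}(\sigma^{n}\omega)$ in $F_{\mu_{i}}(\sigma^{n}\omega)$, so their images under $\psi^{n}_{\sigma^{n}\omega}$ differ by a vector pushed forward from $F_{\mu_{i+1}}(\sigma^{n}\omega)$ (rate at most $\mu_{i+1}$), which is beaten by the $\mu_{i}$-growth of the main term. However, your execution has a gap exactly at the point you flag yourself. To pass from $d(\psi^{n}_{\sigma^{n}\omega}(h),\tilde{H}^{n}_{\omega})$ to $\delta(\tilde{H}^{n+p}_{\omega},\tilde{H}^{n}_{\omega})$ you need a lower bound on $\Vert\psi^{n}_{\sigma^{n}\omega}(h)\Vert$ for \emph{unit} vectors $h$ of $V_{p}:=\psi^{p}_{\sigma^{n+p}\omega}(H^{n+p}_{\sigma^{n+p}\omega})$, and via Lemma \ref{fra_fra} this reduces to a lower bound on $\Vert[h]_{\mu_{i+1}}\Vert$, i.e.\ an upper bound on $\Vert\Pi_{V_{p}\Vert F_{\mu_{i+1}}(\sigma^{n}\omega)}\Vert$ that is uniform in $p$ and subexponential in $n$. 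The bound \eqref{CXCC} controls only the chosen complements $H^{n}_{\sigma^{n}\omega}$, not their images $V_{p}$; and the projection estimate from the generalization of \cite[Lemma 4.4]{GRS19} invoked in Lemma \ref{fra_fra} controls $\Vert\Pi_{\psi^{n}_{\omega}(H_{\omega})\Vert F_{\mu_{i+1}}(\theta^{n}\omega)}\Vert$ along a forward orbit from a \emph{fixed} base point, whereas you need it at the moving intermediate point $\sigma^{n}\omega$ uniformly over the backward depth $p$ --- but $V_{p}=\tilde{H}^{p}_{\sigma^{n}\omega}$, so such a uniform bound is essentially equivalent to the convergence you are trying to prove. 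As stated, the fix is circular.

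The paper avoids this two-parameter problem entirely: it bounds only $d_{H}(\tilde{H}^{n}_{\omega},\tilde{H}^{n+1}_{\omega})$ and sums the resulting geometric series. Concretely, for a unit vector $\xi_{\sigma^{n+1}\omega}\in H^{n+1}_{\sigma^{n+1}\omega}$ it normalizes the image at the \emph{endpoint} $\omega$ and writes $\psi^{n+1}_{\sigma^{n+1}\omega}(\xi)/\Vert\psi^{n+1}_{\sigma^{n+1}\omega}(\xi)\Vert$ minus a suitable element of $\tilde{H}^{n}_{\omega}$ as $\psi^{n}_{\sigma^{n}\omega}(Y^{n})$ with $Y^{n}\in F_{\mu_{i+1}}(\sigma^{n}\omega)$ and $\Vert Y^{n}\Vert\leq(M_{i}+1)\Vert\psi^{1}_{\sigma^{n+1}\omega}\Vert/\Vert\psi^{n+1}_{\sigma^{n+1}\omega}(\xi)\Vert$. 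The denominator is bounded below directly by Lemma \ref{fra_fra} together with \eqref{dis_norm}, which applies because $\xi$ lies in a space satisfying \eqref{CXCC}; the only extra factor is the one-step norm $\Vert\psi^{1}_{\sigma^{n+1}\omega}\Vert$, subexponential by Birkhoff; and $\Vert\psi^{n}_{\sigma^{n}\omega}\vert_{F_{\mu_{i+1}}(\sigma^{n}\omega)}\Vert$ is handled by Lemma \ref{lemma:reverse_sub} and \eqref{eqn:growth_slow_subspace}. If you restrict your argument to $p=1$, normalize at $\omega$ rather than at $\sigma^{n}\omega$, and then sum over $n$, you recover the paper's proof; in its present general-$p$ form the denominator estimate does not close.
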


\begin{proof}
From \eqref{CXCC}, we can deduce that for every $  n\in\mathbb{N}$ and $ \xi_{\sigma^{n}\omega}\in S_{H^{n}_{\sigma^n\omega}}$,
\begin{align}\label{dis_norm}
  \frac{1}{M_{i}} < \Vert [\xi_{\sigma^{n}\omega}]_{\mu_{j+1}}\Vert \leq 1.
\end{align}
Note that $=\psi^{k}_{\sigma^{n}\omega} \vert_{H_{\sigma^{n}\omega}^{n}}$ is injective for any $k \geq 1$, therefore $\operatorname{dim}( \tilde{H}^{n}_{\omega}) = \operatorname{dim}( H_{\sigma^{n}\omega}^{n})= m_i$. Since $ \mu_{i+1} < \mu_{i} $, we know that $\tilde{H}^{n}_{\omega} \cap F_{\mu_{i+1}}(\omega) = \{0\}$ and since $ \operatorname{dim}[\frac{F_{\mu_{i}}(\omega)}{F_{\mu_{i+1}}(\omega)}] =m_{i} $, we obtain that
\begin{align*}
\tilde{H}^{n}_{\omega}\oplus F_{\mu_{i+1}}(\omega)=F_{\mu_{i}}(\omega)
 \end{align*}
 for any $n \in \N$. Let $\lbrace\xi^{j}_{\sigma^{n}\omega}\rbrace_{1\leqslant j\leqslant m_{i}} \subset S_{F_{\mu_{i}}(\sigma^{n}\omega)} $ be a base for $ H^{n}_{\sigma^{n}\omega}$. Then for $ \xi_{\sigma^{n+1}\omega}\in S_{F_{\mu_{i}}(\sigma^{n+1}\omega)} \cap H^{n+1}_{\sigma^{n+1}\omega}$, there exist $ \lbrace\beta_{j}\rbrace_{1\leqslant j\leqslant m_{i}}\subset\mathbb{R} $ such that
\begin{align*}
Z^{n}_{\omega} := \frac{\psi^{n+1}_{\sigma^{n+1}\omega}(\xi_{\sigma^{n+1}\omega})}{\Vert \psi^{n+1}_{\sigma^{n+1}\omega}(\xi_{\sigma^{n+1}\omega})\Vert}-\sum_{1\leqslant j\leqslant m_{i}}\beta_{j}\frac{\psi^{n}_{\sigma^{n}\omega}(\xi_{\sigma^{n}\omega}^{j})}{\Vert \psi^{n}_{\sigma^{n}\omega}(\xi_{\sigma^{n}\omega}^{j})\Vert}\in F_{\mu_{i+1}}(\omega).
\end{align*}
It follows that
\begin{align*}
 Y^{n}_{\sigma^{n}\omega} :=  \frac{\psi^{1}_{\sigma^{n+1}\omega}(\xi_{\sigma^{n+1}\omega})}{\Vert \psi^{n+1}_{\sigma^{n+1}\omega}(\xi_{\sigma^{n+1}\omega})\Vert}-\sum_{1\leqslant j\leqslant m_{i}}\beta_{j}\frac{\xi_{\sigma^{n}\omega}^{j}}{\Vert \psi^{n}_{\sigma^{n}\omega}(\xi_{\sigma^{n}\omega}^{j})\Vert}\in F_{\mu_{i+1}}(\sigma^{n}\omega),
\end{align*}
thus
\begin{align*}
\big{\Vert} \sum_{1\leqslant j\leqslant m_{i}}\beta_{j}\frac{\xi_{\sigma^{n}\omega}^{j}}{\Vert \psi^{n}_{\sigma^{n}\omega}(\xi_{\sigma^{n}\omega}^{j})\Vert}\big{\Vert}&\leqslant   \Vert\Pi_{H^{n}_{\sigma^{n}\omega}||F_{\mu_{j+1}}(\sigma^{n}\omega)}\Vert \frac{\Vert\psi^{1}_{\sigma^{n+1}\omega}\Vert}{\Vert\psi^{n+1}_{\sigma^{n+1}\omega}(\xi_{\sigma^{n+1}\omega})\Vert}\\&\leqslant M_{i} \frac{ \Vert\psi^{1}_{\sigma^{n+1}}\Vert}{\Vert\psi^{n+1}_{\sigma^{n+1}\omega}(\xi_{\sigma^{n+1}\omega})\Vert}
\end{align*}
and so
\begin{align}\label{first_}
 d\bigg{(}\frac{\psi^{n+1}_{\sigma^{n+1}\omega}(\xi_{\sigma^{n+1}\omega})}{\Vert \psi^{n+1}_{\sigma^{n+1}\omega}(\xi_{\sigma^{n+1}\omega})\Vert},\tilde{H}^{n}_{\omega}\bigg{)}\leqslant \| Z^{n}_{\omega} \| = \Vert\psi^{n}_{\sigma^{n}\omega}(Y^{n}_{\sigma^{n}\omega})\Vert  \leqslant \ (M_{i}+1)\frac{\Vert\psi^{n}_{\sigma^{n}\omega}|_{F_{\mu_{i+1}}(\sigma^n \omega)}\Vert  \Vert\psi^{1}_{\sigma^{n+1}\omega}\Vert}{\Vert\psi^{n+1}_{\sigma^{n+1}\omega}(\xi_{\sigma^{n+1}\omega})\Vert}.
\end{align}
Note that $ \lim_{n\rightarrow\infty}\frac{1}{n}\log\Vert\psi^{1}_{\sigma^{n}\omega}\Vert =0 $ from Birkhoff's Ergodic Theorem. Using Lemma \ref{lemma:reverse_sub} and \eqref{eqn:growth_slow_subspace} for $k = 1$, we have
\begin{align*}
 \limsup_{n \to \infty} \frac{1}{n} \log \Vert\psi^{n}_{\sigma^{n}\omega}|_{F_{\mu_{i+1}}(\sigma^n \omega)}\Vert \leq \mu_{i + 1}.
\end{align*}
From Lemma \ref{fra_fra} the estimate \ref{dis_norm} and Lemma \ref{Bli}, \eqref{first_} implies that for $ \epsilon >0 $ small and large $ n $,
\begin{align*}
d_{H}\big{(}\tilde{H}^{n}_\omega ,\tilde{H}^{n+1}_{\omega}\big{)}< M \exp\big{(}n(\mu_{i+1}-\mu_{i}+\epsilon)\big{)}
\end{align*}
for a constant $M > 0$. The claim is proved.
\end{proof}

Next, we collect some facts about the limit of the sequence above. 

\begin{lemma}\label{lemma:first_prop_H}
Assume $ \tilde{H}^{n}_{\omega}\xrightarrow{d_{H}}\tilde{H}_{\omega} $. Then the following holds:
\begin{itemize}
 \item[(i)] $ \tilde{H}_{\omega} $ is invariant, i.e. $\psi_{\omega}^k (\tilde{H}_{\omega}) = \tilde{H}_{\theta^k \omega}$ for any $k \geq 0$.
 \item[(ii)] $\tilde{H}_{\omega}\cap F_{\mu_{i+1}}(\omega) = \lbrace 0\rbrace$.
 \item[(iii)] $ \tilde{H}_{\omega} $ only depends on $ \omega $. In particular, it does not depend on the choice of the sequence $\{ \tilde{H}^{n}_{\omega} \}_{n \geq 1}$.
 \end{itemize}
\end{lemma}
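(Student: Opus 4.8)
The plan is to establish the three properties in order, exploiting the Cauchy property from Proposition \ref{comp_} and the growth estimates from Lemma \ref{fra_fra}.

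For part (i), the key observation is that the sequence $\tilde H^n_\omega = \psi^n_{\sigma^n\omega}(H^n_{\sigma^n\omega})$ satisfies $\psi^1_\omega(\tilde H^n_\omega) = \psi^{n+1}_{\sigma^n\omega}(H^n_{\sigma^n\omega})$, which is a complement candidate ``one step along'' at the base point $\theta\omega$; more precisely, I would compare it to $\tilde H^{n+1}_{\theta\omega} = \psi^{n+1}_{\sigma^{n+1}(\theta\omega)}(H^{n+1}_{\sigma^{n+1}(\theta\omega)})= \psi^{n+1}_{\sigma^n\omega}(H^{n+1}_{\sigma^n\omega})$, noting $\sigma^n\omega = \sigma^{n+1}(\theta\omega)$. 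Since $\psi^1_\omega$ restricted to the finite-dimensional space $F_{\mu_i}(\omega)$ is bounded and, on $\tilde H_\omega$, has trivial kernel (this uses part (ii), so the logical order is (ii) before (i) in the kernel argument, or one argues directly that $\psi^1_\omega$ is injective on each $\tilde H^n_\omega$ with uniformly controlled inverse via \eqref{dis_norm} and Lemma \ref{fra_fra}), the map $A \mapsto \psi^1_\omega(A)$ is continuous on $G_{m_i}(F_{\mu_i}(\omega))$ near $\tilde H_\omega$. Passing to the limit in $d_H$ in the relation $\psi^1_\omega(\tilde H^n_\omega) \to \tilde H_{\theta\omega}$ (which holds because $\tilde H^{n+1}_{\theta\omega}\to\tilde H_{\theta\omega}$ and the two sequences differ by a comparison we can bound) yields $\psi^1_\omega(\tilde H_\omega) = \tilde H_{\theta\omega}$; iterating gives the claim for all $k \geq 0$. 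I expect this dimension-counting/continuity step to be the main obstacle, since one must be careful that the image under $\psi^1_\omega$ stays $m_i$-dimensional and that convergence in $d_H$ is preserved under the (non-isometric) action of $\psi^1_\omega$; the uniform lower bound \eqref{dis_norm} together with the exponential growth rate $\mu_i$ from Lemma \ref{fra_fra} is what rescues this.

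For part (ii), I would argue by contradiction: if $0 \neq \xi \in \tilde H_\omega \cap F_{\mu_{i+1}}(\omega)$, then by the Hausdorff convergence there are unit vectors $\xi_n \in S_{\tilde H^n_\omega}$ with $\xi_n \to \xi/\|\xi\|$. Writing $\xi_n = \psi^n_{\sigma^n\omega}(\eta_n)$ with $\eta_n \in S_{H^n_{\sigma^n\omega}}$ (up to normalization), the estimate \eqref{dis_norm} gives $\|[\eta_n]_{\mu_{i+1}}\| > 1/M_i$, so Lemma \ref{fra_fra} forces $\tfrac1n\log\|[\xi_n]_{\mu_{i+1}}\| \to \mu_i$, meaning $\|[\xi_n]_{\mu_{i+1}}\|$ does not go to zero (it grows at rate $\mu_i > \mu_{i+1} \geq -\infty$, and after the normalization built into $\tilde H^n_\omega$ it is in fact bounded below). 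But $d(\xi_n, F_{\mu_{i+1}}(\omega)) = \|[\xi_n]_{\mu_{i+1}}\| \to \|[\xi/\|\xi\|]_{\mu_{i+1}}\| = 0$ since $\xi \in F_{\mu_{i+1}}(\omega)$, a contradiction. Combined with $\dim\tilde H_\omega = m_i$ (closedness of $G_{m_i}$ under $d_H$) and $\dim(F_{\mu_i}(\omega)/F_{\mu_{i+1}}(\omega)) = m_i$, this also re-proves $\tilde H_\omega \oplus F_{\mu_{i+1}}(\omega) = F_{\mu_i}(\omega)$.

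For part (iii), suppose $\tilde H^n_\omega \to \tilde H_\omega$ and $\hat H^n_\omega \to \hat H_\omega$ come from two choices of complement sequences $H^n_{\sigma^n\omega}$, $\hat H^n_{\sigma^n\omega}$ (both satisfying \eqref{CXCC}). I would show $d_H(\tilde H^n_\omega, \hat H^n_\omega) \to 0$ by the same mechanism as in Proposition \ref{comp_}: for a unit vector in one space, expand its $\psi^n_{\sigma^n\omega}$-image in a basis of the image of the other, and bound the defect by a ratio of the type $\|\psi^n_{\sigma^n\omega}|_{F_{\mu_{i+1}}(\sigma^n\omega)}\| \cdot \|\text{(junk)}\| / \|\psi^n_{\sigma^n\omega}(\text{unit vector})\|$, which decays like $\exp(n(\mu_{i+1}-\mu_i+\epsilon))$ by Lemma \ref{fra_fra} and the estimate on the slow subspace growth. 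Hence both sequences have the same limit, so $\tilde H_\omega$ depends only on $\omega$. Taking the intersection over $i$ of the corresponding $\theta$-invariant full-measure sets keeps all conclusions valid simultaneously.
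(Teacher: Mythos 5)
Parts (i) and (iii) of your plan are workable. For (iii), the direct estimate $d_H(\tilde H^n_\omega,\hat H^n_\omega)\leqslant M\exp\big(n(\mu_{i+1}-\mu_i+\epsilon)\big)$, obtained by the same mechanism as in Proposition \ref{comp_} together with Lemma \ref{Bli}, is a valid route and genuinely different from the paper, which instead identifies the limit intrinsically as the annihilator $\big(G^{*}_{\mu_{i+1}}(\omega)\big)^{\perp}_{\mu_i}$ of a slow subspace of the dual backward cocycle. Part (i) then reduces, as you acknowledge, to injectivity of $\psi^1_\omega$ on $\tilde H_\omega$ plus a dimension count, and that injectivity comes from (ii). So everything hinges on (ii).

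Part (ii) has a genuine gap. For a unit vector $\xi_n=\psi^{n}_{\sigma^{n}\omega}(\eta_n)/\Vert\psi^{n}_{\sigma^{n}\omega}(\eta_n)\Vert\in S_{\tilde H^{n}_{\omega}}$ with $\eta_n\in S_{H^{n}_{\sigma^{n}\omega}}$, Lemma \ref{fra_fra} together with \eqref{dis_norm} controls only the \emph{unnormalized} quantity: $\tfrac1n\log\Vert[\psi^{n}_{\sigma^{n}\omega}(\eta_n)]_{\mu_{i+1}}\Vert\to\mu_i$. But $\Vert\psi^{n}_{\sigma^{n}\omega}(\eta_n)\Vert$ also grows at rate exactly $\mu_i$ (lower bound by the quotient norm, upper bound by \eqref{Du_Re_2} with $k=1$). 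Hence all that follows for the normalized vector is $\tfrac1n\log d\big(\xi_n,F_{\mu_{i+1}}(\omega)\big)\to 0$, i.e.\ subexponential behaviour, which is perfectly compatible with $d(\xi_n,F_{\mu_{i+1}}(\omega))\to0$ (for instance like $1/n$); there is no contradiction with $\xi_n\to\xi/\Vert\xi\Vert\in F_{\mu_{i+1}}(\omega)$. (Note also that $\tfrac1n\log\Vert[\xi_n]_{\mu_{i+1}}\Vert\to\mu_i$ cannot hold for unit $\xi_n$, since the left-hand side is nonpositive.) Your parenthetical assertion that this distance is ``in fact bounded below'' amounts to the uniform transversality $\sup_n\Vert\Pi_{\tilde H^{n}_{\omega}||F_{\mu_{i+1}}(\omega)}\Vert<\infty$, which is essentially equivalent to statement (ii) itself and is not delivered by the rate estimates. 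The paper closes this by duality: it applies \cite[Proposition 4.15]{GRS19} to the dual backward cocycle $(\psi^{n}_{\sigma^{n}\omega})^{*}_{\mu_i}$ on $(F_{\mu_i}(\omega))^{*}$ to obtain a subspace $G^{*}_{\mu_{i+1}}(\omega)$ decaying at rate $\mu_{i+1}$, proves the splitting $(F_{\mu_i}(\omega))^{*}=(F_{\mu_{i+1}}(\omega))^{\perp}_{\mu_i}\oplus G^{*}_{\mu_{i+1}}(\omega)$, chooses $\xi^{*}_{\omega}\in G^{*}_{\mu_{i+1}}(\omega)$ with $\langle\xi^{*}_{\omega},\xi_{\omega}\rangle=1$, and pairs it against the un-normalized preimages $\xi^{n}_{\sigma^{n}\omega}/\Vert\psi^{n}_{\sigma^{n}\omega}(\xi^{n}_{\sigma^{n}\omega})\Vert$; the pairing is bounded by $\exp\big(n(\mu_{i+1}-\mu_i+2\epsilon)\big)\to0$ while it must converge to $1$. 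Some device of this kind, which keeps the exponential gap $\mu_i-\mu_{i+1}>0$ alive instead of dividing it out through normalization, is missing from your plan.
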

\begin{proof}
By construction, $ \tilde{H}_{\omega} $ is invariant. We proceed with (ii). Consider the dual map
\begin{align*}
\big{(}\psi^{n}_{\sigma^{n}\omega}\big{)}^{*}_{\mu_{i}} :\big{(}F_{\mu_{i}}(\omega)\big{)}^{*}\rightarrow \big{(}F_{\mu_{i}}(\sigma^{n}\omega)\big{)}^{*}.
\end{align*}
It is straightforward to see that $ \big{(}\psi^{n}_{\sigma^{n}\omega}\big{)}^{*}_{\mu_{i}} $ enjoys the cocycle property. From \eqref{Du_Re} and \cite[Proposition 4.15]{GRS19}, we can find a closed subspace $ G^{*}_{\mu_{i+1}}(\omega)\subset \big{(}F_{\mu_{i}}(\omega)\big{)}^{*}  $ such that $ \operatorname{dim} [ (F_{\mu_{i}}(\omega))^{*} / G^{*}_{\mu_{i+1}}(\omega) ] = m_{i} $ and for $ \xi^{*}_{\omega}\in G^{*}_{\mu_{i+1}}(\omega) $, $ \limsup_{n\rightarrow\infty}\frac{1}{n}\log\big{\Vert}\big{(}\psi^{n}_{\sigma^{n}\omega}\big{)}^{*}_{\mu_{i}}( \xi^{*}_{\omega}) \big{\Vert} \leqslant\mu_{i+1}$. Set
\begin{align*}
\big{(}F_{\mu_{i+1}}(\omega)\big{)}_{\mu_{i}}^{\perp}=\big{\lbrace} \xi^{*}_{\omega}\in \big{(}F_{\mu_{i}}(\omega)\big{)}^{*}\ : \ \xi^{*}_{\omega}|_{F_{\mu_{i+1}}(\omega)} =0\big{\rbrace}.
\end{align*}
By Hahn-Banach separation theorem,
\begin{align*}
 \operatorname{dim} \left[ \big{(}F_{\mu_{i+1}}(\omega)\big{)}_{\mu_{i}}^{\perp} \right] = \operatorname{dim} \left[ F_{\mu_i}(\omega) / F_{\mu_{i+1}}(\omega) \right] = m_{i}.
\end{align*}
Let $ \xi^{*}_{\omega}\in \big{(}F_{\mu_{i+1}}(\omega)\big{)}_{\mu_{i}}^{\perp}\cap G^{*}_{\mu_{i+1}}(\omega)$ and assume that $ \xi^{*}_{\omega}\neq 0 $. Then for some $ \xi_{\omega}\notin F_{\mu_{i}}(\omega)\setminus F_{\mu_{i+1}}(\omega)  $, $\langle \xi^{*}_{\omega},\xi_{\omega} \rangle = 1 $. Using surjectivity of $[\psi^n_{\sigma^n \omega}]_{\mu_{i+1}}$, for every $ n\in\mathbb{N} $, we can find $ \xi_{\sigma^{n}\omega }\in H^{n}_{\sigma^{n}\omega} $ such that
\begin{align*}
\psi^{n}_{\sigma^{n}\omega}(\xi_{\sigma^{n}\omega})=\xi_{\omega}\ \ \ \text{mod } F_{\mu_{i+1}}(\omega).
\end{align*}
Consequently, $\langle (\psi^{n}_{\sigma^{n}\omega})_{\mu_{i}}^{*}(\xi^{*}_{\omega}),\xi_{\sigma^{n}\omega} \rangle =1 $. From Lemma \ref{fra_fra} ,
\begin{align}\label{GBT}
\lim_{n\rightarrow\infty}\frac{1}{n}\log\big{\Vert} \big{[}\psi^{n}_{\sigma^{n}\omega}(\frac{\xi_{\sigma^{n}\omega}}{\Vert [\xi_{\sigma^{n}\omega}]_{\mu_{i+1}}\Vert})\big{]}_{\mu_{i+1}}\big{\Vert} =\lim_{n\rightarrow\infty}\frac{1}{n}\log\big{\Vert}\frac{\Vert [\xi_{\omega}]_{\mu_{j+1}}\Vert}{\Vert [\xi_{\sigma^{n}\omega}]_{\mu_{j+1}}\Vert}\big{\Vert} =\mu_{i}.
\end{align}
Hence for $ \epsilon>0 $ and large $ n $,
\begin{align*}
\Vert [\xi_{\sigma^{n}\omega}]_{\mu_{j+1}}\Vert < \exp(-n\big{(}\mu_{i}-\epsilon)\big{)}
\end{align*}
which is a contradiction since $\Vert (\psi^{n}_{\sigma^{n}\omega})_{\mu_{i}}^{*}(\xi^{*}_{\omega})\Vert \leqslant \exp\big{(}n(\mu_{i+1}+\epsilon)\big{)}$. Thus we have shown that 
\begin{align}\label{decomp}
\big{(}F_{\mu_{i}}(\omega)\big{)}^{*}=\big{(}F_{\mu_{i+1}}(\omega)\big{)}_{\mu_{i}}^{\perp}\oplus G^{*}_{\mu_{i+1}}(\omega).
\end{align}
Now let $ \xi_{\omega}\in \tilde{H}_{\omega}\cap F_{\mu_{i+1}}(\omega)$ and assume that $ \Vert\xi_{\omega}\Vert =1 $. From \ref{decomp}, we can find $ \xi_{\omega}^{*}\in G^{*}_{\mu_{i+1}}(\omega) $ such that $ \langle \xi^{*}_{\omega} ,\xi_{\omega} \rangle =1 $. By definition of $\tilde{H}_{\omega}$, there exist $ \xi^{n}_{\sigma^{n}\omega}\in S_{H^{n}_{\sigma^{n}\omega}} $ such that $ \frac{\psi^{n}_{\sigma^{n}\omega}(\xi^{n}_{\sigma^{n}\omega})}{\Vert \psi^{n}_{\sigma^{n}\omega}(\xi^{n}_{\sigma^{n}\omega})\Vert}\rightarrow \xi_{\omega} $ as $n \to \infty$, and consequently
\begin{align*}
\langle \xi^{*}_{\omega},\frac{\psi^{n}_{\sigma^{n}\omega}(\xi^{n}_{\sigma^{n}\omega})}{\Vert \psi^{n}_{\sigma^{n}\omega}(\xi^{n}_{\sigma^{n}\omega})\Vert} \rangle = \langle(\psi^{n}_{\sigma^{n}\omega})^{*}(\xi^{*}_{\omega}),\frac{\xi^{n}_{\sigma^{n}\omega}}{\Vert\psi^{n}_{\sigma^{n}\omega}(\xi^{n}_{\sigma^{n}\omega})\Vert} \rangle \rightarrow 1
\end{align*}
as $n \to \infty$. With Lemma \ref{fra_fra} and a similar argument as above, this is again a contradiction and we have shown (ii).
It remains to prove (iii). For $ \xi_{\omega}\in \tilde{H}_{\omega}\subset (F_{\mu_{i}}(\omega))^{**} $, $ \xi^{*}_{\omega}\in G_{\mu_{i+1}}^{*}(\omega) $ and a sequence $ \xi^{n}_{\sigma^{n}\omega}$ chosen as above, 
\begin{align*}
\langle \frac{\psi^{n}_{\sigma^{n}\omega}(\xi^{n}_{\sigma^{n}\omega})}{\Vert \psi^{n}_{\sigma^{n}\omega}(\xi^{n}_{\sigma^{n}\omega})\Vert},\xi^{*}_{\omega} \rangle \rightarrow 0
\end{align*}
as $n \to \infty$. Therefore, $\tilde{H}_{\omega}\subset \big{(}G_{\mu_{i+1}}^{*}(\omega)\big{)}_{\mu_{i}}^{\perp}=\big{\lbrace} \xi^{**}_{\omega}\in \big{(}F_{\mu_{i}}(\omega)\big{)}^{**}\ : \ \xi^{**}_{\omega}|_{G_{\mu_{i+1}}^{*}(\omega)} =0\big{\rbrace} $ and since $\operatorname{dim} \big{[} \big{(}G_{\mu_{i+1}}^{*}(\omega)\big{)}_{\mu_{i}}^{\perp}\big{]} =m_{i} $, we obtain 
\begin{align}\label{eqn:uniqueness_H}
 \tilde{H}_{\omega}=\big{(}G_{\mu_{i+1}}^{*}(\omega)\big{)}_{\mu_{i}}^{\perp}
\end{align}
which proves (iii).
\end{proof}

So far, we have shown the following: There is a $\theta$-invariant set $\tilde{\Omega} \subset \Omega$ of full measure such that for every $i \geq 1$ with $\mu_i > \mu_{i+1}$ and $\omega \in \tilde{\Omega}$, there is an $m_i$-dimensional subspace $H^i_\omega$ such that
\begin{itemize}
 \item $H_{\omega}^i \oplus F_{\mu_{i+1}}(\omega) = F_{\mu_i}(\omega)$ and
 \item $\psi^n_{\omega}(H^i_{\omega}) = H^i_{\theta^n \omega}$.
\end{itemize}
 In particular, $\psi^n_{\omega} \vert_{H^i_{\omega}}$ is injective for every $n \geq 0$.

In the remaining part of this section, we study further properties of the spaces $H^i_{\omega}$. We start with a measurability result.

\begin{lemma}\label{Proj_}
For every $ i \geq 1 $ the maps
\begin{align*}
f_{1}(\omega) := \Vert\Pi_{{H}^{i}_{\omega}||F_{\mu_{i+1}}(\omega)}\Vert \ \ \text{and} \ \ f_{2}(\omega):=\Vert\Pi_{F_{\mu_{i+1}}(\omega)||{H}^{i}_{\omega}}\Vert
\end{align*}
are measurable.
\end{lemma}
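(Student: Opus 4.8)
For algebraically complementary closed subspaces $H,F$ of a normed space with $H\oplus F$ closed one has, by minimizing $\|h+f\|$ over the component being killed,
\[
\|\Pi_{H\|F}\| = \Big(\inf_{h\in S_H} d(h,F)\Big)^{-1}\qquad\text{and}\qquad \|\Pi_{F\|H}\| = \Big(\inf_{f\in S_F} d(f,H)\Big)^{-1}.
\]
Applying this with $H=H^i_\omega$, $F=F_{\mu_{i+1}}(\omega)$ and recalling that $H^i_\omega\oplus F_{\mu_{i+1}}(\omega)=F_{\mu_i}(\omega)$ is a \emph{topological} splitting, it suffices to show that $\omega\mapsto \inf_{h\in S_{H^i_\omega}} d(h,F_{\mu_{i+1}}(\omega))$ and $\omega\mapsto \inf_{f\in S_{F_{\mu_{i+1}}(\omega)}} d(f,H^i_\omega)$ are measurable (both are bounded away from $0$, so the reciprocals are finite). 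The plan is to compute these quantities as limits over $n$ of the analogous quantities for the approximating spaces $\tilde H^n_\omega$ of Proposition \ref{comp_}.

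\textbf{A measurable family of complements.} Since the subspaces $F_{\mu_i}(\omega)$ themselves form a measurable field of Banach spaces (from the construction in \cite{GRS19}), fix a countable family $\Delta^{(i)}_0\subset\Delta$ of sections with $h(\omega)\in F_{\mu_i}(\omega)$ for all $h,\omega$ and $\{h(\omega):h\in\Delta^{(i)}_0\}$ dense in $F_{\mu_i}(\omega)$. For a tuple $\vec h=(h_1,\dots,h_{m_i})\in(\Delta^{(i)}_0)^{m_i}$ let $C_{\vec h}$ be the set of $\omega$ for which $\langle h_1(\omega),\dots,h_{m_i}(\omega)\rangle$ is $m_i$-dimensional, complements $F_{\mu_{i+1}}(\omega)$ in $F_{\mu_i}(\omega)$, and has projection constant $\le M_i+1$. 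These sets are measurable: for a \emph{fixed} spanning tuple the projection constant equals $\sup_{(a_t)\in\mathbb{Q}^{m_i}\setminus\{0\}}\frac{\|\sum_t a_t h_t(\omega)\|}{d(\sum_t a_t h_t(\omega),F_{\mu_{i+1}}(\omega))}$, which is measurable by linearity of $\Delta$, property (iii) of a measurable field and Lemma \ref{mea_distance}, while $m_i$-dimensionality modulo $F_{\mu_{i+1}}(\omega)$ is detected by positivity of $\operatorname{Vol}$ of the classes (Lemma \ref{Con_VOl}, Lemma \ref{mea_distance}). By density of $\Delta^{(i)}_0$ together with the existence statement \eqref{CXCC} (and the fact that complemented subspaces with bounded constant form an open set), the $C_{\vec h}$ cover $\tilde\Omega$; disjointifying gives a countable measurable partition $\{\Omega_\kappa\}$ of $\tilde\Omega$ with a distinguished tuple $\vec h^{\,\kappa}$ on each piece, hence a measurable family $\omega\mapsto H_\omega\subset F_{\mu_i}(\omega)$ of complements satisfying \eqref{CXCC} (with $M_i+1$ in place of $M_i$). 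Set $\tilde H^n_\omega:=\psi^n_{\sigma^n\omega}(H_{\sigma^n\omega})$; on the piece $\theta^n\Omega_\kappa$ of the shifted partition this is $\langle\psi^n_{\sigma^n\omega}(h^{\,\kappa}_t(\sigma^n\omega))\rangle_{1\le t\le m_i}$.

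\textbf{Measurability of the approximants.} Using $\psi^n$ and $\Delta$ linear, the case $k=0$ of Assumption \ref{MES} (giving measurability of $\omega\mapsto\|\psi^n_{\sigma^n\omega}(g(\sigma^n\omega))-\tilde g(\omega)\|$ for $g,\tilde g\in\Delta$), the acting-cocycle property (giving measurability of $\omega\mapsto\|\psi^n_{\sigma^n\omega}(g(\sigma^n\omega))\|$), and measurability of $\omega\mapsto d(\psi^n_{\sigma^n\omega}(g(\sigma^n\omega)),F_{\mu_{i+1}}(\omega))$ (the variant of Lemma \ref{mea_distance} established at the end of the proof of Lemma \ref{mea_Vol}, composed with $\sigma^n$), one gets that on each piece $\theta^n\Omega_\kappa$
\[
\inf_{h\in S_{\tilde H^n_\omega}} d(h,F_{\mu_{i+1}}(\omega)) = \inf_{(a_t)\in\mathbb{Q}^{m_i}\setminus\{0\}} \frac{d\big(\psi^n_{\sigma^n\omega}(\textstyle\sum_t a_t h^{\,\kappa}_t(\sigma^n\omega)),\,F_{\mu_{i+1}}(\omega)\big)}{\big\|\psi^n_{\sigma^n\omega}(\textstyle\sum_t a_t h^{\,\kappa}_t(\sigma^n\omega))\big\|}
\]
is measurable (the map is scale-invariant and continuous in $(a_t)\neq0$, as $\psi^n_{\sigma^n\omega}$ is injective on $H_{\sigma^n\omega}$), and likewise $\omega\mapsto\inf_{f\in S_{F_{\mu_{i+1}}(\omega)}} d(f,\tilde H^n_\omega)$ is measurable, approximating $S_{F_{\mu_{i+1}}(\omega)}$ by normalized $\Delta_0$-sections with the $B^{l,k}_\omega$-cutoffs exactly as in Lemma \ref{mea_Vol} and writing $d(f,\tilde H^n_\omega)=\inf_{(a_t)\in\mathbb{Q}^{m_i}}\|f-\psi^n_{\sigma^n\omega}(\sum_t a_t h^{\,\kappa}_t(\sigma^n\omega))\|$.

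\textbf{Passage to the limit.} By Proposition \ref{comp_}, $(\tilde H^n_\omega)_n$ is $d_H$-Cauchy in the complete space $G_{m_i}(F_{\mu_i}(\omega))$, hence converges; by Lemma \ref{lemma:first_prop_H}(iii) the limit does not depend on the chosen complements, so it is $H^i_\omega$. Since in the paper's convention $d_H(A,B)$ is exactly the Hausdorff distance of the unit spheres $S_A,S_B$, and $d(\cdot,F_{\mu_{i+1}}(\omega))$ is $1$-Lipschitz, $\big|\inf_{h\in S_{\tilde H^n_\omega}} d(h,F_{\mu_{i+1}}(\omega)) - \inf_{h\in S_{H^i_\omega}} d(h,F_{\mu_{i+1}}(\omega))\big|\le d_H(\tilde H^n_\omega,H^i_\omega)\to0$; an equally elementary estimate (bounding the near-minimizing $\|h\|$ by $1+d(f,H^i_\omega)$) gives $\inf_{f\in S_{F_{\mu_{i+1}}(\omega)}} d(f,\tilde H^n_\omega)\to\inf_{f\in S_{F_{\mu_{i+1}}(\omega)}} d(f,H^i_\omega)$. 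Thus both target functions are pointwise limits of measurable functions, hence measurable, and by the Reduction step so are $f_1$ and $f_2$.

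\textbf{Main obstacle.} The delicate point is the second step: producing a genuinely measurable family of complements $H_\omega\subset F_{\mu_i}(\omega)$ of $F_{\mu_{i+1}}(\omega)$ with uniformly bounded projection constant, which forces one to work with sections lying in $F_{\mu_i}(\omega)$ and to verify measurability of the auxiliary sets $C_{\vec h}$ without circularity. Everything else is either the identity from the Reduction, soft Grassmannian convergence, or a direct transcription of the covering-and-indicator technique of Lemma \ref{mea_Vol} combined with Assumption \ref{MES}.
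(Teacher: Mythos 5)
Your reduction to the identity $\Vert\Pi_{H\Vert F}\Vert=\big(\inf_{h\in S_{H}}d(h,F)\big)^{-1}$, the measurable selection of complements from countable tuples of sections, the push-forward $\tilde H^{n}_{\omega}=\psi^{n}_{\sigma^{n}\omega}(H_{\sigma^{n}\omega})$ and the passage to the limit via Proposition \ref{comp_} and Lemma \ref{lemma:first_prop_H}(iii) are all sound, and for $i=1$ (where $F_{\mu_{1}}(\omega)=E_{\omega}$ and $\Delta_{0}$ supplies the tuples) your argument is complete; it is in fact tidier than the paper's, which follows the same skeleton but computes the projection \emph{pointwise} on sections, including a delicate determination of the signs of the coefficients $\alpha_{t}(\omega)$ through the auxiliary exponents $J_{t}$.

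The gap is precisely the step you flag as the main obstacle, and it is not resolved by appealing to \cite{GRS19}. For $i\geq 2$ you posit a countable family $\Delta_{0}^{(i)}\subset\Delta$ of sections taking values in $F_{\mu_{i}}(\omega)$ and dense there. But $\Delta$ is an arbitrary linear space of sections: nothing guarantees it contains even one section valued in $F_{\mu_{i}}(\omega)$ for every $\omega$, let alone a fiberwise dense family, and \cite{GRS19} only provides measurability of $\omega\mapsto d(g(\omega),F_{\mu_{j}}(\omega))$ for $g\in\Delta$, not a measurable-field structure on $\{F_{\mu_{i}}(\omega)\}_{\omega}$. The only available substitute is the one the paper constructs inductively, namely the projected sections $\{\Pi_{F_{\mu_{2}}(\omega)||H^{1}_{\omega}}(g(\omega)):g\in\Delta_{0}\}$ of \eqref{eqn:dense_subset_proj}; but to use these in your selection step (measurability of the sets $C_{\vec h}$, of $d(\cdot,F_{\mu_{i+1}}(\omega))$, of $\Vert\psi^{n}_{\sigma^{n}\omega}(\cdot)-\cdot\Vert$) you need \emph{pointwise} measurability of the projection applied to sections, i.e. exactly the statement \eqref{eqn:proj_meas_interm} together with measurability of $\omega\mapsto\Vert\psi^{k}_{\omega}(\Pi_{F_{\mu_{2}}(\omega)||H^{1}_{\omega}}(g(\omega)))\Vert$ — the very content your operator-norm reduction was designed to avoid. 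As it stands your induction therefore does not close beyond $i=1$. To repair it you must additionally prove pointwise measurability of $\omega\mapsto\Pi_{F_{\mu_{i+1}}(\omega)||H^{i}_{\omega}}(g(\omega))$ for $g\in\Delta_{0}$; note that the coefficients of this projection are only determined up to sign by the distance functions you use, which is why the paper resorts to the $2^{m_{1}}$ candidate functions $I_{t}$ and the Lyapunov-exponent test $\chi_{\mu_{2}}(J_{t})$ to pin the signs down measurably.
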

\begin{proof}
We prove the claim for $i = 1$ first, i.e. $ \operatorname{dim}[E_{\omega} / F_{\mu_{2}}(\omega)] = m_1 $ for $\omega\in \tilde{\Omega}$. Let
\begin{align*}
 \{ (g_{k_1}, \ldots, g_{k_{m_1}})\, :\, k \in \N\} = \Delta_0^{m_1}.
\end{align*}
Fix $n \in \N$ and $\omega\in \tilde{\Omega}$. We define $ \lbrace U^{k}_{\sigma^{n}\omega}\rbrace_{k\geqslant 1} $ to be the family of subspaces of $ E_{\sigma^{n}\omega} $ given by $ U^{k}_{\sigma^{n}\omega} = \langle g_{k_{i}}(\sigma^{n}\omega) \rangle _{1\leqslant i\leqslant m_{1}, g_{k_{i}}\in\Delta_{0}} $, By the same technique as before (cf. e.g. the proof to Lemma \ref{mea_Vol}), the map
\begin{equation*}
\omega \mapsto G_{k}(\sigma^{n}\omega)=
\begin{cases}
\Vert \Pi_{U^{k}_{\sigma^{n}\omega}||F_{\mu_{2}}(\sigma^{n}\omega)}\Vert &\quad\ U^{k}_{\sigma^{n}\omega}\oplus F_{\mu_{2}}(\sigma^{n}\omega)=F_{\mu_{1}}(\sigma^{n}\omega)\\
\infty &\quad\ \text{otherwise}
\end{cases}
\end{equation*}
 is measurable. Set $ \psi_{n}(\omega):=\inf\lbrace k: G_{k}(\sigma^{n}\omega)<M_{1}\rbrace$ which is clearly measurable. By Proposition \ref{comp_}, $ \tilde{H}^{n}_{\omega} := \psi^{n}_{\sigma^{n}\omega}\big{(}U^{\psi_{n}(\omega)}_{\sigma^{n}\omega}\big{)} \xrightarrow{d_{H}}{H}_{\omega}^{j}$ and consequently $ \Pi_{\tilde{H}^{n}_{\omega}||F_{\mu_{2}}(\omega)}\rightarrow \Pi_{H^{j}_{\omega}||F_{\mu_{2}}(\omega)} $ as $n \to \infty$. Thus it is enough to show that for every $ g\in\Delta$, 
\begin{align}\label{eqn:proj_meas_interm}
    \omega \mapsto \Vert \Pi_{\tilde{H}^{n}_{\omega}||F_{\mu_{2}}(\omega)}g({\omega})\Vert
\end{align}
is measurable. Let $ \tilde{H}^{n}_{\omega}=\langle\psi^{n}_{\sigma^{n}\omega}(g_{i}(\sigma^n \omega))\rangle_{ 1\leqslant i\leqslant m_{1} }$, therefore,
\begin{align*}
    \Pi_{\tilde{H}^{n}_{\omega}||F_{\mu_2}(\omega)}g({\omega}) = \sum_{1\leqslant t\leqslant m_{1}}\alpha_{t}(\omega) \psi^{n}_{\sigma^{n}\omega}(g_{t}(\sigma^n \omega)).
\end{align*}
 We have to prove that each $\omega \mapsto \alpha_{t}(\omega) \in \R$ is measurable. Assume $ m_{1}=1 $ first. Since $g(\omega) - \alpha_1(\omega) \psi^n_{\sigma^n \omega}(g_1(\sigma^n \omega)) \in F_{\mu_2}(\omega)$, we have $\| [g(\omega)]_{\mu_2} \| = |\alpha_1(\omega)| \| [\psi^n_{\sigma^n \omega}(g_1(\sigma^n \omega))]\|$ and therefore
 \begin{align*}
  |\alpha_1(\omega)| = \frac{d\big{(}g(\omega),F_{\mu_{2}}(\omega)\big{)}}{d\big{(}\psi_{\sigma^{n}\omega}(g_{1}(\sigma^{n}\omega)),F_{\mu_{2}}(\omega)\big{)}}
 \end{align*}
    Set
\begin{align*}
d_{0}(\omega) := d\big{(}g(\omega),F_{\mu_{2}}(\omega)\big{)}  \quad \text{and} \quad  d_{1}(\omega) := d\big{(}\psi_{\sigma^{n}\omega}(g_{1}(\sigma^{n}\omega)),F_{\mu_{2}}(\omega)\big{)}.
\end{align*}
As before (cf. Lemma \ref{mea_distance}), we can see that $ d_{0}(\omega)$ and $ d_{1}(\omega) $ are measurable, and we have 
\begin{align*}
 \Pi_{\tilde{H}^{n}_{\omega}||F_{\mu_2}(\omega)}g({\omega}) = G(\omega)  \frac{d_0(\omega)}{d_1(\omega)} \psi^{n}_{\sigma^{n}\omega}(g_{1}(\sigma^n \omega))
\end{align*}
where $G(\omega)$ takes values in $\{-1,0,1\}$. 
Set $ h_{0}(\omega) := g(\omega)-\frac{d_{0}(\omega)}{d_{1}(\omega)}\psi^{n}_{\sigma^{n}\omega}(g_{1}(\sigma^{n}\omega)) $ and $h_{1}(\omega) := g(\omega)+\frac{d_{0}(\omega)}{d_{1}(\omega)}\psi^{n}_{\sigma^{n}\omega}(g_{1}(\sigma^{n}\omega))$ and define
\begin{align*}
J_{0}(\omega):=\lim_{m\rightarrow \infty}\frac{1}{m}\log\big{\Vert}\psi^{m}_{\omega}\big{(}h_{0}(\omega)\big{)}\big{\Vert} , \ \ \ \ \ J_{1}(\omega):=\lim_{m\rightarrow \infty}\frac{1}{m}\log\big{\Vert}\psi^{m}_{\omega}\big{(}h_{1}(\omega)\big{)}\big{\Vert}.
\end{align*}
It follows that $ J_{0} $ and $ J_{1}$ are measurable. Finally, 
\begin{align*}
 \Pi_{\tilde{H}^{n}_{\omega}||F_{\mu}(\omega)}g({\omega}) = (1 - \chi_{\{ g(\omega) \in F_{\mu_2}(\omega)\}}) \left[ g(\omega) - \chi_{\mu_{2}} \big{(}J_0(\omega)\big{)}h_0(\omega) - \chi_{\mu_{2}}\big{(}J_{1}(\omega)\big{)} h_{1}(\omega) \right]
\end{align*} 
which proves measurability of \eqref{eqn:proj_meas_interm} for $m_1 = 1$. For $ m_{1}>1 $, we invoke the same technique: Let
\begin{align*}
&d_{0}(\omega)=d\big{(}g(\omega),F_{\mu_{2}}(\omega)\oplus\langle\psi^{n}_{\sigma^{n}\omega}(g_{t}(\sigma^{n}\omega))\rangle_{{2\leqslant t\leqslant m_{1}}}\big{)} ,  \\ &d_{1}(\omega)=d\big{(}\psi^{n}_{\sigma^{n}\omega}(g_{1}(\sigma^{n}\omega)),F_{\mu_{2}}(\omega)\oplus\langle\psi^{n}_{\sigma^{n}\omega}(g_{j}(\sigma^{n}\omega))\rangle_{{2\leqslant t\leqslant m_{1}}}\big{)}.
\end{align*}
For $ h_{0}(\omega)=g(\omega)-\frac{d_{0}(\omega)}{d_{1}(\omega)}\psi^{n}_{\sigma^{n}\omega}(g_{1}(\sigma^{n}\omega))  $ and $ h_{1}(\omega)=g(\omega)+\frac{d_{0}(\omega)}{d_{1}(\omega)}\psi^{n}_{\sigma^{n}\omega}(g_{1}(\sigma^{n}\omega)) $ let 
\begin{align*}
&d_{i0}(\omega):=d\big{(}h_{i}(\omega),F_{\mu_{2}}(\omega)\oplus\langle\psi^{n}_{\sigma^{n}\omega}(g_{t}(\sigma^{n}\omega))\rangle_{{3\leqslant t\leqslant m_{1}}}\big{)}, \ i\in\lbrace 0,1\rbrace\\&d_{01}(\omega)=d_{11}(\omega)=d\big{(}\psi^{n}_{\sigma^{n}\omega}(g_{2}(\sigma^{n}\omega)),F_{\mu_{2}}(\omega)\oplus\langle\psi^{n}_{\sigma^{n}\omega}(g_{t}(\sigma^{n}\omega))\rangle_{{3\leqslant t\leqslant m_{1}}}\big{)}.
\end{align*}
For $ i\in\lbrace 0,1\rbrace $ define 
\begin{align*}
h_{0,i}=h_{0}(\omega)+(-1)^{i+1}\frac{d_{00}(\omega)}{d_{01}(\omega)}\psi^{n}_{\sigma^{n}\omega}(g_{2}(\sigma^{n}\omega))\\
h_{1,i}=h_{1}(\omega)+(-1)^{i+1}\frac{d_{10}(\omega)}{d_{11}(\omega)}\psi^{n}_{\sigma^{n}\omega}(g_{2}(\sigma^{n}\omega)).
\end{align*}
We repeat the same procedure with our four new functions. Iterating this, we end up with $ 2^{m_1} $ functions $ \lbrace I_{t}(\omega)\rbrace_{1\leqslant t\leqslant 2^{m_{1}}} $ for which we define $ J_{t}(\omega) := \lim_{m\rightarrow\infty}\frac{1}{m}\log\big{\Vert}\psi^{m}_{\omega}(I_{t}(\omega))\big{\Vert}$. Since
\begin{align*}
 \Pi_{\tilde{H}^{n}_{\omega}||F_{\mu}(\omega)}g({\omega}) =  (1 - \chi_{\{ g(\omega) \in F_{\mu_2}(\omega)\}}) \left[ g(\omega)-\sum_{0\leqslant t\leqslant 2^{m_{1}}}\chi_{\mu_{2}}\big{(}J_{t}(\omega)\big{)}I_{t}(\omega) \right],
\end{align*} 
measurability of \eqref{eqn:proj_meas_interm} follows for arbitrary $m_1$. As a consequence,
\begin{align}\label{eqn:dense_subset_proj}
    \big{\lbrace}\Pi_{F_{\mu_{2}}(\omega)||H^{1}_{\omega}}\big{(}g(\omega)\big{)}\,:\, g\in\Delta_{0} \big{\rbrace}
\end{align}
 is a dense subset of $ F_{\mu_{2}}(\omega) $ and for $ g\in \Delta $ and $ k \geq 0 $,
\begin{align*}
\omega \mapsto \bigg{\Vert}{\psi}^{k}_{\omega}\bigg{(}\Pi_{F_{\mu_{2}}(\omega)||H^{1}_{\omega}}\big{(}g(\omega)\big{)}\bigg{)}\bigg{\Vert} 
\end{align*}
is measurable. For $k = 0$, we obtain measurability of $f_2$ for $i = 1$. We can now repeat the argument above for $i = 2$ using the dense subset in $\eqref{eqn:dense_subset_proj}$ instead of $\Delta_0$ to see that $f_1$ and $f_2$ are also measurable for $i = 2$. The general case follows by induction.
\end{proof}

\begin{remark}
With the same strategy as in Lemma \ref{Proj_}, we can see that for each $ 1 \leq l \leq j $ and $k \geq 0$,
\begin{align*}
f_{1}(\omega) := \big{\Vert}\Pi_{\oplus_{l \leqslant i < j}H_{\omega}^{i}\oplus F_{\mu_{j}}(\omega)}\big{\Vert}, \ f_{2}(\omega) := \big{\Vert}\Pi_{F_{\mu_{j}}(\omega)||\oplus_{l \leqslant i < j}H^{i}_{\omega}}\big{\Vert} \ \text{and} \ f_{3}(\omega) := \Vert\psi^{k}_{\omega}|_{\oplus_{l \leqslant i < j}H^{i}_{\omega}}\Vert
\end{align*}
are measurable.
\end{remark}

\begin{lemma}\label{inve_lim}
For a measurable and non-negative function $f :\Omega\rightarrow\mathbb{R} $ 
\begin{align*}
\lim_{n\rightarrow\infty}\frac{1}{n}f(\theta^{n}\omega)=0 \text{ a.s.}\ \ \ \text{if and only if} \ \ \ \lim_{n\rightarrow\infty}\frac{1}{n}f(\sigma^{n}\omega)=0 \text{ a.s.}
\end{align*}
\end{lemma}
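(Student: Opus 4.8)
The plan is to prove only the nontrivial implication, say that $\lim_n \tfrac1n f(\theta^n\omega)=0$ a.s.\ implies $\lim_n \tfrac1n f(\sigma^n\omega)=0$ a.s., and then obtain the converse for free by exchanging the roles of $\theta$ and $\sigma$, which is allowed because $(\Omega,\mathcal F,\mathbb P,\sigma)$ is again an ergodic measure-preserving system. So assume $\tfrac1n f(\theta^n\omega)\to 0$ a.s. The device I would use is the \emph{forward envelope}
\begin{align*}
F(\omega):=\sup_{n\geq 0}\bigl(f(\theta^n\omega)-n\bigr)\ \in[0,\infty],
\end{align*}
which is measurable and satisfies $F\geq f\geq 0$. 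Since $\tfrac1n f(\theta^n\omega)\to 0$ a.s.\ forces $f(\theta^n\omega)-n\to-\infty$ a.s., we get $F<\infty$ a.s.; and, straight from the definition of $F$, we have $f(\theta^{n}\rho)\leq n+F(\rho)$ for every $n\geq 0$ and every $\rho\in\Omega$. This last trivial inequality, valid at \emph{every} point $\rho$, is the only property of $f$ that will be used. Finally pick $M>0$ with $\mathbb P(A)>0$, where $A:=\{F\leq M\}$ — possible since $\mathbb P(F<\infty)=1$.

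Now I would apply Birkhoff's ergodic theorem to $\chi_A$ and the ergodic map $\sigma$: for a.e.\ $\omega$ the backward orbit $(\sigma^n\omega)_{n\geq 1}$ visits $A$ with asymptotic frequency $\mathbb P(A)>0$, so enumerating these visit times as $n_1(\omega)<n_2(\omega)<\cdots$ we get $n_k/k\to 1/\mathbb P(A)$, and in particular $n_k\to\infty$ and $(n_{k+1}-n_k)/n_k\to 0$. Fix such a generic $\omega$, take $m\geq n_1$, and choose $k$ with $n_k\leq m<n_{k+1}$. Evaluating the inequality above at $\rho=\sigma^{n_{k+1}}\omega\in A$ with exponent $n_{k+1}-m\geq 0$, and using $\theta^{\,n_{k+1}-m}\sigma^{n_{k+1}}\omega=\sigma^m\omega$, gives
\begin{align*}
f(\sigma^m\omega)\leq (n_{k+1}-m)+F(\sigma^{n_{k+1}}\omega)\leq (n_{k+1}-n_k)+M,
\end{align*}
hence $\tfrac1m f(\sigma^m\omega)\leq \bigl((n_{k+1}-n_k)+M\bigr)/n_k\to 0$ as $m\to\infty$. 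Thus $\tfrac1n f(\sigma^n\omega)\to 0$ a.s., which completes the argument.

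The one genuine obstacle is exactly what this device circumvents, so it is worth pointing out: a direct Borel--Cantelli attack does not work, because the forward and backward ``bad'' events $\{f(\theta^{\pm n}\omega)>\varepsilon n\}$ are not independent, so the forward hypothesis (the forward $\limsup$ being null) yields no control on the sum of their probabilities and hence none on the backward $\limsup$. The point of the envelope $F$ together with recurrence of the backward orbit into the sublevel set $\{F\leq M\}$ (Poincaré recurrence, made quantitative by Birkhoff) is precisely to transport the forward sublinear bound on $f$ onto the backward orbit. The only subtlety to be careful about is that the inequality $f(\theta^n\rho)\leq n+F(\rho)$ must hold at \emph{every} point $\rho$ — it does, being nothing but the definition of $F$ — so that it can be freely evaluated along the randomly chosen return times $\sigma^{n_{k+1}}\omega$ without any extra ``good set'' bookkeeping.
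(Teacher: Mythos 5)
Your proof is correct. It rests on the same underlying mechanism as the paper's argument (which follows Feldman, as cited there): apply Birkhoff's theorem for $\sigma$ to the indicator of a ``good'' set of positive measure on which a quantitative forward bound for $f$ holds, use the resulting density of backward-orbit returns to that set, and evaluate the forward bound at a return point $\sigma^{j}\omega$ with $\theta^{j-k}\sigma^{j}\omega=\sigma^{k}\omega$ to control $f(\sigma^{k}\omega)$. The implementation, however, differs in a genuine and rather pleasant way. The paper fixes $\varepsilon>0$, takes as good set $\Omega_{n_0}=\{\omega: f(\theta^i\omega)\le \varepsilon i \ \forall i\ge n_0\}$ of measure $>9/10$, and runs an explicit window-counting argument (the constants $4/5$, $5/3$) to locate a return time $j$ with $n_0<j-k\le \tfrac23 k+1$, finally letting $\varepsilon\to 0$. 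You instead package the forward hypothesis into the single envelope $F(\omega)=\sup_{n\ge 0}(f(\theta^n\omega)-n)$, whose sublevel set $\{F\le M\}$ plays the role of the good set with the $\varepsilon$-free bound $f(\theta^n\rho)\le n+F(\rho)$; the sublinearity of $\tfrac1m f(\sigma^m\omega)$ then comes not from shrinking $\varepsilon$ but from the fact that consecutive return times satisfy $(n_{k+1}-n_k)/n_k\to 0$, a consequence of $n_k/k\to 1/\P(A)$. This removes all the numerical bookkeeping and the final limit in $\varepsilon$, at the modest cost of introducing the auxiliary random variable $F$ (measurable as a countable supremum, finite a.s.\ because $f(\theta^n\omega)-n\to-\infty$). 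Both proofs are equally rigorous; yours is somewhat cleaner to write down, and your closing remark correctly identifies why a naive Borel--Cantelli approach cannot replace the recurrence argument.
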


\begin{proof}
The main idea is due to Jack Feldman, cf. \cite[Lemma 7.2]{LPP95}. Assume that $ \lim_{n\rightarrow\infty}\frac{1}{n}f(\theta^{n}\omega)=0 $ on a set of full measure $\Omega^0$. Let $\epsilon > 0$ and set
\begin{align*}
\Omega_{n}:=\lbrace \omega \in \Omega^0 \, :\,  \forall i\geqslant n \ \ \frac{f(\theta^{i}\omega)}{i}\leqslant \epsilon\rbrace.
\end{align*}
Fom our assumptions, for some $ n_{0} \in \N  $,
\begin{align*}
    \P(\Omega_{n_{0}})>\frac{9}{10}.
\end{align*}
From Birkhoff's ergodic theorem, there is a set of full measure $\Omega^1$ such that for every $ \omega\in \Omega^1$, we can find $ m_{0}=m_{\omega} $ such that for $ m\geqslant m_{0} $,
\begin{align}\label{ZZS}
\frac{1}{m}\sum_{0\leqslant j\leqslant m}\chi_{\Omega_{n_{0}}}(\sigma^{j}\omega) > \frac{9}{10}.
\end{align}
W.l.o.g., we may assume that $\Omega^0 = \Omega^1$. Now for $ k\geqslant \max \lbrace 3n_{0}, m_{0}\rbrace $, set $ m=\lfloor\frac{5}{3}k\rfloor +1$. Then from \eqref{ZZS}
\begin{align*}
\frac{1}{m}\big{[}\sum_{0\leqslant j\leqslant \frac{4m}{5}}\chi_{\Omega_{n_{0}}}(\sigma^{j}\omega)+\sum_{\frac{4m}{5}<j\leqslant m}\chi_{\Omega_{n_{0}}}(\sigma^{j}\omega)\big{]}>\frac{9}{10}.
\end{align*}
Consequently, there exists $ \frac{4m}{5}<j\leqslant m $ such that $ \sigma^{j}\omega\in\Omega_{n_{0}} $. Set $ i := j-k >n_{0}$. Then by the definition of $ \Omega_{n_{0}} $,
\begin{align*}
\frac{f(\theta^{i}\sigma^{j}\omega)}{i}=\frac{f(\sigma^{k}\omega)}{j-k}\leqslant \epsilon.
\end{align*}
Since $ j-k \leq \frac{2}{3}k + 1 $ and $ \epsilon $ is arbitrary, our claim is shown. The other direction can be proved similarly. 
\end{proof}

As a consequence, we obtain the following:
\begin{lemma}\label{lemma:pro_invv}
 For each $ 1 \leq l \leq j $ and $\omega \in \tilde{\Omega}$,
 \begin{align}\label{pro_invv}
\lim_{n\rightarrow\infty}\frac{1}{n}\log\Vert\Pi_{\oplus_{l\leqslant i<j}H^{i}_{\theta^{n}\omega}||F_{\mu_{j}}(\theta^{n}\omega)}\Vert=\lim_{n\rightarrow\infty}\frac{1}{n}\log\Vert\Pi_{\oplus_{l\leqslant i<j}H^{i}_{\sigma^{n}\omega}||F_{\mu_{j}}(\sigma^{n}\omega)}\Vert =0.
\end{align}
\end{lemma}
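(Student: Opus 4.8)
Since every nontrivial complementary projection has operator norm at least $1$, the two quantities in \eqref{pro_invv} are $\geqslant 0$ in the $\liminf$, so it suffices to prove the matching upper bounds, i.e. that the projection norms grow subexponentially along the orbit. By the Remark following Lemma \ref{Proj_}, the map $\omega\mapsto\Vert\Pi_{\oplus_{l\leqslant i<j}H^i_\omega\Vert F_{\mu_j}(\omega)}\Vert$ is measurable and $\geqslant1$, so applying Lemma \ref{inve_lim} to its logarithm reduces the claim to showing that just \emph{one} of the two limits vanishes; we treat the $\sigma$-direction. Moreover, writing $F_{\mu_l}(\omega)=H^l_\omega\oplus F_{\mu_{l+1}}(\omega)$, then splitting $F_{\mu_{l+1}}(\omega)$ further and iterating, one obtains on $F_{\mu_l}(\omega)$ the identity $\Pi_{\oplus_{l\leqslant i<j}H^i_\omega\Vert F_{\mu_j}(\omega)}=\sum_{p=l}^{j-1}\Pi_{H^p_\omega\Vert F_{\mu_{p+1}}(\omega)}\circ\Pi_{F_{\mu_p}(\omega)\Vert H^{p-1}_\omega}\circ\cdots\circ\Pi_{F_{\mu_{l+1}}(\omega)\Vert H^l_\omega}$, together with $\Vert\Pi_{F_{\mu_{q+1}}(\omega)\Vert H^q_\omega}\Vert\leqslant1+\Vert\Pi_{H^q_\omega\Vert F_{\mu_{q+1}}(\omega)}\Vert$. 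As this is a sum of finitely many products of finitely many factors, it is enough to prove $\lim_{n\to\infty}\frac1n\log\Vert\Pi_{H^i_{\sigma^n\omega}\Vert F_{\mu_{i+1}}(\sigma^n\omega)}\Vert=0$ for each fixed $i$.

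Fix $i$ and fix $\epsilon>0$ small. The plan is to run Proposition \ref{comp_} at the shifted base points $\sigma^n\omega$. There the approximants $\tilde H^{(m)}_{\sigma^n\omega}:=\psi^m_{\sigma^{m+n}\omega}(H^m_{\sigma^{m+n}\omega})$, with each $H^m$ a complement satisfying \eqref{CXCC}, converge as $m\to\infty$ to $H^i_{\sigma^n\omega}$ in $d_H$ at an \emph{exponential} rate $\leqslant C(\sigma^n\omega)\,e^{-m(\mu_i-\mu_{i+1}-\epsilon)}$, valid for $m$ beyond a threshold $m_0(\sigma^n\omega)$ (geometric summation of the per-step estimate from the proof of Proposition \ref{comp_}). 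On the other hand, for $\eta\in S_{\tilde H^{(m)}_{\sigma^n\omega}}$, writing $\eta=\psi^m_{\sigma^{m+n}\omega}(h)/\Vert\psi^m_{\sigma^{m+n}\omega}(h)\Vert$ with $h\in S_{H^m_{\sigma^{m+n}\omega}}$, one has $d(\eta,F_{\mu_{i+1}}(\sigma^n\omega))=\Vert[\psi^m_{\sigma^{m+n}\omega}(h)]_{\mu_{i+1}}\Vert/\Vert\psi^m_{\sigma^{m+n}\omega}(h)\Vert$; estimating the numerator from below by \eqref{dis_norm} and Lemma \ref{fra_fra} and the denominator from above by the slow-subspace growth bound $\Vert\psi^m_{\sigma^{m+n}\omega}|_{F_{\mu_i}(\sigma^{m+n}\omega)}\Vert\leqslant e^{m(\mu_i+\epsilon)}$ coming from \eqref{Du_Re_2} with $k=1$, this distance is $\geqslant M_i^{-1}e^{-2m\epsilon}$, hence $\Vert\Pi_{\tilde H^{(m)}_{\sigma^n\omega}\Vert F_{\mu_{i+1}}(\sigma^n\omega)}\Vert\leqslant M_i e^{2m\epsilon}$ for $m$ beyond another threshold $m_1(\sigma^n\omega)$. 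So the approximants have \emph{at most subexponential} (in $m$) projection norms while converging exponentially fast to $H^i_{\sigma^n\omega}$.

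For each $n$ one now chooses $m=m(n)$ to be the least $m\geqslant\max\{m_0(\sigma^n\omega),m_1(\sigma^n\omega)\}$ with $C(\sigma^n\omega)M_i\,e^{-m(\mu_i-\mu_{i+1}-3\epsilon)}<\tfrac1{2m_i}$; then the $d_H$-distance between $\tilde H^{(m(n))}_{\sigma^n\omega}$ and $H^i_{\sigma^n\omega}$ is small compared to the reciprocal of the approximant's projection norm, so a standard perturbation estimate for projections under a small $d_H$-change of one summand (as in Lemma \ref{Bli}) yields $\Vert\Pi_{H^i_{\sigma^n\omega}\Vert F_{\mu_{i+1}}(\sigma^n\omega)}\Vert\leqslant 2M_i\,e^{2\epsilon m(n)}$. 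Hence $\frac1n\log\Vert\Pi_{H^i_{\sigma^n\omega}\Vert F_{\mu_{i+1}}(\sigma^n\omega)}\Vert\leqslant\frac1n\big(\log(2M_i)+2\epsilon\,m(n)\big)$, and it remains only to check that $\limsup_n\frac{m(n)}{n}$ stays bounded as $\epsilon\downarrow0$. Since $m(n)$ is controlled by $m_0(\sigma^n\omega)$, $m_1(\sigma^n\omega)$ and $\log C(\sigma^n\omega)$, this amounts to showing that these auxiliary quantities are \emph{tempered} along the $\sigma$-orbit: $\frac1n\log C(\sigma^n\omega)\to0$, and $\frac1n m_0(\sigma^n\omega)$, $\frac1n m_1(\sigma^n\omega)$ stay bounded by a constant that does not degenerate as $\epsilon\downarrow0$. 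Here $C(\sigma^n\omega)$ is built by geometric summation from finite products of $\Vert\psi^1_{\sigma^m\bullet}\Vert$- and $\Vert\psi^m_{\sigma^m\bullet}|_{F_{\mu_{i+1}}}\Vert$-type factors and the quotient norms of Lemma \ref{fra_fra}, each of which is tempered in both time directions by Birkhoff's theorem, \eqref{Du_Re_2}, Lemma \ref{fra_fra} and—crucially—Lemma \ref{inve_lim}; the thresholds are governed by the same convergences. Letting $\epsilon\downarrow0$ then gives $\limsup_n\frac1n\log\Vert\Pi_{H^i_{\sigma^n\omega}\Vert F_{\mu_{i+1}}(\sigma^n\omega)}\Vert\leqslant0$, as required.

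\textbf{Main obstacle.} The only genuinely delicate point is the last step: checking that all the constants produced by Proposition \ref{comp_}, when that proposition is run with base point $\sigma^n\omega$, grow subexponentially (resp. at a controlled linear rate) in $n$. The clean way to handle this is to re-derive Proposition \ref{comp_} once more with the base point left as a free variable, carrying explicit tempered bounds throughout; the reduction to one time direction, the reduction to elementary blocks, and the perturbation estimate from Lemma \ref{Bli} are all routine.
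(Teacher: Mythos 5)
Your reductions are sound and partly coincide with the paper's: the projection norms are at least $1$, measurability comes from the Remark after Lemma \ref{Proj_}, Lemma \ref{inve_lim} lets you prove the statement in only one time direction, and the factorization into elementary blocks $\Pi_{H^{i}_{\cdot}||F_{\mu_{i+1}}(\cdot)}$ is a legitimate way to handle the sum over several Lyapunov blocks. The problem is the direction you then choose to prove. The paper establishes the $\theta$-direction first: $H^{i}_{\theta^{n}\omega}=\psi^{n}_{\omega}(H^{i}_{\omega})$ is the forward image of a single fixed complement, so the generalization of \cite[Lemma 4.4]{GRS19} already invoked in the proof of Lemma \ref{fra_fra} gives $\frac{1}{n}\log\Vert\Pi_{\psi^{n}_{\omega}(H^{i}_{\omega})||F_{\mu_{i+1}}(\theta^{n}\omega)}\Vert\rightarrow 0$ directly, and Lemma \ref{inve_lim} then transfers this to the $\sigma$-direction for free. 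You instead attack the $\sigma$-direction head-on by re-running Proposition \ref{comp_} at every shifted base point $\sigma^{n}\omega$.

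That choice creates a genuine gap, which you flag yourself but dismiss as routine: you need $\frac{1}{n}\log C(\sigma^{n}\omega)\rightarrow 0$ and $m_{0}(\sigma^{n}\omega)/n$, $m_{1}(\sigma^{n}\omega)/n$ bounded, where $C$, $m_{0}$, $m_{1}$ are the constants and thresholds implicit in the almost-sure asymptotics of Lemma \ref{fra_fra}, \eqref{Du_Re_2} and Proposition \ref{comp_}. These thresholds are of the form ``first time after which an a.s.\ convergent sequence stays within $\epsilon$ of its limit,'' and controlling such quantities subexponentially along the \emph{backward} orbit is not automatic: it is precisely the kind of two-sided temperedness assertion that the lemma itself encodes, and establishing it would require either a separate application of Lemma \ref{inve_lim} to each auxiliary random variable (after first checking its measurability) or a fully quantitative re-derivation of Proposition \ref{comp_} with the base point as a free parameter. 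As written, the step you defer to the ``main obstacle'' is the entire content of the backward-direction claim, so the proof is incomplete where it matters. The repair is short: prove the $\theta$-direction instead (one application of the generalized \cite[Lemma 4.4]{GRS19} with $H_{\omega}=H^{i}_{\omega}$, or with $H_{\omega}=\oplus_{l\leqslant i<j}H^{i}_{\omega}$ to avoid the block decomposition altogether), and then let Lemma \ref{inve_lim}, exactly as you set it up, deliver the $\sigma$-direction.
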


\begin{proof}
 Follows from a straightforward generalization of \cite[Lemma 4.4]{GRS19} and Lemma \ref{inve_lim}.
\end{proof}

The following lemma characterizes the spaces $H^{i}_{\omega}$ as `fast' growing subspaces.

\begin{proposition}\label{In_In_pr}
For $\omega \in \tilde{\Omega}$, every $ i\geqslant N $ and $ \xi_{\omega}\in H^{i}_{\omega} $,
\begin{align}\label{eqn:forward_fast_growing}
 \lim_{n\rightarrow\infty}\frac{1}{n}\log\Vert \psi^{n}_{\omega}(\xi_{\omega}) \Vert  = \lim_{n\rightarrow\infty}\frac{1}{n}\log\Vert \psi^{n}_{\omega} |_{H^{i}_{\omega}}\Vert = \mu_{i}
\end{align}
and 
\begin{align}\label{eqn:backward_fast_growing}
\lim_{n\rightarrow\infty}\frac{1}{n}\log\Vert(\psi^{n}_{\sigma^{n}\omega})^{-1}(\xi_{\omega})\Vert = \lim_{n\rightarrow\infty}\frac{1}{n}\log\Vert(\psi^{n}_{\sigma^{n}\omega}|_{H^{i}_{\omega}} )^{-1} \Vert = -\mu_{i}.
\end{align}

\end{proposition}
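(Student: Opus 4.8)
We may assume $\xi_\omega \neq 0$ throughout. The plan is to establish the four limits in turn: first the two ``pointwise'' statements (for a fixed nonzero $\xi_\omega\in H^i_\omega$), then upgrade each to the corresponding operator-norm statement by exploiting that $H^i_\omega$ is finite-dimensional of dimension $m_i$.

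For the forward pointwise limit I would observe that any nonzero $\xi_\omega\in H^i_\omega$ lies in $F_{\mu_i}(\omega)\setminus F_{\mu_{i+1}}(\omega)$: indeed $H^i_\omega\subset F_{\mu_i}(\omega)$ since $H^i_\omega\oplus F_{\mu_{i+1}}(\omega)=F_{\mu_i}(\omega)$, while $H^i_\omega\cap F_{\mu_{i+1}}(\omega)=\{0\}$. Hence Theorem \ref{thm:MET_Banach_fields} immediately gives $\lim_n\frac1n\log\Vert\psi^n_\omega(\xi_\omega)\Vert=\mu_i$. To pass to the operator norm, fix a basis $e_1,\dots,e_{m_i}$ of $H^i_\omega$ with $\Vert e_j\Vert=1$; by finite-dimensionality there is a constant $C_\omega$, \emph{independent of $n$}, with $\sum_j|c_j|\le C_\omega$ whenever $\Vert\sum_j c_j e_j\Vert\le 1$, so that $\Vert\psi^n_\omega|_{H^i_\omega}\Vert\le C_\omega m_i\max_j\Vert\psi^n_\omega(e_j)\Vert$. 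Together with the trivial bound $\Vert\psi^n_\omega|_{H^i_\omega}\Vert\ge\Vert\psi^n_\omega(e_1)\Vert$ this yields $\lim_n\frac1n\log\Vert\psi^n_\omega|_{H^i_\omega}\Vert=\mu_i$, proving \eqref{eqn:forward_fast_growing}.

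For the backward statement, fix a nonzero $\xi_\omega\in H^i_\omega$ and put $\eta_n:=(\psi^n_{\sigma^n\omega}|_{H^i_{\sigma^n\omega}})^{-1}(\xi_\omega)\in H^i_{\sigma^n\omega}$, which is well-defined and nonzero because invariance and injectivity make $\psi^n_{\sigma^n\omega}$ restrict to a bijection $H^i_{\sigma^n\omega}\to H^i_\omega$. Since $H^i_{\sigma^n\omega}\cap F_{\mu_{i+1}}(\sigma^n\omega)=\{0\}$ we have $[\eta_n]_{\mu_{i+1}}\neq 0$, and the elementary projection/quotient estimate (as in the proof of Lemma \ref{fra_L}) gives $\Vert[\eta_n]_{\mu_{i+1}}\Vert\le\Vert\eta_n\Vert\le\Vert\Pi_{H^i_{\sigma^n\omega}||F_{\mu_{i+1}}(\sigma^n\omega)}\Vert\,\Vert[\eta_n]_{\mu_{i+1}}\Vert$. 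As Lemma \ref{lemma:pro_invv} yields $\frac1n\log\Vert\Pi_{H^i_{\sigma^n\omega}||F_{\mu_{i+1}}(\sigma^n\omega)}\Vert\to 0$, it suffices to compute $\lim_n\frac1n\log\Vert[\eta_n]_{\mu_{i+1}}\Vert$. I would apply Lemma \ref{fra_fra} to $\zeta_n:=\eta_n/\Vert[\eta_n]_{\mu_{i+1}}\Vert$, which satisfies $\zeta_n\in F_{\mu_i}(\sigma^n\omega)\setminus F_{\mu_{i+1}}(\sigma^n\omega)$ and $\Vert[\zeta_n]_{\mu_{i+1}}\Vert=1$; since $\psi^n_{\sigma^n\omega}(\zeta_n)=\xi_\omega/\Vert[\eta_n]_{\mu_{i+1}}\Vert$, Lemma \ref{fra_fra} gives $\frac1n\log(\Vert[\xi_\omega]_{\mu_{i+1}}\Vert/\Vert[\eta_n]_{\mu_{i+1}}\Vert)\to\mu_i$, hence $\frac1n\log\Vert[\eta_n]_{\mu_{i+1}}\Vert\to-\mu_i$ and therefore $\frac1n\log\Vert\eta_n\Vert\to-\mu_i$. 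Promoting this to $\Vert(\psi^n_{\sigma^n\omega}|_{H^i_{\sigma^n\omega}})^{-1}\Vert$ proceeds exactly as in the forward case, via the fixed basis of $H^i_\omega$ and $n$-independent coefficient bounds, and proves \eqref{eqn:backward_fast_growing}. Throughout one works on the intersection of the $\theta$-invariant full-measure sets provided by Theorem \ref{thm:MET_Banach_fields}, Lemma \ref{fra_fra} and Lemma \ref{lemma:pro_invv}.

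The main obstacle is the backward half: one must track carefully in which fibre ($\omega$ versus $\sigma^n\omega$) each vector lives, verify that Lemma \ref{fra_fra} legitimately applies to the specific sequence $(\zeta_n)$ (its conclusion holding on a single full-measure set, uniformly over all admissible normalized sequences), and ensure that the perturbation coming from the projection norms $\Vert\Pi_{H^i_{\sigma^n\omega}||F_{\mu_{i+1}}(\sigma^n\omega)}\Vert$ is genuinely subexponential, which is exactly the content of Lemma \ref{lemma:pro_invv}. The forward half is, by comparison, an immediate consequence of the MET once the finite-dimensional reduction is in place.
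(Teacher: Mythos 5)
Your proposal is correct and follows essentially the same route as the paper: the forward limits come from the MET (via the filtration characterization plus a finite-dimensional reduction to the operator norm), and the backward limits come from sandwiching $\Vert\eta_n\Vert$ between $\Vert[\eta_n]_{\mu_{i+1}}\Vert$ and $\Vert\Pi_{H^i_{\sigma^n\omega}\Vert F_{\mu_{i+1}}(\sigma^n\omega)}\Vert\,\Vert[\eta_n]_{\mu_{i+1}}\Vert$, applying Lemma \ref{fra_fra} and the subexponential projection bound of Lemma \ref{lemma:pro_invv}. The only (harmless) difference is that you extract the exact backward limit directly from Lemma \ref{fra_fra}, whereas the paper obtains the matching lower bound from the inequality $\Vert\xi_\omega\Vert\leqslant\Vert\psi^n_{\sigma^n\omega}|_{H^i_{\sigma^n\omega}}\Vert\,\Vert(\psi^n_{\sigma^n\omega})^{-1}(\xi_\omega)\Vert$ together with the forward result.
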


\begin{proof}
The equalities \eqref{eqn:forward_fast_growing} follow by applying the  Multiplicative Ergodic Theorem \cite[Theorem 4.17]{GRS19} to the map $\psi^n_\omega \vert_{H^i_{\omega}} \colon H^i_{\omega} \to H^i_{\theta^n \omega}$. It remains to prove \eqref{eqn:backward_fast_growing}. By definition, for every $ \xi_{\omega}\in H^{i}_{\omega} $,
\begin{align*}
\frac{\Vert(\psi^{n}_{\sigma^{n}\omega})^{-1}(\xi_{\omega})\Vert}{\Vert[\xi_{\omega}]_{\mu_{i+1}}\Vert} \times \frac{\big{\Vert}\big{[}\psi^{n}_{\sigma^{n}\omega}\big{(}(\psi^{n}_{\sigma^{n}\omega})^{-1}(\xi_{\omega}) \big{)} \big{]}_{\mu_{i+1}}\big{\Vert}}{\Vert [(\psi^{n}_{\sigma^{n}\omega})^{-1}(\xi_{\omega})]_{\mu_{i+1}}\Vert} = \frac{\Vert(\psi^{n}_{\sigma^{n}\omega})^{-1}(\xi_{\omega})\Vert}{\Vert [(\psi^{n}_{\sigma^{n}\omega})^{-1}(\xi_{\omega})]_{\mu_{i+1}}\Vert} \leqslant \Vert \Pi_{H^{i}_{\sigma^{n}\omega}||F_{\mu_{i+1}}(\sigma^{n}\omega)}\Vert.
\end{align*}
From Lemma \ref{fra_fra},
\begin{align*}
\lim_{n\rightarrow\infty}\frac{1}{n}\inf _{\bar{\xi}_{\sigma^{n}\omega}\in H^{i}_{\sigma^{n}\omega}}\frac{\Vert[\psi^{n}_{\sigma^{n}\omega}(\bar{\xi}_{\sigma^{n}\omega})]_{\mu_{i+1}}\Vert}{\Vert [\bar{\xi}_{\sigma^{n}\omega}]_{\mu_{i+1}}\Vert} = \lim_{n\rightarrow\infty}\frac{1}{n} \frac{\Vert[\psi^{n}_{\sigma^{n}\omega}(\hat{\xi}_{\sigma^{n}\omega})]_{\mu_{i+1}}\Vert}{\Vert [\hat{\xi}_{\sigma^{n}\omega}]_{\mu_{i+1}}\Vert} = \mu_{i}
\end{align*}
where $\hat{\xi}_{\sigma^{n}\omega} \in H^i_{\sigma^n \omega}$ is chosen such that
\begin{align*}
   \frac{\Vert[\psi^{n}_{\sigma^{n}\omega}(\hat{\xi}_{\sigma^{n}\omega})]_{\mu_{i+1}}\Vert}{\Vert [\hat{\xi}_{\sigma^{n}\omega}]_{\mu_{i+1}}\Vert} = \min _{\bar{\xi}_{\sigma^{n}\omega}\in H^{i}_{\sigma^{n}\omega}} \frac{\Vert[\psi^{n}_{\sigma^{n}\omega}(\bar{\xi}_{\sigma^{n}\omega})]_{\mu_{i+1}}\Vert}{\Vert [\bar{\xi}_{\sigma^{n}\omega}]_{\mu_{i+1}}\Vert}.
\end{align*}
Consequently, from \eqref{pro_invv},
\begin{align*}
\limsup_{n\rightarrow\infty}\frac{1}{n}\log\Vert(\psi^{n}_{\sigma^{n}\omega} |_{H^{i}_{\omega}} )^{-1}\Vert\leqslant -\mu_{i}
\end{align*}
Finally, from inequality $ \Vert\xi_{\omega}\Vert\leqslant \Vert\psi^{n}_{\sigma^{n}\omega}|_{H^{i}_{\sigma^{n}\omega}}\Vert \Vert (\psi^{n}_{\sigma^{n}\omega})^{-1}(\xi_{\omega})\Vert $, Lemma \ref{lemma:reverse_sub} and \eqref{eqn:forward_fast_growing}, the equalities \eqref{eqn:backward_fast_growing} can be deduced.
\end{proof}

\begin{lemma}\label{lemma:dist_forw_backw}
 Let $\omega \in \tilde{\Omega}$ and $i < k$. For every $i \leq j < k$, let $\{\xi^t_{\omega} \}_{t \in I_j}$ be a basis of $H^j_{\omega}$. Set $I := \cup_{i \leq j < k} I_j$ and assume $\xi^t_{\omega} \in H^j_{\omega}$. Then
 \begin{align}\label{eqn:dist_forw}
  \lim_{n \to \infty} \frac{1}{n} \log d (\psi^n_{\omega}(\xi^t_{\omega}), \langle \psi^n_{\omega}(\xi^{t'}_{\omega}) \rangle_{t' \in I\setminus \{t\}} ) = \mu_j
 \end{align}
 and
 \begin{align}\label{eqn:dist_backw}
  \lim_{n \to \infty} \frac{1}{n} \log d ((\psi^n_{\sigma^n \omega})^{-1} (\xi^t_{\omega}), \langle (\psi^n_{\sigma^n \omega})^{-1} (\xi^{t'}_{\omega}) \rangle_{t' \in I\setminus \{t\}} ) = - \mu_j.
 \end{align}

\end{lemma}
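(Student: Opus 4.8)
The plan is to establish \eqref{eqn:dist_forw} first and then deduce \eqref{eqn:dist_backw} by the same duality/reversal trick used in the proof of Proposition \ref{In_In_pr}. For \eqref{eqn:dist_forw} the natural strategy is a two-sided estimate. The upper bound $\limsup_n \frac{1}{n}\log d(\psi^n_\omega(\xi^t_\omega), \langle \psi^n_\omega(\xi^{t'}_\omega)\rangle_{t'\in I\setminus\{t\}}) \leq \mu_j$ is immediate: the distance is bounded by $\|\psi^n_\omega(\xi^t_\omega)\|$, which by Proposition \ref{In_In_pr} (applied to $H^j_\omega$, so $\xi^t_\omega\in H^j_\omega$) grows like $\mu_j$. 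For the lower bound one argues by contradiction or, more directly, by relating the distance to a volume. Write $V_n := \operatorname{Vol}\big((\psi^n_\omega(\xi^t_\omega))_{t\in I}\big)$, ordering the basis so that within each block $I_j$ the vectors of $H^j_\omega$ come together and the blocks are ordered by decreasing $j$ (i.e. faster-growing subspaces first). By the definition of $\operatorname{Vol}$ and the fact (from the proof of \cite[Lemma 4.7]{GRS19}) that $\operatorname{Vol}$ is symmetric up to a constant, we have, up to a bounded multiplicative factor,
\begin{align*}
 V_n \asymp \prod_{t\in I} d\big(\psi^n_\omega(\xi^t_\omega), \langle \psi^n_\omega(\xi^{t'}_\omega)\rangle_{t'\prec t}\big) \asymp \prod_{t\in I} d\big(\psi^n_\omega(\xi^t_\omega), \langle \psi^n_\omega(\xi^{t'}_\omega)\rangle_{t'\in I\setminus\{t\}}\big),
\end{align*}
the last step using the block/ordering structure together with the splitting $E_\omega = H^1_\omega\oplus\cdots\oplus F_{\mu_{k}}(\omega)$ and the uniform (in $n$, by Lemma \ref{lemma:pro_invv}) control of the relevant projection norms, which makes "distance to the span of all the others" comparable to "distance to the span of the preceding ones".

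The key computation is then to identify the exponential growth rate of $V_n$. On one hand, since $\oplus_{i\leq j<k} H^j_\omega$ is $\psi$-invariant and equals $F_{\mu_i}(\omega)$ intersected appropriately — more precisely it is a complement of $F_{\mu_k}(\omega)$ inside $F_{\mu_i}(\omega)$ — the growth of $\operatorname{Vol}$ of a full basis of it is governed by $D_{|I|}$ of the restricted cocycle, and by Lemma \ref{lemma:forw_back}, equation \eqref{Du_Re_2}, together with the uniform projection bounds of Lemma \ref{lemma:pro_invv},
\begin{align*}
 \lim_{n\to\infty}\frac{1}{n}\log V_n = \sum_{i\leq j<k} m_j\,\mu_j = \sum_{t\in I} \mu_{j(t)},
\end{align*}
where $j(t)$ denotes the block containing $t$. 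On the other hand, using the upper bound already proved for each individual distance, $\limsup_n \frac{1}{n}\log d(\psi^n_\omega(\xi^t_\omega), \langle\cdots\rangle) \leq \mu_{j(t)}$ for every $t$; combined with the product formula and the exact rate of $V_n$, no single factor can have strictly smaller exponential rate than $\mu_{j(t)}$, so in fact the limit exists and equals $\mu_{j(t)} = \mu_j$ for the distinguished index $t$. This squeezing argument is the heart of the proof.

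Finally, \eqref{eqn:dist_backw} follows by replacing $\omega$ with $\sigma^n\omega$ and $\psi^n_\omega$ with $(\psi^n_{\sigma^n\omega})^{-1}$ throughout: the maps $(\psi^n_{\sigma^n\omega})^{-1}$ restricted to $\oplus_{i\leq j<k}H^j_\omega$ form the inverse cocycle, whose volume growth rates are $-\mu_j$ by Proposition \ref{In_In_pr} (the backward statement \eqref{eqn:backward_fast_growing}) and the dual version of \eqref{Du_Re_2}; the same product/squeeze argument then gives the rate $-\mu_j$ for the individual backward distance. The main obstacle I anticipate is the bookkeeping in the first displayed comparison: justifying that "distance to the span of the other $t'$" is exponentially comparable to the successive-distance factors in $\operatorname{Vol}$, uniformly in $n$. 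This requires invoking Lemma \ref{lemma:pro_invv} to control the angles between the images $\psi^n_\omega(H^j_\omega)$ of distinct fast-growing blocks (which stay uniformly transversal on the exponential scale), plus the symmetry-up-to-a-constant of $\operatorname{Vol}$; once that is in place, the rate identification via \eqref{Du_Re_2} and the squeeze are routine.
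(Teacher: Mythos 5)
Your overall architecture --- bound each distance from above by $\Vert\psi^n_{\omega}(\xi^t_{\omega})\Vert$ via Proposition \ref{In_In_pr}, identify the exponential rate of the volume $V_n$ of the full family, then recover each individual distance from the symmetry of $\operatorname{Vol}$ up to a constant plus a squeeze --- is the same as the paper's, which establishes \eqref{BBVC} and then divides out the remaining factors (the paper works with the backward maps first and declares the forward case analogous; that difference is cosmetic). The place where you genuinely deviate is also where your argument has a gap: the identification $\lim_n \frac{1}{n}\log V_n=\sum_{i\leq j<k}m_j\mu_j$. You want to read the lower bound off from $D_{|I|}\big(\psi^n_{\omega}|_{F_{\mu_i}(\omega)}\big)$ via \eqref{Du_Re_2}, but that quantity is a supremum over \emph{all} unit $|I|$-tuples in $F_{\mu_i}(\omega)$, so \eqref{Du_Re_2} only gives the upper bound for free; the lower bound requires showing that your fixed basis of $\oplus_{i\leq j<k}H^j_{\omega}$ nearly attains the supremum, which is not automatic (a priori the optimizing tuples could use directions in $F_{\mu_k}(\omega)$, and $\operatorname{Vol}$ is not stable under perturbation until one already controls the successive distances from below). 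Supplying exactly this is what the paper's bottom-up computation does: it first proves the single-block case by sandwiching the ratio of distances between $\Vert\psi^{n}_{\sigma^{n}\omega}|_{H^{i}_{\sigma^{n}\omega}}\Vert^{-1}$ and $\Vert(\psi^{n}_{\sigma^{n}\omega}|_{H^{i}_{\omega}})^{-1}\Vert$, then the case where $t$ lies in the top block by applying $\Pi_{H^{i}_{\sigma^{n}\omega}||F_{\mu_{i+1}}(\sigma^{n}\omega)}$ and Lemma \ref{lemma:pro_invv}, and only then multiplies the resulting successive distances to obtain \eqref{BBVC}.

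A second, related soft spot: your comparison $\prod_t d\big(v_t,\langle v_{t'}\rangle_{t'\prec t}\big)\asymp\prod_t d\big(v_t,\langle v_{t'}\rangle_{t'\in I\setminus\{t\}}\big)$ cannot be justified by inter-block projection bounds alone, because for $t'$ a block-mate of $t$ the two spans differ \emph{inside} a single $H^j_{\omega}$, where Lemma \ref{lemma:pro_invv} says nothing; controlling that again requires the single-block estimate above. Both gaps are fillable --- either by the paper's route, or by applying the MET directly to the finite-dimensional restricted cocycle on $\oplus_{i\leq j<k}H^j_{\omega}$ and using that $\operatorname{Vol}(Tx_1,\dots,Tx_m)/\operatorname{Vol}(x_1,\dots,x_m)$ is basis-independent up to a dimensional constant --- and once the lower bound on $V_n$ is in place your squeeze and the passage to the backward statement go through as you describe.
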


\begin{proof}
 We will prove \eqref{eqn:dist_backw} only, the proof for \eqref{eqn:dist_forw} is completely analogous. First, we claim that the statement is true for $j = i$ and $k = i+1$. Indeed, in this case we have the inequalities
 \begin{align*}
\frac{1}{\Vert\psi^{n}_{\sigma^{n}\omega}|_{H^{i}_{\sigma^{n}\omega}}\Vert}\leqslant \frac{d\big{(}(\psi^{n}_{\sigma^{n}\omega})^{-1}(\xi^{t}_{\omega}),\langle (\psi^{n}_{\sigma^{n}\omega})^{-1}(\xi^{t^{\prime}}_{\omega})\rangle_{t^{\prime}\in I\setminus\lbrace t\rbrace }\big{)}}{d\big{(}\xi^{t}_{\omega},\langle\xi^{t^{\prime}}_{\omega}\rangle_{t^{\prime}\in I\setminus\lbrace t\rbrace}\big{)}}\leqslant \Vert (\psi^{n}_{\sigma^{n}\omega})^{-1}|_{H^{i}_{\omega}}\Vert
\end{align*}
and we can conclude with Proposition \ref{In_In_pr}. For arbitrary $k$ and $j = i$, we can use the inequalities
\begin{align*}
1\leqslant\frac{d\big{(}(\psi^{n}_{\sigma^{n}\omega})^{-1}(\xi^{t}_{\omega}),\langle (\psi^{n}_{\sigma^{n}\omega})^{-1}(\xi^{t^{\prime}}_{\omega})\rangle_{t^{\prime}\in I_{i}\setminus\lbrace t\rbrace }\big{)}}{d\big{(}(\psi^{n}_{\sigma^{n}\omega})^{-1}(\xi^{t}_{\omega}),\langle (\psi^{n}_{\sigma^{n}\omega})^{-1}(\xi^{t^{\prime}}_{\omega})\rangle_{t^{\prime}\in I\setminus\lbrace t\rbrace }\big{)}}\leqslant \Vert\Pi_{H^{i}_{\sigma^{n}\omega}||F_{\mu_{i+1}}(\sigma^{n}\omega)}\Vert,
\end{align*}
Lemma \ref{lemma:pro_invv} and our previous result above. The definition of $\operatorname{Vol}$ allows to deduce that
\begin{align}\label{BBVC}
\lim_{n\rightarrow\infty} \frac{1}{n} \log \operatorname{Vol}\big{(}\big{(}(\psi^{n}_{\sigma^n \omega})^{-1}(\xi^{t}_{\omega})\big{)}_{t\in I_{k-1}},...,\big{(}(\psi^{n}_{\sigma^n \omega})^{-1}(\xi^{t}_{\omega})\big{)}_{t\in I_{i}}\big{)}=\sum_{i\leqslant j<k}-\mu_{j}\vert I_{j}\vert.
\end{align}
Since $\operatorname{Vol}$ is symmetric up to a constant, the claim \eqref{eqn:dist_backw} follows for arbitrary $j$.

\end{proof}

The following theorem summarizes the main result of this section.

\begin{theorem}\label{BT}
 There is a $\theta$-invariant set of full measure $\tilde{\Omega}$ such that for every $i \geq 1$ with $\mu_i > \mu_{i+1}$ and $\omega \in \tilde{\Omega}$, there is an $m_i$-dimensional subspace $H^i_\omega$ with the following properties:
\begin{itemize}
 \item[(i)] (Invariance)\ \ $\psi_{\omega}^k(H^i_{\omega}) = H^i_{\theta^k \omega}$ for every $k \geq 0$.
 \item[(ii)] (Splitting)\ \ $H_{\omega}^i \oplus F_{\mu_{i+1}}(\omega) = F_{\mu_i}(\omega)$. In particular,
 \begin{align*}
  E_{\omega} = H^1_{\omega} \oplus \cdots \oplus H^i_{\omega} \oplus   F_{\mu_{i+1}}(\omega).
 \end{align*}
 \item[(iii)] ('Fast' growing subspace I)\ \ For each $ h_{\omega}\in H^{i}_{\omega} $,
 \begin{align*}
  \lim_{n\rightarrow\infty}\frac{1}{n}\log\Vert \psi^{n}_{\omega}(h_{\omega})\Vert = \mu_{j}.
 \end{align*}
 \item[(iv)] ('Fast' growing subspace II)\ \ For each $ h_{\omega}\in H^{i}_{\omega} $,
\begin{align*}
\lim_{n\rightarrow\infty}\frac{1}{n}\log\Vert (\psi^{n}_{\sigma ^{n}\omega})^{-1}(h_{\omega})\Vert =-\mu_{j}.
\end{align*}
  \item[(v)] If $\{\xi^{t}_{\omega} \}_{1\leqslant t\leqslant m}$ is a basis of $ \oplus_{1\leqslant i\leqslant j}H^{i}_{\omega}$, then
\begin{align}
\begin{split}
&\lim_{n\rightarrow\infty}\frac{1}{n}\log \operatorname{Vol}\big{(}\psi^{n}_{\omega}(\xi^{1}_{\omega}),...,\psi^{n}_{\omega}(\xi^{m}_{\omega})\big{)} = \sum_{1\leqslant i\leqslant j}m_{i}\mu_{i} \quad \text{and} \\
&\lim_{n\rightarrow\infty}\frac{1}{n}\log \operatorname{Vol}\big{(}(\psi^{n}_{\sigma^{n}\omega})^{-1}(\xi^{1}_{\omega}),...,(\psi^{n}_{\sigma^{n}\omega})^{-1}(\xi^{m}_{\omega})\big{)}=\sum_{1\leqslant i\leqslant j}-m_{i}\mu_{i}.
\end{split}
\end{align}

\end{itemize}

\end{theorem}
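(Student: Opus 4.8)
The theorem assembles the results established in this section, so the plan is to fix one $\theta$-invariant full-measure set, record the definition of the spaces $H^i_\omega$, and then identify each of (i)--(v) with the corresponding statement proved above. First I would let $\tilde{\Omega}$ be the intersection of all the $\theta$-invariant sets of full measure produced so far in this section; being a countable intersection, $\tilde{\Omega}$ is again $\theta$-invariant and of full measure. For every $i$ with $\mu_i > \mu_{i+1}$ and every $\omega \in \tilde{\Omega}$, I would set $H^i_\omega := \tilde{H}_\omega$, the $d_H$-limit in $G_{m_i}(F_{\mu_i}(\omega))$ of the Cauchy sequence $\tilde{H}^n_\omega = \psi^n_{\sigma^n\omega}(H^n_{\sigma^n\omega})$ constructed in Proposition~\ref{comp_}. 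By Lemma~\ref{lemma:first_prop_H}(iii) this limit does not depend on the choice of complements $H^n_{\sigma^n\omega}$ satisfying \eqref{CXCC}, so $H^i_\omega$ is well defined; it is contained in $F_{\mu_i}(\omega)$ and has dimension $m_i$. Property (i) is then exactly Lemma~\ref{lemma:first_prop_H}(i).

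Next I would treat (ii): Lemma~\ref{lemma:first_prop_H}(ii) gives $H^i_\omega \cap F_{\mu_{i+1}}(\omega) = \{0\}$, and combined with $H^i_\omega \subset F_{\mu_i}(\omega)$, $\dim H^i_\omega = m_i$ and $\dim\big(F_{\mu_i}(\omega)/F_{\mu_{i+1}}(\omega)\big) = m_i$ this forces $H^i_\omega \oplus F_{\mu_{i+1}}(\omega) = F_{\mu_i}(\omega)$; the splitting is topological because the bound \eqref{CXCC} on the associated projection passes to the $d_H$-limit (and the projections are measurable by Lemma~\ref{Proj_}). Iterating this identity along the Oseledets filtration $E_\omega = F_{\mu_1}(\omega) \supset F_{\mu_2}(\omega) \supset \cdots$ then gives $E_\omega = H^1_\omega \oplus \cdots \oplus H^i_\omega \oplus F_{\mu_{i+1}}(\omega)$. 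Properties (iii) and (iv) are precisely the two limit statements of Proposition~\ref{In_In_pr}, applied to the block $H^i_\omega$, on which every $\psi^n_\omega$ is injective with range $H^i_{\theta^n\omega}$, so that the inverse appearing in (iv) makes sense.

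It remains to establish (v). I would first assume that the basis $\{\xi^t_\omega\}_{1\leqslant t\leqslant m}$ is \emph{adapted}, that is, a union of bases of $H^1_\omega,\dots,H^j_\omega$. Using positive homogeneity of $\operatorname{Vol}$ in each argument and the inclusion $\langle\,\cdot\,\rangle_{t'<t}\subset\langle\,\cdot\,\rangle_{t'\neq t}$, one sandwiches $\prod_{t}d\big(\psi^n_\omega(\xi^t_\omega),\langle\psi^n_\omega(\xi^{t'}_\omega)\rangle_{t'\neq t}\big)\leqslant\operatorname{Vol}\big(\psi^n_\omega(\xi^1_\omega),\dots,\psi^n_\omega(\xi^m_\omega)\big)\leqslant\prod_{t}\|\psi^n_\omega(\xi^t_\omega)\|$; by \eqref{eqn:dist_forw} the left-hand side and by (iii) the right-hand side both have logarithmic growth rate $\sum_{1\leqslant i\leqslant j}m_i\mu_i$, which is the first assertion, and the second follows from the same sandwich applied to $(\psi^n_{\sigma^n\omega})^{-1}$ via \eqref{eqn:dist_backw} and (iv) (this is essentially \eqref{BBVC}). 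For a \emph{general} basis $\{\eta^t_\omega\}$ of $\oplus_{1\leqslant i\leqslant j}H^i_\omega$, I would write $\eta^t_\omega=\sum_s A_{ts}\xi^s_\omega$ for a fixed invertible matrix $A$ (independent of $n$) and use that $\operatorname{Vol}$ agrees with the norm of the corresponding exterior product up to constants depending only on $m$; this yields $\operatorname{Vol}\big(\psi^n_\omega(\eta^1_\omega),\dots\big)\asymp|\det A|\,\operatorname{Vol}\big(\psi^n_\omega(\xi^1_\omega),\dots\big)$ with $n$-independent implied constants, so that dividing by $n$ and letting $n\to\infty$ leaves the limit unchanged, and likewise for the inverse cocycle.

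The only step here that goes beyond bookkeeping is the change-of-basis argument in (v): one has to keep careful track of the fact that $\operatorname{Vol}$ is symmetric and multilinear only up to fixed dimensional constants (compare the proof of Lemma~\ref{fra_fra}), but since neither these constants nor $\det A$ depends on $n$, they disappear in the limit $\tfrac1n\log(\cdot)$. Everything else is a direct application of the lemmas and propositions already proved in this section.
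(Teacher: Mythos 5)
Your proposal is correct and follows essentially the same route as the paper, whose proof of Theorem \ref{BT} is likewise just an assembly of Lemma \ref{lemma:first_prop_H} for (i)--(ii), Proposition \ref{In_In_pr} for (iii)--(iv), and Lemma \ref{lemma:dist_forw_backw} together with the symmetry-up-to-a-constant of $\operatorname{Vol}$ for (v); your explicit change-of-basis argument for a non-adapted basis in (v) is a welcome elaboration of what the paper leaves implicit. One small caveat: the bound \eqref{CXCC} holds for the complements $H^n_{\sigma^n\omega}$ at $\sigma^n\omega$ and does not directly transfer to the pushed-forward spaces $\tilde H^n_\omega$ at $\omega$, but this does not matter, since any algebraic complement that is finite-dimensional of a closed subspace automatically yields a topological splitting.
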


\begin{proof}
 Properties (i) and (ii) are proven in Lemma \ref{lemma:first_prop_H}. (iii) and (iv) are shown in Proposition \ref{In_In_pr} and (v) can be deduced from Lemma \ref{lemma:dist_forw_backw}, using the definition of $\operatorname{Vol}$ and symmetry modulo a constant of this function.
\end{proof}

\begin{remark}
 Property (iv) seems to be new in the context of Banach spaces. Note that properties (i) - (iv) uniquely determine the spaces  $H^i_\omega$. In fact, an inspection of the proof of Lemma \ref{lemma:first_prop_H} reveals that these properties are sufficient to establish the equality \eqref{eqn:uniqueness_H}.
\end{remark}

 \section{Invariant Manifolds}\label{sec_inf_mfd}
 Let $ \lbrace E_{\omega}\rbrace_{\omega\in \Omega} $ be a measurable field of Banach spaces and $ \varphi^{n}_{\omega} $ a nonlinear cocycle on acting on it, i.e.
\begin{align*}
&\varphi^{n}_{\omega} \colon E_{\omega} \to E_{\theta^{n}\omega}\\
&\varphi^{n+m}_{\omega}(.) = \varphi^{n}_{\theta^{m}\omega}\big{(}\varphi^{m}_{\omega}(.)\big{)}.
\end{align*}

\begin{definition}
 
We say that $ \varphi^{n}_{\omega} $ admits a \emph{stationary solution} if there exists a map $Y:\Omega\longrightarrow \prod_{\omega\in\Omega}E_{\omega}$ such that
\begin{itemize}
\item[(i)]$Y_{\omega}\in E_{\omega}$,
\item[(ii)]$\varphi^n_\omega(Y_{\omega})=Y_{\theta^{n}\omega}$ and
\item[(iii)]$\omega\rightarrow\Vert Y_{\omega}\Vert $ is measurable.
\end{itemize}
\end{definition}

Stationary solutions should be thought of random analogues to fixed points in (deterministic) dynamical systems. If $ \varphi^{n}_{\omega}$ is Fr\'echet differentiable, one can easily check that the derivative around a stationary solution also enjoys the cocycle property, i.e for $ \psi^n_\omega(.) = D_{Y_{\omega}}\varphi^{n}_{\omega}(.) $, one has
\begin{align*}
\psi^{n+m}_{\omega}(.)=\psi^{n}_{\theta^{m}\omega}\big{(}\psi^m_\omega(.)\big{)}.
\end{align*} 
In the following, we will assume that $\varphi$ is Fr\'echet differentiable, that there exists a stationary solution $Y$ and that the linearized cocycle $\psi$ around $Y$ is compact and satisfies Assumption \ref{MES}. Furthermore, we will assume that 
\begin{align*}
\log^{+}\Vert \psi_{\omega} \Vert\in L^{1}(\Omega).
\end{align*}
Therefore, we can apply the MET to $\psi$. In the following, we will use the same notation as in the previous section.

\subsection{Stable manifolds}

\begin{definition}\label{stable-dfn}
Let $ Y $ be a  stationary solution, let $\lbrace ...<\mu_{j} < \mu_{j-1}<...<\mu_{1} \rbrace \in [-\infty,\infty)$ be the corresponding Lyapunov spectrum and $\tilde{\Omega}$ the $\theta$-invariant set on which the MET holds. Set $ \mu_{j_{0}} = \max \lbrace \mu_{j} : \mu_{j} < 0 \rbrace $ and $ \mu_{j_{0}} = -\infty $ if all finite $ \mu_{j} $ are nonnegative. We define the \emph{stable subspace}
\begin{align*}
S_{\omega} := F_{\mu_{j_{0}}}(\omega).
\end{align*}
By the \emph{unstable subspace} we mean
\begin{align*}
U_{\omega} := \oplus_{1\leqslant i < j_{0}} H^{i}_{\omega}.
\end{align*}
Note that $\dim[E_{\omega}/S_{\omega}] = \dim[ U_{\omega}] =: k<\infty$ for every $\omega \in \tilde{\Omega}$. 
\end{definition}

\begin{lemma}\label{lemma:prop_F}
 For $\omega \in \tilde{\Omega}$ and $\epsilon \in (0,-\mu_{j_0})$, set 
 \begin{align*}
  F(\omega) := \sup_{p\geqslant 0}\exp[-p(\mu_{j_{0}} + \epsilon)] \Vert\psi^p_\omega |_{S_{\omega}}\Vert.
 \end{align*}
 Then
 \begin{align}\label{BB}
  \lim_{n\rightarrow\infty}\frac{1}{n}\log^{+}\big{[}F(\theta^{n}\omega){]} 
  = 0.
 \end{align}

\end{lemma}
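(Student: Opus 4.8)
The claim is a standard "tempered function" estimate: the random constant $F(\omega)$, while possibly infinite-looking, grows subexponentially along the orbit. The plan is to use the fact that $F(\omega)$ is built out of the supremum of subexponentially growing quantities over a shift $\epsilon$ that strictly beats the relevant Lyapunov rate, so $F$ is finite a.s., and then to invoke the general principle that any a.s.\ finite measurable function is tempered, i.e.\ grows subexponentially along $\theta^n\omega$.

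\emph{Step 1: $F(\omega) < \infty$ a.s.} From Lemma~\ref{lemma:forw_back} (equation \eqref{eqn:growth_slow_subspace} with $k=1$, $i=j_0$) we have
\begin{align*}
 \lim_{p\to\infty}\frac{1}{p}\log\Vert \psi^p_\omega|_{S_\omega}\Vert = \mu_{j_0}
\end{align*}
for every $\omega\in\tilde\Omega$. Hence for $\epsilon\in(0,-\mu_{j_0})$ there is a (random) $p_0(\omega)$ with $\Vert\psi^p_\omega|_{S_\omega}\Vert \le \exp(p(\mu_{j_0}+\epsilon/2))$ for $p\ge p_0(\omega)$, so that $\exp[-p(\mu_{j_0}+\epsilon)]\Vert\psi^p_\omega|_{S_\omega}\Vert \le \exp(-p\epsilon/2)\to 0$; the supremum over $p\ge 0$ is therefore attained on the finite set $\{0,\dots,p_0(\omega)\}$ (together with the tail which is bounded by $1$), so $F(\omega)<\infty$ for every $\omega\in\tilde\Omega$. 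Measurability of $F$ follows since $\omega\mapsto\Vert\psi^p_\omega|_{S_\omega}\Vert$ is measurable (this is $f_1$ in Lemma~\ref{mea_Vol} with $m=1$, $j=j_0$) and $F$ is a countable supremum of such maps.

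\emph{Step 2: temperedness.} Now I would show the general fact that if $f\colon\Omega\to[0,\infty)$ is measurable and finite a.s., and $\theta$ is measure-preserving and ergodic, then $\lim_{n\to\infty}\frac1n\log^+ f(\theta^n\omega)=0$ a.s. This is the classical tempering lemma (e.g.\ \cite[Lemma 4.3]{Arn98}): briefly, for fixed $\epsilon>0$ the sets $A_N=\{\omega: \log^+ f(\omega)\le N\}$ increase to a set of full measure, so some $A_N$ has positive measure, and by Poincaré recurrence / Birkhoff applied to $\chi_{A_N}$ the orbit $\theta^n\omega$ returns to $A_N$ with positive frequency; combined with the additional fact that $\frac1n\log^+ f(\theta^n\omega)\to 0$ cannot fail along a full-density subsequence, one concludes $\limsup_n\frac1n\log^+ f(\theta^n\omega)\le\epsilon$, and $\epsilon$ being arbitrary finishes it. (Alternatively, one applies Lemma~\ref{inve_lim} and \ref{lemma:reverse_sub}-type reasoning, or simply cites Birkhoff: $\frac1n\log^+f(\theta^{n}\omega)=\frac1n\big(\sum_{k=0}^n\log^+f(\theta^k\omega)-\sum_{k=0}^{n-1}\log^+f(\theta^k\omega)\big)$, and each Birkhoff average converges a.s.\ since $\log^+ f$ need not be integrable — here one truncates $\log^+ f$ at level $N$, gets convergence of the truncated Birkhoff sums, hence the increments of the truncation go to $0$, and lets $N\to\infty$.)

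\emph{Main obstacle.} The only genuine subtlety is that $\log^+ F$ is \emph{not assumed integrable}, so one cannot directly quote Birkhoff's theorem for $\log^+F$; the argument must go through the truncation $F\wedge N$ (whose $\log^+$ is bounded, hence integrable) and then pass to the limit $N\to\infty$ using that $\{F\le N\}\uparrow\tilde\Omega$ up to a null set. This is routine but is the step that needs care. Everything else — measurability, finiteness of $F$, and the structure of the supremum — follows immediately from the Lemmas already established, in particular Lemma~\ref{lemma:forw_back} and Lemma~\ref{mea_Vol}.
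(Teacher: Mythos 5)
Your Step 1 (measurability and a.s.\ finiteness of $F$ on $\tilde{\Omega}$) is correct. The gap is in Step 2: the ``general principle'' you invoke --- that every a.s.\ finite measurable function is tempered --- is false. For an ergodic system one only has the dichotomy that $\limsup_n \frac1n \log^+ f(\theta^n\omega)$ equals $0$ a.s.\ or $+\infty$ a.s., and the second alternative genuinely occurs for heavy-tailed $f$ (already in the i.i.d.\ setting: if $\P(\log^+ f > t)$ decays slower than $1/t$, Borel--Cantelli gives $\log^+ f(\theta^n \omega) > \delta n$ infinitely often). The classical tempering lemma requires $\log^+ f \in L^1$, and $\log^+ F$ is \emph{not} integrable in general: $\log F(\omega) = \sup_{p\ge 0}\big(\log\Vert\psi^p_\omega|_{S_\omega}\Vert - p(\mu_{j_0}+\epsilon)\big)$ is the all-time supremum of a negative-drift (sub)additive sequence, whose expectation is finite only under second-moment-type conditions on the increments. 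Both of your sketched justifications fail at exactly this point: recurrence to $A_N$ controls $\log^+ f(\theta^n\omega)$ only along a positive-density set of times and says nothing in between (the ``additional fact'' you appeal to is precisely the statement to be proved), and the truncation argument only yields $\frac1n(\log^+ f\wedge N)(\theta^n\omega)\to 0$ for each fixed $N$, from which one cannot pass to $N\to\infty$ without integrability of the excess $(\log^+ f - N)^+$.

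The proof has to use the specific structure of $F$, namely that $a_p(\omega):=\log\Vert\psi^p_\omega|_{S_\omega}\Vert - p(\mu_{j_0}+\epsilon)$ is subadditive over $\theta$ with $a_1^+\in L^1(\Omega)$ and $\frac1p a_p\to-\epsilon<0$ by \eqref{eqn:growth_slow_subspace}. A correct argument runs as follows. Fix $\eta>0$; by Egorov there are $C_\eta<\infty$ and $\Omega_\eta$ with $\P(\Omega_\eta)>1-\eta$ such that $\sup_{p\ge 0}a_p(\omega)\le C_\eta$ on $\Omega_\eta$. For generic $\omega$ and large $n$, let $n+r(n)$ be the first time $\ge n$ at which the orbit visits $\Omega_\eta$; subadditivity gives, for every $p$, $a_p(\theta^n\omega)\le \sum_{j=0}^{r(n)-1}a_1^+(\theta^{n+j}\omega)+C_\eta$. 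Birkhoff's theorem applied to $\chi_{\Omega_\eta}$ shows $r(n)=O(\eta n)$ eventually, and applied to $a_1^+$ shows the sum is $r(n)\,\E[a_1^+]+o(n)$; hence $\limsup_n\frac1n\log^+ F(\theta^n\omega)\le c\,\eta\,\E[a_1^+]$ for a universal constant $c$, and letting $\eta\downarrow 0$ finishes the proof. This (or an equivalent subadditive tempering lemma from the literature) is what is hiding behind the paper's one-line proof; your write-up replaces it with a principle that does not hold.
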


\begin{proof}
 Follows from \eqref{eqn:growth_slow_subspace}. 
\end{proof}

\begin{lemma}
 Let $ \omega\in\tilde{\Omega} $, $ U_{\omega} = \langle\xi^{t}_{\omega}\rangle_{1\leqslant t\leqslant k} $ and $n, p \geq 0$. Then
 \begin{align}\label{inverse-bound}
 \Vert[\psi^{n}_{\theta^{p}\omega}]^{-1}\Vert_{L[{U_{\theta^{n+p}\omega}},U_{\theta^{p}\omega}]}\leqslant\sum_{1\leqslant t\leqslant k}\frac{\Vert\psi^{p}_{\omega}(\xi^{t}_{\omega})\Vert}{\Vert\psi^{n+p}_{\omega}(\xi^{t}_{\omega})\Vert}\times\frac{\Vert \psi_{\omega}^{n+p}(\xi^{t}_{\omega})\Vert}{d\big{(}\psi_{\omega}^{n+p}(\xi^{t}_{\omega}), \langle \psi_{\omega}^{n+p}(\xi^{t^{\prime}}_{\omega}) \rangle_{t^{\prime}\neq t}\big{)}}
 \end{align}
 and 
 \begin{align}\label{inn_BB}
 \Vert [\psi^{p}_{\sigma^{n}\omega}]^{-1}\Vert_{L[{U_{\sigma^{n-p}\omega}},U_{\sigma^{n}\omega}]}\leqslant \sum_{1\leqslant t\leqslant k}\frac{\Vert (\psi^{n}_{\sigma^{n}\omega})^{-1}(\xi^{t}_{\omega})\Vert}{ \Vert (\psi^{n-p}_{\sigma^{n-p}\omega})^{-1}(\xi^{t}_{\omega})\Vert}\times\frac{ \Vert (\psi^{n-p}_{\sigma^{n-p}\omega})^{-1}(\xi^{t}_{\omega})\Vert}{d\big{(}(\psi^{n-p}_{\sigma^{n-p}\omega})^{-1}(\xi^{t}_{\omega}),\langle (\psi^{n-p}_{\sigma^{n-p}(\omega)})^{-1}(\xi^{t^{\prime}}_{\omega})\rangle_{t^{\prime}\neq t}\big{)}}.
 \end{align}
 
\end{lemma}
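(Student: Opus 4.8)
The plan is to fix a convenient basis in the domain of each inverse map, identify its image under the inverse via the cocycle identities, and then control the coordinate functionals by the volume-type distances appearing on the right-hand sides.

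For \eqref{inverse-bound} I would first use that $\psi^{m}_{\omega}|_{U_{\omega}}\colon U_{\omega}\to U_{\theta^{m}\omega}$ is a bijection for every $m\ge 0$: it is injective by Theorem \ref{BT}(i)--(ii) (the $H^{i}$ are in direct-sum position at every base point, so $\psi^m_\omega$ cannot kill a nonzero element of $U_\omega=\oplus_{1\le i<j_0}H^i_\omega$), and domain and codomain both have dimension $k$. Hence $\{\psi^{n+p}_{\omega}(\xi^{t}_{\omega})\}_{1\le t\le k}$ is a basis of $U_{\theta^{n+p}\omega}$, $\{\psi^{p}_{\omega}(\xi^{t}_{\omega})\}_{1\le t\le k}$ a basis of $U_{\theta^{p}\omega}$, and from $\psi^{n}_{\theta^{p}\omega}\big(\psi^{p}_{\omega}(\xi^{t}_{\omega})\big)=\psi^{n+p}_{\omega}(\xi^{t}_{\omega})$ the operator $[\psi^{n}_{\theta^{p}\omega}]^{-1}$ sends $\psi^{n+p}_{\omega}(\xi^{t}_{\omega})$ to $\psi^{p}_{\omega}(\xi^{t}_{\omega})$. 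Writing an arbitrary $v\in U_{\theta^{n+p}\omega}$ as $v=\sum_{t}a_{t}\,\psi^{n+p}_{\omega}(\xi^{t}_{\omega})$, this gives $\|[\psi^{n}_{\theta^{p}\omega}]^{-1}(v)\|\le\sum_{t}|a_{t}|\,\|\psi^{p}_{\omega}(\xi^{t}_{\omega})\|$.

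Then I would estimate the coordinates: for a linearly independent family $\{w_{t}\}_{1\le t\le k}$ and $v=\sum_{t}a_{t}w_{t}$ with $a_{t}\neq 0$, factoring out $a_{t}$ gives $|a_{t}|\,d\big(w_{t},\langle w_{t'}\rangle_{t'\neq t}\big)\le\|v\|$, hence $|a_{t}|\le\|v\|/d\big(w_{t},\langle w_{t'}\rangle_{t'\neq t}\big)$ in all cases. Taking $w_{t}=\psi^{n+p}_{\omega}(\xi^{t}_{\omega})$, dividing by $\|v\|$ and taking the supremum yields
\begin{align*}
\|[\psi^{n}_{\theta^{p}\omega}]^{-1}\|\le\sum_{1\le t\le k}\frac{\|\psi^{p}_{\omega}(\xi^{t}_{\omega})\|}{d\big(\psi^{n+p}_{\omega}(\xi^{t}_{\omega}),\langle\psi^{n+p}_{\omega}(\xi^{t'}_{\omega})\rangle_{t'\neq t}\big)},
\end{align*}
which is exactly \eqref{inverse-bound} after cancelling the common factor $\|\psi^{n+p}_{\omega}(\xi^{t}_{\omega})\|$ in the product of the two displayed fractions there.

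For \eqref{inn_BB} I would repeat the argument with the base points shifted. Since $\theta^{p}\sigma^{n}\omega=\sigma^{n-p}\omega$ and $\psi^{n}_{\sigma^{n}\omega}=\psi^{n-p}_{\sigma^{n-p}\omega}\circ\psi^{p}_{\sigma^{n}\omega}$ on the $U$-spaces, the bijection $\psi^{p}_{\sigma^{n}\omega}\colon U_{\sigma^{n}\omega}\to U_{\sigma^{n-p}\omega}$ sends the basis vector $(\psi^{n}_{\sigma^{n}\omega})^{-1}(\xi^{t}_{\omega})$ to $(\psi^{n-p}_{\sigma^{n-p}\omega})^{-1}(\xi^{t}_{\omega})$, so $[\psi^{p}_{\sigma^{n}\omega}]^{-1}$ maps $(\psi^{n-p}_{\sigma^{n-p}\omega})^{-1}(\xi^{t}_{\omega})$ back to $(\psi^{n}_{\sigma^{n}\omega})^{-1}(\xi^{t}_{\omega})$; running the coordinate estimate with $w_{t}=(\psi^{n-p}_{\sigma^{n-p}\omega})^{-1}(\xi^{t}_{\omega})$ gives \eqref{inn_BB}. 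I do not expect a genuine obstacle here; the only points requiring care are the bookkeeping of which $\theta$- or $\sigma$-shift each $U$-space lives over, and the (routine) verification via Theorem \ref{BT} and finite-dimensionality that all the restrictions of $\psi$ in play are bijections between the relevant $U$-spaces.
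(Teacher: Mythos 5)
Your proof is correct and is essentially the argument in the paper: both rest on the coordinate estimate $|a_t|\, d\big(w_t,\langle w_{t'}\rangle_{t'\neq t}\big)\leqslant \Vert v\Vert$ for $v=\sum_t a_t w_t$ applied to the basis $\{\psi^{n+p}_{\omega}(\xi^t_{\omega})\}$ (resp.\ $\{(\psi^{n-p}_{\sigma^{n-p}\omega})^{-1}(\xi^t_{\omega})\}$), combined with the triangle inequality for the preimage. The only cosmetic difference is that you expand a vector of the codomain in the unnormalized image basis, whereas the paper expands a vector $u$ of the domain in the normalized basis and then tracks how the coefficients transform under $\psi^{n}_{\theta^{p}\omega}$; after the telescoping cancellation the two bounds coincide.
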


 \begin{proof}
Choose $ u\in U_{\theta^{p}\omega} $ and assume that $u = \sum_{1\leqslant t\leqslant k}u^{t}\frac{\psi^{p}_{\omega}(\xi^{t}_{\omega})}{\Vert \psi^{p}_{\omega}(\xi^{t}_{\omega})\Vert} $. Then
\begin{align}\label{inverse}
\frac{\vert u^{t}\vert}{\Vert u\Vert}\leqslant\frac{\Vert \psi_{\omega}^{p}(\xi^{t}_{\omega})\Vert}{d\big{(}\psi_{\omega}^{p}(\xi^{t}_{\omega}),\langle\psi_{\omega}^{p}(\xi^{t^{\prime}}_{\omega})\rangle_{t^{\prime}\neq t}\big{)}}.
\end{align}
From $ \psi^{n}_{\theta^{p}\omega}u = \sum_{1\leqslant t\leqslant k}u^{t} \frac{\Vert\psi^{n+p}_{\omega}(\xi^{t}_{\omega})\Vert}{\Vert\psi^{p}_{\omega}(\xi^{t}_{\omega})\Vert }\frac{\psi^{n+p}_{\omega}(\xi^{t}_{\omega})}{\Vert \psi^{n+p}_{\omega}(\xi^{t}_{\omega})\Vert}$ and \eqref{inverse},
\begin{align*} 
\frac{\vert u^{t}\vert}{\Vert \psi^{n}_{\theta^{p}\omega}u\Vert}\leqslant\frac{\Vert\psi^{p}_{\omega}(\xi^{t}_{\omega})\Vert}{\Vert\psi^{n+p}_{\omega}(\xi^{t}_{\omega})\Vert} \times \frac{\Vert \psi_{\omega}^{n+p}(\xi^{t}_{\omega})\Vert}{d\big{(}\psi_{\omega}^{n+p}(\xi^{t}_{\omega}) ,\langle\psi_{\omega}^{n+p}(\xi^{t^{\prime}}_{\omega})\rangle_{t^{\prime}\neq t}\big{)}}
\end{align*}
and \eqref{inverse-bound} follows. The estimate \eqref{inn_BB} is proven similarly.

 \end{proof}

 \begin{definition}
 For $\omega \in \Omega$ set $ \Sigma_\omega:=\prod_{j\geqslant 0}E_{\theta^{j}\omega}  $. For $ \upsilon>0 $ we define 
  \begin{align*}
 \Sigma^{\upsilon}_{\omega}:=\bigg{\lbrace} \Gamma\in\Sigma_{\omega} :\Vert\Gamma\Vert=\sup_{j\geqslant 0}\big{[}\Vert\Pi_{\omega}^{j}\Gamma\Vert\exp(\upsilon j)\big{]}<\infty \bigg{\rbrace}
 \end{align*}
  where $ \Pi_{\omega}^{j}:\prod_{i\geqslant 0} E_{\theta^{i}\omega}\rightarrow E_{\theta^{j}\omega} $ denotes the projection map. 
 \end{definition}
 
 One can check that $ \Sigma_{\omega}^{\upsilon} $ is a Banach space.
 
 \begin{lemma}\label{func}
  Let $\omega \in {\Omega}$ and $0 < \upsilon < - \mu_{j_0}$. Define
 \begin{align*} 
  P_{\omega} : E_{\omega} &\to E_{\theta\omega }\\
  \xi_{\omega} &\mapsto \varphi^{1}_{\omega} (Y_{\omega}+\xi_{\omega})-\varphi^{1}_{\omega}(Y_{\omega})-\psi^{1}_{\omega}(\xi_{\omega}).
 \end{align*}
 Let $\rho \colon \Omega \to \R^+$ be a random variable with the property that
 \begin{align*}
   \liminf_{n\rightarrow \infty}\frac{1}{n}\log \rho(\theta^{n}\omega) \geq 0
 \end{align*}
 almost surely. Assume that for $ \Vert \xi_{\omega}\Vert , \Vert\tilde{\xi}_{\omega}\Vert <\rho(\omega) $,
  \begin{align}\label{eqn:diff_bound_P}
 \Vert P_{\omega} (\xi_{\omega}) - P_{\omega} (\tilde{\xi}_{\omega})\Vert\leqslant \Vert\xi_{\omega}-\tilde{\xi}_{\omega}\Vert f(\omega)  h{(}\Vert\xi_{\omega}\Vert +\Vert\tilde{\xi}_{\omega}\Vert{)}
 \end{align}
 almost surely where $ f:\Omega\rightarrow \mathbb{R}^{+} $ is a measurable function such that $\lim_{n\rightarrow\infty}\frac{1}{n}\log^{+}f(\theta^{n}\omega) = 0$ almost surely and $ h(x)=x^{r}g(x)$ for some $r > 0$ where $ g:\mathbb{R}\rightarrow\mathbb{R}^{+} $ is an increasing $ C^{1} $ function. Set
  \begin{align}\label{eqn:rhotilde}
    \tilde{\rho}(\omega) := \inf_{n\geqslant 0}\exp(n\upsilon)\rho(\theta^{n}\omega).
  \end{align}
    Then the map
 \begin{align*}
 &I_{_{\omega}} \colon S_{\omega}\times\Sigma_{\omega}^{\upsilon}\cap B(0,\tilde{\rho}(\omega))\rightarrow\Sigma_{\omega}^{\upsilon}, \\
 &{\Pi}^{n}_{\omega}\big{[}I_{\omega}(v_{\omega} ,\Gamma)\big{]} = \begin{cases}
 \psi^{n}_{\omega}(v_{\omega}) + \sum_{0\leqslant j\leqslant n-1}\big{[}\psi^{n-1-j}_{\theta^{1+j}\omega}\circ\Pi_{ S_{\theta^{1+j}\omega}\parallel U_{\theta^{1+j}\omega}}\big{]}P_{\theta^{j}\omega}\big{(}\Pi^{j}_{\omega}[\Gamma]\big{)}    &\text{}\\
 \qquad - \sum_{j\geqslant n}\big{[}[\psi^{j-n+1}_{\theta^{n}\omega}]^{-1}\circ\Pi_{U_{\theta^{1+j}\omega}\parallel S_{\theta^{1+j}\omega}}\big{]}  P_{\theta^{j}\omega}\big{(}\Pi^{j}_{\omega}[\Gamma]\big{)}  &\text{for } n \geq 1,\\
 v_{\omega}- \sum_{j\geqslant 0}\big{[}[\psi^{j+1}_{\omega}]^{-1}\circ\Pi_{U_{\theta^{1+j}\omega}\parallel S_{\theta^{1+j}\omega}}\big{]} P_{\theta^{j}\omega}\big{(}\Pi^{j}_{\omega}[\Gamma]\big{)}  &\text{for } n = 0.
 \end{cases}
 \end{align*}
  is well-defined on a $\theta$-invariant set of full measure $\tilde{\Omega}$.
 \end{lemma}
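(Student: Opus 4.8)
The plan is to show that, on an appropriate $\theta$-invariant set of full measure, each of the (in general infinite) series defining $\Pi^n_\omega[I_\omega(v_\omega,\Gamma)]$ converges absolutely in the complete space $E_{\theta^n\omega}$, and that the resulting section satisfies $\sup_{n\ge 0}e^{\upsilon n}\|\Pi^n_\omega[I_\omega(v_\omega,\Gamma)]\|<\infty$, i.e.\ lies in $\Sigma^\upsilon_\omega$. First I would fix $\tilde\Omega$ to be the intersection of: the full-measure set of Theorem~\ref{BT}; the set where \eqref{BB} holds (Lemma~\ref{lemma:prop_F}); the set where $\|\Pi_{U_{\theta^n\omega}\parallel S_{\theta^n\omega}}\|$, $\|\Pi_{S_{\theta^n\omega}\parallel U_{\theta^n\omega}}\|=\|\mathrm{Id}-\Pi_{U\parallel S}\|$ and $f(\theta^n\omega)$ grow subexponentially along the orbit (Lemma~\ref{lemma:pro_invv} with $l=1$, $j=j_0$, together with the hypothesis on $f$); the set where the rates of Proposition~\ref{In_In_pr} and Lemma~\ref{lemma:dist_forw_backw} hold for a fixed basis $\{\xi^t_\omega\}_{1\le t\le k}$ of $U_\omega$; and the set where $\liminf_n\frac1n\log\rho(\theta^n\omega)\ge 0$. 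On this set $\tilde\rho(\omega)$ from \eqref{eqn:rhotilde} is strictly positive (since $\rho>0$ and $\rho(\theta^n\omega)$ decays subexponentially), and for each $\eta>0$ there is $C_\eta(\omega)$ bounding all the subexponential factors by $C_\eta(\omega)e^{\eta j}$ uniformly in $j\ge0$; I also use $\|\psi^p_{\theta^m\omega}|_{S_{\theta^m\omega}}\|\le F(\theta^m\omega)e^{p(\mu_{j_0}+\epsilon)}$, immediate from the definition of $F$.

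The engine of the argument is a pointwise bound on the nonlinearity. Since $P_\omega(0)=0$, taking $\tilde\xi_\omega=0$ in \eqref{eqn:diff_bound_P} gives $\|P_\omega(\xi_\omega)\|\le\|\xi_\omega\|f(\omega)h(\|\xi_\omega\|)$ whenever $\|\xi_\omega\|<\rho(\omega)$. If $\Gamma\in\Sigma^\upsilon_\omega$ with $\|\Gamma\|<\tilde\rho(\omega)$, then $\|\Pi^j_\omega\Gamma\|\le e^{-\upsilon j}\|\Gamma\|<\rho(\theta^j\omega)$ for every $j$ by the definition of $\tilde\rho$, so, writing $h(x)=x^rg(x)$ and using that $g$ is increasing,
\[
 \|P_{\theta^j\omega}(\Pi^j_\omega\Gamma)\|\ \le\ e^{-(1+r)\upsilon j}\,\|\Gamma\|^{1+r}\,f(\theta^j\omega)\,g(\tilde\rho(\omega)).
\]
Thus the nonlinear term decays at the exponential rate $(1+r)\upsilon>0$ in $j$, up to the subexponential factor $f(\theta^j\omega)$; this gain, coming from the stationary solution being a zero of $P$ together with the constraint $\|\Gamma\|<\tilde\rho(\omega)$, is what makes every series below summable.

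Next I would estimate the three pieces of $\Pi^n_\omega[I_\omega(v_\omega,\Gamma)]$ separately. For $n\ge1$ the term $\psi^n_\omega(v_\omega)$ is bounded by $F(\omega)e^{n(\mu_{j_0}+\epsilon)}\|v_\omega\|$, contributing $e^{n(\mu_{j_0}+\epsilon+\upsilon)}$ to the $\upsilon$-weighted norm, which is bounded once $\epsilon$ is small enough that $\mu_{j_0}+\epsilon+\upsilon<0$ (possible since $\upsilon<-\mu_{j_0}$). For the finite sum over $0\le j\le n-1$ I use $\|\psi^{n-1-j}_{\theta^{1+j}\omega}|_{S}\|\le F(\theta^{1+j}\omega)e^{(n-1-j)(\mu_{j_0}+\epsilon)}$, the subexponential bound on $\|\Pi_{S\parallel U}(\theta^{1+j}\omega)\|$, and the nonlinearity estimate; multiplying the $j$-th term by $e^{\upsilon n}$, the exponent is $(n-1)(\mu_{j_0}+\epsilon)+\upsilon n+j\bigl(\eta-(\mu_{j_0}+\epsilon)-(1+r)\upsilon\bigr)+O_\eta(1)$, and summing the geometric series in $j$ the total is dominated either by $j=0$ (giving $\sim e^{n(\mu_{j_0}+\epsilon+\upsilon)}$, bounded) or by $j=n-1$ (giving $\sim e^{n(\eta-r\upsilon)}$, bounded once $\eta<r\upsilon$). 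For the infinite sum over $j\ge n$ I use \eqref{inverse-bound} with $p=n$ and exponent $j-n+1$ to bound $\|[\psi^{j-n+1}_{\theta^n\omega}]^{-1}|_U\|$ by $\sum_t\frac{\|\psi^n_\omega(\xi^t_\omega)\|}{\|\psi^{j+1}_\omega(\xi^t_\omega)\|}\cdot\frac{\|\psi^{j+1}_\omega(\xi^t_\omega)\|}{d(\psi^{j+1}_\omega(\xi^t_\omega),\langle\psi^{j+1}_\omega(\xi^{t'}_\omega)\rangle_{t'\ne t})}$; by Proposition~\ref{In_In_pr} and Lemma~\ref{lemma:dist_forw_backw} the first factor is $\sim e^{(n-j-1)\mu_{i(t)}}$ and the second subexponential (both up to factors $e^{\eta(n+j)}$), with $\mu_{i(t)}>\mu_{j_0}$, hence $\mu_{i(t)}\ge0$. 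Combining with the nonlinearity bound and the weight $e^{\upsilon n}$, the $j$-series converges at rate $(1+r)\upsilon$ and, being dominated by $j=n$, its sum is $\sim e^{-r\upsilon n}$ up to subexponential corrections — here the a priori growth $e^{n\mu_{i(t)}}$ in the numerator is cancelled exactly by the $e^{-n\mu_{i(t)}}$ coming from the $j=n$ term. Finally the $n=0$ sum is handled by \eqref{inverse-bound} with $p=0$: $\|[\psi^{j+1}_\omega]^{-1}|_U\|\le Ce^{-(j+1)\mu_{\min}}$ up to subexponential factors with $\mu_{\min}=\min_t\mu_{i(t)}\ge0$, so the series converges thanks to the $(1+r)\upsilon$-decay of the nonlinearity alone. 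Putting the three bounds together yields $\sup_n e^{\upsilon n}\|\Pi^n_\omega[I_\omega(v_\omega,\Gamma)]\|<\infty$, and absolute convergence in each $E_{\theta^n\omega}$ is a byproduct of the same estimates, so $I_\omega$ is well-defined with values in $\Sigma^\upsilon_\omega$.

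The main obstacle is the bookkeeping of exponential rates: one must verify that the parameters $\epsilon$ (the spectral buffer in $F$), $\eta$ (the size of the subexponential corrections) and $r,\upsilon$ can be chosen jointly, using only $0<\upsilon<-\mu_{j_0}$ and $r>0$, so that every geometric series above converges and its sum, after the weight $e^{\upsilon n}$, stays bounded uniformly in $n$. The one genuinely delicate point is the second sum, where a naive bound exhibits growth $e^{n\mu_{i(t)}}$ with $\mu_{i(t)}>0$; this is resolved by summing the $j$-series first and reading off the cancellation at $j=n$. Everything else reduces to the pointwise nonlinearity bound and the subexponential control of the cocycle data along the orbit, both already available.
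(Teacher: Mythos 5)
Your proposal is correct and follows essentially the same route as the paper: a pointwise bound on the nonlinearity obtained from \eqref{eqn:diff_bound_P} with $\tilde{\xi}_{\omega}=0$ (using $P_\omega(0)=0$ and $\|\Pi^j_\omega\Gamma\|<\rho(\theta^j\omega)$ forced by the definition of $\tilde\rho$), combined with subexponential control of $\|\psi^p|_{S}\|$, of the projections, and of $\|[\psi^{j-n+1}_{\theta^n\omega}]^{-1}\|$ along the orbit, followed by summing the resulting geometric series under the constraints $\mu_{j_0-1}\geq 0$ and $0<\upsilon<-\mu_{j_0}$ with $\epsilon,\eta$ small. Your derivation of the bound on the inverse from \eqref{inverse-bound}, Proposition \ref{In_In_pr} and Lemma \ref{lemma:dist_forw_backw} is exactly the paper's estimate \eqref{inverse-c}, so no genuinely different idea is involved.
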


 \begin{proof}
We collect some estimates first. Let $\epsilon \in (0, -\mu_{j_0})$. From \eqref{pro_invv}, we can find a random variable $ R(\omega)>1 $ such that for $ j\geqslant 0 $,
\begin{align}\label{projection-B}
\Vert \Pi_{U_{\theta^{j}\omega}\parallel S_{\theta^{j}\omega}}\Vert \leqslant R(\omega)\exp(\epsilon j) \ ,\ \ \ \ \  \Vert \Pi_{ S_{\theta^{j}\omega}\parallel U_{\theta^{j}\omega}}\Vert  \leqslant R(\omega)\exp(\epsilon j).
\end{align}
Also from \eqref{BB}, for $ n,p\geqslant 0 $,
\begin{align}\label{VBB}
\Vert\psi^{p}_{\theta^{n}\omega}|_{S_{\theta^{n}\omega}}\Vert\leqslant R(\omega)\exp\big{(}p\mu_{j_{0}}+\epsilon(n+p)\big{)}.
\end{align}
In addition, from \eqref{eqn:dist_forw} and \eqref{inverse-bound} for $ n, p \geqslant 0 $,
\begin{align}\label{inverse-c}
 \Vert[\psi^{n}_{\theta^{p}\omega}]^{-1}\Vert_{L[{U_{\theta^{n+p} \omega}},U_{\theta^{p}\omega}]}\leqslant R(\omega) \exp\big{(}\epsilon(n+p)\big{)} \exp(-n\mu_{j_{0}-1}).
\end{align}
From our assumptions,
\begin{align*}
\big{\Vert}P_{\theta^{j}\omega}\big{(}\Pi^{j}_{\omega}[\Gamma]\big{)}\big{\Vert}\leqslant\big{\Vert} \Pi^{j}_{\omega}[\Gamma]\big{\Vert}^{1+r}\big{[}f(\theta^{j}\omega) g(\Vert \Pi^{j}_{\omega}[\Gamma]\Vert)\big{]}.
\end{align*}
 So for $ j\geqslant 0 $ and a random variable $ \tilde{R}({\omega})>1 $,
 \begin{align}\label{p-}
\big{\Vert}P_{\theta^{j}\omega}\big{(}\Pi^{j}_{\omega}[\Gamma]\big{)}\big{\Vert}\leqslant \tilde{R}({\omega})\big{\Vert} \Pi^{j}_{\omega}[\Gamma]\big{\Vert}^{1+r}g(\Vert \Pi^{j}_{\omega}[\Gamma]\Vert) \exp(\epsilon j).
 \end{align}
Now from \eqref{projection-B}, \eqref{VBB}, \eqref{inverse-c} and \eqref{p-}, we obtain
 \begin{align*}
 &\big{\Vert}{\Pi}^{n}_{\omega}\big{[}I_{\omega}(v_{\omega} ,\Gamma)\big{]}\big{\Vert}\leqslant R(\omega)\bigg{[}\exp((\mu_{j_{0}}+\epsilon)n)\Vert v_{\omega}\Vert + \\&\sum_{0\leqslant j\leqslant n-1}R(\omega) \tilde{R}(\omega)\exp\big{(}\epsilon n+2\epsilon(1+j)+(n-1-j)\mu_{j_{0}} \big{)}\Vert\Pi^{j}_{\omega}(\Gamma)\Vert^{1+r}g({\Vert} \Pi^{j}_{\omega}[\Gamma]\Vert) + \\&\sum_{j\geqslant n}R(\omega)\tilde{R}(\omega)\exp\big{(}3\epsilon(1+j)-(j-n+1)\mu_{j_{0}-1}\big{)}\Vert\Pi^{j}_{\omega}(\Gamma)\Vert^{1+r}g({\Vert} \Pi^{j}_{\omega}[\Gamma]\Vert)\bigg{]}.
 \end{align*}
Since $ g $ is increasing,
 \begin{align*}
&\big{\Vert}{\Pi}^{n}_{\omega}\big{[}I_{\omega}(v_{\omega} ,\Gamma)\big{]}\big{\Vert} \leqslant R(\omega)\bigg{[}\exp\big{(}(\mu_{j_{0}}+\epsilon)n\big{)}.\Vert v_{\omega}\Vert + \\
&R(\omega)\tilde{R}(\omega)\Vert\Gamma\Vert^{1+r}_{\Sigma^{\upsilon}_{\omega}}g(\Vert\Gamma\Vert_{\Sigma^{\upsilon}_{\omega}})\exp\big{(}\epsilon n+2\epsilon+(n-1)\mu_{j_{0}}\big{)}\sum_{0\leqslant j\leqslant n-1}\exp\big{(}j\big{(}2\epsilon-\mu_{j_{0}}-(1+r)\upsilon\big{)}\big{)}+\\
&R(\omega)\tilde{R}(\omega)\Vert\Gamma\Vert^{1+r}_{\Sigma^{\upsilon}_{\omega}}g(\Vert\Gamma\Vert_{\Sigma^{\upsilon}_{\omega}})\exp\big{(}3\epsilon+(n-1)\mu_{j_{0}-1}\big{)}\sum_{j\geqslant n}\exp\big{(}j\big{(}3\epsilon-\mu_{j_{0}-1}-(1+r)\upsilon\big{)}\big{)}\bigg{]}.
 \end{align*}
 Since $\mu_{j_{0}-1} \geqslant 0 $ and $ 0<\upsilon<-\mu_{j_{0}} $, we can choose $\epsilon > 0$ smaller if necessary to see that
 \begin{align*}
 \sup_{n\geqslant 0}\bigg{[}\big{\Vert}{\Pi}^{n}_{\omega}\big{[}I_{\omega}(v_{\omega} ,\Gamma)\big{]}\big{\Vert}\exp(\upsilon n)\bigg{]}<\infty.
 \end{align*}
 As a result, $ I_{{\omega}} $ is well-defined .
  \end{proof}

\begin{lemma}\label{fixed}
With the same setting as in Lemma \ref{func}, for $\Gamma\in \Sigma^{\upsilon}_{\omega} \cap B(0,\tilde{\rho}(\omega))$,
\begin{align}
I_{{\omega}}[v_{\omega},\Gamma]=\Gamma \ \ \ \ {\Longleftrightarrow}\ \ \ \ \forall j\geqslant 0: \Pi^{j}_{\omega}[\Gamma]=\varphi^{j}_{\omega}(Y_{\omega}+\xi_{\omega})-\varphi^{j}_{\omega}(Y_{\omega}) 
\end{align}
where 
\begin{align}\label{CVB}
\xi_{\omega}=v_{\omega}-\sum_{j\geqslant 0}\big{[}[\psi^{j+1}_{\omega}]^{-1}\circ\Pi_{U_{\theta^{1+j}\omega}\parallel S_{\theta^{1+j}\omega}}\big{]} P_{\theta^{j}\omega}\big{(}\Pi^{j}_{\omega}[\Gamma]\big{)}.
\end{align}
\end{lemma}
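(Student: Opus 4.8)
The idea is to reduce the claimed equivalence to one computational identity, a ``discrete variation of constants formula'' for $I_\omega$. Throughout, fix $\omega\in\tilde{\Omega}$, $v_\omega\in S_\omega$ and $\Gamma\in\Sigma^\upsilon_\omega\cap B(0,\tilde{\rho}(\omega))$, and write $\gamma_j:=\Pi^j_\omega[\Gamma]$ and $a_j:=\Pi^j_\omega[I_\omega(v_\omega,\Gamma)]$. From $\|\gamma_j\|e^{\upsilon j}\leqslant\|\Gamma\|_{\Sigma^\upsilon_\omega}\leqslant\tilde{\rho}(\omega)\leqslant e^{j\upsilon}\rho(\theta^j\omega)$ we get $\|\gamma_j\|\leqslant\rho(\theta^j\omega)$, so the bound \eqref{eqn:diff_bound_P} applies along the whole sequence and, by Lemma \ref{func}, all series in the definition of $I_\omega$ converge and may be manipulated freely. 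Note also that the $n=0$ branch of the definition of $I_\omega$ is literally formula \eqref{CVB}, so $a_0=\xi_\omega$ holds unconditionally.

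The first and main step is to establish the identity
\begin{align*}
 a_{n+1}=\psi^1_{\theta^n\omega}(a_n)+P_{\theta^n\omega}(\gamma_n),\qquad n\geqslant 0 .
\end{align*}
For $n\geqslant 1$ one inserts the explicit formula for $a_{n+1}$ and for $\psi^1_{\theta^n\omega}(a_n)$ and telescopes. The relevant algebraic facts are: the cocycle identities $\psi^1_{\theta^n\omega}\circ\psi^n_\omega=\psi^{n+1}_\omega$ and $\psi^1_{\theta^n\omega}\circ\psi^{n-1-j}_{\theta^{1+j}\omega}=\psi^{n-j}_{\theta^{1+j}\omega}$ for the ``stable'' partial sum; the fact (Theorem \ref{BT}(i) together with $\dim U_{\theta^j\omega}=k<\infty$ from Definition \ref{stable-dfn}) that $\psi^1_{\theta^j\omega}$ restricts to an isomorphism $U_{\theta^j\omega}\to U_{\theta^{j+1}\omega}$, which gives $\psi^1_{\theta^n\omega}\circ[\psi^{j-n+1}_{\theta^n\omega}]^{-1}=[\psi^{j-n}_{\theta^{n+1}\omega}]^{-1}$ on $U_{\theta^{1+j}\omega}$ and hence handles the ``unstable'' tail sum; and finally $\Pi_{S_{\theta^{1+n}\omega}\Vert U_{\theta^{1+n}\omega}}+\Pi_{U_{\theta^{1+n}\omega}\Vert S_{\theta^{1+n}\omega}}=\operatorname{Id}$, which merges the two leftover $j=n$ terms into $P_{\theta^n\omega}(\gamma_n)$. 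The case $n=0$ must be checked separately, since the first branch of $I_\omega$ is written for $n\geqslant 1$ only; using \eqref{CVB} together with $\psi^1_\omega\circ[\psi^{j+1}_\omega]^{-1}=[\psi^j_{\theta\omega}]^{-1}$ on $U_{\theta^{1+j}\omega}$, the same cancellations yield $a_1=\psi^1_\omega(\xi_\omega)+P_\omega(\gamma_0)$.

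The second step identifies the right-hand side of the equivalence as the solution of a recursion. Writing $z_j:=\varphi^j_\omega(Y_\omega+\xi_\omega)-\varphi^j_\omega(Y_\omega)$ with $\xi_\omega$ as in \eqref{CVB}, the cocycle property $\varphi^{j+1}_\omega=\varphi^1_{\theta^j\omega}\circ\varphi^j_\omega$ and stationarity $\varphi^j_\omega(Y_\omega)=Y_{\theta^j\omega}$ give $z_{j+1}=\varphi^1_{\theta^j\omega}(Y_{\theta^j\omega}+z_j)-\varphi^1_{\theta^j\omega}(Y_{\theta^j\omega})$, which by the definition of $P$ equals $\psi^1_{\theta^j\omega}(z_j)+P_{\theta^j\omega}(z_j)$; also $z_0=\xi_\omega$. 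Since a sequence in $\prod_{j\geqslant 0}E_{\theta^j\omega}$ is uniquely determined by its value at $0$ and by a recursion of the form $w_{j+1}=\psi^1_{\theta^j\omega}(w_j)+P_{\theta^j\omega}(w_j)$, the condition ``$\gamma_j=z_j$ for all $j$'' is equivalent to ``$\gamma_0=\xi_\omega$ and $\gamma_{j+1}=\psi^1_{\theta^j\omega}(\gamma_j)+P_{\theta^j\omega}(\gamma_j)$ for all $j$''.

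Finally, the two steps are combined. If $I_\omega[v_\omega,\Gamma]=\Gamma$, then $a_j=\gamma_j$ for all $j$; substituting into the identity of the first step turns it into $\gamma_{j+1}=\psi^1_{\theta^j\omega}(\gamma_j)+P_{\theta^j\omega}(\gamma_j)$, and $\gamma_0=a_0=\xi_\omega$, so by the second step $\gamma_j=z_j$ for all $j$. Conversely, if $\gamma_j=z_j$ for all $j$, then both $(a_j)_j$ and $(\gamma_j)_j$ solve the \emph{same} affine recursion $w_{j+1}=\psi^1_{\theta^j\omega}(w_j)+P_{\theta^j\omega}(\gamma_j)$ (for $(a_j)$ this is the first step; for $(\gamma_j)$ it is the recursion from the second step) with the common initial value $a_0=\gamma_0=\xi_\omega$; since the recursion is affine in $w_j$, induction on $j$ forces $a_j=\gamma_j$ for all $j$, i.e. $I_\omega[v_\omega,\Gamma]=\Gamma$. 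The only laborious point is the verification of the variation of constants identity in the first step, where one has to keep careful track of the restrictions of $\psi^\cdot$ to the (finite-dimensional, invariant) unstable subspaces, of their inverses there, and of the two complementary projections; the rest is bookkeeping.
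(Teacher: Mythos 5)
Your proof is correct and follows essentially the same route as the paper: the key identity $\Pi^{n+1}_{\omega}[I_{\omega}(v_{\omega},\Gamma)]=\psi^{1}_{\theta^{n}\omega}\big(\Pi^{n}_{\omega}[I_{\omega}(v_{\omega},\Gamma)]\big)+P_{\theta^{n}\omega}\big(\Pi^{n}_{\omega}[\Gamma]\big)$, obtained from the cocycle relations, the invertibility of $\psi$ on the finite-dimensional unstable spaces and $\Pi_{S\Vert U}+\Pi_{U\Vert S}=\operatorname{Id}$, is exactly the computation the paper performs in its induction step, and your packaging of both implications as uniqueness for the affine recursion $w_{j+1}=\psi^{1}_{\theta^{j}\omega}(w_{j})+P_{\theta^{j}\omega}(\gamma_{j})$ is just a slightly tidier organization of the paper's two directions (induction forward, closed-form variation-of-constants backward). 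No gaps.
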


\begin{proof}
The strategy of the proof is similar to \cite[Lemma VI.5]{Man83}. Let $ I_{\omega}[v_{\omega},\Gamma]=\Gamma $. Then $ \xi_{\omega}=\Pi^{0}_{\omega}[\Gamma] $ and the claim is shown for $j = 0$. We proceed by induction. Assume that $ \Pi^{n}_{\omega}[\Gamma]=\varphi^{n}_{\omega}(Y_{\omega}+\xi_{\omega})-\varphi^{n}_{\omega}(Y_{\omega}) $. By definition, 
\begin{align*}
&\varphi^{n+1}_{\omega}(Y_{\omega}+\xi_{\omega})-\varphi^{n+1}_{\omega}(Y_{\omega})=\varphi^{1}_{\theta^{n}\omega}\big{(}\varphi^{n}_{\omega}(Y_{\omega}+\xi_{\omega})\big{)}-\varphi^{1}_{\theta^{n}\omega}(Y_{\theta^{n}\omega})=\\&P_{\theta^{n}\omega}\big{(}\varphi^{n}_{\omega}(Y_{\omega}+\xi_{\omega})-Y_{\theta^{n}\omega}\big{)} + \psi^{1}_{\theta^{n}\omega}\big{(}\varphi^{n}_{\omega}(Y_{\omega}+\xi_{\omega})-Y_{\theta^{n}\omega}\big{)} = P_{\theta^{n}\omega}(\Pi^{n}_{\omega}[\Gamma]) + \psi^{1}_{\theta^{n}\omega}\big{(}\Pi^{n}_{\omega}\big{[}I_{\omega}(v_{\omega} ,\Gamma)\big{]}\big{)}.
\end{align*}
Note that for $ j\geqslant n$,
\begin{align*}
\psi^{1}_{\theta^{n}\omega}\circ[\psi^{j-n+1}_{\theta^{n}\omega}]^{-1}=[\psi^{j-n}_{\theta^{n+1}\omega}]^{-1}:U_{\theta^{1+j}\omega}\rightarrow U_{\theta^{1+n}\omega}.
\end{align*}
By definition 
\begin{align*}
&\psi^{1}_{\theta^{n}\omega}\big{(}   \Pi^n_{\omega} [I_{\omega}(v_{\omega} ,\Gamma)] \big{)}=\psi^{n+1}_{\omega}(v_{\omega})+\sum_{0\leqslant j\leqslant n-1}\big{[}\psi^{n-j}_{\theta^{1+j}\omega}\circ\Pi_{ S_{\theta^{1+j}\omega}\parallel U_{\theta^{1+j}\omega}}\big{]}P_{\theta^{j}\omega}\big{(}\Pi^{j}_{\omega}[\Gamma]\big{)}-\\&\sum_{j\geqslant n}\big{[}[\psi^{j-n}_{\theta^{n}\omega}]^{-1}\circ\Pi_{U_{\theta^{1+j}\omega}\parallel S_{\theta^{1+j}\omega}}\big{]} P_{\theta^{j}\omega}\big{(}\Pi^{j}_{\omega}[\Gamma]\big{)}.
\end{align*}
Consequently, $\Pi^{n+1}_{\omega}[\Gamma]=\varphi^{n+1}_{\omega}(Y_{\omega}+\xi_{\omega})-\varphi^{n+1}_{\omega}(Y_{\omega}) $ which finishes the induction step. \\
Conversely, for $\xi_{\omega}\in E_{\omega}$ and $\Gamma\in \Sigma^{\nu}_{\omega} \cap B(0,\tilde{\rho}(\omega))$, assume that for every $ j\geqslant 0 $, $ \Pi^{j}_{\omega}[\Gamma]=\varphi^{j}_{\omega}(Y_{\omega}+\xi_{\omega})-\varphi^{j}_{\omega}(Y_{\omega}) $. Set
\begin{align*}
v_{\omega}:=\xi_{\omega}+\sum_{j\geqslant 0}\big{[}[\psi^{j+1}_{\omega}]^{-1}\circ\Pi_{U_{\theta^{1+j}\omega}\parallel S_{\theta^{1+j}\omega}}\big{]} P_{\theta^{j}\omega}\big{(}\Pi^{j}_{\omega}[\Gamma]\big{)}.
\end{align*}
Similar to Lemma \ref{func}, we can see that $ v_{\omega} $ is well-defined. Morever,
\begin{align*}
{\Pi}^{n}_{\omega}\big{[}I_{\omega}(v_{\omega} ,\Gamma)\big{]}&=\psi^{n}_{\omega}(\xi_{\omega})+\sum_{0\leqslant j\leqslant n-1}\psi^{n-1-j}_{\theta^{1+j}\omega}P_{\theta^{j}\omega}\big{(}\Pi^{j}_{\omega}[\Gamma]\big{)}\\&=\varphi^{j}_{\omega}(Y_{\omega}+\xi_{\omega})-\varphi^{j}_{\omega}(Y_{\omega})=\Pi^{j}_{\omega}[\Gamma]
\end{align*}
which proves the claim.
\end{proof}

\begin{lemma}\label{lemma:estimates_I}
 Under the same assumptions as in Lemma \ref{fixed}, set 
\begin{align*}
h_{1}^{\upsilon}(\omega) &:= \sup_{n\geqslant 0}\big{[}\exp(n\upsilon)\Vert\psi^{n}_{\omega}|_{S_{\omega}}\Vert\big{]} \quad \text{and} \\
h_{2}^{\upsilon}(\omega) &:= \sup_{n\geqslant 0}\big{[}\exp(n\upsilon)\sum_{0\leqslant j\leqslant n-1}\exp(-j\upsilon (1+r))f(\theta^{j}\omega)\Vert\psi^{n-j}_{\theta^{j+1}\omega}|_{S_{\theta^{j+1}\omega}}\Vert \Vert\Pi_{S_{\theta^{j+1}\omega}||U_{\theta^{j+1}\omega}}\Vert \\
&\quad + \exp(n\upsilon)\sum_{j\geqslant n}\exp(-j\upsilon (1+r))f(\theta^{j}\omega)\Vert (\psi^{j-n+1}_{\theta^{n}\omega} |_{U_{\theta^{j+1}}} )^{-1}\Vert \Vert\Pi_{U_{\theta^{j+1}\omega}||S_{\theta^{j+1}\omega}}\Vert\big{]}.
\end{align*}
    Then $ h_{1}^{\upsilon} $ and $ h_{2}^{\upsilon}$ are measurable and finite on a $\theta$-invariant set of full measure $\tilde{\Omega}$. In addition,
    \begin{align*}
     \lim_{n \to \infty} \frac{1}{n} \log^+ h^{\upsilon}_1(\theta^n \omega) = \lim_{n \to \infty} \frac{1}{n} \log^+ h^{\upsilon}_2(\theta^n \omega) = 0
    \end{align*}
    for every $\omega \in \tilde{\Omega}$. Furthermore, the estimates
    \begin{align*}
\Vert I_{\omega}(v_{\omega},\Gamma)\Vert &\leqslant h_{1}^{\upsilon}(\omega)\Vert v_{\omega}\Vert + h_{2}^{\upsilon}(\omega)\Vert\Gamma\Vert^{1+r} g(\Vert\Gamma\Vert) \quad \text{and} \\
\Vert I_{\omega}(v_{\omega},\Gamma)-I_{\omega}(v_{\omega},\tilde{\Gamma})\Vert &\leqslant h_{2}^{\upsilon}(\omega)h(\Vert\Gamma\Vert+\Vert\tilde{\Gamma}\Vert)\ \Vert\Gamma-\tilde{\Gamma}\Vert 
\end{align*} 
hold for every $\omega \in \tilde{\Omega}$, $\Gamma, \tilde{\Gamma} \in \Sigma^{\upsilon}_{\omega} \cap B(0,\tilde{\rho}(\omega))$ and $v_{\omega} \in S_{\omega}$.
\end{lemma}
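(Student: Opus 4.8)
The plan is to establish the four assertions---measurability, finiteness, subexponential growth along the orbit, and the two estimates---in turn, recycling the pointwise bounds \eqref{projection-B}, \eqref{VBB} and \eqref{inverse-c} already obtained in the proof of Lemma \ref{func}. For measurability, each quantity occurring in $h^{\upsilon}_{1}$ and $h^{\upsilon}_{2}$ is $\omega$-measurable: $\omega \mapsto \Vert \psi^{n}_{\omega}|_{S_{\omega}} \Vert = D_{1}(\psi^{n}_{\omega}|_{F_{\mu_{j_{0}}}(\omega)})$ by Lemma \ref{mea_Vol}; the projection norms $\omega \mapsto \Vert \Pi_{S_{\omega}\Vert U_{\omega}} \Vert$ and $\omega \mapsto \Vert \Pi_{U_{\omega}\Vert S_{\omega}} \Vert$ by the Remark following Lemma \ref{Proj_}; $\omega \mapsto f(\omega)$ by hypothesis; and $\omega \mapsto \Vert (\psi^{p}_{\theta^{m}\omega}|_{U_{\theta^{m+p}\omega}})^{-1} \Vert = \big(\inf_{\xi \in U_{\theta^{m}\omega},\, \Vert\xi\Vert=1} \Vert \psi^{p}_{\theta^{m}\omega}(\xi) \Vert\big)^{-1}$, where the infimum may be taken over a countable dense subset of $U_{\theta^{m}\omega}$ built from $\Delta_{0}$ as in the proof of Lemma \ref{Proj_}, so that measurability follows from Assumption \ref{MES} and Lemma \ref{mea_distance}. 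Since $\theta$ is bi-measurable, composition with $\theta^{k}$ preserves measurability, and $h^{\upsilon}_{1}, h^{\upsilon}_{2}$ are countable suprema of countable sums of such functions, hence measurable.

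For finiteness and subexponential growth, note that since $\upsilon < -\mu_{j_{0}}$ and $\mu_{j_{0}-1} \geq 0$ one may fix $\epsilon \in (0,-\mu_{j_{0}})$ so small that $\upsilon + \mu_{j_{0}} + \epsilon < 0$ and $3\epsilon < \upsilon r$. For $h^{\upsilon}_{1}$: Lemma \ref{lemma:prop_F} (equivalently \eqref{eqn:growth_slow_subspace}) gives $\Vert \psi^{p}_{\omega}|_{S_{\omega}} \Vert \leq F(\omega)\exp(p(\mu_{j_{0}}+\epsilon))$, hence $\exp(p\upsilon)\Vert \psi^{p}_{\omega}|_{S_{\omega}} \Vert \leq F(\omega)\exp(p(\upsilon+\mu_{j_{0}}+\epsilon)) \leq F(\omega)$ for all $p \geq 0$, so $h^{\upsilon}_{1} \leq F < \infty$, and since $F \geq 1$ we get $\frac{1}{n}\log^{+}h^{\upsilon}_{1}(\theta^{n}\omega) \leq \frac{1}{n}\log^{+}F(\theta^{n}\omega) \to 0$. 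For $h^{\upsilon}_{2}$: insert into its two defining sums the bounds \eqref{VBB} for $\Vert\psi^{\bullet}_{\bullet}|_{S}\Vert$, \eqref{inverse-c} for $\Vert(\psi^{\bullet}_{\bullet}|_{U})^{-1}\Vert$, \eqref{projection-B} for the projection norms, and $f(\theta^{j}\omega) \leq R'(\omega)\exp(\epsilon j)$ (valid since $\frac{1}{n}\log^{+}f(\theta^{n}\omega) \to 0$). Collecting exponents, the first sum is dominated by $C(\omega)\exp\!\big(n(\upsilon+\mu_{j_{0}}+\epsilon)\big)\sum_{0\leq j\leq n-1}\exp\!\big(j(-\mu_{j_{0}}+2\epsilon-\upsilon(1+r))\big)$ and, after the substitution $j=n+l$, the second by $C(\omega)\exp\!\big(n(3\epsilon-\upsilon r)\big)\sum_{l\geq 0}\exp\!\big(l(-\upsilon(1+r)+3\epsilon-\mu_{j_{0}-1})\big)$, where $C$ is a product of the tempered random variables $R, R'$. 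By the choice of $\epsilon$ both $n$-prefactors stay bounded and both geometric series converge, so $h^{\upsilon}_{2}(\omega) \leq C(\omega) < \infty$ on a $\theta$-invariant full-measure set. Running this argument with $\epsilon = 1/k$, $k \in \N$, the resulting bound satisfies $\limsup_{n}\frac{1}{n}\log^{+}C(\theta^{n}\omega) \leq 3/k$; intersecting the corresponding full-measure sets over $k$ yields $\lim_{n}\frac{1}{n}\log^{+}h^{\upsilon}_{2}(\theta^{n}\omega) = 0$.

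For the two estimates, start from the formula for $\Pi^{n}_{\omega}[I_{\omega}(v_{\omega},\Gamma)]$ in Lemma \ref{func}. Since $v_{\omega} \in S_{\omega}$ and $S$ is invariant, $\Vert \psi^{n}_{\omega}(v_{\omega}) \Vert \leq \Vert \psi^{n}_{\omega}|_{S_{\omega}} \Vert \Vert v_{\omega} \Vert$. Taking $\tilde{\xi}=0$ in \eqref{eqn:diff_bound_P} (and using $P_{\theta^{j}\omega}(0)=0$) together with $\Vert \Pi^{j}_{\omega}[\Gamma] \Vert \leq \exp(-\upsilon j)\Vert \Gamma \Vert$ and monotonicity of $g$ gives $\Vert P_{\theta^{j}\omega}(\Pi^{j}_{\omega}[\Gamma]) \Vert \leq \exp(-\upsilon j(1+r)) f(\theta^{j}\omega) \Vert \Gamma \Vert^{1+r} g(\Vert \Gamma \Vert)$. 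Bounding the operators $\psi^{n-1-j}_{\theta^{1+j}\omega}|_{S}$, $(\psi^{j-n+1}_{\theta^{n}\omega}|_{U})^{-1}$ and the projections by their norms, multiplying by $\exp(\upsilon n)$ and taking the supremum over $n \geq 0$ reproduces precisely $h^{\upsilon}_{1}(\omega)\Vert v_{\omega} \Vert + h^{\upsilon}_{2}(\omega)\Vert \Gamma \Vert^{1+r} g(\Vert \Gamma \Vert)$, which is the first estimate. For the Lipschitz estimate the terms $\psi^{n}_{\omega}(v_{\omega})$ cancel in the difference, and \eqref{eqn:diff_bound_P} together with $\Vert \Pi^{j}_{\omega}[\Gamma-\tilde{\Gamma}] \Vert \leq \exp(-\upsilon j)\Vert \Gamma-\tilde{\Gamma} \Vert$ and monotonicity of $h$ yields $\Vert P_{\theta^{j}\omega}(\Pi^{j}_{\omega}[\Gamma]) - P_{\theta^{j}\omega}(\Pi^{j}_{\omega}[\tilde{\Gamma}]) \Vert \leq \exp(-\upsilon j(1+r)) f(\theta^{j}\omega) h(\Vert \Gamma \Vert + \Vert \tilde{\Gamma} \Vert)\Vert \Gamma-\tilde{\Gamma} \Vert$; repeating the previous computation with $\Vert \Gamma \Vert^{1+r} g(\Vert \Gamma \Vert)$ replaced by $h(\Vert \Gamma \Vert+\Vert \tilde{\Gamma} \Vert)\Vert \Gamma-\tilde{\Gamma} \Vert$ gives the claimed bound $h^{\upsilon}_{2}(\omega) h(\Vert \Gamma \Vert+\Vert \tilde{\Gamma} \Vert)\Vert \Gamma-\tilde{\Gamma} \Vert$.

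The main obstacle is the second step: choosing a single $\epsilon$ (and then a sequence $\epsilon \downarrow 0$) for which every $n$-dependent prefactor remains bounded while all geometric series converge, i.e. exactly the exponent bookkeeping performed at the end of the proof of Lemma \ref{func}, now carried out uniformly in $n$ and combined with the temperedness of the auxiliary random variables $R, R', F$. The remaining verifications---the measurability of the inverse-operator norms and the matching of the termwise bounds against the exact definitions of $h^{\upsilon}_{1}, h^{\upsilon}_{2}$---are routine given the tools already developed in this section.
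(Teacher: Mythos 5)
Your proof is correct and follows essentially the same route as the paper, whose own proof is a two-line citation of the assumption on $f$, \eqref{eqn:growth_slow_subspace}, Lemma \ref{lemma:forw_back} and Proposition \ref{In_In_pr} for the temperedness statements, plus ``by definition of $I_\omega$'' for the estimates; you simply carry out the exponent bookkeeping explicitly, mirroring the computation at the end of the proof of Lemma \ref{func}. (Only cosmetic quibble: for the first sum in $h_2^{\upsilon}$ the geometric series need not converge as $n\to\infty$ when $-\mu_{j_0}+2\epsilon-\upsilon(1+r)>0$, but your condition $3\epsilon<\upsilon r$ still makes the product of prefactor and partial sum bounded, so the conclusion stands.)
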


\begin{proof}
 The statements about $h_{1}^{\upsilon}$ and $h_{2}^{\upsilon}$ follow from our assumption on $f$, \eqref{eqn:growth_slow_subspace}, Lemma \ref{lemma:forw_back} and Proposition \ref{In_In_pr}. The claimed estimates follow by definition of $I_{\omega}$.
\end{proof}

Recall that $ h(x)=x^{r}g(x) $. In particular, $h$ is invertible and $h$ and $h^{-1}$ are strictly increasing.

\begin{lemma}\label{lemma:ex_fp_bound}
 Assume that for $v_{\omega} \in S_{\omega}$, 
 \begin{align*}
  \Vert v_{\omega}\Vert\leqslant\frac{1}{2h_{1}^{\upsilon}(\omega)}\min\big{\lbrace}\frac{1}{2} h^{-1}(\frac{1}{2h_{2}^{\upsilon}(\omega)}),\tilde{\rho}(\omega)\big{\rbrace}.
 \end{align*}
 Then the equation
 \begin{align*}
    I_{\omega}(v_{\omega},\Gamma)=\Gamma
 \end{align*}
 admits a uniques solution $\Gamma = \Gamma(v_{\omega})$ and the bound
 \begin{align}\label{eqn:def_H_1}
  \Vert\Gamma (v_{\omega})\Vert\leqslant \min\big{\lbrace}\frac{1}{2}h^{-1}(\frac{1}{2h_{2}^{\upsilon}(\omega)}), \tilde{\rho}(\omega)\big{\rbrace} =: H^{\upsilon}_{1}(\omega)
 \end{align}
    holds true.

\end{lemma}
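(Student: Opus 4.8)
The plan is to recognize the equation $I_\omega(v_\omega,\Gamma)=\Gamma$ as a fixed point problem for the map $\Gamma\mapsto I_\omega(v_\omega,\Gamma)$ on a small closed ball in the Banach space $\Sigma^\upsilon_\omega$, and to solve it by the contraction mapping principle. Concretely, set $\delta:=H^\upsilon_1(\omega)=\min\{\tfrac12 h^{-1}(\tfrac{1}{2h^\upsilon_2(\omega)}),\tilde\rho(\omega)\}$ and work on the closed ball $\overline{B}(0,\delta)\subset\Sigma^\upsilon_\omega$. Since $\delta\le\tilde\rho(\omega)$, this ball lies inside the set $\Sigma^\upsilon_\omega\cap B(0,\tilde\rho(\omega))$ on which $I_\omega(v_\omega,\cdot)$ is defined and on which the estimates of Lemma \ref{lemma:estimates_I} are available (if one insists on staying strictly inside $B(0,\tilde\rho(\omega))$ one runs the argument on $\overline{B}(0,\delta')$ for $\delta'\uparrow\delta$, which is immaterial for the final bound). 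The two things to verify are that $I_\omega(v_\omega,\cdot)$ maps $\overline{B}(0,\delta)$ into itself and that it is a contraction there; Banach's fixed point theorem then yields a unique fixed point $\Gamma(v_\omega)$ in $\overline{B}(0,\delta)$, which is precisely the asserted bound $\|\Gamma(v_\omega)\|\le\delta=H^\upsilon_1(\omega)$.

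For the self-mapping property I would use the first estimate of Lemma \ref{lemma:estimates_I}, $\|I_\omega(v_\omega,\Gamma)\|\le h^\upsilon_1(\omega)\|v_\omega\|+h^\upsilon_2(\omega)\|\Gamma\|^{1+r}g(\|\Gamma\|)$, and rewrite the last term as $h^\upsilon_2(\omega)\|\Gamma\|\,h(\|\Gamma\|)$ using $h(x)=x^r g(x)$. The hypothesis on $\|v_\omega\|$ gives $h^\upsilon_1(\omega)\|v_\omega\|\le\tfrac12\delta$ directly. For $\|\Gamma\|\le\delta$, since $\delta\le\tfrac12 h^{-1}(\tfrac{1}{2h^\upsilon_2(\omega)})\le h^{-1}(\tfrac{1}{2h^\upsilon_2(\omega)})$ and $h$ is strictly increasing, $h(\|\Gamma\|)\le\tfrac{1}{2h^\upsilon_2(\omega)}$, whence $h^\upsilon_2(\omega)\|\Gamma\|\,h(\|\Gamma\|)\le\tfrac12\|\Gamma\|\le\tfrac12\delta$. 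Adding the two contributions gives $\|I_\omega(v_\omega,\Gamma)\|\le\delta$, so $I_\omega(v_\omega,\cdot)$ preserves $\overline{B}(0,\delta)$.

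For the contraction property I would invoke the second estimate $\|I_\omega(v_\omega,\Gamma)-I_\omega(v_\omega,\tilde\Gamma)\|\le h^\upsilon_2(\omega)\,h(\|\Gamma\|+\|\tilde\Gamma\|)\,\|\Gamma-\tilde\Gamma\|$. For $\Gamma,\tilde\Gamma\in\overline{B}(0,\delta)$ one has $\|\Gamma\|+\|\tilde\Gamma\|\le 2\delta\le h^{-1}(\tfrac{1}{2h^\upsilon_2(\omega)})$ — this is exactly the role of the factor $\tfrac12$ in front of $h^{-1}$ in the definition of $H^\upsilon_1$ — so monotonicity of $h$ yields $h^\upsilon_2(\omega)\,h(\|\Gamma\|+\|\tilde\Gamma\|)\le\tfrac12$, i.e.\ $I_\omega(v_\omega,\cdot)$ is Lipschitz with constant $\tfrac12$ on $\overline{B}(0,\delta)$. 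As $\overline{B}(0,\delta)$ is a complete metric space, being a closed subset of the Banach space $\Sigma^\upsilon_\omega$, the contraction mapping theorem provides the unique fixed point $\Gamma(v_\omega)$, necessarily satisfying $\|\Gamma(v_\omega)\|\le\delta=H^\upsilon_1(\omega)$. There is no genuine obstacle in this step: the analytic content has already been absorbed into Lemmas \ref{func}–\ref{lemma:estimates_I}, and the only point requiring care is aligning the radius $\delta$ with the domain $B(0,\tilde\rho(\omega))$ on which $I_\omega$ is defined and with the argument $2\delta$ of $h$ in the Lipschitz bound — which is precisely why $\tilde\rho(\omega)$ and the factor $\tfrac12$ both appear in $H^\upsilon_1(\omega)$.
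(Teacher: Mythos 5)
Your proposal is correct and follows exactly the paper's argument: the paper's own proof is a one-line appeal to the contraction mapping principle on the closed ball of radius $\min\bigl\{\tfrac{1}{2}h^{-1}(\tfrac{1}{2h_{2}^{\upsilon}(\omega)}),\tilde{\rho}(\omega)\bigr\}$, using the two estimates of Lemma \ref{lemma:estimates_I}, and your write-up simply supplies the self-mapping and Lipschitz computations (including the role of the factors $\tfrac12$) that the paper leaves implicit.
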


\begin{proof}
 We can use the estimates provided in Lemma \ref{lemma:estimates_I} to conclude that $I(v_{\omega},\cdot)$ is a contraction on the closed ball with radius $\min\big{\lbrace}\frac{1}{2}h^{-1}(\frac{1}{2h_{2}^{\upsilon}(\omega)}), \tilde{\rho}(\omega)\big{\rbrace}$.
\end{proof}

 Now we can formulate the main theorem about the existence of local stable manifolds.
 
\begin{theorem}\label{stable manifold}
Let $(\Omega,\mathcal{F},\P,\theta)$ be an ergodic measure-preserving dynamical systems and $\varphi$ a Fr\'echet-differentiable cocycle acting on a measurable field of Banach spaces $\{E_{\omega}\}_{\omega \in \Omega}$. Assume that $\varphi$ admits a  stationary solution $Y$ and that the linearized cocycle $\psi$ around $Y$ is compact, satisfies Assumption \ref{MES} and the integrability condition
\begin{align*}
 \log^+ \| \psi_{\omega} \| \in L^1(\omega).
\end{align*}
Moreover, assume that \eqref{eqn:diff_bound_P} holds for $\varphi$ and $\psi$. Let $\mu_{j_0} < 0$ and $S_{\omega}$ be defined as in Definition \ref{stable-dfn}.
For $ 0 < \upsilon < -\mu_{j_{0}} $, $\omega \in {\Omega}$ and $ R^{\upsilon}(\omega) :=\frac{1}{2h_{1}^{\upsilon}(\omega)}\min\big{\lbrace}\frac{1}{2}h^{-1}(\frac{1}{2h_{2}^{\upsilon}(\omega)}),\tilde{\rho}(\omega)\big{\rbrace} $ with $\tilde{\rho}$ defined as in \eqref{eqn:rhotilde}, let
\begin{align}
 S^{\upsilon}_{loc}(\omega) := \big{\lbrace}Y_{\omega}+\Pi^{0}_{\omega}[\Gamma(v_{\omega})], \ \ \Vert v_{\omega}\Vert< R^{\upsilon}(\omega)\big{\rbrace}.
\end{align}
Then there is a $\theta$-invariant set of full measure $\tilde{\Omega}$ on which the following properties are satisfied for every $\omega \in \tilde{\Omega}$:
\begin{itemize}
\item[(i)] There are random variables $ \rho_{1}^{\upsilon}(\omega), \rho_{2}^{\upsilon}(\omega)$, positive and finite on $\tilde{\Omega}$, for which
\begin{align}\label{eqn:rho_temp}
 \liminf_{p \to \infty} \frac{1}{p} \log \rho_i^{\upsilon}(\theta^p \omega) \geq 0, \quad i = 1,2
\end{align}
and such that
\begin{align*}
\big{\lbrace} Z_{\omega} \in E_{\omega}\, :\, \sup_{n\geqslant 0}\exp(n\upsilon)\Vert\varphi^{n}_{\omega}(Z_{\omega})-Y_{\theta^{n}\omega}\Vert &<\rho_{1}^{\upsilon}(\omega)\big{\rbrace}\subseteq S^{\upsilon}_{loc}(\omega)\\&\subseteq \big{\lbrace} Z_{\omega} \in E_{\omega}\, :\, \sup_{n\geqslant 0}\exp(n\upsilon)\Vert\varphi^{n}_{\omega}(Z_{\omega})-Y_{\theta^{n}\omega}\Vert<\rho_{2}^{\upsilon}({\omega})\big{\rbrace}.
\end{align*}
\item[(ii)]$ S^{\upsilon}_{loc}(\omega) $ is an immersed submanifold of $ E_{\omega} $ and
\begin{align*}
 T_{Y_{\omega}}S^{\upsilon}_{loc}(\omega) = S_{\omega}.
\end{align*}
\item[(iii)] For $ n\geqslant N(\omega) $,
\begin{align*}
\varphi^{n}_{\omega}(S^{\upsilon}_{loc}(\omega))\subseteq S^{\upsilon}_{loc}(\theta^{n}\omega).
\end{align*}
\item[(iv)]For $ 0<\upsilon_{1}\leqslant\upsilon_{2}< - \mu_{j_{0}} $,
\begin{align*}
S^{\upsilon_{2}}_{loc}(\omega)\subseteq S^{\upsilon_{1}}_{loc}(\omega).
\end{align*}
Also for $n\geqslant N(\omega) $,
\begin{align*}
\varphi^{n}_{\omega}(S^{\upsilon_{1}}_{loc}(\omega))\subseteq S^{\upsilon_{2}}_{loc}(\theta^{n}(\omega))
\end{align*}
 and consequently for $ Z_{\omega}\in S^{\upsilon}_{loc}(\omega) $,
\begin{align}\label{eqn:contr_char}
    \limsup_{n\rightarrow\infty}\frac{1}{n}\log\Vert\varphi^{n}_{\omega}(Z_{\omega})-Y_{\theta^{n}\omega}\Vert\leqslant  \mu_{j_{0}}.
\end{align}
\item[(v)] 
\begin{align*}
\limsup_{n\rightarrow\infty}\frac{1}{n}\log\bigg{[}\sup\bigg{\lbrace}\frac{\Vert\varphi^{n}_{\omega}(Z_{\omega})-\varphi^{n}_{\omega}(\tilde{Z}_{\omega})\Vert }{\Vert Z_{\omega}-\tilde{Z}_{\omega}\Vert},\ \ Z_{\omega}\neq\tilde{Z}_{\omega},\  Z_{\omega},\tilde{Z}_{\omega}\in S^{\upsilon}_{loc}(\omega) \bigg{\rbrace}\bigg{]}\leqslant \mu_{j_{0}}.
\end{align*}
\end{itemize}
\end{theorem}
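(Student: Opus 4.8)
The plan is to realise $S^{\upsilon}_{loc}(\omega)$ as the graph of a $C^1$ map over a ball in $S_\omega$ and, separately, to identify it with the set of initial conditions whose forward $\varphi$-orbit decays exponentially at rate $\upsilon$; every other assertion then follows from the cocycle property and the tempering estimates already available. First I would record the graph structure. For $\|v_\omega\|<R^{\upsilon}(\omega)$ let $\Gamma(v_\omega)\in\Sigma^{\upsilon}_\omega$ be the unique fixed point of $I_\omega(v_\omega,\cdot)$ furnished by Lemma \ref{lemma:ex_fp_bound}. Reading off the $n=0$ component of $I_\omega$, the fixed-point equation gives $\Pi^0_\omega[\Gamma(v_\omega)]=v_\omega-\sum_{j\ge 0}\big[[\psi^{j+1}_\omega]^{-1}\circ\Pi_{U_{\theta^{1+j}\omega}\parallel S_{\theta^{1+j}\omega}}\big]P_{\theta^j\omega}\big(\Pi^j_\omega[\Gamma(v_\omega)]\big)$, and since every summand lies in $U_\omega$ we get $\Pi_{S_\omega\parallel U_\omega}\Pi^0_\omega[\Gamma(v_\omega)]=v_\omega$; hence $v_\omega\mapsto Y_\omega+\Pi^0_\omega[\Gamma(v_\omega)]$ is an injective parametrisation and $S^{\upsilon}_{loc}(\omega)=Y_\omega+\operatorname{graph}(g^{\upsilon}_\omega)$ for a map $g^{\upsilon}_\omega\colon S_\omega\cap B(0,R^{\upsilon}(\omega))\to U_\omega$. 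Because $I_\omega$ is affine in $v_\omega$ and, by the ($C^1$-) regularity of $\varphi$ and the bound \eqref{eqn:diff_bound_P} (which controls the Fr\'echet derivative of $\Gamma\mapsto(P_{\theta^j\omega}(\Pi^j_\omega[\Gamma]))_{j}$ on $\Sigma^{\upsilon}_\omega$), of class $C^1$ in $\Gamma$, and moreover a $\tfrac12$-contraction in $\Gamma$ on the relevant ball by Lemma \ref{lemma:estimates_I}, the uniform contraction principle makes $v_\omega\mapsto\Gamma(v_\omega)$, hence $g^{\upsilon}_\omega$, of class $C^1$; since $P_\omega(0)=0$ and $D_0P_\omega=0$ we get $\Gamma(0)=0$ and $D_0g^{\upsilon}_\omega=0$, so the parametrisation is a $C^1$ injective immersion with derivative $\operatorname{id}_{S_\omega}$ at $0$. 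This is (ii).

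The core of the argument is (i). The inclusion ``$\subseteq$'' is immediate: if $Z_\omega=Y_\omega+\Pi^0_\omega[\Gamma(v_\omega)]$, then Lemma \ref{fixed} identifies $\varphi^n_\omega(Z_\omega)-Y_{\theta^n\omega}$ with $\Pi^n_\omega[\Gamma(v_\omega)]$, so $\sup_n\exp(n\upsilon)\|\varphi^n_\omega(Z_\omega)-Y_{\theta^n\omega}\|\le\|\Gamma(v_\omega)\|_{\Sigma^{\upsilon}_\omega}\le H^{\upsilon}_1(\omega)$, and we set $\rho^{\upsilon}_2:=H^{\upsilon}_1+1$. For ``$\supseteq$'', let $Z_\omega$ satisfy $\sup_n\exp(n\upsilon)\|\varphi^n_\omega(Z_\omega)-Y_{\theta^n\omega}\|<\rho^{\upsilon}_1(\omega)$; put $\xi_\omega:=Z_\omega-Y_\omega$, $\eta_j:=\varphi^j_\omega(Y_\omega+\xi_\omega)-Y_{\theta^j\omega}$, and $\Pi^j_\omega[\Gamma]:=\eta_j$, so $\Gamma\in\Sigma^{\upsilon}_\omega$ with norm $<\rho^{\upsilon}_1(\omega)$. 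From $\eta_{j+1}=\psi^1_{\theta^j\omega}(\eta_j)+P_{\theta^j\omega}(\eta_j)$ together with the $\psi$-invariance of $S$ and $U$ (Theorem \ref{BT}), projecting onto $U$ and unrolling the recursion backward yields
\begin{align*}
\eta^U_0=(\psi^n_\omega|_{U_\omega})^{-1}(\eta^U_n)-\sum_{j=0}^{n-1}(\psi^{j+1}_\omega|_{U_\omega})^{-1}\,\Pi_{U_{\theta^{j+1}\omega}\parallel S_{\theta^{j+1}\omega}}P_{\theta^j\omega}(\eta_j).
\end{align*}
Here $\|\eta^U_n\|$ decays at rate $\ge\upsilon$ while $\|(\psi^n_\omega|_{U_\omega})^{-1}\|$ grows at rate $-\mu_{j_0-1}\le 0<\upsilon$ (Proposition \ref{In_In_pr}), so letting $n\to\infty$ forces $\eta^U_0=-\sum_{j\ge 0}(\psi^{j+1}_\omega|_{U_\omega})^{-1}\Pi_{U\parallel S}P_{\theta^j\omega}(\eta_j)$. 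Consequently the element $v_\omega:=\xi_\omega+\sum_{j\ge 0}[[\psi^{j+1}_\omega]^{-1}\circ\Pi_{U\parallel S}]P_{\theta^j\omega}(\eta_j)$ appearing in the converse half of Lemma \ref{fixed} equals $\eta^S_0\in S_\omega$, with $\|v_\omega\|\le\|\Pi_{S_\omega\parallel U_\omega}\|\,\rho^{\upsilon}_1(\omega)$. Choosing $\rho^{\upsilon}_1(\omega)$ small enough that $\|\Pi_{S_\omega\parallel U_\omega}\|\rho^{\upsilon}_1(\omega)<R^{\upsilon}(\omega)$ and that $\rho^{\upsilon}_1(\omega)$ stays below $\tilde\rho(\omega)$ and the uniqueness radius of Lemma \ref{lemma:ex_fp_bound}, Lemma \ref{fixed} gives $\Gamma=\Gamma(v_\omega)$, hence $Z_\omega=Y_\omega+\Pi^0_\omega[\Gamma(v_\omega)]\in S^{\upsilon}_{loc}(\omega)$. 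The tempering \eqref{eqn:rho_temp} of $\rho^{\upsilon}_1,\rho^{\upsilon}_2$ follows since they are assembled from $h^{\upsilon}_1,h^{\upsilon}_2$ (tempered by Lemma \ref{lemma:estimates_I}), from $\tilde\rho$ (tempered from below because $\rho$ is), from $\|\Pi_{S_\omega\parallel U_\omega}\|$ (tempered), and from $h^{-1}$, using $h^{-1}(\delta)\ge(\delta/g(1))^{1/r}$ for small $\delta$.

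The remaining items are consequences. For (iii), if $Z_\omega\in S^{\upsilon}_{loc}(\omega)$ then $\sup_k\exp(k\upsilon)\|\varphi^k_{\theta^n\omega}(\varphi^n_\omega(Z_\omega))-Y_{\theta^{n+k}\omega}\|\le H^{\upsilon}_1(\omega)\exp(-n\upsilon)$, which drops below $\rho^{\upsilon}_1(\theta^n\omega)$ for $n\ge N(\omega)$ by \eqref{eqn:rho_temp}, so the ``$\supseteq$'' inclusion of (i) at $\theta^n\omega$ applies. For (iv), the operator $I_\omega$ does not depend on $\upsilon$ and $\Sigma^{\upsilon_2}_\omega\hookrightarrow\Sigma^{\upsilon_1}_\omega$ when $\upsilon_1\le\upsilon_2$, so $\Gamma^{\upsilon_2}(v_\omega)$ is again a fixed point of $I_\omega(v_\omega,\cdot)$ in $\Sigma^{\upsilon_1}_\omega$ and, after a routine comparison of the radii using monotonicity of $h^{\upsilon}_1$ in $\upsilon$, coincides with $\Gamma^{\upsilon_1}(v_\omega)$; this gives $S^{\upsilon_2}_{loc}(\omega)\subseteq S^{\upsilon_1}_{loc}(\omega)$. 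Feeding the decay rate $\upsilon$ back into the variation-of-constants identity for $\eta_n$ and exploiting the superlinear factor $\|\eta_j\|^{1+r}$ in $P_{\theta^j\omega}(\eta_j)$ improves the decay of $\eta_n$ from rate $\upsilon$ to rate $\min\{-\mu_{j_0},(1+r)\upsilon\}$; finitely many iterations give \eqref{eqn:contr_char}, and the refined inclusion $\varphi^n_\omega(S^{\upsilon_1}_{loc}(\omega))\subseteq S^{\upsilon_2}_{loc}(\theta^n\omega)$ for $n\ge N(\omega)$ then follows from \eqref{eqn:contr_char} together with (i). Finally (v): since $I_\omega$ is affine in $v_\omega$ and a $\tfrac12$-contraction in $\Gamma$, Lemma \ref{lemma:estimates_I} yields $\|\Gamma(v_\omega)-\Gamma(\tilde v_\omega)\|_{\Sigma^{\upsilon}_\omega}\le 2h^{\upsilon}_1(\omega)\|v_\omega-\tilde v_\omega\|$, so for $Z_\omega,\tilde Z_\omega\in S^{\upsilon}_{loc}(\omega)$ one has $\|\varphi^n_\omega(Z_\omega)-\varphi^n_\omega(\tilde Z_\omega)\|\le 2h^{\upsilon}_1(\omega)\|\Pi_{S_\omega\parallel U_\omega}\|\exp(-n\upsilon)\|Z_\omega-\tilde Z_\omega\|$; composing this estimate with the analogous one at $\theta^{N(\omega)}\omega$ for a rate $\upsilon_2 \uparrow -\mu_{j_0}$ (using the refined inclusion just proved) upgrades the exponent from $-\upsilon$ to $\mu_{j_0}$.

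The hard part is the inclusion ``$\supseteq$'' in (i): backward-solving the $U$-component recursion and thereby showing that any forward-decaying orbit is pinned onto the graph is where the genuine work lies. The $C^1$-dependence of $\Gamma(\cdot)$ needed for (ii) and the finite-step bootstrap for \eqref{eqn:contr_char} are the remaining technical hurdles; once (i) and the estimates of Lemmas \ref{lemma:estimates_I}--\ref{lemma:ex_fp_bound} are in hand, (iii)--(v) amount to bookkeeping.
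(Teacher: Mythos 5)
Your proposal is correct and rests on the same Lyapunov--Perron architecture as the paper (the operator $I_\omega$, the fixed point $\Gamma(v_\omega)$ from Lemma \ref{lemma:ex_fp_bound}, the characterization in Lemma \ref{fixed}, and tempered radii $\rho_1^\upsilon,\rho_2^\upsilon$), but several sub-arguments are genuinely different, and in one place you are more complete than the paper. For the inclusion ``$\supseteq$'' in (i), the paper simply defines $\tilde v_\omega$ and invokes the converse half of Lemma \ref{fixed} plus uniqueness; it never checks that $\tilde v_\omega$ lies in $S_\omega$, which is needed since $I_\omega(\cdot,\cdot)$ and the uniqueness statement are only set up for $v_\omega\in S_\omega$. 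Your backward unrolling of the $U$-component, showing $\eta_0^U=-\sum_{j\ge 0}(\psi^{j+1}_\omega|_{U_\omega})^{-1}\Pi_{U\parallel S}P_{\theta^j\omega}(\eta_j)$ and hence $\tilde v_\omega=\eta_0^S\in S_\omega$, supplies exactly this missing verification (note only that the growth bound on $\|(\psi^n_\omega|_{U_\omega})^{-1}\|$ you need is the forward-time estimate \eqref{inverse-c}, coming from \eqref{inverse-bound} and Lemma \ref{lemma:dist_forw_backw}, rather than Proposition \ref{In_In_pr} itself, which concerns $(\psi^n_{\sigma^n\omega})^{-1}$). For \eqref{eqn:contr_char} and the refined inclusion in (iv), the paper proceeds in the opposite order: it first shows $\varphi^n_\omega(S^{\upsilon_1}_{loc}(\omega))\subseteq S^{\upsilon_2}_{loc}(\theta^n\omega)$ by re-running the fixed-point argument at $\theta^n\omega$ with rate $\upsilon_2$ and matching $S$-projections, and then lets $\upsilon_2\uparrow-\mu_{j_0}$; your route is a bootstrap on the decay rate of $\eta_n$ exploiting the superlinear factor $\|\eta_j\|^{1+r}$, after which the refined inclusion follows from (i). Both work; the paper's version avoids the iteration but needs the uniqueness-of-fixed-point matching, while yours is the more classical Lyapunov--Perron bootstrap. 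For (v), your direct estimate via $v_\omega=\Pi_{S_\omega\parallel U_\omega}(Z_\omega-Y_\omega)$ and the Lipschitz bound $\|\Gamma(v_\omega)-\Gamma(\tilde v_\omega)\|\le 2h_1^\upsilon(\omega)\|v_\omega-\tilde v_\omega\|$ is cleaner than the paper's, which instead proves a lower bound on $\|\Pi^0_\omega(\Gamma(v_\omega))-\Pi^0_\omega(\Gamma(\tilde v_\omega))\|$ and separately checks finiteness of the Lipschitz constant $H^\upsilon_N(\omega)$ of $\varphi^N_\omega$ on $S^\upsilon_{loc}(\omega)$ by induction from \eqref{eqn:diff_bound_P}; in your version that finiteness is already contained in the uniform estimate. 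The remaining items (graph structure and $C^1$-dependence for (ii), the monotonicity comparisons of the radii in (iv), and the temperedness of the various random constants) are treated at the same level of detail in both arguments.
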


\begin{proof}
We start with (i). For the first inclusion, note that we can find a random variable $ \rho_1^{\upsilon}(\omega) $ satisfying
\begin{align}\label{BH}
\liminf_{p\rightarrow\infty}\frac{1}{p}\log \rho_{1}^{\upsilon}(\theta^{p}\omega) \geqslant 0 
\end{align}
and such that whenever $\Vert\Gamma\Vert\leqslant\rho^{\upsilon}_{1}(\omega)$,
\begin{align*}
    \Vert\Gamma\Vert +h_{2}^{\upsilon}(\omega)\Vert\Gamma\Vert^{r+1} g(\Vert\Gamma\Vert)\leqslant \frac{1}{2h_{1}^{\upsilon}(\omega)}\min\big{\lbrace}\frac{1}{2} h^{-1}(\frac{1}{2h_{2}^{\upsilon}(\omega)}),\tilde{\rho}(\omega)\big{\rbrace} =: H^{\upsilon}_{2}(\omega).
\end{align*}
For example, we can define 
\begin{align*}
 \rho_1^{\upsilon}(\omega) :=\min\big{\lbrace}h^{-1}(\frac{1}{h_{2}^{\upsilon}(\omega)}), H^{\upsilon}_{2}(\omega) / 2, H^{\upsilon}_{1}(\omega)\big{\rbrace}
\end{align*}
with $H^{\upsilon}_{1}$ defined as in \eqref{eqn:def_H_1}. Assume that $Z_{\omega} \in E_{\omega}$ has the property that
\begin{align*}
 \sup_{n\geqslant 0}\exp(n\upsilon)\Vert\varphi^{n}_{\omega}(Z_{\omega}) - Y_{\theta^{n}\omega} \Vert < \rho_1^{\upsilon}(\omega).
\end{align*}
Setting
\begin{align*}
 \tilde{v}_{\omega} := Z_{\omega} - Y_{\omega} + \sum_{j\geqslant 0}\big{[}[\psi^{j+1}_{\omega}]^{-1}\circ\Pi_{U_{\theta^{1+j}\omega}\parallel S_{\theta^{1+j}\omega}}\big{]} P_{\theta^{j}\omega}\big{(}\Pi^{j}_{\omega}[\tilde{\Gamma}]\big{)},
\end{align*}
it follows that $\| \tilde{v}_{\omega} \| < R^{\upsilon}(\omega)$. From Lemma \ref{fixed}, we conclude that $I_{\omega}[\tilde{v}_{\omega},\tilde{\Gamma}] = \tilde{\Gamma}$. By uniqueness of the fixed point map, we have $\tilde{\Gamma} = \Gamma(\tilde{v}_{\omega})$, therefore $Z_{\omega} = Y_{\omega} + \Pi^0_{\omega}(\Gamma(\tilde{v}_{\omega})) \in S^{\upsilon}_{loc}(\omega)$. Next, let $Z_{\omega} \in  S^{\upsilon}_{loc}(\omega)$, i.e. $Z_{\omega} = Y_{\omega} + \Pi^0_{\omega}(\Gamma({v}_{\omega}))$ for some $\| v_{\omega} \| < R^{\upsilon}(\omega)$. From Lemma \ref{fixed} and Lemma \ref{lemma:ex_fp_bound},
\begin{align*}
\Vert \Gamma(v_{\omega})\Vert = \sup_{n\geqslant 0}\exp(n\upsilon)\Vert\varphi^{n}_{\omega}(Z_{\omega})-Y_{\theta^{n}\omega} \Vert\leqslant R^{\upsilon}(\omega).
\end{align*}
We can therefore choose $\rho_{2}^{\upsilon}(\omega) = R^{\upsilon}(\omega)$ and the second inclusion is shown.

The second item immediately follows from our definition for $ S^{\upsilon}_{loc}(\omega) $. 

For item (iii), by \eqref{eqn:rho_temp}, we can find $ N(\omega) $ such that for $ n\geqslant N(\omega) $,
\begin{align*}
\exp(-n\upsilon)\rho_{2}^{\upsilon}(\omega)\leqslant \rho_{1}^{\upsilon}(\theta^{n}\omega).
\end{align*}
Now the claim follows from item (i).

For item (iv), note first that $ R^{\upsilon_{2}}(\omega)\leqslant  R^{\upsilon_{1}}(\omega)$. By definition of $ \Gamma^{\upsilon}_{\omega}(v_{\omega}) $, it immediately follows that 
 \begin{align*}
S^{\upsilon_{2}}_{loc}(\omega)\subseteq S^{\upsilon_{1}}_{loc}(\omega).
 \end{align*}
 Now take $ Z_{\omega}\in S^{\upsilon_{1}}_{loc}(\omega) $. From Lemma \ref{lemma:pro_invv} and (i), we can find $ N(\omega) $ such that for $ n\geqslant N(\omega) $,
\begin{align*}
\Vert\Pi_{ S_{\theta^{n}\omega}\parallel U_{\theta^{n}\omega}}\big{(}\varphi^{n}_{\omega}(Z_{\omega})-Y_{\theta^{n}\omega}\big{)}\Vert< R^{\upsilon_{2}}(\theta^{n}\omega).
\end{align*}
We may also assume that $ \exp(-n\upsilon_{1})\rho_{2}^{\upsilon_{1}}(\omega)\leqslant \rho_{1}^{\upsilon_{1}}(\theta^{n}\omega) $ for $n \geq N(\omega)$. For
\begin{align*}
 v_{\theta^{n}\omega} := \Pi_{ S_{\theta^{n}\omega}\parallel U_{\theta^{n}\omega}}\big{(}\varphi^{n}_{\omega}(Z_{\omega})-Y_{\theta^{n}\omega}\big{)}
\end{align*}
let
\begin{align*}
Z_{\theta^{n}\omega} := \Pi^{0}_{\theta^{n}\omega}(\Gamma(v_{\theta^{n}\omega}))+Y_{\theta^{n}\omega}\in S^{\upsilon_{2}}_{loc}(\theta^{n}\omega)\subset S^{\upsilon_{1}}_{loc}(\theta^{n}\omega).
\end{align*}
We claim that $ Z_{\theta^{n}\omega}=\varphi^{n}_{\omega}(Z_{\omega}) $. Since $ Z_{\omega}\in S^{\upsilon_{1}}_{loc}(\omega) $,
\begin{align*}
\sup_{j\geqslant 0}\exp(j\upsilon_{1})\Vert\varphi^{j}_{\theta^{n}\omega}(\varphi^{n}_{\omega}(Z_{\omega}))-Y_{\theta^{j}\theta^{n}\omega}\Vert\leqslant \exp(-n\upsilon_{1})\rho_{2}^{\upsilon_{1}}(\omega)\leqslant\rho_{1}^{\upsilon_{1}}(\theta^{n}\omega).
\end{align*}
So from item (i), $ \varphi^{n}_{\omega}(Z_{\omega})\in S^{\upsilon_{1}}_{loc}(\theta^{n}\omega) $. Remember $ Z_{\theta^{n}\omega}\in S^{\upsilon_{1}}_{loc}(\theta^{n}\omega)\cap S^{\upsilon_{2}}_{loc}(\theta^{n}\omega) $ and
\begin{align*}
\Pi_{S^{\theta^{n}\omega}||U^{\theta^{n}\omega}}(Z_{\theta^{n}\omega}-Y_{\theta^{n}\omega})=\Pi_{S^{\theta^{n}\omega}||U^{\theta^{n}\omega}}(\varphi^{n}_{\omega}(Z_{\omega})-Y_{\theta^{n}\omega}).
\end{align*}
So by uniqueness of the fixed point, we indeed have 
\begin{align*}
    \varphi^{n}_{\omega}(Z_{\omega}) = Z_{\theta^{n}\omega} \in S^{\upsilon_{2}}_{loc}(\theta^{n}\omega).
\end{align*}
To prove \eqref{eqn:contr_char}, let $\upsilon \leq \upsilon_2 < - \mu_0$ and take $Z_{\omega} \in S^{\upsilon}_{loc}(\omega)$. Then we know that for large enough $N$, $\varphi^N_{\omega} (Z_{\omega}) \in S^{\upsilon_2}_{loc}(\theta^N \omega)$, therefore
\begin{align*}
 \sup_{j \geq 0} \exp(j \upsilon_2) \| \varphi^{j+N}_{\omega}(Z_{\omega}) - Y_{\theta^{j+N} \omega} \| < \infty
\end{align*}
and it follows that 
\begin{align*}
  \limsup_{n\rightarrow\infty}\frac{1}{n}\log\Vert\varphi^{n}_{\omega}(Z_{\omega}) - Y_{\theta^{n}\omega}\Vert\leqslant  - \upsilon_2.
\end{align*}
We can choose $\upsilon_2$ arbitrarily close to $- \mu_0$, therefore the claim follows and item (iv) is proved. 

For item (v), first by definition,
\begin{align*}
\Vert \Gamma(v_{\omega})-\Gamma(\tilde{v}_{\omega})\Vert &=\Vert I_{\omega}(v_{\omega},\Gamma(v_{\omega}))-I_{\omega}(\tilde{v}_{\omega},\Gamma(\tilde{v}_{\omega}))\Vert\\&\leqslant \Vert I_{\omega}(v_{\omega},\Gamma(v_{\omega}))-I_{\omega}(\tilde{v}_{\omega},\Gamma({v}_{\omega}))\Vert +\Vert I_{\omega}(\tilde{v}_{\omega},\Gamma(v_{\omega}))-I_{\omega}(\tilde{v}_{\omega},\Gamma(\tilde{v}_{\omega}))\Vert\\&\leqslant h_{1}^{\upsilon}(\omega)\Vert v_{\omega}-\tilde{v}_{\omega}\Vert +\frac{1}{2}\Vert\Gamma(v_{\omega})-\Gamma(\tilde{v}_{\omega})\Vert
\end{align*}
for every $v_{\omega}, \tilde{v}_{\omega} \in S_{\omega}$ with $\|v_{\omega}\|, \| \tilde{v}_{\omega} \| \leq R^{\upsilon}(\omega)$. Consequently,
\begin{align}\label{BBU}
\Vert \Gamma(v_{\omega})-\Gamma(\tilde{v}_{\omega})\Vert\leqslant 2h_{1}^{\upsilon}(\omega)\Vert v_{\omega}-\tilde{v}_{\omega}\Vert.
\end{align}
 Also by definition, cf. \eqref{CVB},
\begin{align*}
\Vert \Pi^{0}_{\omega}(\Gamma(v_{\omega}))-\Pi^{0}_{\omega}(\Gamma(\tilde{v}_{\omega}))\Vert\geqslant \Vert v_{\omega} - \tilde{v}_{\omega}\Vert - h_{2}^{\upsilon}(\omega)\, \Vert\Gamma(v_{\omega})-{\Gamma}_{\omega}(\tilde{v}_{\omega})\Vert\, h(\Vert\Gamma(v_{\omega})\Vert+\Vert{\Gamma}_{\omega}(\tilde{v}_{\omega})\Vert).
\end{align*} 
So from \eqref{BBU}
\begin{align}\label{ZS}
\Vert \Pi^{0}_{\omega}(\Gamma(v_{\omega}))-\Pi^{0}_{\omega}(\Gamma(\tilde{v}_{\omega}))\Vert\geqslant \Vert v_{\omega}-\tilde{v}_{\omega}\Vert \big{[}1 -2h_{1}^{\upsilon}(\omega)h_{2}^{\upsilon}(\omega) h(\Vert\Gamma(v_{\omega})\Vert+\Vert{\Gamma}_{\omega}(\tilde{v}_{\omega})\Vert)\big{]}.
\end{align}
First assume that
\begin{align*}
\max\lbrace\Vert\Gamma(v_{\omega}), \Gamma(\tilde{v}_{\omega})\Vert\rbrace\leqslant\frac{1}{2}h^{-1}(\frac{1}{4h_{1}^{\upsilon}(\omega)h_{2}^{\upsilon}(\omega)}).
\end{align*}
Then from \eqref{BBU} and \eqref{ZS},
\begin{align}\label{Tt}
\frac{\Vert \Gamma(v_{\omega})-\Gamma(\tilde{v}_{\omega})\Vert}{\Vert \Pi^{0}_{\omega}(\Gamma(v_{\omega}))-\Pi^{0}_{\omega}(\Gamma(\tilde{v}_{\omega}))\Vert}\leqslant 4h_{1}^{\upsilon}(\omega).
\end{align}
Thus if $Z_{\omega} = Y_{\omega} + \Pi^0_{\omega}[\Gamma(v_{\omega})]$ and $\tilde{Z}_{\omega} = Y_{\omega} + \Pi^0_{\omega}[\Gamma(v_{\omega})]$, it follows that
\begin{align*}
 \frac{\Vert\varphi^{n}_{\omega}(Z_{\omega})-\varphi^{n}_{\omega}(\tilde{Z}_{\omega})\Vert}{\Vert Z_{\omega}-\tilde{Z}_{\omega}\Vert}\leqslant 4\exp(-n\upsilon) h_{1}^{\upsilon}(\omega)
\end{align*}
for every $n \geq 1$. In the general case, we can use item (i) and that $ h^{-1}(\frac{1}{4h_{1}^{\upsilon}(\omega)h_{2}^{\upsilon}(\omega)}) $ satisfies \eqref{eqn:rho_temp} to see that for some $ N=N(\omega) $,
\begin{align*}
\sup_{j\geqslant 0}\exp(j\upsilon)\Vert\varphi^{j}_{\theta^{N}\omega}(\varphi^{N}_{\omega}(Z_{\omega}))-Y_{\theta^{j}\theta^{N}\omega}\Vert\leqslant \exp(-N\upsilon)\rho^{\upsilon}_{2}(\omega)\leqslant \frac{1}{2}h^{-1}(\frac{1}{4h_{1}^{\upsilon}(\theta^{N}\omega)h_{2}^{\upsilon}(\theta^{N}\omega)}).
\end{align*} 
Consequently, from \eqref{Tt},
\begin{align*}
  \sup_{j\geqslant 0} \frac{\exp(j\upsilon)\Vert\varphi^{j+N}_{\omega}(Z_{\omega})-\varphi^{j+N}_{\omega}(\tilde{Z}_{\omega})\Vert}{\Vert\varphi^{N}_{\omega}(Z_{\omega})-\varphi^{N}_{\omega}(\tilde{Z}_{\omega})\Vert}\leqslant 4h_{1}^{\upsilon}(\theta^{N}\omega)
\end{align*}
and hence for every $ n\geqslant N $,
\begin{align}\label{CR}
\frac{\Vert\varphi^{n}_{\omega}(Z_{\omega})-\varphi^{n}_{\omega}(\tilde{Z}_{\omega})\Vert}{\Vert Z_{\omega}-\tilde{Z}_{\omega}\Vert}\leqslant 4\exp((-n-N)\upsilon) h_{1}^{\upsilon}(\theta^{N}\omega)H^{\upsilon}_{N}(\omega)
\end{align}
where
\begin{align*}
H^{\upsilon}_{N}(\omega)=\sup\bigg{\lbrace}\frac{\Vert\varphi^{N}_{\omega}(Z_{\omega})-\varphi^{N}_{\omega}(\tilde{Z}_{\omega})\Vert }{\Vert Z_{\omega}-\tilde{Z}_{\omega}\Vert},\ \ Z_{\omega}\neq\tilde{Z}_{\omega},\  Z_{\omega},\tilde{Z}_{\omega}\in S^{\upsilon}_{loc}(\omega) \bigg{\rbrace}.
\end{align*}
We claim that $ H^{\upsilon}_{N}(\omega) $ is finite. Indeed, by assumption \eqref{eqn:diff_bound_P},
\begin{align*}
&\Vert\varphi^{N}_{\omega}(Z_{\omega})-\varphi^{N}_{\omega}(\tilde{Z}_{\omega})\Vert\leqslant \Vert\psi^{1}_{\theta^{N-1}\omega}\Vert\  \Vert\varphi^{N-1}_{\omega}(Z_{\omega})-\varphi^{N-1}_{\omega}(\tilde{Z}_{\omega})\Vert \\ &\qquad + f(\theta^{N}\omega) \ \Vert \varphi^{N-1}_{\omega}(Z_{\omega})-\varphi^{N-1}_{\omega}(\tilde{Z}_{\omega})\Vert  h\big{(}\Vert\varphi^{N-1}_{\omega}(Z_{\omega})-Y_{\theta^{N-1}\omega}\Vert +\Vert\varphi^{N-1}_{\omega}(\tilde{Z}_{\omega})-Y_{\theta^{N-1}\omega}\Vert\big{)}
\end{align*} 
and we can proceed by induction to conclude. Finally, from \eqref{CR} and item (iv), our claim is proved.
\end{proof}

\begin{remark}
Assume that for $ \omega \in \tilde{\Omega} $ the function $ \varphi_{\omega} $  is $ C^{m} $. Then, since
\begin{align*}
I_{\omega}(0,0)=\frac{\partial}{\partial\Gamma}I_{\omega}(0,0)=0,
\end{align*}
we can deduce from the Implicit function theorem that $S^{\upsilon}_{loc}(\omega) $ is locally $ C^{m-1} $.
\end{remark}

\subsection{Unstable manifolds}
We invoke same strategy for proving the existence of unstable manifolds. Since the arguments are very similar, we will only sketch them briefly. In this section, we will assume that the largest Lyapunov exponent is strictly positive, i.e. that $\mu_1 > 0$. 
 \begin{definition}\label{unstable-dfn}
  Set  $k_{0} := \min \lbrace k :\mu_{k} > 0\rbrace $, $\tilde{S}_{\omega} := F_{\mu_{k_{0}+1}}(\omega) $ and $ \tilde{U}_{\omega}=\oplus_{1\leqslant i\leqslant k_{0}}H^{i}_{\omega} $ for $\omega \in \tilde{\Omega}$. For $ \tilde{\Sigma}_\omega := \prod_{j\geqslant 0}E_{\sigma^{j}\omega}  $ and $ \upsilon>0 $, we define the Banach space
  \begin{align*}
 \tilde{\Sigma}^{\upsilon}_{\omega} := \bigg{\lbrace} {\Gamma}\in \tilde{\Sigma}_{\omega}\ :\ \Vert{\Gamma}\Vert = \sup_{k\geqslant 0}\big{[}\Vert\tilde{\Pi}_{\omega}^{k}{\Gamma}\Vert\exp( k\upsilon)\big{]}<\infty \bigg{\rbrace}
 \end{align*}
 where $ \tilde{\Pi}_{\omega}^{k}:\prod_{i\geqslant 0} E_{\sigma^{i}\omega}\rightarrow E_{\sigma^{k}\omega} $ is the  projection map. Similar to last section, we also set
\begin{align*}
\tilde{h}_{1}^{\upsilon}(\omega) &:= \sup_{n\geqslant 0}\big{[}\exp(n\upsilon)\Vert (\psi^{n}_{\sigma^{n}\omega} |_{\tilde{U}_{\omega}} )^{-1} \Vert\big{]} \quad \text{and} \\
\tilde{h}_{2}^{\upsilon}(\omega) &:= \sup_{n\geqslant 0}\big{[}\exp(n\upsilon) \sum_{0\leqslant k\leqslant n-1} \exp\big{(}-\upsilon(n-k) (1+r) \big{)} f(\sigma^{n-k}\omega) \Vert (\psi^{k+1}_{\sigma^{n}\omega} |_{\tilde{U}_{\sigma^{n-1-k}\omega}} )^{-1}\Vert \\
&\qquad \times \Vert\Pi_{ \tilde{U}_{\sigma^{n-1-k}\omega}\parallel \tilde{S}_{\sigma^{n-1-k}\omega}}\Vert \\
&\quad + \exp(n\upsilon)\sum_{k\geqslant n}\exp(-\upsilon (k+1)(1+r))f(\sigma^{k+1}\omega)\Vert \psi^{k-n}_{\sigma^{k}\omega}|_{\tilde{S}_{\sigma^{k}\omega}}\Vert \Vert\Pi_{\tilde{S}_{\sigma^{k}\omega}||\tilde{U}_{\sigma^{k}\omega}}\Vert\big{]}.
\end{align*}
  \end{definition}

  \begin{lemma}
   Let $\omega \in \Omega$, $ 0 < \upsilon < \mu_{k_{0}}$ and assume that $\rho \colon \Omega \to \R^+$ satisfies 
   \begin{align}\label{eqn:rho_sigma_prop}
    \liminf_{n \to \infty} \frac{1}{n} \log \rho(\sigma^n \omega) \geq 0
   \end{align}
   almost surely. Define $P$ as in Lemma \ref{func} and assume that  \eqref{eqn:diff_bound_P} holds for a random variable $f \colon \Omega \to \R^+$ which satisfies $\lim_{n \to \infty} f(\sigma^n \omega) = 0$ almost surely. Set 
   \begin{align}\label{eqn:def_tilde_rho_2}
    \tilde{\rho}(\omega) := \inf_{n \geq 0} \exp(n \upsilon) \rho(\sigma^n \omega).
   \end{align}
   Then the map
   \begin{align*}
 &\tilde{I}_{_{\omega}} :\tilde{U}_{\omega}\times\tilde{\Sigma}_{\omega}^{\upsilon}\cap B(0,\tilde{\rho}(\omega))\rightarrow\tilde{\Sigma}_{\omega}^{\upsilon}, \\
 &{\tilde{\Pi}}^{n}_{\omega}\big{[} \tilde{I} _{\omega}(u_{\omega} ,\Gamma)\big{]} = \begin{cases}
 [\psi^{n}_{\sigma^{n}\omega}]^{-1}(u_{\omega}) &\text{}\\
 \qquad -\sum_{0\leqslant k\leqslant n-1}\big{[}[\psi^{k+1}_{\sigma^{n}\omega}]^{-1}\circ\Pi_{ \tilde{U}_{\sigma^{n-1-k}\omega}\parallel \tilde{S}_{\sigma^{n-1-k}\omega}}\big{]}P_{\sigma^{n-k}\omega}\big{(}\tilde{\Pi}^{n-k}_{\omega}[\Gamma]\big{)}   &\text{}\\
 \qquad + \sum_{k\geqslant n}\big{[}\psi^{k-n}_{\sigma^{k}\omega}\circ\Pi_{\tilde{S}_{\sigma^{k}\omega}\parallel \tilde{U}_{\sigma^{k}\omega}}\big{]} P_{\sigma^{k+1}\omega}\big{(}\tilde{\Pi}^{k+1}_{\omega}[\Gamma]\big{)}  &\text{for } n \geq 1,\\
 u_{\omega}+ \sum_{k\geqslant 0}\big{[}\psi^{k}_{\sigma^{k}\omega}\circ\Pi_{\tilde{S}_{\sigma^{k}\omega}\parallel \tilde{U}_{\sigma^{k}\omega}}\big{]} P_{\sigma^{k+1}\omega}\big{(}\tilde{\Pi}^{k+1}_{\omega}[\Gamma]\big{)}  &\text{for } n = 0.
 \end{cases}
 \end{align*}
  is well-defined on a $\theta$-invariant set of full measure $\tilde{\Omega}$.

  \end{lemma}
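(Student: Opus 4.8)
The proof runs completely parallel to that of Lemma \ref{func}, with the roles of forward and backward iteration interchanged; I only indicate the modifications. Fix $\epsilon\in(0,\mu_{k_{0}})$. The starting point is a collection of tempered estimates, valid on a $\theta$-invariant set of full measure. First, \eqref{pro_invv} (which already contains both the $\theta^{n}$- and the $\sigma^{n}$-limit) yields a random variable $R(\omega)>1$ with
\[
 \|\Pi_{\tilde U_{\sigma^{j}\omega}\parallel \tilde S_{\sigma^{j}\omega}}\|\leqslant R(\omega)e^{\epsilon j},\qquad \|\Pi_{\tilde S_{\sigma^{j}\omega}\parallel \tilde U_{\sigma^{j}\omega}}\|\leqslant R(\omega)e^{\epsilon j}\qquad(j\geqslant 0).
\]
Second, since $\tilde S_{\omega}=F_{\mu_{k_{0}+1}}(\omega)$ with $\mu_{k_{0}+1}\leqslant 0$, \eqref{eqn:growth_slow_subspace} together with Lemma \ref{lemma:forw_back} (after enlarging $R$) gives $\|\psi^{p}_{\sigma^{n}\omega}|_{\tilde S_{\sigma^{n}\omega}}\|\leqslant R(\omega)\exp\!\big(p\mu_{k_{0}+1}+\epsilon(n+p)\big)$ for all $n,p\geqslant 0$. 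Third, $\tilde U_{\omega}=\oplus_{1\leqslant i\leqslant k_{0}}H^{i}_{\omega}$, so the backward fast-growth statement \eqref{eqn:backward_fast_growing} of Proposition \ref{In_In_pr} — combined with Lemma \ref{lemma:dist_forw_backw} to control $(\psi^{m}_{\sigma^{m}\omega}|_{\tilde U_{\omega}})^{-1}$ on the whole sum, and with Birkhoff's theorem applied to $\log\|\psi^{1}_{\sigma^{n}\omega}\|$ to absorb the base-point shift — produces, after possibly enlarging $R$, a bound of the form $\|(\psi^{k+1}_{\sigma^{n}\omega}|_{\tilde U_{\sigma^{n-1-k}\omega}})^{-1}\|\leqslant R(\omega)\exp\!\big(-(k+1)\mu_{k_{0}}+\epsilon(n+k)\big)$. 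Finally, \eqref{eqn:diff_bound_P} with $\tilde\xi_{\omega}=0$, the hypothesis $\lim_{n}f(\sigma^{n}\omega)=0$ and monotonicity of $g$ give a random variable $\tilde R(\omega)>1$ with $\|P_{\sigma^{m}\omega}(\tilde\Pi^{m}_{\omega}[\Gamma])\|\leqslant \tilde R(\omega)\|\tilde\Pi^{m}_{\omega}[\Gamma]\|^{1+r}g(\|\tilde\Pi^{m}_{\omega}[\Gamma]\|)e^{\epsilon m}$ for all $m\geqslant 0$.

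Next I would substitute these four estimates, together with $\|\tilde\Pi^{m}_{\omega}[\Gamma]\|\leqslant \|\Gamma\|_{\tilde\Sigma^{\upsilon}_{\omega}}e^{-m\upsilon}$, into each of the three pieces defining $\tilde\Pi^{n}_{\omega}[\tilde I_{\omega}(u_{\omega},\Gamma)]$ for $n\geqslant 1$ (the case $n=0$ is the special case of the $\sum_{k\geqslant n}$ term). The first term is bounded by $R(\omega)e^{(-\mu_{k_{0}}+\epsilon)n}\|u_{\omega}\|$. The middle sum over $0\leqslant k\leqslant n-1$, after pulling out $\|\Gamma\|^{1+r}g(\|\Gamma\|)$ and using that $g$ is increasing, is a geometric series in $k$ whose ratio is $\exp\!\big(k(-\mu_{k_{0}}+c\epsilon-(1+r)\upsilon)\big)$ up to harmless prefactors, hence convergent once $\epsilon$ is small, because $\upsilon>0$ and $r>0$ make the exponent strictly negative; the tail sum over $k\geqslant n$ is a geometric series with ratio $\exp\!\big(k(\mu_{k_{0}+1}+c'\epsilon-(1+r)\upsilon)\big)$, which converges for small $\epsilon$ because $\mu_{k_{0}+1}\leqslant 0$ and $\upsilon>0$. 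Collecting the remaining exponential factors in $n$ and using $\mu_{k_{0}+1}\leqslant 0<\mu_{k_{0}}$ together with $0<\upsilon<\mu_{k_{0}}$, one sees after shrinking $\epsilon$ once more that each of the three contributions to $e^{n\upsilon}\|\tilde\Pi^{n}_{\omega}[\tilde I_{\omega}(u_{\omega},\Gamma)]\|$ is bounded uniformly in $n\geqslant 0$. Hence $\tilde I_{\omega}(u_{\omega},\Gamma)\in\tilde\Sigma^{\upsilon}_{\omega}$ and $\tilde I_{\omega}$ is well-defined; the exceptional null set is $\theta$-invariant since every limit statement invoked holds on a $\theta$-invariant set.

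The only genuine subtlety — and the step on which I would spend the most care — is the third estimate: $\tilde U_{\omega}$ is merely an \emph{algebraic} direct sum of the $H^{i}_{\omega}$, $1\leqslant i\leqslant k_{0}$, so the operator norm of $(\psi^{k+1}_{\sigma^{n}\omega}|_{\tilde U})^{-1}$ is not given directly by Proposition \ref{In_In_pr} but has to be extracted from the distortion estimates of Lemma \ref{lemma:dist_forw_backw} (expand a vector in the basis $\{\xi^{t}_{\omega}\}$ and bound its coordinates via $\operatorname{Vol}$), and moreover the base point of $\psi^{k+1}$ is $\sigma^{n}\omega$, not a fixed $\omega$, so one must first rewrite $\psi^{k+1}_{\sigma^{n}\omega}=\psi^{k+1}_{\sigma^{k+1}(\sigma^{n-1-k}\omega)}$ and control the resulting $\omega$-dependence using Lemma \ref{inve_lim} (to transfer temperedness from $\theta$-orbits to $\sigma$-orbits) and Birkhoff's ergodic theorem. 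Once this bound is available in the displayed form, the remainder is the same summation of geometric series as in the proof of Lemma \ref{func}.
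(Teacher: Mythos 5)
Your proposal is correct and follows exactly the route the paper intends: the paper's own proof consists of the single remark that one should transfer Lemma \ref{lemma:prop_F} to the backward orbit via Lemma \ref{inve_lim} and then repeat the computation of Lemma \ref{func}, which is precisely what you carry out (with the right sources \eqref{pro_invv}, \eqref{inn_BB}, \eqref{eqn:backward_fast_growing} and \eqref{eqn:dist_backw} for the tempered bounds on the projections, on $\psi^{p}|_{\tilde S}$ and on $(\psi^{k+1}|_{\tilde U})^{-1}$). One small slip: the $k$-exponent of the finite sum over $0\leqslant k\leqslant n-1$ is $-\mu_{k_{0}}+(1+r)\upsilon+O(\epsilon)$, not $-\mu_{k_{0}}-(1+r)\upsilon+O(\epsilon)$, so it need not be negative; this is harmless because, exactly as in the corresponding estimate in Lemma \ref{func}, the $n$-dependent prefactor combined with $0<\upsilon<\mu_{k_{0}}$ still makes the product bounded uniformly in $n$ --- which is the conclusion you assert in your final collection of terms.
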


  \begin{proof}
   We can use Lemma \ref{inve_lim} to obtain a version of Lemma \ref{lemma:prop_F} where we replace $\theta$ by $\sigma$. The rest of the proof is similar to Lemma \ref{func}.
  \end{proof}

\begin{lemma}
 For $ 0 < \upsilon < \mu_{k_{0}}$, $\omega \in \tilde{\Omega}$ and $\Gamma\in \Sigma^{\upsilon}_{\omega} \cap B(0,\tilde{\rho}(\omega))$,
\begin{align}\label{UY}
\tilde{I}_{\omega}(u_{\omega},{\Gamma})={\Gamma} \ \ \ \ {\Longleftrightarrow}\ \ \ \ \forall\ 0 \leq k\leqslant n: \ \ \ \tilde{\Pi}^{n-k}_{\omega}{\Gamma} = \varphi^{k}_{\sigma^{n}\omega}( \tilde{\Pi}^{n}_{\omega}{\Gamma}+Y_{\sigma^{n}\omega})-Y_{\sigma^{n-k}\omega} .
\end{align}
\end{lemma}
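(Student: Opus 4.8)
The plan is to follow the template of Lemma \ref{fixed}, transposed to backward time. The crucial reduction is that, after writing out the definition of $P$, the whole family of identities on the right-hand side of \eqref{UY} is equivalent to the single \emph{one-step} identity
\[
\tilde{\Pi}^{n}_{\omega}\Gamma = \psi^{1}_{\sigma^{n+1}\omega}\big(\tilde{\Pi}^{n+1}_{\omega}\Gamma\big) + P_{\sigma^{n+1}\omega}\big(\tilde{\Pi}^{n+1}_{\omega}\Gamma\big), \qquad n \geqslant 0,
\]
which I will call $(\star)$. Indeed $(\star)$ is the $k=1$ instance of \eqref{UY}; conversely, writing $Z_{\sigma^{n}\omega} := Y_{\sigma^{n}\omega} + \tilde{\Pi}^{n}_{\omega}\Gamma$, condition $(\star)$ reads $\varphi^{1}_{\sigma^{n+1}\omega}(Z_{\sigma^{n+1}\omega}) = Z_{\sigma^{n}\omega}$, and iterating this with the cocycle identity $\varphi^{k}_{\sigma^{n}\omega} = \varphi^{k-1}_{\sigma^{n-1}\omega}\circ\varphi^{1}_{\sigma^{n}\omega}$ gives $\varphi^{k}_{\sigma^{n}\omega}(Z_{\sigma^{n}\omega}) = Z_{\sigma^{n-k}\omega}$ for all $0\leqslant k\leqslant n$, which is exactly \eqref{UY}. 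So it suffices to prove that $\tilde{I}_{\omega}(u_{\omega},\Gamma) = \Gamma$ holds for a suitable $u_{\omega}\in\tilde{U}_{\omega}$ if and only if $(\star)$ holds, the relevant element being $u_{\omega} = \Pi_{\tilde{U}_{\omega}\parallel\tilde{S}_{\omega}}\tilde{\Pi}^{0}_{\omega}\Gamma$ (as is forced by the $n=0$ component).

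For the direction ``$\Rightarrow$'' I would apply $\psi^{1}_{\sigma^{n+1}\omega}$ to the defining formula for $\tilde{\Pi}^{n+1}_{\omega}[\tilde{I}_{\omega}(u_{\omega},\Gamma)]$ and simplify term by term, using the cocycle relations $\psi^{1}_{\sigma^{n+1}\omega}\circ[\psi^{k+1}_{\sigma^{n+1}\omega}]^{-1} = [\psi^{k}_{\sigma^{n}\omega}]^{-1}$ on $\tilde{U}$ and $\psi^{1}_{\sigma^{n+1}\omega}\circ\psi^{k-n-1}_{\sigma^{k}\omega} = \psi^{k-n}_{\sigma^{k}\omega}$, together with $\Pi_{\tilde{U}_{\sigma^{n}\omega}\parallel\tilde{S}_{\sigma^{n}\omega}} + \Pi_{\tilde{S}_{\sigma^{n}\omega}\parallel\tilde{U}_{\sigma^{n}\omega}} = \operatorname{Id}$ on $E_{\sigma^{n}\omega} = \tilde{U}_{\sigma^{n}\omega}\oplus\tilde{S}_{\sigma^{n}\omega}$ (Theorem \ref{BT}(ii)). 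After shifting the indices in the two series, the boundary terms of the shifted sums combine via $\Pi_{\tilde U\parallel\tilde S}+\Pi_{\tilde S\parallel\tilde U}=\operatorname{Id}$ and everything collapses to $\psi^{1}_{\sigma^{n+1}\omega}\big(\tilde{\Pi}^{n+1}_{\omega}[\tilde{I}_{\omega}(u_{\omega},\Gamma)]\big) = \tilde{\Pi}^{n}_{\omega}[\tilde{I}_{\omega}(u_{\omega},\Gamma)] - P_{\sigma^{n+1}\omega}\big(\tilde{\Pi}^{n+1}_{\omega}\Gamma\big)$; substituting $\tilde{I}_{\omega}(u_{\omega},\Gamma) = \Gamma$ gives $(\star)$. (The cases $n=0$ and $n=1$ are covered by the same computation once empty sums are read as $0$.)

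For the direction ``$\Leftarrow$'' I would split each section $\tilde{\Pi}^{n}_{\omega}\Gamma = w_{n} + s_{n}$ with $w_{n}\in\tilde{U}_{\sigma^{n}\omega}$, $s_{n}\in\tilde{S}_{\sigma^{n}\omega}$, and project $(\star)$ onto the two summands; since $\psi^{1}$ maps $\tilde{U}$ into $\tilde{U}$ and $\tilde{S}$ into $\tilde{S}$, this yields $w_{n} = \psi^{1}_{\sigma^{n+1}\omega}(w_{n+1}) + \Pi_{\tilde{U}_{\sigma^{n}\omega}\parallel\tilde{S}_{\sigma^{n}\omega}}P_{\sigma^{n+1}\omega}(\tilde{\Pi}^{n+1}_{\omega}\Gamma)$ and the analogous recursion for $s_{n}$. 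Because $\psi^{1}_{\sigma^{n+1}\omega}|_{\tilde{U}_{\sigma^{n+1}\omega}}$ is invertible, I can solve the first recursion forward, obtaining (a finite sum, no convergence issue) $w_{n} = [\psi^{n}_{\sigma^{n}\omega}]^{-1}(w_{0}) - \sum_{0\leqslant k\leqslant n-1}\big{[}[\psi^{k+1}_{\sigma^{n}\omega}]^{-1}\circ\Pi_{\tilde{U}_{\sigma^{n-1-k}\omega}\parallel\tilde{S}_{\sigma^{n-1-k}\omega}}\big{]}P_{\sigma^{n-k}\omega}(\tilde{\Pi}^{n-k}_{\omega}\Gamma)$, which is precisely the first term plus first sum of $\tilde{\Pi}^{n}_{\omega}[\tilde{I}_{\omega}(w_{0},\Gamma)]$; and the $\tilde{S}$-recursion I would solve by telescoping to infinity, using that $\|\psi^{m}_{\sigma^{m}\omega}|_{\tilde{S}_{\sigma^{m}\omega}}\|$ has exponential rate at most $\mu_{k_{0}+1}\leqslant 0$ (Lemma \ref{lemma:forw_back}), while $\|s_{m}\| \leqslant \|\Gamma\|_{\tilde{\Sigma}^{\upsilon}_{\omega}}\exp(-m\upsilon)$ with $\upsilon>0$, so $\psi^{m}_{\sigma^{m}\omega}(s_{m})\to 0$; this produces $s_{n} = \sum_{k\geqslant n}\big{[}\psi^{k-n}_{\sigma^{k}\omega}\circ\Pi_{\tilde{S}_{\sigma^{k}\omega}\parallel\tilde{U}_{\sigma^{k}\omega}}\big{]}P_{\sigma^{k+1}\omega}(\tilde{\Pi}^{k+1}_{\omega}\Gamma)$, the second sum in the same formula. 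Taking $u_{\omega} := w_{0} = \Pi_{\tilde{U}_{\omega}\parallel\tilde{S}_{\omega}}\tilde{\Pi}^{0}_{\omega}\Gamma\in\tilde{U}_{\omega}$ then gives $\tilde{\Pi}^{n}_{\omega}[\tilde{I}_{\omega}(u_{\omega},\Gamma)] = w_{n}+s_{n} = \tilde{\Pi}^{n}_{\omega}\Gamma$ for all $n\geqslant 0$, i.e.\ $\tilde{I}_{\omega}(u_{\omega},\Gamma)=\Gamma$.

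The only genuinely delicate points are the index bookkeeping in the ``$\Rightarrow$'' collapse and the convergence of the infinite series; the series defining $\tilde{I}_{\omega}$ converges on a $\theta$-invariant set of full measure by the preceding lemma, and the telescoped $\tilde{S}$-series converges by the same growth estimates (Proposition \ref{In_In_pr}, \eqref{eqn:growth_slow_subspace}, Lemma \ref{lemma:pro_invv}, and Birkhoff's theorem applied to $\|\psi^{1}_{\sigma^{n}\omega}\|$) combined with $\Gamma\in B(0,\tilde{\rho}(\omega))$, which guarantees $\|\tilde{\Pi}^{n}_{\omega}\Gamma\|\leqslant \rho(\sigma^{n}\omega)$ so that the bound \eqref{eqn:diff_bound_P} on $P$ is available along the orbit. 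There is no essentially new obstacle: the failure of $\psi$ to be globally invertible is handled, exactly as in the construction of $\tilde{I}_{\omega}$, by carrying the whole argument out componentwise along $E_{\omega} = \tilde{U}_{\omega}\oplus\tilde{S}_{\omega}$.
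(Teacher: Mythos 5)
Your proof is correct and follows essentially the same Lyapunov--Perron argument that the paper invokes by its reference to Lemma \ref{fixed}: the reduction to the one-step identity, the collapse of the two series under $\psi^{1}_{\sigma^{n+1}\omega}$ via the cocycle relations and $\Pi_{\tilde{U}\parallel\tilde{S}}+\Pi_{\tilde{S}\parallel\tilde{U}}=\operatorname{Id}$, and the componentwise resolution of the recursions along $E_{\omega}=\tilde{U}_{\omega}\oplus\tilde{S}_{\omega}$ are exactly the computations behind the stable-manifold version, transposed to backward time. The explicit identification $u_{\omega}=\Pi_{\tilde{U}_{\omega}\parallel\tilde{S}_{\omega}}\tilde{\Pi}^{0}_{\omega}\Gamma$, which the statement of the lemma leaves implicit, is a welcome clarification.
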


\begin{proof}
 Similar to Lemma \ref{fixed}.
\end{proof}

\begin{lemma}\label{lemma:estimates_II}
  For $ 0 < \upsilon < \mu_{k_{0}}$, $\tilde{h}_{1}^{\upsilon} $ and $\tilde{h}_{2}^{\upsilon}$ are measurable and finite on a $\theta$-invariant set of full measure $\tilde{\Omega}$. Moreover,
    \begin{align}\label{BX}
\lim_{p\rightarrow\infty}\frac{1}{p}\log^{+}\tilde{h}^{\upsilon}_{1}(\sigma^{p}\omega)=\lim_{p\rightarrow\infty}\frac{1}{p}\log^{+}\tilde{h}^{\upsilon}_{2}(\sigma^{p}\omega)=0
\end{align}
and
\begin{align*}
&\Vert \tilde{I}_{\omega}(u_{\omega},{\Gamma})\Vert\leqslant \tilde{h}_{1}^{\upsilon}(\omega)\Vert u_{\omega}\Vert +\tilde{h}_{2}^{\upsilon}(\omega)\Vert{\Gamma}\Vert^{r+1} g(\Vert{\Gamma}\Vert)\\
&\Vert \tilde{I}_{\omega}(u_{\omega},\Gamma) - \tilde{I}_{\omega}(u_{\omega},\tilde{\Gamma})\Vert\leqslant \tilde{h}_{2}^{\upsilon}(\omega)h(\Vert\Gamma\Vert+\Vert\tilde{\Gamma}\Vert)\ \Vert\Gamma-\tilde{\Gamma}\Vert 
\end{align*}
hold for every $\omega \in \tilde{\Omega}$, $\Gamma, \tilde{\Gamma} \in \tilde{\Sigma}^{\upsilon}_{\omega} \cap B(0,\tilde{\rho}(\omega))$ and $u_{\omega} \in \tilde{U}_{\omega}$. 
\end{lemma}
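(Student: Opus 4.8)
The plan is to follow the proof of Lemma \ref{lemma:estimates_I} essentially verbatim, interchanging $\theta\leftrightarrow\sigma$ and forward growth $\leftrightarrow$ backward growth throughout. For measurability I would argue as follows: the projection norms $\omega\mapsto\|\Pi_{\tilde{U}_{\omega}\parallel\tilde{S}_{\omega}}\|$ and $\omega\mapsto\|\Pi_{\tilde{S}_{\omega}\parallel\tilde{U}_{\omega}}\|$ are measurable by the Remark following Lemma \ref{Proj_} (with $j=k_{0}+1$, $l=1$); the map $\omega\mapsto\|\psi^{k}_{\sigma^{k}\omega}|_{\tilde{S}_{\sigma^{k}\omega}}\|$ is the composition of the measurable map $\omega\mapsto\sigma^{k}\omega$ with $\omega'\mapsto D_{1}(\psi^{k}_{\omega'}|_{F_{\mu_{k_{0}+1}}(\omega')})$, which is measurable by Lemma \ref{mea_Vol}; the inverse norms $\|(\psi^{n}_{\sigma^{n}\omega}|_{\tilde{U}_{\omega}})^{-1}\|$ and $\|(\psi^{k+1}_{\sigma^{n}\omega}|_{\tilde{U}_{\sigma^{n-1-k}\omega}})^{-1}\|$ are measurable because, as in the proof of Lemma \ref{Proj_}, the finite-dimensional spaces $\tilde{U}_{\cdot}$ admit a measurable frame of vectors of the form $\psi^{N}_{\sigma^{N}\omega}(g(\sigma^{N}\omega))$ with $g\in\Delta_{0}$, so these norms are countable suprema of $\operatorname{Vol}$-quotients, measurable by Lemma \ref{Con_VOl} and Assumption \ref{MES}; and $f$ is measurable by hypothesis. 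Since $\tilde{h}_{1}^{\upsilon}$ and $\tilde{h}_{2}^{\upsilon}$ are assembled from these functions by countably many sums and suprema, they are measurable.

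For finiteness and the temperedness \eqref{BX}, I would fix $\omega$ in the $\theta$-invariant full-measure set on which all previously cited results hold, pick $\epsilon\in(0,\mu_{k_{0}}-\upsilon)$, and reproduce the estimates \eqref{projection-B}, \eqref{inverse-c}, \eqref{inn_BB} of the proof of Lemma \ref{func} in their backward form, using Lemma \ref{lemma:pro_invv}, Proposition \ref{In_In_pr} (the limit \eqref{eqn:backward_fast_growing}) and the backward estimate \eqref{eqn:dist_backw} of Lemma \ref{lemma:dist_forw_backw}. This yields a tempered random variable $R(\omega)>1$ which dominates, up to factors $\exp(\epsilon\cdot(\text{index}))$, the quantities $\|\Pi_{\tilde{U}_{\sigma^{j}\omega}\parallel\tilde{S}_{\sigma^{j}\omega}}\|$, $\|\Pi_{\tilde{S}_{\sigma^{j}\omega}\parallel\tilde{U}_{\sigma^{j}\omega}}\|$, $\exp\big((k+1)\mu_{k_{0}}\big)\|(\psi^{k+1}_{\sigma^{n}\omega}|_{\tilde{U}_{\sigma^{n-1-k}\omega}})^{-1}\|$, and --- via \eqref{Du_Re_2} in Lemma \ref{lemma:forw_back} together with Lemma \ref{inve_lim} and $\mu_{k_{0}+1}\le0$ --- also $\|\psi^{k-n}_{\sigma^{k}\omega}|_{\tilde{S}_{\sigma^{k}\omega}}\|$; the assumption on $f$ gives in addition $f(\sigma^{m}\omega)\le R(\omega)\exp(\epsilon m)$. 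Substituting these bounds into the definitions of $\tilde{h}_{1}^{\upsilon}$ and $\tilde{h}_{2}^{\upsilon}$ and using $0<\upsilon<\mu_{k_{0}}$, $1+r>1$ and $\mu_{k_{0}+1}\le0$, each summand is bounded by a fixed power of $R(\omega)$ times $\exp(-\delta\cdot(\text{index}))$ with $\delta=\delta(\epsilon)>0$ once $\epsilon$ is small, so the inner series converge geometrically and the suprema over $n$ are finite. The same bounds together with temperedness of $R$ give $\limsup_{p\to\infty}\frac{1}{p}\log^{+}\tilde{h}_{i}^{\upsilon}(\sigma^{p}\omega)\le0$, while $\liminf_{p\to\infty}\frac{1}{p}\log^{+}\tilde{h}_{i}^{\upsilon}(\sigma^{p}\omega)\ge0$ is automatic since $\log^{+}\ge0$; this proves \eqref{BX}. (Alternatively, Lemma \ref{inve_lim} reduces \eqref{BX} to the corresponding statement along $\theta$.)

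For the two estimates for $\tilde{I}_{\omega}$ I would simply unwind the definition: plug $\|\tilde{\Pi}^{j}_{\omega}[\Gamma]\|\le\|\Gamma\|\exp(-j\upsilon)$ and $\|P_{\sigma^{j}\omega}(\tilde{\Pi}^{j}_{\omega}[\Gamma])\|\le f(\sigma^{j}\omega)\|\tilde{\Pi}^{j}_{\omega}[\Gamma]\|^{1+r}g(\|\tilde{\Pi}^{j}_{\omega}[\Gamma]\|)$ (the latter from \eqref{eqn:diff_bound_P} with second argument $0$) into the three branches of the formula for $\tilde{\Pi}^{n}_{\omega}[\tilde{I}_{\omega}(u_{\omega},\Gamma)]$, multiply by $\exp(n\upsilon)$, take the supremum over $n\ge0$ and use that $g$ is increasing; the $u_{\omega}$-branch then contributes exactly $\tilde{h}_{1}^{\upsilon}(\omega)\|u_{\omega}\|$ and the two $P$-sums contribute $\tilde{h}_{2}^{\upsilon}(\omega)\|\Gamma\|^{1+r}g(\|\Gamma\|)$. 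For the Lipschitz estimate the $u_{\omega}$-branch cancels in $\tilde{I}_{\omega}(u_{\omega},\Gamma)-\tilde{I}_{\omega}(u_{\omega},\tilde{\Gamma})$, and applying \eqref{eqn:diff_bound_P} to each difference $P_{\sigma^{j}\omega}(\tilde{\Pi}^{j}_{\omega}[\Gamma])-P_{\sigma^{j}\omega}(\tilde{\Pi}^{j}_{\omega}[\tilde{\Gamma}])$ and summing as before produces the coefficient $\tilde{h}_{2}^{\upsilon}(\omega)h(\|\Gamma\|+\|\tilde{\Gamma}\|)$ in front of $\|\Gamma-\tilde{\Gamma}\|$. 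The one genuinely delicate step is the summability bookkeeping in the second paragraph: I expect the main work to be in checking that every exponent appearing in the summands of $\tilde{h}_{1}^{\upsilon}$ and $\tilde{h}_{2}^{\upsilon}$ is made strictly negative by a sufficiently small $\epsilon$, which rests on $0<\upsilon<\mu_{k_{0}}$, on $\mu_{k_{0}+1}\le0$, and on $f$ being tempered \emph{along the $\sigma$-orbit} --- the last being the hypothesis that differs from the stable case and must be invoked in its $\sigma$-form.
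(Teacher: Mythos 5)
Your proposal is correct and follows the same route as the paper, whose own proof of this lemma simply refers back to Lemma \ref{lemma:estimates_I} (and that proof in turn cites the assumption on $f$, \eqref{eqn:growth_slow_subspace}, Lemma \ref{lemma:forw_back} and Proposition \ref{In_In_pr}, exactly the ingredients you use in their backward form, together with the measurability machinery of Lemmas \ref{mea_Vol} and \ref{Proj_}). You have merely made explicit the bookkeeping the paper leaves implicit; no gap.
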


\begin{proof}
 As in Lemma \ref{lemma:estimates_I}.
\end{proof}

\begin{lemma}\label{lemma:ex_fp_bound_2}
 Assume that for $u_{\omega} \in \tilde{U}_{\omega}$, 
 \begin{align*}
  \Vert u_{\omega}\Vert\leqslant \frac{1}{2\tilde{h}_{1}^{\upsilon}(\omega)}\min\big{\lbrace}\frac{1}{2}h^{-1}(\frac{1}{2\tilde{h}_{2}^{\upsilon}(\omega)}),\tilde{\rho}(\omega)\big{\rbrace}.
 \end{align*}
 Then the equation
 \begin{align*}
    \tilde{I}_{\omega}(u_{\omega},\Gamma) = \Gamma
 \end{align*}
 admits a uniques solution $\Gamma = \Gamma(u_{\omega})$ and the bound
 \begin{align*}
  \Vert\Gamma (u_{\omega})\Vert \leqslant \min\big{\lbrace}\frac{1}{2}h^{-1}(\frac{1}{2\tilde{h}_{2}^{\upsilon}(\omega)}), \tilde{\rho}(\omega)\big{\rbrace} 
 \end{align*}
    holds true.

\end{lemma}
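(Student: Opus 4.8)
The plan is to follow the proof of Lemma~\ref{lemma:ex_fp_bound} line for line, with $\tilde I_\omega$, $\tilde h^\upsilon_1$, $\tilde h^\upsilon_2$ in place of $I_\omega$, $h^\upsilon_1$, $h^\upsilon_2$: namely, to show that for $u_\omega$ satisfying the stated bound, the map $\Gamma\mapsto\tilde I_\omega(u_\omega,\Gamma)$ is a $\tfrac12$-contraction of the closed ball
\[
 B_\omega:=\overline B(0,r_\omega)\subset\tilde\Sigma^\upsilon_\omega,\qquad r_\omega:=\min\Big\{\tfrac12 h^{-1}\big(\tfrac{1}{2\tilde h^\upsilon_2(\omega)}\big),\tilde\rho(\omega)\Big\},
\]
into itself, and then to invoke Banach's fixed point theorem. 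We work on the $\theta$-invariant full-measure set on which $\tilde h^\upsilon_1,\tilde h^\upsilon_2$ are finite and the two estimates of Lemma~\ref{lemma:estimates_II} hold; since $r_\omega\le\tilde\rho(\omega)$, the ball $B_\omega$ lies in the domain of $\tilde I_\omega(u_\omega,\cdot)$ (shrinking $\tilde\rho$ slightly if one insists on strict containment).

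First I would verify the self-mapping property. By the first estimate of Lemma~\ref{lemma:estimates_II}, the identity $h(x)=x^r g(x)$, and the hypothesis on $u_\omega$, for $\Gamma\in B_\omega$ we have
\[
 \|\tilde I_\omega(u_\omega,\Gamma)\|\le\tilde h^\upsilon_1(\omega)\|u_\omega\|+\tilde h^\upsilon_2(\omega)\,\|\Gamma\|\,h(\|\Gamma\|)\le\tfrac12 r_\omega+\tilde h^\upsilon_2(\omega)\,r_\omega\,h(r_\omega),
\]
using that $h\ge 0$ is increasing. Since $h$ is strictly increasing and $r_\omega\le h^{-1}\big(\tfrac{1}{2\tilde h^\upsilon_2(\omega)}\big)$, one gets $h(r_\omega)\le\tfrac{1}{2\tilde h^\upsilon_2(\omega)}$, so the last term is $\le\tfrac12 r_\omega$ and $\|\tilde I_\omega(u_\omega,\Gamma)\|\le r_\omega$. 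For the contraction property, the second estimate of Lemma~\ref{lemma:estimates_II} gives, for $\Gamma,\tilde\Gamma\in B_\omega$,
\[
 \|\tilde I_\omega(u_\omega,\Gamma)-\tilde I_\omega(u_\omega,\tilde\Gamma)\|\le\tilde h^\upsilon_2(\omega)\,h(\|\Gamma\|+\|\tilde\Gamma\|)\,\|\Gamma-\tilde\Gamma\|,
\]
and since $\|\Gamma\|+\|\tilde\Gamma\|\le 2r_\omega\le h^{-1}\big(\tfrac{1}{2\tilde h^\upsilon_2(\omega)}\big)$, monotonicity of $h$ bounds the Lipschitz constant by $\tfrac12$.

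Finally, $B_\omega$ is a closed subset of the Banach space $\tilde\Sigma^\upsilon_\omega$, hence complete, so Banach's fixed point theorem yields a unique $\Gamma(u_\omega)\in B_\omega$ with $\tilde I_\omega(u_\omega,\Gamma(u_\omega))=\Gamma(u_\omega)$, and membership in $B_\omega$ is exactly the asserted bound $\|\Gamma(u_\omega)\|\le\min\{\tfrac12 h^{-1}(\tfrac{1}{2\tilde h^\upsilon_2(\omega)}),\tilde\rho(\omega)\}$. I do not expect a genuine obstacle: well-definedness of $\tilde I_\omega$ on $\tilde\Sigma^\upsilon_\omega\cap B(0,\tilde\rho(\omega))$, measurability and finiteness of $\tilde h^\upsilon_1,\tilde h^\upsilon_2$, and the two quantitative estimates are all supplied by the preceding lemmas, so the only point needing a little care is the bookkeeping with $h$, $h^{-1}$ and the factors $\tfrac12$ — i.e.\ checking that $r_\omega$ is small enough that both the self-map radius and the Lipschitz constant come out $\le\tfrac12$ — exactly as in Lemma~\ref{lemma:ex_fp_bound}.
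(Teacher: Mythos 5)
Your proposal is correct and takes exactly the paper's route: the paper's own proof is the one-line observation that $\tilde{I}_{\omega}(u_{\omega},\cdot)$ is a contraction on the closed ball of radius $\min\big\{\tfrac{1}{2}h^{-1}\big(\tfrac{1}{2\tilde{h}_{2}^{\upsilon}(\omega)}\big),\tilde{\rho}(\omega)\big\}$ via the estimates of Lemma \ref{lemma:estimates_II}, mirroring Lemma \ref{lemma:ex_fp_bound}. You have merely written out the self-map and Lipschitz bookkeeping (correctly, including why the factors $\tfrac12$ are needed), so nothing further is required.
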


\begin{proof}
 We can show that $\tilde{I}(u_{\omega},\cdot)$ is a contraction using Lemma \ref{lemma:estimates_II}.
\end{proof}

Finally we can formulate our main results about the existence of local unstable manifolds.
\begin{theorem}\label{unstable manifold}
Let $(\Omega,\mathcal{F},\P,\theta)$ be an ergodic measure-preserving dynamical systems, $\sigma := \theta^{-1}$ and $\varphi$ a Fr\'echet-differentiable cocycle acting on a measurable field of Banach spaces $\{E_{\omega}\}_{\omega \in \Omega}$. Assume that $\varphi$ admits a stationary solution $Y$ and that the linearized cocycle $\psi$ around $Y$ is compact, satisfies Assumption \ref{MES} and the integrability condition
\begin{align*}
 \log^+ \| \psi_{\omega} \| \in L^1(\omega).
\end{align*}
Moreover, assume that \eqref{eqn:diff_bound_P} holds for $\varphi$ and $\psi$ and a random variable $\rho \colon \Omega \to \R^+$ satisfying \eqref{eqn:rho_sigma_prop}. Assume that $\mu_1 > 0 $ and let $\mu_{k_0} > 0$ and $\tilde{U}_{\omega}$ be defined as in Definition \ref{unstable-dfn}.
For $ 0 < \upsilon < \mu_{k_{0}} $, $\omega \in {\Omega}$ and $ R^{\upsilon}(\omega) :=\frac{1}{2 \tilde{h}_{1}^{\upsilon}(\omega)} \min\big{\lbrace}\frac{1}{2}h^{-1}(\frac{1}{2\tilde{h}_{2}^{\upsilon}(\omega)}),\tilde{\rho}(\omega)\big{\rbrace} $ with $\tilde{\rho}$ defined as in \eqref{eqn:def_tilde_rho_2}, let
\begin{align}
 U^{\upsilon}_{loc}(\omega) := \big{\lbrace}Y_{\omega}+\tilde{\Pi}^{0}_{\omega}[\Gamma(u_{\omega})], \ \ \Vert u_{\omega}\Vert< \tilde{R}^{\upsilon}(\omega)\big{\rbrace}.
\end{align}
Then there is a $\theta$-invariant set of full measure $\tilde{\Omega}$ on which the following properties are satisfied for every $\omega \in \tilde{\Omega}$:

\begin{itemize}
\item[(i)] There are random variables $ \tilde{\rho}_{1}^{\upsilon}(\omega), \tilde{\rho}_{2}^{\upsilon}(\omega)$, positive and finite on $\tilde{\Omega}$, for which
\begin{align*}
 \liminf_{p \to \infty} \frac{1}{p} \log \tilde{\rho}_i^{\upsilon}(\sigma^p \omega) \geq 0, \quad i = 1,2
\end{align*}
and such that
\begin{align*}
&\bigg{\lbrace} Z_{\omega} \in E_{\omega}\, :\, \exists \lbrace Z_{\sigma^{n}\omega}\rbrace_{n\geqslant 1} \text{ s.t. } \varphi^{m}_{\sigma^{n}\omega}(Z_{\sigma^{n}\omega}) = Z_{\sigma^{n-m}\omega} \text{ for all } 0 \leq m \leq n \text{ and }\\ 
&\quad \sup_{n\geqslant 0}\exp(n\upsilon)\Vert Z_{\sigma^{n}\omega} - Y_{\sigma^{n}\omega} \Vert <\tilde{\rho}_{1}^{\upsilon}(\omega)\bigg{\rbrace} \subseteq U^{\upsilon}_{loc}(\omega) \subseteq \bigg{\lbrace}  Z_{\omega} \in E_{\omega}\, :\, \exists \lbrace Z_{\sigma^{n}\omega}\rbrace_{n\geqslant 1} \text{ s.t. } \\
&\qquad \varphi^{m}_{\sigma^{n}\omega}(Z_{\sigma^{n}\omega} ) = Z_{\sigma^{n-m}\omega} \text{ for all } 0 \leq m \leq n \text{ and }  \sup_{n\geqslant 0}\exp(n\upsilon)\Vert Z_{\sigma^{n}\omega} - Y_{\sigma^{n}\omega}\Vert <\tilde{\rho}_{2}^{\upsilon}(\omega)\bigg{\rbrace}.
\end{align*}
\item[(ii)]$ U^{\upsilon}_{loc}(\omega) $ is an immersed submanifold of $E_{\omega} $ and
\begin{align*}
 T_{Y_{\omega}}U^{\upsilon}_{loc}(\omega) = \tilde{U}_{\omega}.
\end{align*}
\item[(iii)] For $ n\geqslant N(\omega) $,
\begin{align*}
U^{\upsilon}_{loc}(\omega)\subseteq \varphi^{n}_{\sigma^{n}\omega}(U^{\upsilon}_{loc}(\sigma^{n}\omega)).
\end{align*}
\item[(iv)] For $ 0<\upsilon_{1} \leqslant \upsilon_{2} < \mu_{k_{0}} $,
\begin{align*}
U^{\upsilon_{2}}_{loc}(\omega)\subseteq U^{\upsilon_{1}}_{loc}(\omega).
\end{align*}
Also for $n\geqslant N(\omega) $,
\begin{align*}
U^{\upsilon_{1}}_{loc}(\omega)\subseteq \varphi^{n}_{\sigma^{n}\omega}(U^{\upsilon_{2}}_{loc}(\sigma^{n}(\omega))
\end{align*}
and consequently for $ Z_{\omega}\in U^{\upsilon}_{loc}(\omega) $, 
\begin{align*}
\limsup_{n\rightarrow\infty}\frac{1}{n}\log\Vert Z_{\sigma^{n}\omega} - Y_{\sigma^{n}\omega}\Vert\leqslant -\mu_{k_{0}}.
\end{align*}
\item[(v)] 
\begin{align*}
\limsup_{n\rightarrow\infty}\frac{1}{n}\log\bigg{[}\sup\bigg{\lbrace}\frac{\Vert Z_{\sigma^{n}\omega}- \tilde{Z}_{\sigma^{n}\omega}\Vert }{\Vert Z_{\omega}-\tilde{Z}_{\omega}\Vert},\ \ Z_{\omega}\neq\tilde{Z}_{\omega},\  Z_{\omega},\tilde{Z}_{\omega}\in U^{\upsilon}_{loc}(\omega) \bigg{\rbrace}\bigg{]}\leqslant -\mu_{k_{0}}.
\end{align*}
\end{itemize}
\end{theorem}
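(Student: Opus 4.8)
The plan is to run the proof of Theorem~\ref{stable manifold} essentially verbatim, replacing $\theta$ by $\sigma$ and the stable objects $S_\omega,\Sigma^\upsilon_\omega,I_\omega,h^\upsilon_i,\rho^\upsilon_i$ by their unstable counterparts $\tilde U_\omega,\tilde\Sigma^\upsilon_\omega,\tilde I_\omega,\tilde h^\upsilon_i,\tilde\rho^\upsilon_i$, and using the backward estimates of Lemma~\ref{lemma:estimates_II}, Lemma~\ref{lemma:ex_fp_bound_2} and \eqref{BX} in place of their forward analogues. For item~(i) the second inclusion should be immediate: for $\|u_\omega\|<\tilde R^\upsilon(\omega)$, Lemma~\ref{lemma:ex_fp_bound_2} together with \eqref{UY} shows that the fixed point $\Gamma(u_\omega)$ produces a backward orbit $Z_{\sigma^n\omega}:=Y_{\sigma^n\omega}+\tilde\Pi^n_\omega[\Gamma(u_\omega)]$ with $\varphi^m_{\sigma^n\omega}(Z_{\sigma^n\omega})=Z_{\sigma^{n-m}\omega}$ and $\sup_n\exp(n\upsilon)\|Z_{\sigma^n\omega}-Y_{\sigma^n\omega}\|=\|\Gamma(u_\omega)\|\leqslant\tilde R^\upsilon(\omega)=:\tilde\rho^\upsilon_2(\omega)$. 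For the first inclusion I would choose $\tilde\rho^\upsilon_1(\omega)$ small enough, in analogy with the stable case, so that any backward orbit with $\sup_n\exp(n\upsilon)\|Z_{\sigma^n\omega}-Y_{\sigma^n\omega}\|<\tilde\rho^\upsilon_1(\omega)$ yields, after forming the associated section $\Gamma\in\tilde\Sigma^\upsilon_\omega$ and an element $u_\omega\in\tilde U_\omega$ built from $Z_\omega-Y_\omega$ and a correction term exactly as in the stable case, an element with $\|u_\omega\|<\tilde R^\upsilon(\omega)$, whence $Z_\omega\in U^\upsilon_{loc}(\omega)$ by uniqueness of the fixed point. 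The growth property \eqref{eqn:rho_sigma_prop} for $\tilde\rho^\upsilon_i$ should follow from \eqref{BX} and \eqref{eqn:def_tilde_rho_2}.

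Item~(ii) should fall out of the description of $U^\upsilon_{loc}(\omega)$ as the graph over $\tilde U_\omega$ of the map $u_\omega\mapsto\tilde\Pi^0_\omega[\Gamma(u_\omega)]-u_\omega\in\tilde S_\omega$ (note that $\psi$ maps $\tilde S$ into $\tilde S$ forward, so the correction sum in $\tilde\Pi^0_\omega[\tilde I_\omega(u_\omega,\Gamma)]$ lies in $\tilde S_\omega$): since $\tilde I_\omega(0,0)=\partial_\Gamma\tilde I_\omega(0,0)=0$ and the remainder $P_\omega$ has vanishing derivative at $0$, the implicit function theorem forces this graph map to have zero derivative at $0$, so $T_{Y_\omega}U^\upsilon_{loc}(\omega)=\tilde U_\omega$ (and $U^\upsilon_{loc}$ is locally $C^{m-1}$ when $\varphi$ is $C^m$, as in the remark after Theorem~\ref{stable manifold}). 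For item~(iii), if $Z_\omega\in U^\upsilon_{loc}(\omega)$ with backward orbit $\{Z_{\sigma^n\omega}\}$ then $\sup_m\exp(m\upsilon)\|Z_{\sigma^{m+n}\omega}-Y_{\sigma^{m+n}\omega}\|\leqslant\exp(-n\upsilon)\tilde\rho^\upsilon_2(\omega)$, and by \eqref{eqn:rho_sigma_prop} one finds $N(\omega)$ with $\exp(-n\upsilon)\tilde\rho^\upsilon_2(\omega)\leqslant\tilde\rho^\upsilon_1(\sigma^n\omega)$ for $n\geqslant N(\omega)$; the first inclusion of item~(i) at $\sigma^n\omega$ then places $Z_{\sigma^n\omega}$ in $U^\upsilon_{loc}(\sigma^n\omega)$, so $Z_\omega=\varphi^n_{\sigma^n\omega}(Z_{\sigma^n\omega})\in\varphi^n_{\sigma^n\omega}(U^\upsilon_{loc}(\sigma^n\omega))$.

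For item~(iv) the inclusion $U^{\upsilon_2}_{loc}(\omega)\subseteq U^{\upsilon_1}_{loc}(\omega)$ follows from $\tilde R^{\upsilon_2}(\omega)\leqslant\tilde R^{\upsilon_1}(\omega)$; for the second part, given $Z_\omega\in U^{\upsilon_1}_{loc}(\omega)$ I would use Lemma~\ref{lemma:pro_invv} and item~(i) to produce $N(\omega)$ with $\|\Pi_{\tilde U_{\sigma^n\omega}\parallel\tilde S_{\sigma^n\omega}}(Z_{\sigma^n\omega}-Y_{\sigma^n\omega})\|<\tilde R^{\upsilon_2}(\sigma^n\omega)$ for $n\geqslant N(\omega)$, note that the point of $U^{\upsilon_2}_{loc}(\sigma^n\omega)\subseteq U^{\upsilon_1}_{loc}(\sigma^n\omega)$ with that $\tilde U$-projection must equal $Z_{\sigma^n\omega}$ by uniqueness of the fixed point, and conclude $Z_\omega\in\varphi^n_{\sigma^n\omega}(U^{\upsilon_2}_{loc}(\sigma^n\omega))$. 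Then $Z_\omega$ lies in $\varphi^n_{\sigma^n\omega}(U^{\upsilon_2}_{loc}(\sigma^n\omega))$ for all large $n$ and all $\upsilon_2<\mu_{k_0}$, which forces $\sup_m\exp(m\upsilon_2)\|Z_{\sigma^m\omega}-Y_{\sigma^m\omega}\|<\infty$ and hence $\limsup_n\frac1n\log\|Z_{\sigma^n\omega}-Y_{\sigma^n\omega}\|\leqslant-\upsilon_2$; letting $\upsilon_2\uparrow\mu_{k_0}$ finishes the item.

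Item~(v) I would derive from the unstable analogues of \eqref{BBU}, \eqref{ZS}, \eqref{Tt} and \eqref{CR}: Lemma~\ref{lemma:estimates_II} gives $\|\Gamma(u_\omega)-\Gamma(\tilde u_\omega)\|\leqslant2\tilde h^\upsilon_1(\omega)\|u_\omega-\tilde u_\omega\|$, while the bound on $\|\Gamma(u_\omega)\|$ from Lemma~\ref{lemma:ex_fp_bound_2} yields $\|\tilde\Pi^0_\omega[\Gamma(u_\omega)]-\tilde\Pi^0_\omega[\Gamma(\tilde u_\omega)]\|\geqslant\frac12\|u_\omega-\tilde u_\omega\|$ for small parameters, so $\|Z_{\sigma^n\omega}-\tilde Z_{\sigma^n\omega}\|=\|\tilde\Pi^n_\omega[\Gamma(u_\omega)-\Gamma(\tilde u_\omega)]\|\leqslant4\tilde h^\upsilon_1(\omega)\exp(-n\upsilon)\|Z_\omega-\tilde Z_\omega\|$; for general $Z_\omega,\tilde Z_\omega\in U^\upsilon_{loc}(\omega)$ one localizes by pushing backward to $\sigma^N\omega$ (where items~(i) and~(iv) make the parameters small) and bounds the finite-time factor $\tilde H^\upsilon_N(\omega):=\sup\{\|Z_{\sigma^N\omega}-\tilde Z_{\sigma^N\omega}\|/\|Z_\omega-\tilde Z_\omega\|\}$, which is finite because both differences are governed by $\Gamma(u_\omega)-\Gamma(\tilde u_\omega)$ and by $u_\omega-\tilde u_\omega=\Pi_{\tilde U_\omega\parallel\tilde S_\omega}(Z_\omega-\tilde Z_\omega)$ through the topological splitting $E_\omega=\tilde U_\omega\oplus\tilde S_\omega$ of Theorem~\ref{BT}; taking $\frac1n\log$ and invoking item~(iv) then gives the claim. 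I expect the argument to be routine once these backward estimates are on the table; the only points that need real care are the index bookkeeping inside $\tilde I_\omega$ — in particular that the operators $[\psi^{k+1}_{\sigma^n\omega}]^{-1}$ are well defined, which is exactly the invertibility of $\psi$ on the finite-dimensional fast subspaces $\tilde U$ from Proposition~\ref{In_In_pr} — and the verification that $\tilde\rho^\upsilon_i$ and $\tilde h^\upsilon_i$ grow subexponentially along the $\sigma$-orbit, which is what lets us extract the various $N(\omega)$ above.
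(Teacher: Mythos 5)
Your proposal is correct and coincides with the paper's own proof, which consists precisely of the remark that one repeats the arguments of Theorem \ref{stable manifold} with $\theta$ replaced by $\sigma$ and the stable objects replaced by their unstable counterparts from Definition \ref{unstable-dfn} and Lemmas \ref{lemma:estimates_II}--\ref{lemma:ex_fp_bound_2}. Your write-up in fact supplies more detail than the paper does (e.g.\ the graph description over $\tilde{U}_{\omega}$ for item (ii) and the backward analogues of \eqref{BBU}--\eqref{CR} for item (v)), all of it consistent with the stable-case argument.
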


\begin{proof}
    One uses the same arguments as in the proof of Theorem \ref{stable manifold}.
\end{proof}

\begin{remark}
 \begin{itemize}
  \item[(i)] As in the stable case, if $ \varphi_{\omega} $  is $ C^{m} $ for every $ \omega \in \tilde{\Omega} $, one can deduce that ${U}^{\upsilon}_{loc}(\omega) $ is locally $ C^{m-1} $.
  \item[(ii)] In the \emph{hyperbolic} case, i.e. if all Lyapunov exponents are non-zero, if the assumptions of Theorem \ref{stable manifold} and \ref{unstable manifold} are satisfied, we have $S_{\omega} = \tilde{S}_{\omega}$ and $U_{\omega} = \tilde{U}_{\omega}$. In particular, the submanifolds $S^{\upsilon}_{loc}(\omega) $ and $U^{\upsilon}_{loc}(\omega) $ are \emph{transversal}, i.e.
  \begin{align*}
   E_{\omega} = T_{Y_{\omega}} U^{\upsilon}_{loc}(\omega) \oplus T_{Y_{\omega}} S^{\upsilon}_{loc}(\omega).
  \end{align*}

 \end{itemize}

\end{remark}

 \subsection*{Acknowledgements}
\label{sec:acknowledgements}

MGV acknowledges a scholarship from the Berlin Mathematical School (BMS). SR acknowledges financial support by the DFG via Research Unit FOR 2402.

\bibliographystyle{alpha}
\bibliography{refs}

\end{document}